\numberwithin{equation}{section}
\numberwithin{figure}{section}
\numberwithin{table}{section}
\theoremstyle{plain}
\newtheorem{mainthmIntro}{Theorem}
\newtheorem{mainthm}{Theorem}
\newtheorem{thm}{Theorem}[section]
\newtheorem{prop}[thm]{Proposition}
\newtheorem{lem}[thm]{Lemma}
\newtheorem{cor}[thm]{Corollary}
\theoremstyle{definition}
\newtheorem{dfn}[thm]{Definition}
\newtheorem{ex}[thm]{Example}
\newtheorem{rmk}[thm]{Remark}
\DeclareMathOperator{\Span}{Span}
\DeclareMathOperator{\Mor}{Mor}
\DeclareMathOperator{\Hom}{Hom}
\DeclareMathOperator{\Aut}{Aut}
\newcommand{\xto}[0]{\xrightarrow}
\newcommand{\x}[0]{\times}
\newcommand{\ox}[0]{\otimes}
\newcommand{\Ø}[0]{\emptyset}
\newcommand{\e}[0]{\varepsilon}
\newcommand{\ph}[0]{\varphi}
\newcommand{\Z}[0]{\mathbb Z}
\newcommand{\Q}[0]{\mathbb Q}
\DeclarePairedDelimiter{\abs}\lvert\rvert
\DeclarePairedDelimiter{\gen}\langle\rangle
\newcommand{\cal}{\mathcal}
\renewcommand{\tilde}{\widetilde}
\renewcommand{\bar}{\overline}
\newcommand{\ccset}[1]{Cl(#1)}
\newcommand{\free}[1]{{\protect\tilde\Omega(#1)}}
\newcommand{\op}[0]{{\textup{op}}}
\newcommand{\lc}[1]{\prescript{#1\!}{}}
\tikzset{dot/.style={circle,fill=black,thick,inner sep=0pt,minimum size=1mm,draw}}
\tikzset{arrow/.style={semithick,>=stealth',shorten >=1pt,shorten <=1pt}}
\tikzset{equal/.style={arrow,double distance=2pt}}
\title{Transfer and characteristic idempotents for saturated fusion systems}
\author[S. P. Reeh]{Sune Precht Reeh}
\address{Department of Mathematics, Massachusetts Institute of Technology, Cambridge, Massachusetts, USA}
\email{reeh@mit.edu}
\subjclass[2010]{}
\thanks{Supported by the Danish National Research Foundation through the Centre for Symmetry and Deformation (DNRF92), and by The Danish Council for Independent Research's Sapere Aude program (DFF – 4002-00224).}
\begin{document}
\begin{abstract}
We construct a well-behaved transfer map from the $p$-local Burnside ring of the underlying $p$-group $S$ to the $p$-local Burnside ring of a saturated fusion system $\cal F$. Using this transfer map, we give new results on the characteristic idempotent of $\cal F$ -- the unique idempotent in the $p$-local double Burnside ring of $S$ satisfying properties of Linckelmann and Webb. We describe this idempotent explicitly both in terms of fixed points and as a linear combination of transitive bisets.
Additionally, using fixed points we determine the map for Burnside rings given by multiplication with the characteristic idempotent, and show that this is the transfer map previously constructed. Applying these results, we show that for every saturated fusion system the ring generated by all (not necessarily idempotent) characteristic elements in the $p$-local double Burnside ring is isomorphic as rings to the $p$-local ``single'' Burnside ring of the fusion system, and we disprove a conjecture by Park-Ragnarsson-Stancu on the composition product of fusion systems.
\end{abstract}
\maketitle

\section{Introduction}
Saturated fusion systems are abstract models for the $p$-local structure of finite groups. The canonical example comes from a finite group $G$ with Sylow $p$-subgroup $S$. The fusion system $\cal F_S(G)$ associated to $G$ (and $S$) is a category whose objects are the subgroups of $S$ and where the morphisms between subgroups are the homomorphisms induced by conjugation by elements of $G$.
As shown by Ragnarsson-Stancu in \citelist{\cite{Ragnarsson} \cite{RagnarssonStancu}}, there is a one-to-one correspondence between the saturated fusion systems on a finite $p$-group $S$ and their associated characteristic idempotents in $A(S,S)_{(p)}$, the $p$-localized double Burnside ring of $S$.

In this paper we introduce a particular transfer map $\pi\colon A(S)_{(p)} \to A(\cal F)_{(p)}$ between Burnside rings for a saturated fusion system $\cal F$ and its underlying $p$-group $S$. This transfer map enables us to calculate the fixed points and coefficients of the characteristic idempotent $\omega_{\cal F}$ for the saturated fusion system $\cal F$ and to give a precise description of the products $\omega_{\cal F}\circ X$ and $X\circ \omega_{\cal F}$ for any element $X$ of the double Burnside ring of $S$. We give an application of these results to a conjecture by Park-Ragnarsson-Stancu on the composition product of saturated fusion systems.

%\medskip
In more detail, we first consider the transfer map for Burnside rings of fusion systems:
The Burnside ring $A(S)$ for a finite $p$-group $S$ is the Grothendieck group formed from the monoid of isomorphism classes of finite $S$-sets, with disjoint union as addition and cartesian product as multiplication.
Let
\[\Phi\colon A(S) \to \prod_{\substack{Q\leq S\\ \text{up to $S$-conj.}}} \Z\]\enlargethispage{.75cm}
be the homomorphism of marks, i.e., the injective ring homomorphism whose $Q$-coordinate $\Phi_Q(X)$ counts the number of fixed points $\abs*{X^Q}$ when $X$ is an $S$-set.
Given a fusion system $\cal F$ on $S$, we say that a finite $S$-set $X$, or a general element of $A(S)$, is \emph{$\cal F$-stable} if the action on $X$ is invariant under conjugation in $\cal F$ -- see section \ref{secBurnsideFusion}. The $\cal F$-stable elements form a subring of $A(S)$ which we call \emph{the Burnside ring of $\cal F$} and denote by $A(\cal F)$.

It is useful to have a canonical way to construct an $\cal F$-stable element from any $S$-set preferably in terms of a transfer map $A(S)\to A(\cal F)$ that plays well with the structure of $A(S)$ as a module over $A(\cal F)$. For the $p$-localized Burnside ring $A(\cal F)_{(p)}$ this paper gives a particular choice of such a map $\pi$ with nice properties and the following simple description in terms of fixed points and the mark homomorphism. In addition, $\pi$ is in fact identical to map $A(S)_{(p)}\to A(S)_{(p)}$ induced by the characteristic idempotent associated to $\cal F$ in the double Burnside ring $A(S,S)_{(p)}$ -- see Corollary \ref{corCharOnesidedStabilization}.
\begin{mainthmIntro}\label{thmStabilizationHomIntro}
Let $\cal F$ be a saturated fusion system on a finite $p$-group $S$. We let $A(\cal F)_{(p)}$ denote the $p$-localized Burnside ring of $\cal F$ as a subring of the $p$-localized Burnside ring $A(S)_{(p)}$ for $S$.
For each $X\in A(S)_{(p)}$, there is a well defined $\cal F$-stable element $\pi(X)\in A(\cal F)_{(p)}$ determined by the fixed point formula
\begin{align*}
\Phi_{Q}(\pi(X)) ={}& \frac 1{\abs{[Q]_{\cal F}}}\sum_{Q'\in [Q]_{\cal F}} \Phi_{Q'}(X),
\end{align*}
which takes the average for each $\cal F$-conjugacy class $[Q]_{\cal F}$ of subgroups $Q\leq S$. The resulting map $\pi\colon A(S)_{(p)} \to A(\cal F)_{(p)}$ is a homomorphism of $A(\cal F)_{(p)}$-modules and restricts to the identity on $A(\cal F)_{(p)}$.
\end{mainthmIntro}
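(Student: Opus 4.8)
The plan is to work entirely through the mark homomorphism, which stays injective after $p$-localization, $\Phi\colon A(S)_{(p)}\hookrightarrow\prod_{[Q]_S}\Z_{(p)}$. Hence, \emph{if} $\pi(X)$ exists it is unique, and the real content of the theorem is that the prescribed vector
\[
y=y(X),\qquad y_Q=\tfrac{1}{\abs{[Q]_{\cal F}}}\sum_{Q'\in[Q]_{\cal F}}\Phi_{Q'}(X)\in\Q,
\]
lies in the image of $\Phi$ restricted to $A(\cal F)_{(p)}$. By construction $y$ is constant on $\cal F$-conjugacy classes and so, in particular, on $S$-conjugacy classes, so it is a well defined element of $\Q\otimes A(S)=\prod_{[Q]_S}\Q$; and by the characterization of $\cal F$-stability through marks recalled in Section~\ref{secBurnsideFusion} (an element of $A(S)$ is $\cal F$-stable exactly when its mark vector is constant on $\cal F$-conjugacy classes), once we know $y\in A(S)_{(p)}$ we automatically get $y\in A(\cal F)_{(p)}$. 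So the whole theorem reduces to the single claim: \emph{for every $X\in A(S)_{(p)}$ the averaged vector $y(X)$ lies in $A(S)_{(p)}$.}

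Granting this claim, the remaining assertions are formal. The formula is visibly additive in $X$. For the module property, fix $Z\in A(\cal F)_{(p)}$ and $X\in A(S)_{(p)}$; since $\Phi$ is a ring homomorphism and $\Phi_{Q'}(Z)=\Phi_Q(Z)$ for all $Q'\in[Q]_{\cal F}$, we get
\[
\Phi_Q\bigl(\pi(Z\cdot X)\bigr)=\tfrac{1}{\abs{[Q]_{\cal F}}}\sum_{Q'\in[Q]_{\cal F}}\Phi_{Q'}(Z)\,\Phi_{Q'}(X)=\Phi_Q(Z)\,\Phi_Q\bigl(\pi(X)\bigr)=\Phi_Q\bigl(Z\cdot\pi(X)\bigr),
\]
and injectivity of $\Phi$ gives $\pi(Z\cdot X)=Z\cdot\pi(X)$. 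If in addition $X\in A(\cal F)_{(p)}$, each inner sum collapses to $\abs{[Q]_{\cal F}}\,\Phi_Q(X)$, whence $\Phi(\pi(X))=\Phi(X)$ and $\pi(X)=X$; thus $\pi$ restricts to the identity on $A(\cal F)_{(p)}$.

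It remains to prove the claim, which is the technical core. Its point is that the conditions cutting out $A(S)_{(p)}$ inside $\prod_{[Q]_S}\Q$ --- $p$-integrality of a vector $z$ together with, for each $Q\le S$, the congruence $\sum_{\bar g}z_{\gen{Q,g}}\equiv 0\pmod{\abs{N_S(Q)/Q}}$, where the sum runs over $\bar g\in N_S(Q)/Q$ and $g\in N_S(Q)$ is a lift of $\bar g$ (the classical description of the image of the mark homomorphism, cf.\ Section~\ref{secBurnsideFusion}) --- are a restatement of Burnside's counting lemma: for an honest $S$-set $W$ the fixed set $W^Q$ carries an action of $N_S(Q)/Q$ with $\abs{(W^Q)^{\bar g}}=\Phi_{\gen{Q,g}}(W)$, so $\sum_{\bar g}\Phi_{\gen{Q,g}}(W)$ equals $\abs{N_S(Q)/Q}$ times the number of $N_S(Q)/Q$-orbits on $W^Q$, and by $\Z_{(p)}$-linearity $\tfrac{1}{\abs{N_S(Q)/Q}}\sum_{\bar g}\Phi_{\gen{Q,g}}(\,\cdot\,)$ takes values in $\Z_{(p)}$ on all of $A(S)_{(p)}$. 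Thus the claim amounts to showing that the function $\bar g\mapsto y_{\gen{Q,g}}$ on $N_S(Q)/Q$ has this same divisibility for every $Q$; $p$-integrality of $y$ itself then follows by induction from $Q=S$ downwards, since the congruence at $Q$ expresses $y_Q$ in terms of an element of $\abs{N_S(Q)/Q}\Z_{(p)}$ and the values $y_{\gen{Q,g}}$ at the strictly larger subgroups $\gen{Q,g}$.

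The plan for the divisibility is to expand $y_{\gen{Q,g}}$ as the average of the marks $\Phi_{\psi(\gen{Q,g})}(X)$ over $\psi\in\Hom_{\cal F}(\gen{Q,g},S)$, reorganize the resulting double sum over $\bar g\in N_S(Q)/Q$ by sorting the maps $\psi$ according to the $\cal F$-isomorphism they induce from $Q$ onto $P_0:=\psi(Q)$, and recognize the total contribution of each $\cal F$-conjugate $P_0$ of $Q$ as an expression of the shape $\sum_{\bar h\in N_S(P_0)/P_0}\Phi_{\gen{P_0,h}}(X)$, which is divisible by $\abs{N_S(P_0)/P_0}$ by the Burnside congruence for $X$ at $P_0$. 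Making the numerical weights in this reorganization work out is where saturation is essential: one uses that the $\cal F$-isomorphisms $Q\to P_0$ form a torsor under $\Aut_{\cal F}(Q)$, together with the extension axiom and the existence of fully normalized representatives, to count exactly which such isomorphisms extend over $\gen{Q,g}$ and with what multiplicity, and then to balance those multiplicities against the orders $\abs{N_S(P_0)/P_0}$ and the class size $\abs{[Q]_{\cal F}}$ so that the final coefficient lands in $\Z_{(p)}$. This bookkeeping, rather than any isolated conceptual difficulty, is the main obstacle, and it is exactly the place where one must use that $\cal F$ is \emph{saturated} and not merely a fusion system.
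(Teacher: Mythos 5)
Your formal reductions are fine and agree with the paper's: uniqueness of $\pi(X)$ via injectivity of the mark homomorphism, the module property, and the identity on $A(\cal F)_{(p)}$ are exactly the computations used in the proof of Theorem \ref{thmStabilizationHom}, and reducing existence to the statement that the averaged vector $y(X)$ lies in the image of $\Phi$ on $A(S)_{(p)}$ (i.e.\ is $p$-integral and satisfies the congruences of Proposition \ref{propYoshidaGroup}) is a legitimate, genuinely different route from the paper's inductive orbit-by-orbit construction. The problem is that this reduction \emph{is} the entire content of the theorem, and your plan for it contains a step that fails as stated. After expanding $y_{\gen{Q,g}}$ as an average over $\psi\in\cal F(\gen{Q,g},S)$ and sorting by $P_0=\psi(Q)$, the contribution of a fixed $\cal F$-conjugate $P_0$ is \emph{not} of the shape $c\cdot\sum_{\bar h\in W_S P_0}\Phi_{\gen{P_0,h}}(X)$: the weights on the various overgroups of $P_0$ differ, because $\abs{\cal F(\gen{Q,g},S)}$ and the number of $\cal F$-extensions of a given isomorphism $Q\to P_0$ both vary with $g$. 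Concretely, for $\cal F=\cal F_{D_8}(A_6)$ and $Q=Z(D_8)$ the block belonging to $P_0=Z$ works out to $\tfrac15\Phi_{Z}+\Phi_{C_4}+\tfrac13\Phi_{R}+\tfrac13\Phi_{T}$ (with $R,T$ the two Klein four-groups and $C_4$ the cyclic subgroup of order $4$), so the Burnside congruence at $P_0$ cannot be quoted directly. Worse, the needed divisibility by $\abs{W_S Q}$ genuinely does not hold conjugate-by-conjugate: taking $X=[S/A_1]$ with $A_1\leq R$ a noncentral subgroup of order $2$, the block of the single conjugate $P_0=A_1$ equals $\tfrac25$, which is not divisible by $\abs{W_S Q}=4$ in $\Z_{(2)}$; only the sum over the whole class, $\tfrac25+\tfrac25=\tfrac45$, is. So the cancellation is global across $[Q]_{\cal F}$ and requires exactly the careful counting you defer ("the bookkeeping").

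This is where the paper spends its effort, and its organization is what makes the counting tractable: in Lemma \ref{lemInduceFromASptoAFp} one averages the classes one at a time from the top down, so at the class currently being treated one only needs to compare \emph{two} congruences of Proposition \ref{propYoshidaGroup} -- one for $X$ restricted along a morphism $N_S P'\to N_S P$ supplied by saturation (Lemma \ref{lemNormalizerMap}), one for $X$ over $S$ -- all higher terms already agreeing; the $p$-integrality of the resulting coefficients then rests on the index count of Lemma \ref{lemNumberOfConjugates} and Lemma \ref{lemWeightedMeanOfCoordinates}. Your proposal names the right ingredients (extension axiom, fully normalized representatives, class sizes) but does not carry out the count, and the one concrete mechanism it does propose is contradicted by the example above; as it stands the proof has a genuine gap at its core.
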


If we apply $\pi$ to the transitive $S$-sets $S/P$ for $P\leq S$, we get elements $\beta_P:=\pi(S/P)$, which form a $\Z_{(p)}$-basis for the $p$-localized Burnside ring $A(\cal F)_{(p)}$ by Proposition \ref{thmPLocalBurnsideBasis}, and where $\beta_P=\beta_Q$ if and only if $P$ and $Q$ are conjugate in $\cal F$. In Proposition \ref{propSameBurnsideGroup}, we show that when $\cal F$ arises from a finite group $G$ with Sylow $p$-subgroup $S$, then the basis elements $\beta_P$ are closely related to the transitive $G$-sets $G/P$ for $P\leq S$, and the $p$-localized Burnside ring $A(\cal F)_{(p)}$ is isomorphic to the part of $A(G)_{(p)}$ where all stabilizers are $p$-subgroups.

The (double) Burnside module $A(S,T)$ is defined for a pair of $p$-groups similarly to the Burnside ring of a group, except that we consider isomorphism classes of $(S,T)$-bisets, which are sets equipped with both a right $S$-action and a left $T$-action that commute with each other. The Burnside module $A(S,T)$ is then the Grothendieck group of the monoid formed by isomorphism classes of finite $(S,T)$-bisets with disjoint union as addition. The $(S,T)$-bisets correspond to sets with a left $(T\x S)$-action, and the transitive bisets correspond to transitive sets $(T\x S)/D$ for subgroups $D\leq T\x S$.
Note that we do not make the usual requirement that the bisets have a free left action, and the results below hold for non-free bisets as well. %In the rare case where left-freeness is actually needed in this paper, we let $A^{\lhd}(S,T)$ denote the submodule of left-free bisets.

For every triple of $p$-groups $S$, $T$, $U$ we have a composition map $\circ \colon A(T,U) \x A(S,T) \to A(S,U)$ given on bisets by $Y\circ X := Y\x_T X = Y\x X/ \sim$
where $(yt,x) \sim (y,tx)$ for all $y\in Y$, $x\in X$, and $t\in T$.
For each $D\leq T\x S$ we have a fixed point homomorphism $\Phi_D\colon A(S,T) \to \Z$, but it is only a homomorphism of abelian groups. An element $X\in A(S,T)$ is still fully determined by the number of fixed points $\Phi_D(X)$ for $D\leq T\x S$.
Subgroups in $T\x S$ of particular interest are the graphs of homomorphisms $\ph\colon P\to T$ for $P\leq S$, where the graph of $\ph\colon P\to T$ is the subgroup $\Delta(P,\ph):=\{(\ph(g),g) \mid g\in P\}$. The transitive $(T\x S)$-set $(T\x S)/\Delta(P,\ph)$ corresponds to a transitive $(S,T)$-biset whose isomorphism class we denote by $[P,\ph]_S^T$. These are precisely the transitive $(S,T)$-bisets where the left action by $T$ is free.

Given a saturated fusion system $\cal F$, a particularly nice class of elements in the $p$-localized double Burnside ring $A(S,S)_{(p)}$ are the \emph{$\cal F$-characteristic} elements, which satisfy the following properties put down by Linckelmann-Webb: An element $X\in A(S,S)_{(p)}$ is $\cal F$-charac\-te\-ri\-stic if it is
\begin{description}
\item[$\cal F$-generated] $X$ is a linear combination of the $(S,S)$-bisets $[P,\ph]_S^S$ where $\ph\colon P\to S$ is a morphism of $\cal F$,
\item[Right $\cal F$-stable] For all $P\leq S$ and $\ph\in \cal F(P,S)$ we have
  $X\circ [P,\ph]_P^S = X\circ [P,id]_P^S$ as elements of $A(P,S)_{(p)}$,
\item[Left $\cal F$-stable] For all $P\leq S$ and $\ph\in \cal F(P,S)$ we have
  $[\ph P,\ph^{-1}]_{S}^P\circ X = [P,id]_{S}^P \circ X$ as elements of $A(P,S)_{(p)}$,
\end{description}
and an additional technical condition to ensure that $X$ is not degenerate.

Ragnarsson and Stancu showed in \citelist{\cite{Ragnarsson} \cite{RagnarssonStancu}} that there is a unique $\cal F$-characteristic idempotent $\omega_{\cal F}\in A(S,S)_{(p)}$ associated to each saturated fusion system $\cal F$, and that it is always possible to recover $\cal F$ from $\omega_{\cal F}$.
In this paper we show that the characteristic idempotent has the following number of fixed points, and we give the decomposition of $\omega_{\cal F}$ into biset orbits:
\begin{mainthmIntro}\label{thmCharIdemStructureIntro}
Let $\cal F$ be a saturated fusion system on a finite $p$-group $S$.
The characteristic idempotent $\omega_{\cal F}\in A(S,S)_{(p)}$ associated to $\cal F$ satisfies:

For all graphs $\Delta(P,\ph)\leq S\x S$ with $\ph\in \cal F(P,S)$, we have
\[\Phi_{\Delta(P,\ph)}(\omega_{\cal F}) = \frac{\abs S}{\abs{\cal F(P,S)}};\]
and $\Phi_{D}(\omega_{\cal F})=0$ for all other subgroups $D\leq S\x S$.
Consequently, if we write $\omega_{\cal F}$ in terms of the transitive bisets in $A(S,S)_{(p)}$, we get the expression
\[\omega_{\cal F} = \hspace{-.3cm}\sum_{\substack{[\Delta(P,\ph)]_{S\x S}\\ \text{with } \ph\in \cal F(P,S)}} \frac {\abs S}{\Phi_{\Delta(P,\ph)}([P,\ph]_S^S)} \Big(\sum_{P\leq Q\leq S}\hspace{-.2cm} \frac{\abs{\{\psi\in \cal F(Q,S) \mid \psi|_P=\ph\}}}{\abs{\cal F(Q,S)}}\cdot \mu(P,Q) \Big)[P,\ph]_S^S,\]
where the outer sum is taken over $(S\x S)$-conjugacy classes of subgroups, and where $\mu$ is the M\"obius function for the poset of subgroups in $S$.
\end{mainthmIntro}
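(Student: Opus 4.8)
The plan is to write down the element $\omega$ given by the displayed right-hand side, check that it satisfies the Linckelmann--Webb conditions, and conclude $\omega=\omega_{\cal F}$ from the uniqueness of the $\cal F$-characteristic idempotent proved by Ragnarsson--Stancu; the fixed-point formula then falls out of computing the marks of $\omega$. So first I would set $\omega\in A(S,S)_{(p)}$ equal to the displayed combination of transitive bisets, checking along the way that the coefficients really lie in $\Z_{(p)}$: the possibly non-invertible factor $\Phi_{\Delta(P,\ph)}([P,\ph]_S^S)=[N_{S\x S}(\Delta(P,\ph)):\Delta(P,\ph)]$ is cancelled by the $p$-divisibility of the M\"obius sum $\sum_{P\le Q\le S}\frac{\abs{\{\psi\in\cal F(Q,S)\mid\psi|_P=\ph\}}}{\abs{\cal F(Q,S)}}\mu(P,Q)$ (for instance, when $P$ is trivial this sum already vanishes). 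Since every subgroup of a graph $\Delta(P,\ph)$ is $\Delta(P',\ph|_{P'})$ with $P'\le P$, the matrix $\big(\Phi_D([P,\ph]_S^S)\big)$ converting coefficients to marks is triangular over the poset of $(S\x S)$-conjugacy classes of graphs of $\cal F$-morphisms, with diagonal entries $\Phi_{\Delta(P,\ph)}([P,\ph]_S^S)$; inverting it is a M\"obius inversion over the subgroup poset of $S$, and with the displayed coefficients one computes $\Phi_{\Delta(P,\ph)}(\omega)=\abs{S}/\abs{\cal F(P,S)}$ for $\ph\in\cal F(P,S)$, and $\Phi_D(\omega)=0$ for every other $D\le S\x S$ --- exactly the fixed-point formula in the statement.

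Next I would verify that $\omega$ is $\cal F$-characteristic. It is $\cal F$-generated by construction, and the technical non-degeneracy condition reduces to a quick check on a single mark of $\omega$. For left $\cal F$-stability, unwinding the definitions of composition and of fixed points shows that $[\ph P,\ph^{-1}]_S^P\circ\omega=[P,id]_S^P\circ\omega$ is equivalent to the identity $\Phi_{\Delta(R,\ph\circ\theta)}(\omega)=\Phi_{\Delta(R,\theta)}(\omega)$ for all homomorphisms $\theta$ into the domain of $\ph$; this holds because $\theta\in\cal F\iff\ph\circ\theta\in\cal F$ (as $\ph$ and $\ph^{-1}$ are morphisms of $\cal F$), and both marks then equal $\abs{S}/\abs{\cal F(R,S)}$, a number depending only on the $\cal F$-conjugacy class of $R$; right $\cal F$-stability is symmetric. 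The real content is idempotence $\omega\circ\omega=\omega$. Here one uses that $\omega$ is bifree --- its transitive summands $[P,\ph]_S^S$ have $\ph$ injective --- so that for graphs $\Delta(P,\ph)$ the Mackey formula for marks under biset composition takes the clean form
\[\Phi_{\Delta(P,\ph)}(\omega\circ\omega)\;=\;\frac1{\abs{S}}\sum_{\psi\in\Hom(P,S)}\Phi_{\Delta(\psi P,\,\ph\circ\psi^{-1})}(\omega)\cdot\Phi_{\Delta(P,\psi)}(\omega).\]
By $\cal F$-generation only the $\psi\in\cal F(P,S)$ contribute, and for each of them both factors equal $\abs{S}/\abs{\cal F(P,S)}$ (using $\psi P\sim_{\cal F}P$), so the sum is $\abs{\cal F(P,S)}\cdot\frac1{\abs{S}}\big(\abs{S}/\abs{\cal F(P,S)}\big)^2=\abs{S}/\abs{\cal F(P,S)}=\Phi_{\Delta(P,\ph)}(\omega)$; since $\omega\circ\omega$ is again $\cal F$-generated ($\cal F$-generation being preserved by $\circ$), its marks at all non-graph subgroups vanish as well, so $\omega\circ\omega$ and $\omega$ have equal marks, hence $\omega\circ\omega=\omega$.

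With the four conditions and idempotence verified, the uniqueness result of Ragnarsson--Stancu gives $\omega=\omega_{\cal F}$, which proves both assertions: the biset decomposition is the definition of $\omega$, and the fixed-point formula was computed in the first step. I expect the main obstacle to be exactly those two bookkeeping computations: the M\"obius inversion identifying the displayed coefficients with the marks $\abs{S}/\abs{\cal F(\cdot,S)}$ (integrality included), and the verification via the composition formula for marks that these marks are idempotent; once the correct form of that composition formula for bifree elements is in hand, the stability and non-degeneracy checks and the appeal to uniqueness are routine.
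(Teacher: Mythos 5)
Your route is genuinely different from the paper's: you posit the element with the displayed coefficients, compute its marks by M\"obius inversion, verify the Linckelmann--Webb conditions and idempotence in the ghost ring via the convolution formula $\Phi_{\Delta(P,\ph)}(Y\circ X)=\frac1{\abs S}\sum_{\psi}\Phi_{\Delta(\psi P,\ph\psi^{-1})}(Y)\,\Phi_{\Delta(P,\psi)}(X)$ for bifree elements, and then invoke Ragnarsson's uniqueness -- much closer in spirit to Boltje--Danz than to this paper, which instead constructs $\omega_{\cal F}$ as $\beta_{\Delta(S)}=\pi([S,id])$ for the product system $\cal F\x\cal F_S$, so that the fixed-point formula holds by construction (Lemma~\ref{lemCharIdemFixedPoints}), idempotence comes from the orbit-count bookkeeping of Remark~\ref{rmkStabilizationCoeff} via Lemma~\ref{lemStableFixedByCharIdem} (no mark-level composition formula is needed), and the biset decomposition is then read off from Proposition~\ref{propPLocalBasisDecomp}. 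Your mark-level verifications (stability via $\Phi_{\Delta(R,\ph\theta)}=\Phi_{\Delta(R,\theta)}$, $\e(\omega)=1$ from the mark at the trivial subgroup, idempotence from the convolution formula, vanishing off $\cal F$-graphs) are correct as sketched, granting that standard formula.

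The genuine gap is the very first step: membership of your candidate in $A(S,S)_{(p)}$. The asserted mechanism -- that the factor $\abs S/\Phi_{\Delta(P,\ph)}([P,\ph]_S^S)$ is ``cancelled by the $p$-divisibility of the M\"obius sum'' -- is not proved, and it is not routine bookkeeping: the inner sum itself carries $p$-denominators, since by Lemma~\ref{lemNumberOfConjugates} the $p$-part of $\abs{\cal F(Q,S)}$ is $\abs S/\abs{C_S Q}$ for $Q$ fully normalized (and that lemma is exactly where saturation enters, via the Sylow axiom and the citation of \cite{BCGLO2}), while the prefactor can itself be a negative power of $p$ (for $P=1$ it is $1/\abs S$; your observation that the sum then vanishes is one instance, not an argument). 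Note that for an arbitrary, non-saturated fusion system the same rational element exists, has the same marks, and is idempotent by your computation, so integrality is precisely where the theorem's content lies; the paper never verifies divisibility of these M\"obius sums directly, but obtains integrality from the inductive stabilization of Lemma~\ref{lemInduceFromASptoAFp}, which stays inside $\Z_{(p)}$ using Lemmas~\ref{lemNumberOfConjugates} and~\ref{lemWeightedMeanOfCoordinates} and the exact sequence of Proposition~\ref{propYoshidaGroup}. Without integrality your closing step also fails formally: Ragnarsson's uniqueness concerns idempotents in $A(S,S)_{(p)}$ (or its $p$-completion) and cannot be applied to an element known only to lie in $A(S,S)\otimes\Q$. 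To repair the argument you must either prove that the displayed coefficients lie in $\Z_{(p)}$ (essentially redoing the work of Lemmas~\ref{lemNumberOfConjugates}--\ref{lemInduceFromASptoAFp}), or restructure as Boltje--Danz do: show that the marks of the actual $\omega_{\cal F}\in A(S,S)_{(p)}$ are forced by the characteristic properties, and only afterwards recover the coefficients by M\"obius inversion, integrality then being automatic.
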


We reach these formulas by showing that the characteristic idempotent $\omega_{\cal F}$ coincides with the element $\beta_{\Delta(S,id)}$ that we get by
the transfer map of Theorem \ref{thmStabilizationHomIntro} to the biset $(S\x S)/\Delta(S,id)$ with respect to the product fusion system $\cal F\x\cal F$ on $S\x S$.
A closer look at how Theorem \ref{thmStabilizationHomIntro} is applied to construct $\beta_{\Delta(S,id)}=\omega_{\cal F}$ gives us a precise description of what happens when other elements are multiplied by $\omega_{\cal F}$:
\begin{mainthmIntro}\label{thmCharIdemMultiplicationIntro}
Let $\cal F_1$ and $\cal F_2$ be saturated fusion systems on finite $p$-groups $S_1$ and $S_2$ respectively, and let $\omega_1\in A(S_1,S_1)_{(p)}$ and $\omega_2\in A(S_2,S_2)_{(p)}$ be the characteristic idempotents.

For every element of the Burnside module $X\in A(S_1,S_2)_{(p)}$, the product $\omega_2\circ X\circ \omega_1$ is right $\cal F_1$-stable and left $\cal F_2$-stable, and satisfies
\[\Phi_{D}(\omega_2\circ X \circ \omega_1) = \frac{1}{\abs{[D]_{\cal F_2\x \cal F_1}}} \sum_{D'\in [D]_{\cal F_2\x \cal F_1}} \Phi_{D'}(X),\]
 for all subgroups $D\leq S_2\x S_1$, where $[D]_{\cal F_2\x \cal F_1}$ is the isomorphism class of $D$ in the product fusion system $\cal F_2\x \cal F_1$ on $S_2\x S_1$.
\end{mainthmIntro}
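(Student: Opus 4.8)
The plan is to reduce the two-sided product $\omega_2\circ X\circ\omega_1$ to a single application of the transfer map $\pi$ of Theorem~\ref{thmStabilizationHomIntro}, for the product fusion system $\cal F_2\x\cal F_1$ on $S_2\x S_1$. Recall the standard identification of abelian groups $A(S_1,S_2)\cong A(S_2\x S_1)$ under which an $(S_1,S_2)$-biset $X$ corresponds to the $(S_2\x S_1)$-set $\hat X$ with $(s_2,s_1)\cdot x=s_2\,x\,s_1^{-1}$; this identification takes the fixed-point map $\Phi_D$ on $A(S_1,S_2)$ to the mark homomorphism $\Phi_D$ on $A(S_2\x S_1)$ for every $D\leq S_2\x S_1$, essentially by definition. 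The first and main step is an interchange-type identity: under this identification (and the analogous ones for $S_1$ and $S_2$), the operation $X\mapsto\omega_2\circ X\circ\omega_1$ on $A(S_1,S_2)_{(p)}$ coincides with multiplication in the double Burnside ring of $S_2\x S_1$ by the external (Cartesian) product $\omega_2\boxtimes\omega_1$. Concretely, both composites are computed from the set $\omega_2\x X\x\omega_1$ modulo the relations $(ws_2,x,g)\sim(w,s_2x,g)$ and $(w,xs_1,g)\sim(w,x,s_1g)$; the only delicate point is matching the left/right biset conventions on the $\omega_1$-factor, which is unproblematic because the characteristic idempotents are symmetric, $\omega_i^\op=\omega_i$ (the transpose of an $\cal F_i$-characteristic idempotent is again $\cal F_i$-characteristic and idempotent, hence equals $\omega_i$ by the uniqueness of Ragnarsson--Stancu).

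The second step is to identify the external product $\omega_2\boxtimes\omega_1$ with the characteristic idempotent $\omega_{\cal F_2\x\cal F_1}$ of the product fusion system: it is idempotent, $(\cal F_2\x\cal F_1)$-generated, both right and left $(\cal F_2\x\cal F_1)$-stable, and satisfies the remaining technical condition of Linckelmann--Webb, all coordinatewise from the corresponding properties of $\omega_1$ and $\omega_2$; so uniqueness of the characteristic idempotent forces $\omega_2\boxtimes\omega_1=\omega_{\cal F_2\x\cal F_1}$. Now Corollary~\ref{corCharOnesidedStabilization}, applied to $\cal F_2\x\cal F_1$, says that multiplication by $\omega_{\cal F_2\x\cal F_1}$ is exactly the transfer map $\pi$ of Theorem~\ref{thmStabilizationHomIntro}. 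Combining this with the interchange identity, $\omega_2\circ X\circ\omega_1$ corresponds to $\pi(\hat X)\in A(\cal F_2\x\cal F_1)_{(p)}$ for every $X\in A(S_1,S_2)_{(p)}$, and Theorem~\ref{thmStabilizationHomIntro} (together with $\Phi_D(\hat X)=\Phi_D(X)$) yields
\[\Phi_{D}(\omega_2\circ X\circ\omega_1)=\frac{1}{\abs{[D]_{\cal F_2\x\cal F_1}}}\sum_{D'\in[D]_{\cal F_2\x\cal F_1}}\Phi_{D'}(X)\qquad\text{for all }D\leq S_2\x S_1.\]

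For the stability assertions I would argue directly. For $Q\leq S_2$ and $\psi\in\cal F_2(Q,S_2)$, associativity of $\circ$ gives $[\psi Q,\psi^{-1}]_{S_2}^Q\circ(\omega_2\circ X\circ\omega_1)=([\psi Q,\psi^{-1}]_{S_2}^Q\circ\omega_2)\circ X\circ\omega_1$, which by the left $\cal F_2$-stability of $\omega_2$ equals $([Q,id]_{S_2}^Q\circ\omega_2)\circ X\circ\omega_1=[Q,id]_{S_2}^Q\circ(\omega_2\circ X\circ\omega_1)$; hence $\omega_2\circ X\circ\omega_1$ is left $\cal F_2$-stable, and the right $\cal F_1$-stability is symmetric, using the right $\cal F_1$-stability of $\omega_1$. (One may also read these off from $\pi(\hat X)$ being $(\cal F_2\x\cal F_1)$-stable as an element of $A(S_2\x S_1)_{(p)}$.)

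I expect the interchange identity of the first step to be the only genuine obstacle: keeping the left/right biset conventions straight while passing between $A(S_1,S_2)$, the Burnside ring $A(S_2\x S_1)$, and the double Burnside ring of $S_2\x S_1$, and verifying that the two composite operations literally coincide. Everything after that is an assembly of Theorem~\ref{thmStabilizationHomIntro}, Corollary~\ref{corCharOnesidedStabilization}, and the uniqueness of the characteristic idempotent; it is in fact essentially the computation that produces $\beta_{\Delta(S,id)}=\omega_{\cal F}$ in the background of Theorem~\ref{thmCharIdemStructureIntro}, now run with two possibly different fusion systems on the two factors.
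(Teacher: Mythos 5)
Your reduction is appealing, and several of its ingredients are sound (the interchange identity with the convention issue resolved by symmetry of $\omega_1$, the identification $\omega_2\boxtimes\omega_1=\omega_{\cal F_2\x\cal F_1}$ via the Linckelmann--Webb properties and uniqueness, and the associativity argument for left/right stability). But there is a genuine circularity at the decisive step: you invoke Corollary \ref{corCharOnesidedStabilization} -- that multiplication by the characteristic idempotent of $\cal F_2\x\cal F_1$ on $A(1,S_2\x S_1)_{(p)}\cong A(S_2\x S_1)_{(p)}$ \emph{is} the transfer map $\pi$ -- to finish. In the paper that corollary is itself deduced from Theorem \ref{thmCharIdemMultiplication}, the statement you are proving. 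Nothing established before this theorem computes $\omega_{\cal G}\circ Y$ for a general, non-stable $Y$: the only prior statement of this kind is Lemma \ref{lemStableFixedByCharIdem}, which says that already-stable elements are fixed by the idempotent. So the single claim your argument outsources is exactly the theorem's content in the one-sided case; as written, your proof establishes the two-sided statement only modulo that one-sided statement.

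The gap is fixable, but filling it requires the same work the paper does, so your route does not bypass it. Concretely: expand $\pi(X)-X$ (with $X$ viewed in $A(S_2\x S_1)_{(p)}$) in the transitive basis; by Remark \ref{rmkStabilizationCoeff} the sums of its coefficients over each $(\cal F_2\x\cal F_1)$-conjugacy class of stabilizers vanish; by Lemma \ref{propStableBasis} the element $\omega_2\circ[(S_2\x S_1)/D]\circ\omega_1$ depends only on the $(\cal F_2\x\cal F_1)$-class of $D$ (a short manipulation using right stability of $\omega_2$ and left stability of $\omega_1$); hence $\omega_2\circ(\pi(X)-X)\circ\omega_1=0$, and finally $\omega_2\circ\pi(X)\circ\omega_1=\pi(X)$ by Lemma \ref{lemStableFixedByCharIdem}, giving $\omega_2\circ X\circ\omega_1=\pi(X)$ and with it the fixed-point formula from Theorem \ref{thmStabilizationHom}. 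This is precisely the paper's proof, which works directly with the two idempotents and never needs $\omega_2\boxtimes\omega_1$ or the interchange identity. If you do want to keep your packaging, you must first prove the one-sided corollary for $\cal F_2\x\cal F_1$ by this kind of argument; also note that checking full $(\cal F_2\x\cal F_1)$-stability of $\omega_2\boxtimes\omega_1$ ``coordinatewise'' is not purely formal for subgroups $D\leq S_2\x S_1$ that are not products -- you need to factor a morphism of the product fusion system on $D$ through $D_2\x D_1$, as in the proof of Lemma \ref{lemStableBiset}.
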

Let $A(\cal F_1,\cal F_2)_{(p)}$ denote the right $\cal F_1$-stable and left $\cal F_2$-stable elements of $A(S_1,S_2)_{(p)}$. Then the characteristic idempotents $\omega_1$ and $\omega_2$ act trivially on $A(\cal F_1,\cal F_2)_{(p)}$, and Theorem \ref{thmCharIdemMultiplicationIntro} gives a transfer homomorphism of modules over the double Burnside rings $A(\cal F_1,\cal F_1)_{(p)}$ and $A(\cal F_2,\cal F_2)_{(p)}$ as described in Proposition \ref{propDoubleStabilizationHom}.

For a saturated fusion system $\cal F$ on $S$, the double Burnside ring $A(\cal F,\cal F)_{(p)}$ is the subring of $A(S,S)_{(p)}$ consisting of all the elements that are both left and right $\cal F$-stable.
An even smaller subring is the collection of all the elements that are $\cal F$-generated as well as $\cal F$-stable. We denote this subring $A^{\text{char}}(\cal F)_{(p)}$ since a generic $\cal F$-generated, $\cal F$-stable element is actually $\cal F$-characteristic. Hence we have a sequence of inclusions of subrings
\[A^{\text{char}}(\cal F)_{(p)} \subseteq A(\cal F,\cal F)_{(p)}\subseteq A(S,S)_{(p)}.\]
The last inclusion is not unital since $\omega_{\cal F}$ is the multiplicative identity of the first two rings, and $[S,id]_S^S$ is the identity of $A(S,S)_{(p)}$.
According to Proposition \ref{propCharElemBasis}, $A^{\text{char}}(\cal F)_{(p)}$ has a $\Z_{(p)}$-basis consisting of elements $\beta_{\Delta(P,id)}$, which only depend on $P\leq S$ up to $\cal F$-con\-ju\-ga\-tion, and each element of $A^{\text{char}}(\cal F)_{(p)}$, written
\[X= \sum_{\substack{P\leq S\\ \text{up to $\cal F$-conj.}}} c_{\Delta(P,id)} \beta_{\Delta(P,id)},\]
is $\cal F$-characteristic if and only if $c_{\Delta(S,id)}$ is invertible in $\Z_{(p)}$.

For every $(S,S)$-biset $X$, we can quotient out the right $S$-action in order to get $X/S$ as a left $S$-set. Quotienting out the right $S$-action preserves disjoint union and extends to a collapse map $q\colon A(S,S)_{(p)}\to A(S)_{(p)}$, and by restriction to subrings we get maps
\begin{equation*}
  \begin{tikzpicture}
    \matrix[matrix of math nodes] (M) {
        A^{\text{char}}(\cal F)_{(p)} &[1cm] A(\cal F,\cal F)_{(p)} &[1cm] A(S,S)_{(p)} \\[1cm]
            & A(\cal F)_{(p)} & A(S)_{(p)} \\
    };
    \path[arrow]
        (M-1-1) -- node{$\subseteq$} (M-1-2) -- node{$\subseteq$} (M-1-3)
        (M-2-2) -- node{$\subseteq$} (M-2-3)
        (M-1-1) edge[->] (M-2-2)
        (M-1-2) edge[->] (M-2-2)
        (M-1-3) edge[->] (M-2-3)
    ;
  \end{tikzpicture}
\end{equation*}
where $\cal F$-stable bisets are collapsed to $\cal F$-stable sets. In general the collapse map does not respect the multiplication of the double Burnside ring, but combining the techniques of Theorems \ref{thmStabilizationHomIntro} and \ref{thmCharIdemMultiplicationIntro} we show that on $A^{\text{char}}(\cal F)_{(p)}$ the collapse map is not only a ring homomorphism but actually an isomorphism of rings!
\begin{mainthmIntro}\label{thmSingleEmbedsInDoubleIntro}
Let $\cal F$ be a saturated fusion system on a finite $p$-group $S$.

Then the collapse map $q\colon A^{\text{char}}(\cal F)_{(p)} \to A(\cal F)_{(p)}$, which quotients out the right $S$-action, is an isomorphism of rings, and it sends the basis element $\beta_{\Delta(P,id)}$ of $A^{\text{char}}(\cal F)_{(p)}$ to the basis element $\beta_{P}$ of $A(\cal F)_{(p)}$.
\end{mainthmIntro}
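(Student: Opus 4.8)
The plan is to prove the statement in two halves: that $q$ restricted to $A^{\text{char}}(\cal F)_{(p)}$ (a) carries the $\Z_{(p)}$-basis $\{\beta_{\Delta(P,id)}\}$ of Proposition~\ref{propCharElemBasis} bijectively onto the $\Z_{(p)}$-basis $\{\beta_{P}\}$ of Proposition~\ref{thmPLocalBurnsideBasis}, sending $\beta_{\Delta(P,id)}$ to $\beta_{P}$; and (b) is multiplicative. Since both bases are indexed by the set of $\cal F$-conjugacy classes of subgroups of $S$ and $q$ is $\Z_{(p)}$-linear, (a) alone makes $q$ a $\Z_{(p)}$-module isomorphism; together with (b) and the fact that $q$ sends the identity $\omega_{\cal F}$ of the source to the identity $\beta_{S}$ of the target, it is a ring isomorphism.

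Everything hinges on the single identity
\[\beta_{\Delta(P,id)}\;=\;\omega_{\cal F}\circ[P,id]_S^S\circ\omega_{\cal F}\qquad\text{in }A(S,S)_{(p)}.\]
I would prove this via the mark homomorphism. By construction $\beta_{\Delta(P,id)}$ is obtained by applying the transfer map of Theorem~\ref{thmStabilizationHomIntro} for the product fusion system $\cal F\x\cal F$ on $S\x S$ to the $(S\x S)$-set $(S\x S)/\Delta(P,id)$ --- equivalently, to the biset $[P,id]_S^S$ --- so that
\[\Phi_D\bigl(\beta_{\Delta(P,id)}\bigr)=\frac1{\abs{[D]_{\cal F\x\cal F}}}\sum_{D'\in[D]_{\cal F\x\cal F}}\Phi_{D'}\bigl([P,id]_S^S\bigr)\]
for every $D\leq S\x S$. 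But this is exactly the value of $\Phi_D\bigl(\omega_{\cal F}\circ[P,id]_S^S\circ\omega_{\cal F}\bigr)$ supplied by Theorem~\ref{thmCharIdemMultiplicationIntro} with $\cal F_1=\cal F_2=\cal F$ and $X=[P,id]_S^S$. Since the marks $\{\Phi_D\}$ jointly separate elements of $A(S,S)_{(p)}$, the two elements coincide.

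Next I assemble some elementary biset bookkeeping. Write $Z_{*}\colon A(S)_{(p)}\to A(S)_{(p)}$ for the map on Burnside rings induced by a biset $Z\in A(S,S)_{(p)}$. Then composition of bisets corresponds to composition of induced maps, $(Y\circ X)_{*}=Y_{*}\circ X_{*}$, and the collapse map is recovered as $q(Z)=Z_{*}(S/S)$, since quotienting out the right $S$-action is the same as $\times_S$ with the one-point $S$-set. Moreover $(\omega_{\cal F})_{*}=\pi$, the transfer map of Theorem~\ref{thmStabilizationHomIntro} for $\cal F$, by Corollary~\ref{corCharOnesidedStabilization}, and $[P,id]_S^S$ induces $\mathrm{Ind}_P^S\mathrm{Res}^S_P$, so $([P,id]_S^S)_{*}(M)=(S/P)\cdot M$ for all $M\in A(S)_{(p)}$. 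Hence, by the displayed identity, $(\beta_{\Delta(P,id)})_{*}=\pi\circ([P,id]_S^S)_{*}\circ\pi$. Evaluating at $S/S$ and using $\pi(S/S)=S/S$ gives (a): $q(\omega_{\cal F})=\pi(S/S)=S/S=\beta_{S}$ and $q(\beta_{\Delta(P,id)})=\pi\bigl((S/P)\cdot(S/S)\bigr)=\pi(S/P)=\beta_{P}$. For (b), using $(Y\circ X)_{*}=Y_{*}\circ X_{*}$, then part (a), then the factorization of $(\beta_{\Delta(P,id)})_{*}$ together with $\pi(\beta_{Q})=\beta_{Q}$ and the fact that $\pi$ is a homomorphism of $A(\cal F)_{(p)}$-modules (Theorem~\ref{thmStabilizationHomIntro}),
\[q\bigl(\beta_{\Delta(P,id)}\circ\beta_{\Delta(Q,id)}\bigr)=(\beta_{\Delta(P,id)})_{*}\bigl(q(\beta_{\Delta(Q,id)})\bigr)=\pi\bigl(([P,id]_S^S)_{*}(\beta_{Q})\bigr)=\pi\bigl((S/P)\cdot\beta_{Q}\bigr)=\pi(S/P)\cdot\beta_{Q}=\beta_{P}\cdot\beta_{Q}.\]

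The conceptual content sits entirely in the displayed identity, where Theorems~\ref{thmStabilizationHomIntro} and~\ref{thmCharIdemMultiplicationIntro} are made to agree; after that the proof is formal and requires no new combinatorics. The step I expect to take the most care is the elementary biset bookkeeping: under the paper's conventions for $\circ$ and for the collapse map one must track which copy of $S$ is contracted in order to justify $q(Z)=Z_{*}(S/S)$, and one must confirm that $[P,id]_S^S$ induces $\mathrm{Ind}_P^S\mathrm{Res}^S_P$ rather than the opposite composite, so that $([P,id]_S^S)_{*}(M)=(S/P)\cdot M$ holds on the nose.
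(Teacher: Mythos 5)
Your proposal is correct. The first half --- $q(\beta_{\Delta(P,id)})=\beta_P$ --- is essentially the paper's own argument: the identity $\beta_{\Delta(P,id)}=\omega_{\cal F}\circ[P,id]_S^S\circ\omega_{\cal F}$ is exactly Proposition \ref{propCharElemBasis} (equivalently Corollary \ref{corCharDoubleStabilization}), and your chain $q(Z)=Z_*(S/S)$, $(\omega_{\cal F})_*=\pi$, $\pi(S/S)=S/S$ is the paper's use of $\omega_{\cal F}\circ[pt]_1^S=[pt]_1^S$ together with Corollary \ref{corCharOnesidedStabilization} in slightly different clothing. Where you genuinely diverge is the proof that $q$ is multiplicative. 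The paper gets this by constructing the inverse: it takes the ring embedding $\iota^S\colon A(S)_{(p)}\to A(S,S)_{(p)}$, $[S/P]\mapsto[P,id]_S^S$, of Example \ref{exGroupSingleEmbedsInGroupDouble}, proves via a fixed-point computation involving centralizer orders (Lemmas \ref{lemPhiOfIota} and \ref{lemCharIdemTimesBeta}) that $\omega_{\cal F}\circ\iota^S(\beta_P)\circ\omega_{\cal F}=\iota^S(\beta_P)\circ\omega_{\cal F}=\beta_{\Delta(P,id)}$, and concludes that $\iota^{\cal F}:=\omega_{\cal F}\circ\iota^S(-)\circ\omega_{\cal F}$ is a multiplicative inverse of $q$. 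You instead verify $q(X\circ Y)=q(X)\cdot q(Y)$ directly on the basis, using only formal inputs already available: associativity of $\circ$ (so $q(Y\circ X)=Y_*(q(X))$), the projection formula $[P,id]_S^S\circ M\cong S\x_P \mathrm{Res}^S_P M\cong (S/P)\x M$, and the $A(\cal F)_{(p)}$-module property of $\pi$ from Theorem \ref{thmStabilizationHom}, giving $\pi\bigl((S/P)\cdot\beta_Q\bigr)=\beta_P\cdot\beta_Q$. The convention issue you flag does check out: under the paper's definitions $[P,id]_S^S\cong S\x_P S$ as $(S,S)$-bisets, so composing with a left $S$-set is indeed $\mathrm{Ind}_P^S\mathrm{Res}_P^S$ and $([P,id]_S^S)_*(M)=[S/P]\cdot M$. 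As for what each route buys: yours is shorter and avoids the $\abs{C_S(Q)}$ bookkeeping of Lemmas \ref{lemPhiOfIota} and \ref{lemCharIdemTimesBeta} entirely, while the paper's route produces, as a byproduct, the explicit ring embedding $\iota^{\cal F}\colon A(\cal F)_{(p)}\to A(\cal F,\cal F)_{(p)}$ generalizing the group-level embedding $A(S)\into A(S,S)$, together with the identities $\omega_{\cal F}\circ\iota^S(X)=\iota^S(X)\circ\omega_{\cal F}$ for $\cal F$-stable $X$, which have independent interest.
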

This generalizes a similar result for groups where the Burnside ring $A(S)$ embeds in the double Burnside ring $A(S,S)$ with the transitive $S$-set $S/P$ corresponding to the transitive biset $[P,id]_S^S$.
As an immediate consequence of Theorem \ref{thmSingleEmbedsInDoubleIntro} we get an alternative proof that the characteristic idempotent $\omega_{\cal F}$ is unique: Corollary \ref{corCharIdemUnique} shows that $\beta_{\Delta(S,id)}=\omega_{\cal F}$ is the only non-zero idempotent of $A^{\text{char}}(\cal F)_{(p)}$ by proving that $0$ and $S/S$ are the only idempotents of $A(\cal F)_{(p)}$.

The final section of this paper applies Theorem \ref{thmCharIdemMultiplicationIntro} to disprove a conjecture by Park-Ragnarsson-Stancu, \cite{ParkRagnarssonStancu}, on the composition product of fusion systems.
Let $\cal F$ be a saturated fusion system on a $p$-group $S$, and let $\cal H,\cal K$ be saturated fusion subsystems on subgroups $R,T\leq S$ respectively. In the terminology of Park-Ragnarsson-Stancu, we then say that $\cal F$ is the composition product of $\cal H$ and $\cal K$, written $\cal F=\cal H\cal K$, if $S=RT$ and for all subgroups $P\leq T$ it holds that every morphism $\ph\in \cal F(P,R)$ can be written as a composition $\ph=\psi\rho$ where $\psi$ is a morphism of $\cal H$ and $\rho$ is a morphism of $\cal K$.

Park-Ragnarsson-Stancu conjectured that $\cal F=\cal H\cal K$ is equivalent to the following equation of characteristic idempotents:
\begin{equation}\label{eqPRSIntro}
[R,id]_S^R \circ \omega_{\cal F} \circ [T,id]_T^S = \omega_{\cal H}\circ [R\cap T,id]_T^R\circ \omega_{\cal K}
\end{equation}
A special case of the conjecture was proven in \cite{ParkRagnarssonStancu}, in the case where $R=S$ and $\cal K$ is weakly normal in $\cal F$, and the general conjecture was inspired by the group case, where $H,K\leq G$ satisfy $G=HK$ if and only if there is an isomorphism of $(K,H)$-bisets $G\cong H\x_{H\cap K} K$. By direct calculation via Theorem \ref{thmCharIdemMultiplicationIntro} we can now characterize all cases where \eqref{eqPRSIntro} holds:
\begin{mainthmIntro}\label{thmCompositionProductsIntro}
Let $\cal F$ be a saturated fusion system on a $p$-group $S$, and suppose that $\cal H,\cal K$ are saturated fusion subsystems of $\cal F$ on subgroups $R,T\leq S$ respectively.

Then the characteristic idempotents satisfy
\begin{equation}\label{eqCompositionIdemsIntro}
[R,id]_S^R \circ \omega_{\cal F} \circ [T,id]_T^S = \omega_{\cal H}\circ [R\cap T,id]_T^R\circ \omega_{\cal K}
\end{equation}
if and only if $\cal F=\cal H\cal K$ and for all $Q\leq R\cap T$ we have
\begin{equation}\label{eqCompositionHomsIntro}
\abs{\cal F(Q,S)} = \frac{\abs{\cal H(Q,R)}\cdot \abs{\cal K(Q,T)}}{\abs{\Hom_{\cal H\cap \cal K}(Q,R\cap T)}}.
\end{equation}
\end{mainthmIntro}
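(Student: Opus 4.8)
The plan is to compare the fixed points $\Phi_D$ of the two sides of \eqref{eqCompositionIdemsIntro} for all subgroups $D\leq R\x T$ --- both sides lie in $A(T,R)_{(p)}$, which we identify with $A(R\x T)_{(p)}$, and an element of the latter is determined by its fixed points --- and then read off exactly when these agree. For the side built from $\omega_{\cal F}$, the key observation is that composing $\omega_{\cal F}$ with the inclusion biset $[R,id]_S^R$ on the left and $[T,id]_T^S$ on the right simply restricts the $(S\x S)$-action of $\omega_{\cal F}$ along $R\x T\into S\x S$; hence $\Phi_D([R,id]_S^R\circ\omega_{\cal F}\circ[T,id]_T^S)=\Phi_D(\omega_{\cal F})$ for every $D\leq R\x T$, and Theorem \ref{thmCharIdemStructureIntro} evaluates this as $\abs S/\abs{\cal F(P,S)}$ when $D=\Delta(P,\ph)$ is the graph of a morphism $\ph$ of $\cal F$ --- the inclusion $D\leq R\x T$ forcing $P\leq T$ and $\ph(P)\leq R$, so that $\ph\in\cal F(P,R)$ --- and as $0$ otherwise. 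For the other side, Theorem \ref{thmCharIdemMultiplicationIntro} applied with $\cal F_1=\cal K$, $\cal F_2=\cal H$ and $X=[R\cap T,id]_T^R$ gives
\[\Phi_D(\omega_{\cal H}\circ[R\cap T,id]_T^R\circ\omega_{\cal K})=\frac{1}{\abs{[D]_{\cal H\x\cal K}}}\sum_{D'\in[D]_{\cal H\x\cal K}}\Phi_{D'}([R\cap T,id]_T^R).\]

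Writing $\Delta_0:=\Delta(R\cap T,\iota)$ for the diagonal copy of $R\cap T$ inside $R\x T$, the number $\Phi_{D'}([R\cap T,id]_T^R)=\Phi_{D'}\bigl((R\x T)/\Delta_0\bigr)$ is a normalized count of the elements of $R\x T$ conjugating $D'$ into $\Delta_0$, so it vanishes unless $D'$ is $(R\x T)$-subconjugate to $\Delta_0$. Since every $(R\x T)$-conjugation is induced by a morphism of $\cal H\x\cal K$ (inner automorphisms of $R$ lie in $\cal H$, those of $T$ in $\cal K$), it follows that $\omega_{\cal H}\circ[R\cap T,id]_T^R\circ\omega_{\cal K}$ is supported exactly on the $\cal H\x\cal K$-conjugacy classes of the ``diagonal'' graphs $\Delta(Q,\iota)$ with $Q\leq R\cap T$. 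Moreover --- using the definition of the composition product, together with the fact that isomorphisms in a fusion system are invertible within it --- a graph $\Delta(P,\ph)\leq R\x T$ with $\ph\in\cal F(P,R)$ is $\cal H\x\cal K$-conjugate to such a diagonal graph precisely when $\ph$ factors as a morphism of $\cal H$ composed with a morphism of $\cal K$. Finally, the fixed-point functions of both sides of \eqref{eqCompositionIdemsIntro} are constant on $\cal H\x\cal K$-conjugacy classes of subgroups of $R\x T$: for the right-hand side this is immediate from the averaging formula above, and for the left-hand side it holds because $\Phi_D(\omega_{\cal F})$ depends only on the $\cal F\x\cal F$-conjugacy class of $D$ by Theorem \ref{thmCharIdemStructureIntro}, while $\cal H\x\cal K$-conjugacy refines $\cal F\x\cal F$-conjugacy.

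These ingredients give the equivalence. Evaluating at the trivial subgroup (i.e.\ comparing total virtual cardinalities) yields $\abs S$ on the left and $\abs{(R\x T)/\Delta_0}=\abs{RT}$ on the right, so \eqref{eqCompositionIdemsIntro} forces $S=RT$. Comparing supports, the left-hand side is nonzero on the graph of every morphism $\ph\in\cal F(P,R)$ with $P\leq T$ while the right-hand side is supported on diagonal graphs, so \eqref{eqCompositionIdemsIntro} forces every such $\ph$ to factor as a morphism of $\cal H$ after a morphism of $\cal K$; together with $S=RT$ this is exactly the statement $\cal F=\cal H\cal K$. It then remains to compare the two sides on the diagonal graphs $D=\Delta(Q,\iota)$ with $Q\leq R\cap T$, where the left-hand value is $\abs S/\abs{\cal F(Q,S)}$. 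For the right-hand value I would first find that the $\cal H\x\cal K$-orbit of $\Delta(Q,\iota)$ has size $\abs{\cal H(Q,R)}\cdot\abs{\cal K(Q,T)}/\abs{\Aut_{\cal H\cap\cal K}(Q)}$, via the surjection $(\alpha,\beta)\mapsto\Delta(\beta(Q),\alpha\beta^{-1})$ from $\cal H(Q,R)\x\cal K(Q,T)$ onto the orbit whose fibres are cosets of $\Aut_{\cal H\cap\cal K}(Q)$, and then establish, by a double count over $(R\x T)/\Delta_0$ that uses $S=RT$,
\[\sum_{D'\in[\Delta(Q,\iota)]_{\cal H\x\cal K}}\Phi_{D'}([R\cap T,id]_T^R)=\abs S\cdot\bigl|\{Q_0\leq R\cap T\mid Q_0\cong_{\cal H\cap\cal K}Q\}\bigr|.\]
Dividing, the right-hand value equals $\abs S\cdot\abs{\Hom_{\cal H\cap\cal K}(Q,R\cap T)}/\bigl(\abs{\cal H(Q,R)}\cdot\abs{\cal K(Q,T)}\bigr)$, and equating it with $\abs S/\abs{\cal F(Q,S)}$ is exactly \eqref{eqCompositionHomsIntro}. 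Conversely, if $\cal F=\cal H\cal K$ and \eqref{eqCompositionHomsIntro} both hold, then both sides vanish off graph subgroups, agree (being $0$) on graphs $\Delta(P,\ph)$ with $\ph\notin\cal F(P,R)$, and on graphs of morphisms of $\cal F$ reduce --- using the $\cal H\x\cal K$-invariance of the fixed-point functions and the composition-product factorization --- to the diagonal case just checked; so \eqref{eqCompositionIdemsIntro} holds.

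The step I expect to be the main obstacle is the diagonal computation: controlling $\sum_{D'}\Phi_{D'}([R\cap T,id]_T^R)$ over the entire $\cal H\x\cal K$-orbit of $\Delta(Q,\iota)$ and recognizing the answer in terms of $\abs{\Hom_{\cal H\cap\cal K}(Q,R\cap T)}$. This is where the hypothesis $S=RT$ is genuinely used: the double count must identify, for each point $x$ of $(R\x T)/\Delta_0$, the subgroups in the orbit that stabilize $x$ with the subgroups $Q_0\leq R\cap T$ that are $\cal H\cap\cal K$-isomorphic to $Q$, and one has to be careful with the bookkeeping relating $\Aut_{\cal H\cap\cal K}(Q)$, the count of such $Q_0$, and $\Hom_{\cal H\cap\cal K}(Q,R\cap T)$, as well as with the precise definition of the intersection fusion system $\cal H\cap\cal K$ and with the equivalence between the factorization in the definition of $\cal F=\cal H\cal K$ and being $\cal H\x\cal K$-conjugate to a diagonal.
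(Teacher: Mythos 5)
Your proposal is correct and follows essentially the same route as the paper: both sides are compared through their fixed points, the left side being the restriction of $\omega_{\cal F}$ along $R\x T\leq S\x S$ evaluated by Theorem \ref{thmCharIdemStructureIntro}, the right side by Theorem \ref{thmCharIdemMultiplicationIntro}, with $S=RT$ read off at the trivial subgroup, the factorization condition of $\cal F=\cal H\cal K$ from comparing supports, and \eqref{eqCompositionHomsIntro} from the diagonal graphs $\Delta(Q,id)$. The step you flag as the main obstacle is exactly what the paper short-circuits: by Corollary \ref{corCharDoubleStabilization} the right-hand side is the basis element $\beta_{\Delta(R\cap T,id)}$ of $A(\cal H\x\cal K)_{(p)}$, so its fixed points are given in closed form by Definition \ref{dfnBetas} (i.e.\ Lemma \ref{lemWeightedMeanOfCoordinates}) as $\abs{RT}\cdot\abs{\Hom_{\cal H\cap\cal K}(Q,R\cap T)}/(\abs{\cal H(Q,R)}\cdot\abs{\cal K(Q,T)})$, which makes your orbit-size and double-count computation (and the use of $S=RT$ at that stage) unnecessary.
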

In particular \eqref{eqCompositionIdemsIntro} always implies $\cal F=\cal H\cal K$, but the converse is not true in general.
In Example \ref{exA6product}, the alternating group $A_6$ gives rise to a composition product $\cal F=\cal H\cal K$ where \eqref{eqCompositionHomsIntro} fails -- hence we get a counter-example to the general conjecture of Park-Ragnarsson-Stancu.

At the same time, Proposition \ref{propNormalCompositionProduct} proves a special case of the conjecture where $\cal K$ is weakly normal in $\cal F$. This is a generalization of the case proved by Park-Ragnarsson-Stancu, as Proposition \ref{propNormalCompositionProduct} does not require that $R=S$.
Finally we suggest a revised definition of composition products (see Definition \ref{dfnRevisedCompositionProd}) with respect to which the conjecture holds in general.

\subsection*{Earlier results on Burnside rings for fusion systems}
An alternative candidate for the Burnside ring of a fusion system, was given by Diaz-Libman in \cite{DiazLibman}. The advantage of the Diaz-Libman definition of the Burnside ring is that it is constructed in close relation to a nice orbit category for the centric subgroups in a saturated fusion system $\cal F$. However, by construction the Burnside ring of Diaz-Libman doesn't see the non-centric subgroup at all, in contrast to the definition of $A(\cal F)$ used in this paper where we have basis elements corresponding to all the subgroups. In Proposition \ref{propSameBurnsideCent}, we compare the two definitions and show that if we quotient out the non-centric part of $A(\cal F)_{(p)}$ we recover the centric Burnside ring of Diaz-Libman, and we relate the basis elements given by Diaz-Libman to the basis elements $\beta_P$ used in this paper.

Theorem \ref{thmStabilizationHomIntro} and the construction of characteristic idempotents in this paper is strongly inspired by an algorithm by Broto-Levi-Oliver.
Originally, in \cite{BrotoLeviOliver}, Broto-Levi-Oliver gave a procedure for constructing a characteristic biset $\Omega$ from a saturated fusion system $\cal F$, and using such a biset, they then constructed a classifying spectrum for $\cal F$. In \cite{Ragnarsson} Ragnarsson took a characteristic biset as constructed by Broto-Levi-Oliver, and proceeded to refine this biset to get an idempotent. This proof used a Cauchy sequence argument in the\linebreak $p$-com\-ple\-tion $A(S,S)^\wedge_p$ of the double Burnside ring in order to show that a characteristic idempotent exists. A later part of \cite{Ragnarsson} showed that $\omega_{\cal F}$ is unique and that in fact $\omega_{\cal F}$ lies in the $p$-localization $A(S,S)_{(p)}$ as a subring of the $p$-completion.
The new construction of $\omega_{\cal F}$ as the element $\beta_{\Delta(S,id)}$ given in this paper takes the original procedure by Broto-Levi-Oliver and refines it in order to construct $\omega_{\cal F}$ directly as an element of $A(S,S)_{(p)}$ -- without needing to work in the $p$-completion. Furthermore, this refined procedure generalizes to give us the transfer map of Theorem \ref{thmStabilizationHomIntro}.

Finally, we note that the formula for the fixed points of $\omega_{\cal F}$ given in Theorem \ref{thmCharIdemStructureIntro} coincides with the work done independently by Boltje-Danz in \cite{BoltjeDanz}. The calculations by Boltje-Danz are done by working in their ghost ring for the double Burnside ring and applying the steps of Ragnarsson's proof for the uniqueness of $\omega_{\cal F}$. This way they are able to calculate what the fixed points of $\omega_{\cal F}$ have to be, assuming that $\omega_{\cal F}$ exists. In this paper, the fixed points follow as an immediate consequence of the way we construct $\beta_{\Delta(S,id)}=\omega_{\cal F}$.

\subsection*{Outline}
Section \ref{secFusSys} recalls the definition and basic properties of saturated fusion systems, and establishes the related notation used throughout the rest of the paper. Section \ref{secBurnside} gives a similar treatment to the Burnside ring of a finite group as well as the Burnside ring for a saturated fusion system. Section \ref{secStabilization} is the first main section of the paper, where we consider the structure of the $p$-localization $A(\cal F)_{(p)}$ of the Burnside ring for a saturated fusion system $\cal F$ on a finite $p$-group $S$. In particular, we construct a stabilization map that sends every finite $S$-set to an $\cal F$-stable element in a canonical way, and we prove Theorem \ref{thmStabilizationHomIntro}.
The other main section, section \ref{secCharIdem}, is subdivided in three parts. In \ref{secDoubleBurnside} we recall the double Burnside ring of a group. In \ref{secConstructCharIdem} we apply the stabilization map above for the fusion system $\cal F\x \cal F$ in order to prove Theorem \ref{thmCharIdemStructureIntro}. In \ref{secCharIdemAction} we prove Theorem \ref{thmCharIdemMultiplicationIntro} and study the strong relation between the stabilization homomorphism of Theorem \ref{thmStabilizationHomIntro} and multiplying with the characteristic idempotent.
In section \ref{secRingOfChar} we prove Theorem \ref{thmSingleEmbedsInDoubleIntro} relating the $\cal F$-characteristic elements to the Burnside ring of $\cal F$.
Finally, section \ref{secCompositionProducts} concerns the composition product of fusion systems and Theorem \ref{thmCompositionProductsIntro}.

\subsection*{Acknowledgements}
First I would like to thank K\'ari Ragnarsson who suggested that I look at \cite{ParkRagnarssonStancu} and the conjecture therein -- which in turn inspired sections \ref{secCharIdemAction} to \ref{secCompositionProducts}.
I also thank Radu Stancu and Serge Bouc for some good and fruitful discussions during my visit to Amiens, in particular thanks to Serge for pointing out the existence of \cite{Gluck}. Thanks go to Matthew Gelvin for corrections to the paper and for help finding the counterexample of section \ref{secCompositionProducts}. Additional thanks go to the anonymous reviewer for a lot of constructive suggestions and comments.
Last but not least, I thank my advisor Jesper Grodal for his suggestions, feedback and support during my time as his student.

\section{Fusion systems}\label{secFusSys}
The next few pages contain a very short introduction to fusion systems, which were originally introduced by Puig under the name ``full Frobenius systems.'' The aim is to introduce the terminology from the theory of fusion systems that will be used in the paper, and to establish the relevant notation. For a proper introduction to fusion systems see, for instance, Part I of ``Fusion Systems in Algebra and Topology'' by Aschbacher, Kessar and Oliver, \cite{AKO}.

\begin{dfn}
A \emph{fusion system} $\cal F$ on a $p$-group $S$, is a category where the objects are the subgroups of $S$, and for all $P,Q\leq S$ the morphisms must satisfy:
\begin{enumerate}
\item Every morphism $\ph\in \Mor_{\cal F}(P,Q)$ is an injective group homomorphism, and the composition of morphisms in $\cal F$ is just composition of group homomorphisms.
\item $\Hom_{S}(P,Q)\subseteq \Mor_{\cal F}(P,Q)$, where
    \[\Hom_S(P,Q) = \{c_s \mid s\in N_S(P,Q)\}\]
    is the set of group homomorphisms $P\to Q$ induced by $S$-conjugation.
\item For every morphism $\ph\in \Mor_{\cal F}(P,Q)$, the group isomorphisms $\ph\colon P\to \ph P$ and $\ph^{-1}\colon \ph P \to P$ are elements of $\Mor_{\cal F}(P,\ph P)$ and $\Mor_{\cal F}(\ph P, P)$ respectively.
\end{enumerate}
We also write $\Hom_{\cal F}(P,Q)$ or just $\cal F(P,Q)$ for the morphism set $\Mor_{\cal F}(P,Q)$; and the group $\cal F(P,P)$ of automorphisms is denoted by $\Aut_{\cal F}(P)$.
\end{dfn}

The canonical example of a fusion system comes from a finite group $G$ with a given $p$-subgroup $S$.
The fusion system of $G$ on $S$, denoted $\cal F_S(G)$, is the fusion system on $S$ where the morphisms from $P\leq S$ to $Q\leq S$ are the homomorphisms induced by $G$-conjugation:
\[\Hom_{\cal F_S(G)}(P,Q) := \Hom_G(P,Q) = \{c_g \mid g\in N_G(P,Q)\}.\]
%Unless otherwise mentioned, we only consider $\cal F_S(G)$ in the case where $S$ is a Sylow $p$-subgroup of $G$.
A particular case is the fusion system $\cal F_S(S)$ consisting only of the homomorphisms induced by $S$-conjugation.

Let $\cal F$ be an abstract fusion system on $S$. We say that two subgroup $P,Q\leq S$ are\linebreak \emph{$\cal F$-con\-ju\-gate}, written $P\sim_{\cal F} Q$, if they are isomorphic in $\cal F$, i.e., there exists a group isomorphism $\ph\in \cal F(P,Q)$.
$\cal F$-conjugation is an equivalence relation, and the set of $\cal F$-conjugates to $P$ is denoted by $[P]_{\cal F}$. The set of all $\cal F$-conjugacy classes of subgroups in $S$ is denoted by $\ccset{\cal F}$.
Similarly, we write $P\sim_S Q$ if $P$ and $Q$ are $S$-conjugate, the $S$-conjugacy class of $P$ is written $[P]_{S}$ or just $[P]$, and we write $\ccset S$ for the set of $S$-conjugacy classes of subgroups in $S$.
Since all $S$-conjugation maps are in $\cal F$, any $\cal F$-conjugacy class $[P]_{\cal F}$ can be partitioned into disjoint $S$-conjugacy classes of subgroups $Q\in [P]_{\cal F}$.

We say that $Q$ is \emph{$\cal F$-} or \emph{$S$-subconjugate} to $P$ if $Q$ is, respectively, $\cal F$- or $S$-conjugate to a subgroup of $P$, and we denote this by $Q\lesssim_{\cal F} P$ or $Q\lesssim_S P$ respectively.
In the case where $\cal F = \cal F_S(G)$, we have $Q\lesssim_{\cal F} P$ if and only if $Q$ is $G$-conjugate to a subgroup of $P$; in this case the $\cal F$-conjugates of $P$ are just those $G$-conjugates of $P$ that are contained in $S$.

A subgroup $P\leq S$ is said to be \emph{fully $\cal F$-normalized} if $\abs{N_S P} \geq \abs{N_S Q}$ for all $Q\in [P]_{\cal F}$; similarly $P$ is \emph{fully $\cal F$-centralized} if $\abs{C_S P} \geq \abs{C_S Q}$ for all $Q\in [P]_{\cal F}$.

\begin{dfn}
A fusion system $\cal F$ on $S$ is said to be \emph{saturated} if the following properties are satisfied for all $P\leq S$:
\begin{enumerate}
\item If $P$ is fully $\cal F$-normalized, then $P$ is fully $\cal F$-centralized, and $\Aut_S(P)$ is a Sylow $p$-subgroup of $\Aut_{\cal F}(P)$.
\item Every homomorphism $\ph\in \cal  F(P,S)$ with $\ph (P)$ fully $\cal F$-centralized extends to a homomorphism $\ph\in \cal F(N_\ph,S)$, where
    \[N_\ph:= \{x\in N_S(P) \mid \exists y\in S\colon \ph\circ c_x = c_y\circ \ph\}.\]
\end{enumerate}
\end{dfn}
The saturation axioms are a way of emulating the Sylow theorems for finite groups; in particular, whenever $S$ is a Sylow $p$-subgroup of $G$, then the Sylow theorems imply that the induced fusion system $\cal F_S(G)$ is saturated (see e.g. \cite{AKO}*{Theorem 2.3}).

In this paper, we shall rarely use the defining properties of saturated fusion systems directly. We shall instead mainly use the following lifting property, which saturated fusion systems satisfy:
\begin{lem}[{\cite{RobertsShpectorov}}]\label{lemNormalizerMap}
Let $\cal F$ be saturated. If $P\leq S$ is fully normalized, then for each $Q\in [P]_{\cal F}$ there exists a homomorphism $\ph\in \cal F(N_S Q, N_S P)$ with $\ph(Q) = P$.
\end{lem}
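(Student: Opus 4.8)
The plan is to start from any $\cal F$-isomorphism $Q\to P$ and to adjust it until saturation axiom (ii) can be applied on all of $N_S Q$. The subtlety is that axiom (ii), applied to a morphism $\psi$, only produces an extension over the subgroup $N_\psi$, which in general is a proper subgroup of $N_S Q$; the heart of the argument is to replace the chosen isomorphism by one for which $N_\psi$ becomes as large as possible.

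Concretely, I would first pick any $\psi_0\in \cal F(Q,P)$, which exists since $Q\in [P]_{\cal F}$. Then $\psi_0\Aut_S(Q)\psi_0^{-1}$ is a $p$-subgroup of $\Aut_{\cal F}(P)$. Since $P$ is fully normalized, saturation axiom (i) says that $P$ is fully centralized and that $\Aut_S(P)$ is a Sylow $p$-subgroup of $\Aut_{\cal F}(P)$, so there is some $\chi\in\Aut_{\cal F}(P)$ with $\chi\,(\psi_0\Aut_S(Q)\psi_0^{-1})\,\chi^{-1}\leq\Aut_S(P)$. Putting $\psi:=\chi\circ\psi_0\in\cal F(Q,P)$, we obtain an $\cal F$-isomorphism $Q\to P$ with $\psi\,c_x\,\psi^{-1}\in\Aut_S(P)$ for every $x\in N_S Q$. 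For such an $x$ this means $\psi\circ c_x=c_y\circ\psi$ for some $y\in N_S P\subseteq S$, hence $x\in N_\psi$; since $N_\psi\subseteq N_S Q$ always holds, we conclude $N_\psi=N_S Q$. As $\psi(Q)=P$ is fully centralized, saturation axiom (ii) now extends $\psi$ to a morphism $\bar\psi\in\cal F(N_S Q,S)$.

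It remains to check that $\bar\psi$ carries $N_S Q$ into $N_S P$. For $x\in N_S Q$ and $u\in Q$ we have $\bar\psi(x)\,\psi(u)\,\bar\psi(x)^{-1}=\bar\psi(xux^{-1})\in\bar\psi(Q)=\psi(Q)=P$, using that $\bar\psi$ is a homomorphism restricting to $\psi$ on $Q$ and that $xux^{-1}\in Q$; since $\psi(Q)=P$, this shows that $\bar\psi(x)$ normalizes $P$. Restricting the codomain (legitimate by axiom (iii) of the definition of a fusion system, together with the fact that inclusions are $\cal F$-morphisms), we obtain $\ph\in\cal F(N_S Q,N_S P)$ with $\ph|_Q=\psi$, and therefore $\ph(Q)=P$, as required. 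The one genuinely non-formal step is the Frattini-type argument in the second paragraph, where the Sylow property of axiom (i) is used to straighten $\psi_0$ into $\psi$; once $N_\psi=N_S Q$ has been arranged, the rest is routine bookkeeping with the saturation axioms.
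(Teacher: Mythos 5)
Your proof is correct, and it is exactly the standard argument found in the sources the paper cites for this lemma (Roberts--Shpectorov, Lemma 4.5, and \cite{AKO}*{Lemma 2.6(c)}): the paper itself gives no proof but defers to these references, whose proof is precisely your Sylow-straightening of $\psi_0$ to force $N_\psi=N_S Q$, followed by the extension axiom and the corestriction into $N_S P$.
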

For the proof, see Lemma 4.5 of \cite{RobertsShpectorov} or Lemma 2.6(c) of \cite{AKO}.

\section{Burnside rings for groups and fusion systems}\label{secBurnside}
In this section we recall the Burnside ring of a finite group $S$ and how to describe its structure in terms of the homomorphism of marks, which embeds the Burnside ring into a suitable ghost ring.
We also recall the Burnside ring of a saturated fusion system $\cal F$, in the sense of \cite{ReehStableSets}, which has a similar mark homomorphism and ghost ring.

Let $S$ be a finite group, not necessarily a $p$-group. Then the isomorphism classes of finite $S$-sets form a semiring with disjoint union as addition and cartesian product as multiplication. The Burnside ring of $S$, denoted $A(S)$, is then defined as the additive Grothendieck group of this semiring, and $A(S)$ inherits the multiplication as well. Given a finite $S$-set $X$, we let $[X]$ denote the isomorphism class of $X$ as an element of $A(S)$.
The isomorphism classes $[S/P]$ of transitive $S$-sets form an additive basis for $A(S)$, and two transitive sets $S/P$ and $S/Q$ are isomorphic if and only if the subgroups $P$ and $Q$ are conjugate in $S$.

For each element $X\in A(S)$ we define $c_P(X)$, with $P\leq S$, to be the coefficients when we write $X$ as a linear combination of the basis elements $[S/P]$ in $A(S)$, i.e.
\[X= \sum_{[P]\in \ccset S} c_P(X) \cdot [S/P],\]
where $\ccset S$ denotes the set of $S$-conjugacy classes of subgroup in $S$. The resulting maps $c_P\colon A(S) \to \Z$ are group homomorphisms, but they are \emph{not} ring homomorphisms.

To describe the multiplication of $A(S)$, it is enough to know the products of basis elements $[S/P]$ and $[S/Q]$. By taking the cartesian product $(S/P)\x (S/Q)$ and considering how it breaks into orbits, one reaches the following double coset formula for the multiplication in $A(S)$:
\begin{equation}\label{eqSingleBurnsideDoubleCoset}
[S/P]\cdot [S/Q] = \sum_{\bar s \in P\backslash S /Q} [S/(P\cap \lc s Q)],
\end{equation}
where $P\backslash S /Q$ is the set of double cosets $PsQ$ with $s\in S$, and $\lc s Q$ is short for $sQs^{-1}$.

Instead of counting orbits, an alternative way of characterising a finite $S$-set is counting the fixed points for each subgroup $P\leq S$. For every $P\leq S$ and $S$-set $X$, we denote the number of $P$-fixed points by $\Phi_{P}(X) := \abs*{X^P}$. This number only depends on $P$ up to $S$-conjugation.
Since we have
\[\abs*{(X \sqcup Y)^P}= \abs*{X^P}+\abs* {Y^P} \quad\text{and}\quad\abs*{(X \x Y)^P}= \abs*{X^P}\cdot\abs*{Y^P}\]
for all $S$-sets $X$ and $Y$, the \emph{fixed point map} $\Phi_{P}$ for $S$-sets extends to a ring homomorphism $\Phi_{P}\colon A(S) \to \Z$.
On the basis elements $[S/P]$, the number of fixed points is given by
\begin{equation}\label{eqPhiOnBasis}
\Phi_{Q}([S/P]) = \abs*{(S/P)^Q}= \frac{\abs{N_S(Q,P)}}{\abs P},
\end{equation}
where $N_S(Q,P) = \{s\in S \mid \lc s Q \leq P\}$ is the transporter in $S$ from $Q$ to $P$.
In particular, $\Phi_{Q}([S/P])\neq 0$ if and only if $Q\lesssim_S P$ ($Q$ is subconjugate to $P$).

\phantomsection\label{dfnFreeS}\label{dfnObsS}
We have one fixed point homomorphism $\Phi_P$ per conjugacy class of subgroups in $S$, and we combine them into the \emph{homomorphism of marks} $\Phi=\Phi^S\colon A(S) \xto{\prod_{[P]} \Phi_{P}} \prod_{[P]\in \ccset S} \Z$. This ring homomorphism maps $A(S)$ into the product ring $\free{S}:=\prod_{[P]\in \ccset S} \Z$, the so-called \emph{ghost ring} for the Burnside ring $A(S)$.

Results by tom Dieck and others show that the mark homomorphism is injective, and that the cokernel of $\Phi$ is the  \emph{obstruction group} $Obs(S) := \prod_{[P]\in \ccset S} (\Z/\abs{W_S P}\Z)$, where $W_S P := N_S P / P$. These statements are combined in the following proposition, the proof of which can be found in \cite{tomDieck}*{Chapter 1}, \cite{Dress}, and \cite{YoshidaSES}.
\begin{prop}\label{propYoshidaGroup}
Let $\Psi=\Psi^S\colon \free S \to Obs(S)$ be given by the $[P]$-coordinate functions
\[\Psi_{P}(\xi)  := \sum_{\bar s\in W_S P} \xi_{\gen s P} \pmod {\abs{W_S P}}.\]
Here $\xi_{\gen s P}$ denotes the $[\gen s P]$-coordinate of an element $\xi\in \free S = \prod_{[P]\in \ccset S} \Z$.

The following sequence of abelian groups is then exact:
\[0\to A(S) \xto{\Phi} \free S \xto{\Psi} Obs(S) \to 0.\]
$\Phi$ is a ring homomorphism, but $\Psi$ is just a group homomorphism.
\end{prop}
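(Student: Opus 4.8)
The plan is to prove in turn the four assertions making up the statement: injectivity of $\Phi$, the vanishing $\Psi\circ\Phi=0$, the inclusion $\ker\Psi\subseteq\im\Phi$, and surjectivity of $\Psi$. That $\Phi$ is a ring homomorphism is already known, since it is the product of the ring homomorphisms $\Phi_P$; and $\Psi$ is additive directly from its formula, once one notes that $\Psi_P$ is independent of the chosen coset representative $s$ (as $\langle s\rangle P=\langle sp\rangle P$ for $p\in P$) and of the representative of $[P]$ (as $\xi$ is constant on $S$-conjugacy classes). For injectivity I would order the classes in $\ccset S$ as $[P_1],\dots,[P_n]$ with $\abs{P_1}\ge\cdots\ge\abs{P_n}$; then by \eqref{eqPhiOnBasis} the matrix $M_{ij}:=\Phi_{P_i}([S/P_j])$ has $M_{ij}=0$ unless $P_i\lesssim_S P_j$, and since $P_i\lesssim_S P_j$ together with $\abs{P_i}=\abs{P_j}$ forces $P_i\sim_S P_j$, the matrix $M$ is lower triangular with nonzero diagonal entries $M_{ii}=\abs{N_S P_i}/\abs{P_i}=\abs{W_S P_i}$. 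Hence $\det M=\prod_i\abs{W_S P_i}\neq 0$, so $\Phi$ is injective and $\abs{\free S/\im\Phi}=\abs{\det M}=\prod_{[P]}\abs{W_S P}=\abs{Obs(S)}$, which will be used at the end.

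To prove $\Psi\circ\Phi=0$ it suffices by additivity to check $\Psi_P(\Phi([S/T]))=0$ for all subgroups $P,T\le S$. The point is that $W_S P$ acts on the fixed set $(S/T)^P$ via $\bar s\cdot xT:=sxT$ (well defined since $s\in N_S P$), and that a coset $xT\in(S/T)^P$ is fixed by $\bar s$ exactly when $x^{-1}(\langle s\rangle P)x\le T$, i.e. when $xT\in(S/T)^{\langle s\rangle P}$; therefore $\abs*{((S/T)^P)^{\bar s}}=\Phi_{\langle s\rangle P}([S/T])$. Summing over $\bar s$ and applying the orbit-counting (Cauchy-Frobenius) lemma,
\[\sum_{\bar s\in W_S P}\Phi_{\langle s\rangle P}([S/T])=\sum_{\bar s\in W_S P}\abs*{((S/T)^P)^{\bar s}}=\abs{W_S P}\cdot\abs*{(S/T)^P/W_S P},\]
which is divisible by $\abs{W_S P}$. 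So $\Psi_P(\Phi([S/T]))=0$ for every $P$, i.e. $\im\Phi\subseteq\ker\Psi$.

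For the reverse inclusion I would take $\xi\in\ker\Psi$ and construct $X=\sum_{[P]}c_P\,[S/P]$ with $\Phi(X)=\xi$ by downward induction on subgroup order, maintaining the invariant that after all classes of order $\ge m$ have been treated the partial sum $Y$ satisfies $\Phi_R(Y)=\xi_R$ whenever $\abs R\ge m$. Treating a class $[Q]$ with $\abs Q=m$ comes down to solving $\Phi_Q(Y_{>m})+c_Q\abs{W_S Q}=\xi_Q$, where $Y_{>m}$ is the partial sum over classes of order $>m$ (adding $c_Q[S/Q]$ affects $\Phi_R$ only for $R\lesssim_S Q$, and among subgroups of order $m$ only for $R\sim_S Q$, so the order-$m$ classes do not interfere with one another). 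Integrality of $c_Q$ is exactly where the hypothesis enters: subtracting $\Psi_Q(\Phi(Y_{>m}))=0$, known from the previous step, from $\Psi_Q(\xi)=0$ yields $\sum_{\bar s\in W_S Q}\bigl(\xi_{\langle s\rangle Q}-\Phi_{\langle s\rangle Q}(Y_{>m})\bigr)\equiv 0\pmod{\abs{W_S Q}}$, and for $\bar s\neq\bar 1$ the subgroup $\langle s\rangle Q$ strictly contains $Q$, hence has order $>m$ and has already been matched by the invariant, so only the $\bar s=\bar 1$ term survives and we obtain $\xi_Q\equiv\Phi_Q(Y_{>m})\pmod{\abs{W_S Q}}$ -- precisely the divisibility needed to make $c_Q$ an integer. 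This preserves the invariant, so $\ker\Psi=\im\Phi$. Finally, $\Psi$ induces an injection $\free S/\im\Phi\into Obs(S)$ of finite groups, which is an isomorphism by the equality of orders proved above; in particular $\Psi$ is surjective, and the sequence is exact. I expect the downward induction to be the only genuine obstacle -- the invariant must be set up so that the single congruence $\Psi_Q(\xi)=0$ delivers exactly the divisibility for $c_Q$, and one must check that $\langle s\rangle Q$ is strictly larger than $Q$ for $\bar s\neq\bar 1$ so that its coordinate is already determined; the remaining ingredients are triangular linear algebra and the orbit-counting lemma.
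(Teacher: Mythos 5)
Your argument is correct and complete. Note, though, that the paper itself gives no proof of this proposition: it is quoted as a classical result, with the proof deferred to tom Dieck (Chapter 1), Dress, and Yoshida, so there is no internal argument to compare against. What you have written is essentially the standard proof from those sources, and all the delicate points are handled properly: the triangularity of the matrix $\bigl(\Phi_{P_i}([S/P_j])\bigr)$ with diagonal entries $\abs{W_S P_i}$ gives both injectivity of $\Phi$ and the index count $\abs{\free{S}/\im\Phi}=\prod_{[P]}\abs{W_S P}=\abs{Obs(S)}$; the identification $\bigl((S/T)^P\bigr)^{\bar s}=(S/T)^{\gen{s}P}$ plus Cauchy--Frobenius gives $\Psi\circ\Phi=0$; and in the downward induction for $\ker\Psi\subseteq\im\Phi$ you correctly isolate the $\bar s=\bar 1$ term by observing that $\gen{s}Q$ strictly contains $Q$ when $\bar s\neq\bar 1$, so that the single congruence $\Psi_Q(\xi)=0$ (together with $\Psi_Q(\Phi(Y_{>m}))=0$, already established) yields exactly the divisibility needed for integrality of $c_Q$, while orbits $[S/Q']$ of the same order in different classes do not interfere. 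The final step, deducing surjectivity of $\Psi$ from the equality of the two finite orders, is also valid. The only cosmetic remark is that well-definedness of $\Psi$ (independence of the representatives $s$ and $P$) is part of the statement's setup rather than something requiring proof, but your verification of it does no harm.
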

The homomorphism of marks enables us to perform calculations for the Burnside ring $A(S)$ inside the much nicer product ring $\free S$, where we identify each element $X\in A(S)$ with its fixed point vector $(\Phi_Q(X))_{[Q]\in \ccset S}$.

\subsection{The Burnside ring of a saturated fusion system}\label{secBurnsideFusion}
Let $S$ be a finite $p$-group, and suppose that $\cal F$ is a saturated fusion system on $S$. We say that a finite $S$-set is \emph{$\cal F$-stable} if the action is unchanged up to isomorphism whenever we act through morphisms of $\cal F$.
More precisely, if $P\leq S$ is a subgroup and $\ph\colon P\to S$ is a homomorphism in $\cal F$, we can turn $X$ into a $P$-set by using $\ph$ to define the action $g.x:=\ph(g)x$ for $g\in P$. We denote the resulting $P$-set by $\prescript{}{P,\ph}X$. In particular when $incl\colon P\to S$ is the inclusion map, $\prescript{}{P,incl}X$ has the usual restriction of the $S$-action to $P$.
Restricting the action of $S$-sets along $\ph$ extends to a ring homomorphism $r_\ph\colon A(S)\to A(P)$, and we let $\prescript{}{P,\ph}X$ denote the image $r_\ph(X)$ for all elements $X\in A(S)$.

We then say that an element $X\in A(S)$ is \emph{$\cal F$-stable} if it satisfies
\begin{equation}\label{charFstable}
\parbox[c]{.9\textwidth}{\emph{$\prescript{}{P,\ph}X=\prescript{}{P,incl}X$ inside $A(P)$, for all $P\leq S$ and homomorphisms $\ph\colon P\to S$ in $\cal F$.}}
\end{equation}
Alternatively, one can characterize $\cal F$-stability in terms of fixed points and the mark homomorphism, and the following three properties are equivalent for all $X\in A(S)$:
\begin{enumerate}
\item\label{itemPhiBurnsideEq}  $X$ is $\cal F$-stable.
\item\label{itemPhiStable} $\Phi_{P}(X) = \Phi_{\ph P}(X)$ for all $\ph\in \cal F(P,S)$ and $P\leq S$.
\item\label{itemFConjStable} $\Phi_{P}(X) = \Phi_{Q}(X)$ for all pairs $P,Q\leq S$ with $P\sim_{\cal F} Q$.
\end{enumerate}
A proof of this claim can be found in \cite{Gelvin}*{Proposition 3.2.3} or \cite{ReehStableSets}. We shall primarily use \ref{itemPhiStable} and \ref{itemFConjStable} to characterize $\cal F$-stability.

It follows from property \ref{itemFConjStable} that the $\cal F$-stable elements form a subring of $A(S)$. We define the \emph{Burnside ring of $\cal F$} to be the subring $A(\cal F)\subseteq A(S)$ consisting of all the $\cal F$-stable elements. Equivalently, we can consider the actual $S$-sets that are $\cal F$-stable: The $\cal F$-stable sets form a semiring, and we define $A(\cal F)$ to be the Grothendieck group hereof. These two constructions give rise to the same ring $A(\cal F)$ -- see \cite{ReehStableSets}*{Proposition 4.4}.
As is the case for the Burnside ring of a group, $A(\cal F)$ has an additive basis, where the basis elements are in one-to-one correspondence with the $\cal F$-conjugacy classes of subgroups in $S$.

For each $X\in A(\cal F)$ the fixed point map $\Phi_{P}(X)$ only depends on $P$ up to $\cal F$-conjugation. The homomorphism of marks for $A(S)$ therefore restricts to the subring $A(\cal F)$ as an injective ring homomorphism
\[\Phi^{\cal F} \colon A(\cal F) \xto{\prod_{[P]_{\cal F}} \Phi_{P}} \prod_{[P]_{\cal F}\in \ccset {\cal F}} \Z,\]
where $\ccset{\cal F}$ denotes the set of $\cal F$-conjugacy classes of subgroups in $S$. The product ring $\free{\cal F} := \prod_{[P]_{\cal F}\in \ccset {\cal F}} \Z$ we call the \emph{the ghost ring} for $A(\cal F)$, and we view $\free{\cal F}$ as a subring of $\free{S}$ consisting of the vectors that are constant on each $\cal F$-conjugacy class of subgroups.
The ring homomorphism $\Phi^{\cal F}$ is called \emph{the homomorphism of marks} for $A(\cal F)$.

As for the Burnside ring of a group, we also have an explicit description of the cokernel of $\Phi^{\cal F}$ as the group
\[Obs(\cal F) := \prod_{\substack{[P]\in\ccset{\cal F}\\ P\text{ f.n.}}} (\Z / \abs{W_S P}\Z),\]
where $P$ is taken to be a fully normalized representative for each $\cal F$-conjugacy class of subgroups.
According to \cite{ReehStableSets}*{Theorem B}, we have a short-exact sequence similar to Proposition \ref{propYoshidaGroup}:
\begin{prop}\label{propYoshidaFusion}
Let $\Psi=\Psi^{\cal F}\colon \free {\cal F} \to Obs(\cal F)$ be given by the $[P]_{\cal F}$-coordinate functions
\[\Psi_{P}(\xi)  := \sum_{\bar s\in W_S P} \xi_{\gen s P} \pmod {\abs{W_S P}},\]
when $P$ is fully $\cal F$-normalized, and $\xi_{\gen s P}$ denotes the $[\gen s P]_{\cal F}$-coordinate of an element $\xi\in \free {\cal F} = \prod_{[P]\in \ccset {\cal F}} \Z$.

The following sequence of abelian groups is then exact:
\[0\to A(\cal F) \xto{\Phi} \free {\cal F} \xto{\Psi} Obs(\cal F) \to 0.\]
$\Phi$ is a ring homomorphism, but $\Psi$ is just a group homomorphism.
\end{prop}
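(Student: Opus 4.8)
The plan is to deduce the proposition from its group-level counterpart, Proposition~\ref{propYoshidaGroup}, by realizing $A(\cal F)$ inside $\free S$. By the equivalence of \ref{itemPhiBurnsideEq} and \ref{itemFConjStable}, an element $X\in A(S)$ is $\cal F$-stable exactly when its vector of marks $\Phi^S(X)$ lies in the subring $\free{\cal F}=\prod_{[P]_{\cal F}}\Z\subseteq\free S$ of vectors constant on $\cal F$-classes; hence $\Phi^{\cal F}$ is merely the restriction of $\Phi^S$, it is injective (and a ring homomorphism) because $\Phi^S$ is, and $\Phi^{\cal F}(A(\cal F))=\Phi^S(A(S))\cap\free{\cal F}=\ker(\Psi^S)\cap\free{\cal F}$. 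Since the defining formula for $\Psi^{\cal F}_P$ is literally the $[P]_S$-coordinate function of $\Psi^S$, the map $\Psi^{\cal F}$ equals the composite $\free{\cal F}\xto{\Psi^S}Obs(S)\xto{\mathrm{pr}}Obs(\cal F)$, where $\mathrm{pr}$ discards all coordinates except those of the chosen fully normalized representatives. Before going on I would check that this makes $\Psi^{\cal F}$ a well-defined additive map, independent of the representatives: if $P\sim_{\cal F}P'$ are both fully normalized then $\abs{W_SP}=\abs{W_SP'}$, and Lemma~\ref{lemNormalizerMap} supplies an isomorphism $W_SP\cong W_SP'$ carrying each $\langle s\rangle P$ to a subgroup $\cal F$-conjugate to the corresponding $\langle s'\rangle P'$, so $\cal F$-stability of $\xi$ makes the two defining sums agree. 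As with $\Psi^S$, the map $\Psi^{\cal F}$ is not multiplicative. With all of this, exactness of the sequence at $A(\cal F)$ is settled, and exactness at $Obs(\cal F)$ and at $\free{\cal F}$ reduce, respectively, to surjectivity of $\Psi^{\cal F}$ and to the identity $\ker(\Psi^{\cal F})=\ker(\Psi^S)\cap\free{\cal F}$.

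The heart of the matter is exactness at $\free{\cal F}$. The inclusion $\ker(\Psi^S)\cap\free{\cal F}\subseteq\ker(\Psi^{\cal F})$ is immediate from the factorization. For the converse, take $\xi\in\free{\cal F}$ with $\Psi^S_P(\xi)\equiv 0\pmod{\abs{W_SP}}$ for every fully normalized $P\leq S$ (by the well-definedness check it costs nothing to demand this for all fully normalized $P$, not just the chosen ones); I must show $\Psi^S_Q(\xi)\equiv 0\pmod{\abs{W_SQ}}$ for \emph{every} $Q\leq S$, for then $\xi\in A(S)$ by Proposition~\ref{propYoshidaGroup} and hence $\xi\in A(\cal F)$. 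I would run a downward induction on $\abs{Q}$; if $Q$ is fully normalized there is nothing to prove. Otherwise pick a fully normalized $\cal F$-conjugate $P$ and, by Lemma~\ref{lemNormalizerMap}, a morphism $\varphi\in\cal F(N_SQ,N_SP)$ with $\varphi(Q)=P$. It induces an \emph{injection} $\bar\varphi\colon W_SQ\hookrightarrow W_SP$, and since $S$ is a $p$-group the orders $\abs{W_SQ}$ and $\abs{W_SP}$ are powers of $p$ with $\abs{W_SQ}\mid\abs{W_SP}$. Because $\varphi$ carries $\langle s\rangle Q$ to $\langle\varphi(s)\rangle P$ and $\xi$ is $\cal F$-stable, writing $N_R(\xi):=\sum_{\bar s\in W_SR}\xi_{\langle s\rangle R}$ and $V:=\bar\varphi(W_SQ)\leq W_SP$, one obtains
\[
N_Q(\xi)\;=\;N_P(\xi)\;-\;\sum_{h\in W_SP\setminus V}\xi_{\langle h\rangle P}.
\]
As $N_P(\xi)\equiv 0\pmod{\abs{W_SP}}$ and $\abs{W_SQ}\mid\abs{W_SP}$, everything reduces to showing that the correction term $\sum_{h\in W_SP\setminus V}\xi_{\langle h\rangle P}$ is divisible by $\abs{W_SQ}=\abs{V}$. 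Here conjugation by $V$ permutes $W_SP\setminus V$, and conjugating $h$ by an element of $V$ sends $\langle h\rangle P$ to an $\cal F$-conjugate subgroup, so $\cal F$-stability makes $\xi_{\langle h\rangle P}$ constant on $V$-conjugacy orbits; the orbits of full size $\abs{V}$ contribute multiples of $\abs{V}$, and the remaining orbits, which consist of elements $h$ with nontrivial $V$-centralizer (so that $\langle h\rangle P$ — already strictly larger than $P$ — lies inside a yet larger subgroup built from $h$ and that centralizer), are controlled by feeding the induction hypothesis for $N_R(\xi)$ with $R\supsetneq P$ back into the sum. This last bookkeeping is precisely the step I expect to be the main obstacle: it is the fusion-theoretic form of the tom Dieck congruences, and it is the only place where saturation (through Lemma~\ref{lemNormalizerMap}) and the hypothesis that $S$ is a $p$-group are genuinely used.

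Finally, surjectivity of $\Psi^{\cal F}$ is a formal downward induction on subgroup order, using neither saturation nor the $p$-group hypothesis. Given a target $\eta\in Obs(\cal F)$, process the $\cal F$-conjugacy classes in order of decreasing order of their subgroups; once $\xi$ has been defined (constantly on $\cal F$-classes) at all subgroups larger than a fully normalized representative $P$, the quantity $\Psi^S_P(\xi)=\xi_P+\sum_{\bar s\neq\bar e}\xi_{\langle s\rangle P}$ depends only on those already-chosen values and on $\xi_P$, the coefficient of $\xi_P$ being $1$, so one picks $\xi_P\in\Z$ with $\Psi^S_P(\xi)\equiv\eta_{[P]}\pmod{\abs{W_SP}}$ and extends by constancy across $[P]_{\cal F}$; the resulting $\xi$ lies in $\free{\cal F}$ and satisfies $\Psi^{\cal F}(\xi)=\eta$. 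Combined with the two previous paragraphs this establishes the short exact sequence. (Alternatively, one could compute $\abs{\coker\Phi^{\cal F}}$ directly from an explicit integral basis of $A(\cal F)$ and compare it with $\abs{Obs(\cal F)}=\prod_{[P]\text{ f.n.}}\abs{W_SP}$, but this only trades the combinatorial difficulty above for a determinant computation.)
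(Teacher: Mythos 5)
Your reductions around the edges are fine: identifying $\Phi^{\cal F}(A(\cal F))$ with $\Phi^S(A(S))\cap\free{\cal F}$ via the equivalence of $\cal F$-stability with constancy of marks on $\cal F$-classes, the inclusion $\Phi(A(\cal F))\subseteq\ker\Psi^{\cal F}$ from Proposition~\ref{propYoshidaGroup}, and the surjectivity of $\Psi^{\cal F}$ by descending induction (coefficient $1$ on $\xi_P$) are all correct. But the heart of the statement is exactly the step you flag and do not carry out: showing that $\xi\in\free{\cal F}$ satisfying the congruences at fully normalized subgroups satisfies $\Psi^S_Q(\xi)\equiv 0\pmod{\abs{W_S Q}}$ at \emph{every} $Q$, i.e.\ $\ker\Psi^{\cal F}\subseteq\ker\Psi^S\cap\free{\cal F}$. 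Your identity $N_Q(\xi)=N_P(\xi)-\sum_{\bar h\in W_S P\setminus V}\xi_{\gen h P}$ (with $V=\bar\varphi(W_S Q)$, via Lemma~\ref{lemNormalizerMap}) is correct, but the proposed treatment of the correction term does not suffice. The orbit-counting device has no force precisely when it is most needed: if $W_S P$ is abelian (already the case $W_S P\cong \Z/p\x\Z/p$, which occurs e.g.\ for $\cal F_{D_8}(A_6)$ at a non-central subgroup of order $2$), every $V$-conjugation orbit on $W_S P\setminus V$ is a singleton, so the entire burden falls on the unexplained "bookkeeping." And that bookkeeping is not routine: the induction hypothesis at an overgroup $R=\gen h P$ is a congruence modulo $\abs{W_S R}$, a quantity with no relation to the required modulus $\abs{V}=\abs{W_S Q}$ (it can be much smaller), it is a sum over $N_S R$, which need not lie in $N_S P$, and it introduces fresh unknown values of $\xi$ at still larger subgroups; arranging these to cancel modulo $\abs{V}$ is the actual content of the proposition, not a detail.

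The efficient way to close this gap is the route you set aside in your final parenthesis, and it is essentially how the source cited for this proposition, \cite{ReehStableSets}, proceeds: one uses the integral basis of $A(\cal F)$ by irreducible $\cal F$-stable sets $\alpha_P$, which have $c_P(\alpha_P)=1$ for a fully normalized representative $P$, hence $\Phi_P(\alpha_P)=\abs{W_S P}$, and $\Phi_Q(\alpha_P)=0$ unless $Q\lesssim_{\cal F}P$. With this triangularity the "determinant computation" is immediate: $[\free{\cal F}:\Phi(A(\cal F))]=\prod_{[P]_{\cal F}}\abs{W_S P}=\abs{Obs(\cal F)}$, which together with your surjectivity of $\Psi^{\cal F}$ and the inclusion $\Phi(A(\cal F))\subseteq\ker\Psi^{\cal F}$ forces $\Phi(A(\cal F))=\ker\Psi^{\cal F}$; alternatively one runs a descending induction subtracting integer multiples of $\alpha_P$ from a preimage of $\xi$, for which the only congruence needed at each stage is the one at the fully normalized representative. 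Either way the input is an integral $\cal F$-stable element with prescribed top mark $\abs{W_S P}$ -- note that the $\beta_P$ of this paper live only in $A(\cal F)_{(p)}$ and cannot substitute for it. As written, your argument proves injectivity, $\im\Phi\subseteq\ker\Psi$, and surjectivity, but not the remaining inclusion, so the proposition is not yet established.
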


\section{The $p$-localized Burnside ring}\label{secStabilization}
Let $\cal F$ be a saturated fusion system on a $p$-group $S$. In this section we describe a well-defined stabilization map $A(S)_{(p)}\to A(\cal F)_{(p)}$ between $p$-localized Burnside rings. We will later see in Corollary \ref{corCharOnesidedStabilization} that this map is in fact induced by the characteristic idempotent of $\cal F$ though that is not how we initially construct it. The map is shown to be a homomorphism of $A(\cal F)_{(p)}$-modules, and it has a simple expression in terms of the mark homomorphism for $A(S)_{(p)}$ as stated in Theorem \ref{thmStabilizationHom}. Using this stabilization homomorphism, we give a new basis for $A(\cal F)_{(p)}$. It was shown in \cite{ReehStableSets} that the irreducible $\cal F$-stable sets form a basis for $A(\cal F)$, but very little is known about their actual structure. The new basis for $A(\cal F)_{(p)}$, though it only exists after $p$-localization, is easily described in terms of the homomorphism of marks.
We use this basis in section \ref{secCharIdem}, for the product fusion system $\cal F\x \cal F$ on $S\x S$, to give a new description of the so-called \emph{characteristic idempotent} for the saturated fusion system $\cal F$.
In section \ref{secEquivBurnsideRings} we compare $A(\cal F)_{(p)}$, including its basis, with the centric Burnside ring of $\cal F$ defined by Diaz and Libman in \cite{DiazLibman}. When $\cal F$ is realized by a group $G$, we also relate $A(\cal F)_{(p)}$ to the $p$-subgroup part of $A(G)_{(p)}$.

\medskip
It is useful to have a procedure for constructing an $\cal F$-stable set from a general $S$-set. Such a procedure was used by Broto, Levi and Oliver in \cite{BrotoLeviOliver} to show that every saturated fusion system has at least one ``characteristic biset,'' a set with left and right $S$-actions satisfying properties suggested by Linckelmann and Webb. A similar procedure was used in \cite{ReehStableSets}, to construct all irreducible $\cal F$-stable $S$-sets.
Both constructions follow the same general idea: To begin with, we are given a finite $S$-set $X$ (or in general an element of the Burnside ring). We then consider each $\cal F$-conjugacy class of subgroups in $S$ in decreasing order and add further $S$-orbits to $X$ until the set becomes $\cal F$-stable. To construct the irreducible $\cal F$-stable sets, we start with a transitive $S$-set $[S/P]$; to construct a characteristic biset, we start with $S$ itself considered as an $(S,S)$-biset.

The construction changes the number of elements and orbits in the set $X$ that we stabilize, and the number of added orbits depends heavily on the set that we start with -- if $X$ is already $\cal F$-stable we need not add anything at all.
Because of this, we expect the stabilized sets to behave quite differently from the sets we start with, for instance, the stabilization procedure does not even preserve addition.

In this section we adjust the construction of \citelist{\cite{BrotoLeviOliver} \cite{ReehStableSets}} such that instead of just adding orbits to stabilize a set, we subtract orbits as well, in a way such that all changes cancel ``up to $\cal F$-conjugation.'' This results in a nicely behaved stabilization procedure that works for all $S$-sets, with one disadvantage: we must work in the $p$-localization $A(S)_{(p)}$ instead of $A(S)$.

The following Lemmas \ref{lemNumberOfConjugates} and \ref{lemWeightedMeanOfCoordinates} are needed to show that the later calculations work in $A(S)_{(p)}$, i.e., that we never divide by $p$. Lemma \ref{lemNumberOfConjugates} is also interesting in itself since it shows that for any fully normalized subgroup $P\leq S$, the number of $\cal F$-conjugates to $P$ is the same as the number of $S$-conjugates up to a $p'$-factor.
\begin{lem}\label{lemNumberOfConjugates}
Let $\cal F$ be a saturated fusion system on $S$, and let $P\leq S$ be fully $\cal F$-normal\-ized. Then the number of $\cal F$-conjugates of $P$ can be expressed as $\frac{\abs S}{\abs{N_S P}}\cdot k$ where $p\nmid k$. Under the same assumptions we also have $\abs{\cal F(P,S)} = \frac{\abs S}{\abs{C_S P}}\cdot k'$ with $p\nmid k'$.
\end{lem}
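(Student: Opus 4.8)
The plan is to reduce both fixed-point formulas to one combinatorial fact about fully normalized subgroups, and then to isolate that fact as the crux.

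\emph{First, I would show the two formulas are equivalent.} Every morphism in $\cal F(P,S)$ factors uniquely as an isomorphism of $P$ onto its image followed by an inclusion, and for each $Q\in[P]_{\cal F}$ there are exactly $\abs{\Aut_{\cal F}(P)}$ isomorphisms $P\to Q$ in $\cal F$; hence $\abs{\cal F(P,S)}=\abs{[P]_{\cal F}}\cdot\abs{\Aut_{\cal F}(P)}$. Since $P$ is fully $\cal F$-normalized it is fully $\cal F$-centralized and $\Aut_S(P)$ is a Sylow $p$-subgroup of $\Aut_{\cal F}(P)$, so $\abs{\Aut_{\cal F}(P)}=\abs{N_S P : C_S P}\cdot m$ with $p\nmid m$. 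Putting these together, $\abs{[P]_{\cal F}}=\frac{\abs S}{\abs{N_S P}}k$ with $p\nmid k$ is equivalent to $\abs{\cal F(P,S)}=\frac{\abs S}{\abs{C_S P}}k'$ with $p\nmid k'$ (take $k'=km$), so it is enough to prove the first.

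\emph{Next, I would reduce to a count of fully normalized classes.} Partitioning the set $[P]_{\cal F}$ of subgroups into $S$-conjugacy classes gives $\abs{[P]_{\cal F}}=\sum_{[Q]_S\subseteq[P]_{\cal F}}\abs{S : N_S Q}$. As $P$ is fully normalized, $\abs{N_S Q}\le\abs{N_S P}$ for every $Q$ in the class, so the $p$-power $\abs{S : N_S P}$ divides each summand and hence divides $\abs{[P]_{\cal F}}$; the cofactor is $k=\sum_{[Q]_S}\abs{N_S P}/\abs{N_S Q}$, and each term here equals $1$ when $Q$ is fully normalized and is a positive power of $p$ otherwise. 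Thus $k\equiv N\pmod p$, where $N$ is the number of $S$-conjugacy classes of fully normalized subgroups inside $[P]_{\cal F}$, and it remains to show $p\nmid N$.

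\emph{The crux is $p\nmid N$, and here I would use Lemma \ref{lemNormalizerMap}.} For $P$ fully normalized, any $\sigma\in\cal F(N_S P,S)$ carries the normal subgroup $P\trianglelefteq N_S P$ to $\sigma(P)\in[P]_{\cal F}$ with $\sigma(N_S P)\le N_S(\sigma P)$ and $\abs{\sigma(N_S P)}=\abs{N_S P}$; by maximality of $\abs{N_S P}$ this forces $\sigma(P)$ fully normalized and $\sigma(N_S P)=N_S(\sigma P)$. Conversely, Lemma \ref{lemNormalizerMap} realizes every fully normalized member of $[P]_{\cal F}$ as some $\sigma(P)$. So $\sigma\mapsto\sigma(P)$ is a surjection of $\cal F(N_S P,S)$ onto the set of fully normalized subgroups of $[P]_{\cal F}$, with all nonempty fibres of the same size $\abs A$, where $A=\{\alpha\in\Aut_{\cal F}(N_S P)\mid\alpha(P)=P\}$; counting yields
\[
\abs{\cal F(N_S P,S)}=\abs A\cdot N\cdot\abs{S : N_S P}.
\]
Because $P\trianglelefteq N_S P$, the group of inner automorphisms $\mathrm{Inn}(N_S P)$ lies in $A$. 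To force $v_p(N)=0$ exactly I would then set up an induction on the index $\abs{S : P}$ (the case $P=S$ being trivial): replace $N_S P$ by a fully normalized $\cal F$-conjugate, to which the inductive hypothesis applies since its index equals $\abs{S : N_S P}<\abs{S : P}$, and compare $p$-adic valuations using the saturation axioms ($\Aut_S(-)$ a Sylow $p$-subgroup of $\Aut_{\cal F}(-)$ on fully normalized subgroups) to see that the index of $A$ in $\Aut_{\cal F}(N_S P)$ accounts for precisely the discrepancy between $\abs{S : N_S P}$ and the $p$-part of $\abs{[N_S P]_{\cal F}}$. Carrying out this last matching of $p$-parts is the step I expect to be the main obstacle.
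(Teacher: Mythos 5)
Your two reduction paragraphs coincide with the paper's own proof: it performs exactly the same reduction, first from $\abs{\cal F(P,S)}$ to $\abs{[P]_{\cal F}}$ via $\abs{\cal F(P,S)}=\abs{[P]_{\cal F}}\cdot\abs{\Aut_{\cal F}(P)}$ and the Sylow axiom, and then, modulo $p$, to the number $N$ of $S$-conjugacy classes of fully normalized subgroups in $[P]_{\cal F}$. Where you part ways is at the crux: the paper does not prove $p\nmid N$ directly but quotes \cite{BCGLO2}*{Proposition 1.16} (itself proved using characteristic bisets), remarking only that a direct proof is possible ``though it becomes slightly complicated.'' Your counting identity $\abs{\cal F(N_S P,S)}=\abs{A}\cdot N\cdot\abs{S:N_S P}$ is correct (well-definedness and $\sigma(N_SP)=N_S(\sigma P)$ follow from maximality of $\abs{N_SP}$, surjectivity from Lemma \ref{lemNormalizerMap}, and the fibres are cosets of $A$), and your induction on $\abs{S:P}$ does close, so your route is genuinely different: self-contained where the published proof leans on an external result. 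One point needs more care than your phrasing suggests: you must conjugate the pair, not just the normalizer. Choose $K\sim_{\cal F}N_S P$ fully normalized, use Lemma \ref{lemNormalizerMap} to get $\ph\in\cal F(N_S(N_S P),N_S K)$ with $\ph(N_S P)=K$, and set $Q:=\ph(P)$; then $Q\lhd K$ forces $N_S Q=K$, so $Q$ and $N_SQ$ are both fully normalized, while $N$ and $\abs{S:N_S P}$ are unchanged, and the identity can be rewritten for $(Q,K)$.

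The step you flag as the main obstacle is in fact easier than you fear: you do not need to match $p$-parts exactly, only to bound one side, because $N$ is a positive integer. With $K=N_S Q$ fully normalized and $A_0=\{\alpha\in\Aut_{\cal F}(K)\mid\alpha(Q)=Q\}\supseteq\mathrm{Inn}(K)$, the inductive hypothesis (the second formula of the lemma applied to $K$, whose index $\abs{S:K}=\abs{S:N_SP}$ is strictly smaller than $\abs{S:P}$) gives $v_p(\abs{\cal F(K,S)})=v_p(\abs S)-v_p(\abs{C_S K})$, and therefore
\[
v_p(N)=v_p\bigl(\abs{\cal F(K,S)}\bigr)-v_p\bigl(\abs{A_0}\bigr)-v_p\bigl(\abs{S:K}\bigr)
\le v_p\bigl(\abs{Z(K)}\bigr)-v_p\bigl(\abs{C_S K}\bigr)\le 0,
\]
using only $\mathrm{Inn}(K)\cong K/Z(K)$, $\mathrm{Inn}(K)\le A_0$ and $Z(K)\le C_S K$. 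Since $v_p(N)\ge 0$ automatically, $p\nmid N$; the Sylow axiom enters only through your first-paragraph equivalence (applied inductively to $K$), not in this comparison. So, once this is filled in, your proposal is correct and gives a direct argument at exactly the point where the paper cites the literature.
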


\begin{proof}
Recall that $[P]_{\cal F}$ denotes the set of subgroups in $S$ that are $\cal F$-conjugate to $P$.
We then have $\abs{\cal F(P,S)} = \abs{\Aut_{\cal F}(P)} \cdot \abs{[P]_{\cal F}}$ for all $P\leq S$. When $P$ is fully $\cal F$-normalized, we furthermore get
\[\abs{\Aut_{\cal F}(P)} = \abs{\Aut_S(P)}\cdot k'' = \tfrac{\abs{N_S P}}{\abs{C_S P}} \cdot k''\] where $p\nmid k''$ since $\cal F$ is saturated. If we prove that $\abs{[P]_{\cal F}} =\frac{\abs S}{\abs{N_S P}}\cdot k$ where $p\nmid k$, then it immediately follows that
\[ \abs{\cal F(P,S)} = \abs{\Aut_{\cal F}(P)} \cdot \abs{[P]_{\cal F}} = \tfrac{\abs S}{\abs{C_S P}} \cdot k''k\] where $p\nmid k''k$. It is therefore sufficient to prove the claim about $\abs{[P]_{\cal F}}$.

The $\cal F$-conjugacy class $[P]_{\cal F}$ is a disjoint union of the $S$-conjugacy classes $[Q]_S$ where $Q\sim_{\cal F} P$. The $S$-conjugacy class $[Q]_S$ has $\abs S / \abs{N_S Q}$ elements; and $\frac{\abs S}{\abs{N_S Q}}$ is divisible by $\frac{\abs S}{\abs{N_S P}}$ since $P$ is fully normalized. In particular, $\frac{\abs S}{\abs{N_S P}}$ divides $\abs{[P]_{\cal F}}$.

Furthermore, we have $\abs[\big]{[Q]_S}\cdot \frac{\abs{N_S P}}{\abs S}= \frac{\abs{N_S P}}{\abs{N_S Q}}\equiv 0 \pmod p$ whenever $Q\sim_{\cal F} P$ isn't fully normalized.
It follows that
\begin{align*}
\abs[\big]{[P]_{\cal F}}\cdot \frac{\abs{N_S P}}{\abs S} &=\sum_{[Q]_S\subseteq [P]_{\cal F}} \abs[\big]{[Q]_S}\cdot \frac{\abs{N_S P}}{\abs S}
\\ &\equiv \sum_{\substack{[Q]_S\subseteq [P]_{\cal F}\\ Q\text{ f.n.}}} \abs[\big]{[Q]_S}\cdot \frac{\abs{N_S P}}{\abs S} = \abs[\big]{[P]_{\cal F}^{\text{f.n.}}}\cdot \frac{\abs{N_S P}}{\abs S} \pmod{p},
\end{align*}
where ``f.n.'' is short for ``fully normalized,'' and $[P]_{\cal F}^{\text{f.n.}}$ is the set of $Q\sim_{\cal F} P$ that are fully normalized. We conclude that $\abs[\big]{[P]_{\cal F}}= \frac{\abs S}{\abs{N_S P}}\cdot k$ with $p\nmid k$ if and only if $\abs[\big]{[P]_{\cal F}^{\text{f.n.}}} = \frac{\abs S}{\abs{N_S P}}\cdot u$ with $p\nmid u$.

The number $u$ is the number for $S$-conjugacy classes of fully normalized subgroups in $[P]_{\cal F}^{\text{f.n.}}$, as these all have the same size $\frac {\abs S}{\abs{N_S P}}$. The fact $p\nmid u$ is then the precise statement of \cite{BCGLO2}*{Proposition 1.16} which was proved using the existence of characteristic bisets for $\cal F$. It is possible to prove the claim directly, though it becomes slightly complicated.
\end{proof}

\begin{lem}\label{lemWeightedMeanOfCoordinates}
Let $P,Q\leq S$, then $\abs{[Q]_{\cal F}}$ divides $\abs{[Q']_S}$ in $\Z_{(p)}$ for all $Q'\sim_\cal F Q$; and furthermore
\[\frac 1{\abs{[Q]_{\cal F}}}\sum_{Q'\in [Q]_{\cal F}}\Phi_{Q'}([S/P]) = \sum_{[Q']_{S}\subseteq [Q]_{\cal F}}\frac {\abs{[Q']_S}}{\abs{[Q]_{\cal F}}}\Phi_{Q'}([S/P]) =\frac{\abs{\cal F(Q,P)}\cdot \abs S}{\abs P\cdot \abs{\cal F(Q,S)}}\in \Z_{(p)}.\]
\end{lem}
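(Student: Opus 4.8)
The plan is to dispatch the three assertions separately, with Lemma \ref{lemNumberOfConjugates} as the only nontrivial ingredient. For the divisibility $\abs{[Q]_{\cal F}} \mid \abs{[Q']_S}$ in $\Z_{(p)}$, pick a fully $\cal F$-normalized representative $Q_0 \in [Q]_{\cal F}$. By Lemma \ref{lemNumberOfConjugates}, $\abs{[Q]_{\cal F}} = \frac{\abs S}{\abs{N_S Q_0}}\cdot k$ with $p \nmid k$, so the $p$-adic valuation of $\abs{[Q]_{\cal F}}$ is that of $\abs S/\abs{N_S Q_0}$. For any $Q' \sim_{\cal F} Q$ one has $\abs{[Q']_S} = \abs S/\abs{N_S Q'}$, which is a power of $p$, and $\abs{N_S Q'} \le \abs{N_S Q_0}$ since $Q_0$ is fully normalized; hence $\abs S/\abs{N_S Q_0}$ divides $\abs S/\abs{N_S Q'}$ and $\abs{[Q']_S}/\abs{[Q]_{\cal F}} \in \Z_{(p)}$. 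Consequently the middle expression in the statement is a $\Z_{(p)}$-linear combination of the integers $\Phi_{Q'}([S/P])$, which yields the integrality claim once the equalities are established.

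The first equality is bookkeeping: $\Phi_{Q'}([S/P])$ depends on $Q'$ only up to $S$-conjugacy, so partitioning $[Q]_{\cal F}$ into its $S$-conjugacy classes gives $\sum_{Q' \in [Q]_{\cal F}}\Phi_{Q'}([S/P]) = \sum_{[Q']_S \subseteq [Q]_{\cal F}}\abs{[Q']_S}\cdot\Phi_{Q'}([S/P])$, and dividing by $\abs{[Q]_{\cal F}}$ yields it. For the second equality I would double count the set $T := \{(Q',s) : Q' \in [Q]_{\cal F},\ s \in S,\ \lc s{Q'} \le P\}$. Summing over $Q'$ first and using \eqref{eqPhiOnBasis} in the form $\abs{N_S(Q',P)} = \abs{\{s : \lc s{Q'}\le P\}} = \abs P\cdot\Phi_{Q'}([S/P])$, one gets $\abs T = \abs P\cdot\sum_{Q' \in [Q]_{\cal F}}\Phi_{Q'}([S/P])$.

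Summing over $s$ first, fix $s$: the map $Q' \mapsto \lc s{Q'}$ is a bijection from $\{Q' \in [Q]_{\cal F} : \lc s{Q'}\le P\}$ onto $\{R \le P : R \sim_{\cal F} Q\}$, with inverse $R \mapsto \lc{s^{-1}}R$, and this target set has some size $m$ that does not depend on $s$; so $\abs T = \abs S\cdot m$, and hence $\sum_{Q' \in [Q]_{\cal F}}\Phi_{Q'}([S/P]) = \abs S\cdot m/\abs P$. It remains to identify $m/\abs{[Q]_{\cal F}}$ with $\abs{\cal F(Q,P)}/\abs{\cal F(Q,S)}$: every $\cal F$-morphism out of $Q$ is an isomorphism onto its image, and the $\cal F$-isomorphisms $Q \to R$ form a torsor under $\Aut_{\cal F}(Q)$ (acting by precomposition) for each $R \sim_{\cal F} Q$, so sorting morphisms by image gives $\abs{\cal F(Q,S)} = \abs{[Q]_{\cal F}}\cdot\abs{\Aut_{\cal F}(Q)}$ and $\abs{\cal F(Q,P)} = m\cdot\abs{\Aut_{\cal F}(Q)}$. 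Dividing $\sum_{Q' \in [Q]_{\cal F}}\Phi_{Q'}([S/P]) = \abs S\cdot m/\abs P$ by $\abs{[Q]_{\cal F}}$ then produces $\frac{\abs{\cal F(Q,P)}\cdot\abs S}{\abs P\cdot\abs{\cal F(Q,S)}}$, as wanted. The step most likely to need care is the double count itself -- pinning down the bijection and the $\Aut_{\cal F}(Q)$-torsor identity precisely -- after which integrality of the final quotient follows from the per-term divisibility of the first paragraph rather than needing a separate argument.
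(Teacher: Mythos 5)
Your proposal is correct and follows essentially the same route as the paper: the divisibility claim via Lemma \ref{lemNumberOfConjugates} is identical, and your double count of pairs $(Q',s)$ with $\lc s{Q'}\leq P$ is just a repackaging of the paper's identity $\abs{N_S(Q',P)}/\abs{N_S Q'}=\abs{\{R\in[Q']_S\mid R\leq P\}}$ summed over $S$-classes, both reducing the sum to counting $\cal F$-conjugates of $Q$ inside $P$. The final conversion of that count and of $\abs{[Q]_{\cal F}}$ into $\abs{\cal F(Q,P)}/\abs{\Aut_{\cal F}(Q)}$ and $\abs{\cal F(Q,S)}/\abs{\Aut_{\cal F}(Q)}$ is exactly the paper's closing step.
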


\begin{proof}
By Lemma \ref{lemNumberOfConjugates} we can express the number of $\cal F$-conjugates as $\abs{[Q]_{\cal F}} = \frac{\abs{S}}{\abs{N_S Q_0}} \cdot k$, with $p\nmid k$, where $Q_0\sim_{\cal F} Q$ is fully normalized. At the same time, the number of $S$-con\-ju\-gates of $Q'$ is given by $\abs{[Q']_S} = \frac{\abs{S}}{\abs{N_S Q'}}$. Since $\abs{N_S Q'}\leq \abs{N_S Q_0}$, it then follows that $\abs{[Q]_{\cal F}}$ divides $\abs{[Q']_S}$ in $\Z_{(p)}$.

We try to simplify the sum in the lemma:
\begin{align*}
\sum_{[Q']_S \subseteq [Q]_{\cal F}} \frac{\abs{[Q']_S}}{\abs{[Q]_{\cal F}}}\Phi_{Q'}([S/P]) &= \frac{1}{\abs{[Q]_{\cal F}}}\sum_{[Q']_S \subseteq [Q]_{\cal F}} \frac{\abs{S}}{\abs{N_S(Q')}}\cdot\frac{\abs{N_S(Q', P)}}{\abs P}
\\ &= \frac{\abs{S}}{\abs P \cdot \abs{[Q]_{\cal F}}}\sum_{[Q']_S \subseteq [Q]_{\cal F}} \frac{\abs{N_S(Q', P)}}{\abs{N_S(Q')}}
\\ &= \frac{\abs{S}}{\abs P \cdot \abs{[Q]_{\cal F}}}\sum_{[Q']_S \subseteq [Q]_{\cal F}} \abs{\{R\in [Q']_S \mid R\leq P\}}
\\ &= \frac{\abs{S}}{\abs P \cdot \abs{[Q]_{\cal F}}} \abs{\{R\in [Q]_{\cal F} \mid R\leq P\}}
\\ &= \frac{\abs{\cal F(Q,P)}\cdot\abs{S}}{\abs P \cdot \abs{\cal F(Q,S)}}.
\end{align*}
The last equality follows from multiplying with $\abs{\Aut_{\cal F} (Q)}$ in both the numerator and the denominator.
\end{proof}

Given any element $X$ in the $p$-localized Burnside ring $A(S)_{(p)}$, we stabilize $X$ according to the following idea: We run through the subgroups $Q\leq S$ in decreasing order and subtract/add orbits to $X$ such that it becomes $\cal F$-stable at the conjugacy class of $Q$ in $\cal F$, i.e., such that $\Phi_{Q'}(X)=\Phi_Q(X)$ for all $Q'\sim_{\cal F} Q$. Here we take care to ``add as many orbits as we remove'' at each step.
The actual work of the stabilization procedure is handled in the following technical Lemma \ref{lemInduceFromASptoAFp}, which is then applied in Theorem \ref{thmStabilizationHom} to construct the stabilization map $A(S)_{(p)} \to A(\cal F)_{(p)}$.
Recall that $c_P(X)$ denotes the coefficient of $[S/P]$ when $X$ is written in the standard basis of $A(S)_{(p)}$, and $\Phi_P\colon A(S)_{(p)} \to \Z_{(p)}$ for $P\leq S$ denotes the fixed point homomorphisms.

\begin{dfn}\label{dfnStabilization}
Let $\cal F$ be a saturated fusion system on a $p$-group $S$, and let $\cal H$ be a collection of subgroups of $S$ such that $\cal H$ is closed under taking $\cal F$-subconjugates, i.e., if $P\in \cal H$, then $Q\in \cal H$ for all $Q\lesssim_{\cal F} P$.

An element $X\in A(S)_{(p)}$ is said to be \emph{$\cal F$-stable away from $\cal H$} if $\Phi_{Q}(X) = \Phi_{Q'}(X)$ for all pairs $Q\sim_{\cal F}Q'$, with $Q,Q'\not\in \cal H$.

Given an $X\in A(S)_{(p)}$ that is $\cal F$-stable away from $\cal H$ and another element $Y\in A(S)_{(p)}$, we say that \emph{$Y$ is an $\cal F$-stabilization of $X$ over $\cal H$} if the following properties are satisfied:
\begin{enumerate}
\item\label{itemInducePreserve} $\Phi_{Q}(Y)=\Phi_{Q}(X)$ and $c_{Q}(Y) = c_{Q}(X)$ for all $Q\not\in\cal H$, $Q\leq S$.
\item\label{itemInduceCoord} For all $Q\leq S$ we have
\[\sum_{[Q']_S \subseteq [Q]_{\cal F}} c_{Q'}(Y) = \sum_{[Q']_S \subseteq [Q]_{\cal F}} c_{Q'}(X).\]
\item\label{itemInduceWeightedMean} For every $Q\leq S$:
\begin{align*}
\Phi_{Q}(Y) %&= \frac1{\abs{\cal F(Q,S)}} \sum_{\ph\in \cal F(Q,S)} \Phi_{\ph Q}(X) \\
= \sum_{[Q']_S\subseteq [Q]_{\cal F}} \frac{\abs{[Q']_S}}{\abs{[Q]_{\cal F}}} \Phi_{Q'}(X)
= \frac{1}{\abs{[Q]_{\cal F}}} \sum_{Q'\in  [Q]_{\cal F}}  \Phi_{Q'}(X).
\end{align*}
\end{enumerate}
Here $[Q]_{\cal F}$ denotes the set of $\cal F$-conjugates of $Q$. In the sums we pick one representative $Q'$ for each $S$-conjugacy class $[Q']_S$ contained in $[Q]_{\cal F}$, and by Lemma \ref{lemWeightedMeanOfCoordinates} the fractions $\frac{\abs{[Q']_S}}{\abs{[Q]_{\cal F}}}$ make sense in $\Z_{(p)}$.
\end{dfn}

\begin{rmk}\label{rmkStabilization}
If an $\cal F$-stabilization $Y$ of $X$ over $\cal H$ exists, then
property \ref{itemInducePreserve} ensures that $Y$ retains the part of $X$ that has already been stabilized. Property \ref{itemInduceCoord} is the requirement that the total number of orbits is preserved from $X$ to $Y$ for each $\cal F$-conjugacy class of subgroups. $Y$ is only allowed to ``replace'' an orbit $[S/Q]$ by another orbit $[S/Q']$ where $Q'\sim_{\cal F} Q$. Finally, property \ref{itemInduceWeightedMean} tells us exactly what happens to the mark homomorphism on $Y$: We simply take the average of the fixed points of $X$ for each conjugacy class in $\cal F$. Hence $Y$ is unique if it exists, and because the right hand side of \ref{itemInduceWeightedMean} only depends on $Q$ up to $\cal F$-conjugation, $Y$ must be $\cal F$-stable.
Property \ref{itemInduceWeightedMean} also implies that such an $\cal F$-stabilization $Y$ of $X$ is independent of the choice of collection $\cal H$, as long as the chosen collection $\cal H$ satisfies the assumptions above.
\end{rmk}

\begin{lem}
\label{lemInduceFromASptoAFp}
Let $\cal F$ be a saturated fusion system on a $p$-group $S$, and let $\cal H$ be a collection of subgroups of $S$ such that $\cal H$ is closed under taking $\cal F$-subconjugates.
Assume that $X\in A(S)_{(p)}$ is $\cal F$-stable away from $\cal H$.

Then there exists a uniquely determined element $\pi_{\cal H} X\in A(\cal F)_{(p)}\subseteq A(S)_{(p)}$ which is an $\cal F$-stabilization of $X$ over $\cal H$.
\end{lem}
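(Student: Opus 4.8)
\emph{Uniqueness} is the easy half: if $Y$ and $Y'$ are both stabilizations of $X$ over $\cal H$, then property \ref{itemInduceWeightedMean} forces $\Phi_P(Y)=\Phi_P(Y')$ for every $P\leq S$, and since the mark homomorphism is injective on $A(S)_{(p)}$ (the $p$-localization of Proposition \ref{propYoshidaGroup}) we conclude $Y=Y'$; properties \ref{itemInducePreserve} and \ref{itemInduceCoord} are not needed for this. So everything comes down to existence, which I would prove by induction on $\abs{\cal H}$. If $\cal H=\emptyset$ then $X$ is already $\cal F$-stable, hence lies in $A(\cal F)_{(p)}$, and $\pi_\emptyset X:=X$ clearly satisfies \ref{itemInducePreserve}--\ref{itemInduceWeightedMean}.

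For the inductive step, pick $Q\in\cal H$ of maximal order and set $\cal H':=\cal H\setminus[Q]_{\cal F}$. One first checks that $\cal H'$ is still closed under $\cal F$-subconjugation: if $P\in\cal H'$ and $R\lesssim_{\cal F}P$ with $R\in[Q]_{\cal F}$, then $\abs Q=\abs R\leq\abs P\leq\abs Q$, so $\abs P=\abs Q$ and $P\sim_{\cal F}R$, forcing $P\in[Q]_{\cal F}$, a contradiction; and $\abs{\cal H'}<\abs{\cal H}$ since $Q\in[Q]_{\cal F}$. Put $v:=\frac1{\abs{[Q]_{\cal F}}}\sum_{P'\in[Q]_{\cal F}}\Phi_{P'}(X)$; expanding $X$ in the basis $\{[S/P]\}$ and applying Lemma \ref{lemWeightedMeanOfCoordinates} to each term gives $v\in\Z_{(p)}$. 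Now define
\[
X_1 \;:=\; X \;+\; \sum_{[R]_S\subseteq[Q]_{\cal F}}\frac{v-\Phi_R(X)}{\abs{W_S R}}\,[S/R].
\]
The one genuinely $p$-local point, and the step I expect to be the main obstacle, is that each coefficient $\dfrac{v-\Phi_R(X)}{\abs{W_S R}}$ lies in $\Z_{(p)}$, so that $X_1\in A(S)_{(p)}$; I would again reduce to basis elements $[S/P]$ and deduce this from Lemmas \ref{lemNumberOfConjugates} and \ref{lemWeightedMeanOfCoordinates}, using in particular that for a fully normalized $R_0\sim_{\cal F}Q$ the number $\abs{[Q]_{\cal F}}$ equals $\abs S/\abs{N_S R_0}$ up to a unit of $\Z_{(p)}$.

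Granting $X_1\in A(S)_{(p)}$, the remaining work is bookkeeping with marks and coefficients. Since every $R\in[Q]_{\cal F}$ satisfies $\abs R=\abs Q$, a summand $[S/R]$ of the correction contributes to $\Phi_P$ only for $P\lesssim_S R$, whence one reads off $\Phi_P(X_1)=v$ for $P\in[Q]_{\cal F}$ and $\Phi_P(X_1)=\Phi_P(X)$, $c_P(X_1)=c_P(X)$ for $P\notin\cal H$ (as such $P$ is not $\cal F$-subconjugate to $Q$); consequently $X_1$ is $\cal F$-stable away from $\cal H'$. By the inductive hypothesis $Y:=\pi_{\cal H'}(X_1)\in A(\cal F)_{(p)}$ is the stabilization of $X_1$ over $\cal H'$, and I set $\pi_{\cal H}X:=Y$. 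To verify that $Y$ is a stabilization of $X$ over $\cal H$, the key is that the correction $W:=X_1-X$ is ``balanced'': $\sum_{[R]_S\subseteq[Q]_{\cal F}}c_R(W)=0$, because $\abs{[R]_S}\cdot\abs{W_S R}=\abs S/\abs Q$ is independent of $R\in[Q]_{\cal F}$ and so this sum is a fixed multiple of $\sum_{P'\in[Q]_{\cal F}}(v-\Phi_{P'}(X))=0$; and $\sum_{P'\in[P]_{\cal F}}\Phi_{P'}(W)=0$ for every $P$, because by Lemma \ref{lemWeightedMeanOfCoordinates} the quantity $\sum_{P'\in[P]_{\cal F}}\Phi_{P'}([S/R])$ is likewise independent of $R\in[Q]_{\cal F}$, so this sum equals that common value times $\sum_{[R]_S\subseteq[Q]_{\cal F}}c_R(W)=0$. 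Combining the identities for $X_1$ versus $X$ with these two vanishings, properties \ref{itemInducePreserve}--\ref{itemInduceWeightedMean} for $Y$ over $\cal H'$ upgrade to \ref{itemInducePreserve}--\ref{itemInduceWeightedMean} for $Y$ over $\cal H$ (property \ref{itemInducePreserve} being immediate, while \ref{itemInduceCoord} and \ref{itemInduceWeightedMean} use the two vanishings), which closes the induction. Thus the whole proof hinges on the single integrality statement $\frac{v-\Phi_R(X)}{\abs{W_S R}}\in\Z_{(p)}$ -- i.e.\ that averaging fixed points over an $\cal F$-conjugacy class introduces no denominator divisible by $p$ -- with everything else being formal.
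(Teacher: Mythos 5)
Your induction, your correction element, and your bookkeeping all match the paper's proof: one can check that the coefficient $\frac{v-\Phi_R(X)}{\abs{W_S R}}$ you attach to $[S/R]$ is exactly the paper's $\mu_R$ (the quantities $\lambda_{P'}$ and $c$ there just repackage it, since $\Phi_P(X)-\abs{W_SP}c=v$), the balancedness computations using $\abs{[R]_S}\cdot\abs{W_S R}=\abs{S}/\abs{Q}$ and Lemma \ref{lemWeightedMeanOfCoordinates} are correct, and the uniqueness argument via injectivity of the mark homomorphism is fine. The problem is the step you yourself single out as the crux: the integrality $\frac{v-\Phi_R(X)}{\abs{W_S R}}\in\Z_{(p)}$ is not proved, and the route you propose cannot work as stated. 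Reducing to basis elements $[S/P]$ discards the hypothesis that $X$ is $\cal F$-stable away from $\cal H$; you would then need, for every transitive set $[S/P]$ and every $R$ in the class being corrected, that the deviation of $\Phi_R([S/P])$ from its $\cal F$-class average is divisible by $\abs{W_S R}$ in $\Z_{(p)}$, with no stability hypothesis at all. Lemmas \ref{lemNumberOfConjugates} and \ref{lemWeightedMeanOfCoordinates} contain nothing of this kind: they control denominators (the average lies in $\Z_{(p)}$, and $\abs{[Q]_{\cal F}}=\frac{\abs S}{\abs{N_S Q_0}}\cdot(\text{unit})$), but give no congruence modulo $\abs{W_S R}$. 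Unwinding the termwise statement, it amounts (for $R$ fully normalized) to the counting congruence that the number of $\cal F$-conjugates of $R$ contained in $P$ is congruent to $k$ times the number of $S$-conjugates of $R$ contained in $P$ modulo $\abs{P:R}$ -- a statement of essentially the same depth as the existence of characteristic bisets, and certainly not a formal consequence of the two cited lemmas.

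The paper's proof of the integrality is where the real content of the lemma sits, and it uses precisely the data your reduction throws away. Choosing the top class representative $P\in\cal H$ fully normalized (your ``maximal order'' choice does not guarantee this, and full normalization is needed both for Lemma \ref{lemNormalizerMap} and to make $\abs{W_SP}/\abs{W_SP'}$ an integer), saturation supplies $\ph\in\cal F(N_SP',N_SP)$ with $\ph(P')=P$ for each $P'\sim_{\cal F}P$; the congruences of Proposition \ref{propYoshidaGroup} are then applied twice, once to $X$ in $A(S)$ at the subgroup $P'$ and once to the restriction of $X$ to $\ph(N_SP')$ at the subgroup $P$; and the non-identity terms of the two congruences are matched using exactly the hypothesis that $X$ is already $\cal F$-stable on all classes strictly above $[P]_{\cal F}$ (these lie outside $\cal H$ by maximality). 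This yields $\Phi_P(X)\equiv\Phi_{P'}(X)\pmod{\abs{W_SP'}}$, and combining over the class with Lemma \ref{lemNumberOfConjugates} gives your integrality statement. You need this argument (or an equally substantial substitute using the stability hypothesis); with it inserted your write-up goes through, but as it stands the central step is a genuine gap.
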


\begin{proof}
We proceed by induction on the size of $\cal H$, and show that every $X$ that is $\cal F$-stable away from $\cal H$ has an $\cal F$-stabilization over $\cal H$ denoted $\pi_{\cal H} X$. As remarked above, the property \ref{itemInduceWeightedMean} implies that $\pi_{\cal H} X$ is unique so it is sufficient to show that $\pi_{\cal H} X$ exists.

If $\cal H = \Ø$, then $X$ is $\cal F$-stable by assumption. The first two properties for an $\cal F$-stabilization over $\cal H$ are vacuously true for $\pi_{\cal H} X := X$. Furthermore, since $X$ is $\cal F$-stable, we have $\Phi_{Q}(X) = \Phi_{Q'}(X)$ for all pairs $Q\sim_{\cal F}Q'$, and therefore
\[\frac{1}{\abs{[Q]_{\cal F}}}\sum_{Q'\in [Q]_{\cal F}} \Phi_{Q'}(X)
= \frac{\Phi_{Q}(X)}{\abs{[Q]_{\cal F}}}\sum_{Q'\in [Q]_{\cal F}} 1  = \Phi_{Q}(X).\]
Hence $\pi_{\cal H} X = X$ is an $\cal F$-stabilization of $X$ over $\cal H=\Ø$.

Now assume that $\cal H\neq \Ø$, and let $P\in \cal H$ be maximal under $\cal F$-subconjugation as well as fully normalized. Furthermore assume that the lemma is true for all collections strictly smaller than $\cal H$.

Let $P'\sim_{\cal F} P$. Then there is a homomorphism $\ph\in \cal F(N_S P', N_S P)$ with $\ph(P')=P$ by Lemma \ref{lemNormalizerMap} since $\cal F$ is saturated. The restriction of $S$-actions to the subgroup $\ph(N_S P')$ gives a ring~homo\-morphism $A(S)_{(p)} \to A(\ph(N_S P'))_{(p)}$ that preserves the fixed-point homomorphisms $\Phi_{Q}$ for $Q\leq \ph(N_S P')\leq N_S P$.

If we consider $X$ as an element of $A(\ph(N_S P'))_{(p)}$, we can apply the short exact sequence of Proposition \ref{propYoshidaGroup} to
get $\Psi^{\ph(N_S P')}(\Phi(X))=0$. In particular, the $P$-coordinate function satisfies $\Psi^{\ph(N_S P')}_{P}(\Phi(X))=0$, that is,
\[\sum_{\bar s\in \ph(N_S P')/ P} \Phi_{\gen s P}(X) \equiv 0 \pmod{\abs{\ph(N_S P')/P}}.\]
Similarly, we have $\Psi^S(\Phi^S(X))=0$, where the $P'$-coordinate $\Psi^S_{P'}(\Phi^S(X))=0$ gives us
\[\sum_{\bar s\in N_S P'/P'} \Phi_{\gen s P'}(X) \equiv 0 \pmod{\abs{N_S P'/P'}}.\]
Since $P$ is maximal in $\cal H$, we have by assumption $\Phi_{Q}(X) = \Phi_{Q'}(X)$ for all $Q\sim_{\cal F} Q'$ where $P$ is $\cal F$-conjugate to a \emph{proper} subgroup of $Q$. Specifically, we have
\[\Phi_{\gen {\ph (s)}P}(X) = \Phi_{\ph(\gen sP')}(X) = \Phi_{\gen sP'}(X)\] for all $s\in N_S P'$ with $s\not\in P'$. Taking the sum over all nontrivial cosets $\bar s\in N_S P' / P'$, we get the equality
\[\sum_{\substack{\bar s\in \ph(N_S P')/P\\ \bar s \neq \bar 1}} \Phi_{\gen s P}(X) = \sum_{\substack{\bar s\in N_S P' / P'\\ \bar s \neq \bar 1}} \Phi_{\gen s P'}(X).\]
The difference $\Phi_{P}(X)-\Phi_{P'}(X)$ can then be rewritten by adding the left hand side above and subtracting the right hand side:
\begin{align*}
\Phi_{P}(X)-\Phi_{P'}(X)&= \sum_{\bar s\in \ph(N_S P')/P} \Phi_{\gen s P}(X) - \sum_{\bar s\in N_S P' / P'} \Phi_{\gen s P'}(X)
\\ &\equiv 0-0 \pmod{\abs{W_SP'}}.
\end{align*}
We can therefore define
\[\lambda_{P'} := \frac{\Phi_{P}(X)-\Phi_{P'}(X)}{\abs{W_SP'}}\in \Z_{(p)}.\]
We now recall from Lemma \ref{lemNumberOfConjugates} that $\abs[\big]{[P]_{\cal F}} = \frac{\abs S}{\abs{N_S P}} \cdot k$ where $p\nmid k$, and since $k$ is invertible in $\Z_{(p)}$, we can define
\[c := \left(\sum_{[P']_S\subseteq [P]_{\cal F}} \lambda_{P'} \right) \Big/ k\quad \in \Z_{(p)},\]
as well as
\[\mu_{P'} := \lambda_{P'} - \frac{\abs{W_S P}}{\abs{W_S P'}} c \in \Z_{(p)}.\]
We use the $\mu_{P'}$ as coefficients to construct a new element
\begin{equation}\label{eqX'def}
X':=X + \sum_{[P']_S\subseteq [P]_{\cal F}} \mu_{P'}\cdot [S/P'] \in A(S)_{(p)}.
\end{equation}
We then at least have $c_{Q}(X') = c_{Q}(X)$ for all $Q\not\sim_{\cal F} P$.
The definition of $c$ ensures that
\begin{align*}
 \sum_{[P']_S\subseteq [P]_{\cal F}} \frac{\abs{W_S P}}{\abs{W_S P'}} c
 &= c\cdot\sum_{[P']_S\subseteq [P]_{\cal F}} \frac{\abs{N_S P}}{\abs{N_S P'}}
= c\cdot\frac{\abs {N_S P}}{\abs S}\sum_{[P']_S\subseteq [P]_{\cal F}} \abs{[P']_S}
\\ &= c\cdot\frac{\abs {N_S P}}{\abs S}\cdot\abs{[P]_{\cal F}}= c\cdot k
= \sum_{[P']_S\subseteq [P]_{\cal F}} \lambda_{P'};
\end{align*}
which in turn gives us
\begin{equation}\label{eqX'coord}
\begin{split}
&\sum_{[P']_S \subseteq [P]_{\cal F}} c_{P'}(X') - \sum_{[P']_S \subseteq [P]_{\cal F}} c_{P'}(X) = \sum_{[P']_S\subseteq [P]_{\cal F}} \mu_{P'}
\\={}&\sum_{[P']_S\subseteq [P]_{\cal F}} \lambda_{P'}-\sum_{[P']_S\subseteq [P]_{\cal F}} \frac{\abs{W_S P}}{\abs{W_S P'}} c =0.
\end{split}
\end{equation}
Next we recall that $\Phi_Q([S/P']) =0$ unless $Q\lesssim_S P'$, which implies that $\Phi_Q(X') = \Phi_Q(X)$ for every $Q\not\in \cal H$. We then calculate $\Phi_{Q}(X')$ for each $Q\sim_{\cal F} P$:
\begin{equation}\label{eqX'Phi}
\begin{split}
\Phi_{Q}(X') &= \Phi_{Q}(X) + \sum_{[P']_S\subseteq [P]_{\cal F}} \mu_{P'}\cdot\Phi_{Q}([S/P'])
\\ &=  \Phi_{Q}(X) + \mu_{Q}\cdot\Phi_{Q}([S/Q]) \hspace{1.5cm} \text{since $\Phi_{Q}([S/P'])=0$ unless $P'=Q$}
\\ &= \Phi_{Q}(X) + \mu_{Q}\abs{W_S Q}
\\ &= \Phi_{Q}(X) + \lambda_{Q}\abs{W_S Q} - \frac{\abs{W_S P}}{\abs{W_S Q}} c\cdot \abs{W_S Q} \quad\text{by definition of $\mu_Q$}
\\ &= \Phi_P(X) - \abs{W_S P}c \hspace{3.9cm}\text{by definition of $\lambda_Q$};
\end{split}
\end{equation}
which is independent of the choice of $Q\in [P]_{\cal F}$.

We define $\cal H':= \cal H \setminus [P]_{\cal F}$ as $\cal H$ with the $\cal F$-conjugates of $P$ removed. Because $P$ is maximal in $\cal H$, the subcollection $\cal H'$ again contains all $\cal F$-subconjugates of any $H\in \cal H'$.

From \eqref{eqX'Phi} we get that $X'$ is $\cal F$-stable away from $\cal H'$.
By the induction hypothesis we can therefore apply Lemma \ref{lemInduceFromASptoAFp} to $X'$ and the strictly smaller collection $\cal H'$. We get an element $\pi_{\cal H'} X'\in A(\cal F)_{(p)}$ which is an $\cal F$-stabilization of $X'$ over $\cal H'$, so $\pi_{\cal H'} X'$ satisfies
\begin{enumerate}
\item $\Phi_{Q}(\pi_{\cal H'} X')=\Phi_{Q}(X')$ and $c_{Q}(\pi_{\cal H'} X') = c_{Q}(X')$ for all $Q\not\in\cal H'$.
\item For all $Q\leq S$ we have
\[\sum_{[Q']_S \subseteq [Q]_{\cal F}} c_{Q'}(\pi_{\cal H'} X') = \sum_{[Q']_S \subseteq [Q]_{\cal F}} c_{Q'}(X').\]
\item For every $Q\leq S$:
\begin{align*}
\Phi_{Q}(\pi_{\cal H'} X') &= \frac{1}{\abs{[Q]_{\cal F}}}\sum_{Q'\in [Q]_{\cal F}} \Phi_{Q'}(X').
\end{align*}
\end{enumerate}
We claim that $\pi_{\cal H} X := \pi_{\cal H'} X'$ is an $\cal F$-stabilization of $X$ over $\cal H$ as well.

We immediately have that  $\Phi_{Q}(\pi_{\cal H'} X') = \Phi_{Q}(X')=\Phi_{Q}(X)$ and $c_{Q}(\pi_{\cal H'} X')=c_{Q}(X') = c_{Q}(X)$ for all $Q\not\in \cal H$, so property \ref{itemInducePreserve} is satisfied as well as property \ref{itemInduceCoord} for $Q\not\in\cal H$.
Since $c_Q(X') = c_{Q}(X)$ when $Q\not\sim_{\cal F} P$, we get for all $Q \in \cal H'$ that
\[\sum_{[Q']_S \subseteq [Q]_{\cal F}} c_{Q'}(\pi_{\cal H'} X') = \sum_{[Q']_S \subseteq [Q]_{\cal F}} c_{Q'}(X') = \sum_{[Q']_S \subseteq [Q]_{\cal F}} c_{Q'}(X).\]
Furthermore, since $P\not\in \cal H'$, we have $c_{P'}(\pi_{\cal H'} X') = c_{P'}(X')$ for $P' \sim_{\cal F} P$. Using \eqref{eqX'coord} we then get
\[\sum_{[P']_S \subseteq [P]_{\cal F}} c_{P'}(\pi_{\cal H'} X') = \sum_{[P']_S \subseteq [P]_{\cal F}} c_{P'}(X') = \sum_{[P']_S \subseteq [P]_{\cal F}} c_{P'}(X).\]
This proves that \ref{itemInduceCoord} is satisfied.
Since $c_Q(X') = c_{Q}(X)$ when $Q\not\sim_{\cal F} P$, we have $\Phi_Q (X') = \Phi_Q(X)$ for all $Q$ that are not $\cal F$-subconjugate to $P$. Consequently we have
\begin{align*}
\Phi_Q(\pi_{\cal H'} X') &= \frac{1}{\abs{[Q]_{\cal F}}} \sum_{Q'\in [Q]_{\cal F}} \Phi_{Q'}(X')
\\ &= \frac{1}{\abs{[Q]_{\cal F}}} \sum_{Q'\in [Q]_{\cal F}}  \Phi_{Q'}(X),
\end{align*}
when $Q$ is not $\cal F$-subconjugate to $P$.
By Lemma \ref{lemWeightedMeanOfCoordinates} every $P'\sim_{\cal F} P$ and $Q\leq S$ satisfies
\begin{equation}\label{eqWeightedMeanOfCoordinates}
\frac{1}{\abs{[Q]_{\cal F}}} \sum_{Q'\in [Q]_{\cal F}}  \Phi_{Q'}([S/P']) = \frac{\abs{\cal F(Q,P)}\cdot \abs{S}}{\abs P \cdot \abs{\cal F(Q,S)}}\in \Z_{(p)}.
\end{equation}
In the case where $Q$ is subconjugate to $P$ in $\cal F$, we can then use both \eqref{eqX'def} and \eqref{eqX'coord} to show that
\begin{align*}
  \Phi_Q (\pi_{\cal H'} X')
 \overset{\phantom{\eqref{eqX'def}}}={}& \frac{1}{\abs{[Q]_{\cal F}}} \sum_{Q'\in [Q]_{\cal F}} \Phi_{Q'}(X')
\\ \overset{\eqref{eqX'def}}={}& \frac{1}{\abs{[Q]_{\cal F}}} \sum_{Q'\in [Q]_{\cal F}}  \Phi_{Q'}(X) + \sum_{[P']_S\subseteq [P]_{\cal F}} \mu_{P'} \left(\frac{1}{\abs{[Q]_{\cal F}}} \sum_{Q'\in [Q]_{\cal F}} \Phi_{Q'}([S/P'])\right)
\\ \overset{\eqref{eqWeightedMeanOfCoordinates}}={}& \frac{1}{\abs{[Q]_{\cal F}}} \sum_{Q'\in [Q]_{\cal F}}  \Phi_{Q'}(X) + \sum_{[P']_S\subseteq [P]_{\cal F}} \mu_{P'}\cdot \frac{\abs{\cal F(Q,P)}\cdot \abs{S}}{\abs P \cdot \abs{\cal F(Q,S)}}
\\ \overset{\eqref{eqX'coord}}={}& \frac{1}{\abs{[Q]_{\cal F}}} \sum_{Q'\in [Q]_{\cal F}}  \Phi_{Q'}(X)+0;
\end{align*}
which proves that $\pi_{\cal H'} X'$ satisfies \ref{itemInduceWeightedMean}.
\end{proof}

\begin{mainthm}\label{thmStabilizationHom}
Let $\cal F$ be a saturated fusion system on a finite $p$-group $S$. We let $A(\cal F)_{(p)}$ denote the $p$-localized Burnside ring of $\cal F$ as a subring of the $p$-localized Burnside ring $A(S)_{(p)}$ for $S$.
For each $X\in A(S)_{(p)}$, there is a well defined $\cal F$-stable element $\pi(X)\in A(\cal F)_{(p)}$ determined by the fixed point formula
\begin{align*}
\Phi_{Q}(\pi(X)) ={}& \frac 1{\abs{[Q]_{\cal F}}}\sum_{Q'\in [Q]_{\cal F}} \Phi_{Q'}(X),
\end{align*}
which takes the average for each $\cal F$-conjugacy class $[Q]_{\cal F}$ of subgroups $Q\leq S$. The resulting map $\pi\colon A(S)_{(p)} \to A(\cal F)_{(p)}$ is a homomorphism of $A(\cal F)_{(p)}$-modules and restricts to the identity on $A(\cal F)_{(p)}$.
\end{mainthm}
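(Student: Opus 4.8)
The plan is to deduce the theorem directly from Lemma~\ref{lemInduceFromASptoAFp} by applying it with $\cal H$ taken to be the collection of \emph{all} subgroups of $S$. First I would observe that $\cal H = \{Q \mid Q\leq S\}$ is trivially closed under taking $\cal F$-subconjugates, and that with this choice \emph{every} $X\in A(S)_{(p)}$ is $\cal F$-stable away from $\cal H$ vacuously, since there are no pairs $P\sim_{\cal F} P'$ with $P,P'\notin\cal H$. Hence Lemma~\ref{lemInduceFromASptoAFp} applies to every $X$ and produces a uniquely determined element $\pi(X):=\pi_{\cal H}X\in A(\cal F)_{(p)}\subseteq A(S)_{(p)}$, namely the stabilization of $X$ over $\cal H$. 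Property \ref{itemInduceWeightedMean} of Definition~\ref{dfnStabilization} is exactly the asserted fixed point formula for $\Phi_Q(\pi(X))$; and since that formula together with the injectivity of the mark homomorphism $\Phi^{\cal F}$ (Proposition~\ref{propYoshidaFusion}) pins $\pi(X)$ down uniquely, the map $\pi\colon A(S)_{(p)}\to A(\cal F)_{(p)}$ is well defined.

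The remaining assertions I would verify by computing fixed points and invoking injectivity of $\Phi^{\cal F}$ throughout. For $\Z_{(p)}$-linearity: the right hand side $\frac{1}{\abs{[Q]_{\cal F}}}\sum_{Q'\in[Q]_{\cal F}}\Phi_{Q'}(X)$ is $\Z_{(p)}$-linear in $X$ because each $\Phi_{Q'}$ is a group homomorphism, so $\Phi_Q(\pi(X+Y))=\Phi_Q(\pi(X))+\Phi_Q(\pi(Y))$ and $\Phi_Q(\pi(aX))=a\,\Phi_Q(\pi(X))$ for $a\in\Z_{(p)}$; since $\pi(X+Y)-\pi(X)-\pi(Y)$ and $\pi(aX)-a\pi(X)$ lie in $A(\cal F)_{(p)}$ and have all marks zero, they vanish. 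For the claim that $\pi$ restricts to the identity on $A(\cal F)_{(p)}$: if $X$ is $\cal F$-stable then $\Phi_{Q'}(X)=\Phi_Q(X)$ for all $Q'\sim_{\cal F} Q$ by the characterization \ref{itemFConjStable} of $\cal F$-stability, so the average equals $\Phi_Q(X)$; thus $\pi(X)$ and $X$ have the same marks, hence coincide.

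Finally, for the $A(\cal F)_{(p)}$-module structure, let $a\in A(\cal F)_{(p)}$ and $X\in A(S)_{(p)}$. Using that each $\Phi_{Q'}$ is a ring homomorphism on $A(S)_{(p)}$ and that $a$ is $\cal F$-stable, for every $Q'\sim_{\cal F} Q$ we have $\Phi_{Q'}(aX)=\Phi_{Q'}(a)\,\Phi_{Q'}(X)=\Phi_Q(a)\,\Phi_{Q'}(X)$; averaging over $[Q]_{\cal F}$ gives
\[
\Phi_Q(\pi(aX)) \;=\; \Phi_Q(a)\cdot\frac{1}{\abs{[Q]_{\cal F}}}\sum_{Q'\in[Q]_{\cal F}}\Phi_{Q'}(X) \;=\; \Phi_Q(a)\,\Phi_Q(\pi(X)) \;=\; \Phi_Q\bigl(a\cdot\pi(X)\bigr).
\]
Since $A(\cal F)_{(p)}$ is a subring, $a\cdot\pi(X)\in A(\cal F)_{(p)}$, and as marks are injective on $A(\cal F)_{(p)}$ we conclude $\pi(aX)=a\cdot\pi(X)$. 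Together with additivity this shows $\pi$ is a homomorphism of $A(\cal F)_{(p)}$-modules, completing the proof.

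The essential content is all packed into Lemma~\ref{lemInduceFromASptoAFp}, which is already established, so I do not expect a genuine obstacle here; the remaining work is bookkeeping with the mark homomorphism. The one point that warrants a moment's care is confirming that $\cal H=\{\text{all subgroups of }S\}$ is a legitimate input to Lemma~\ref{lemInduceFromASptoAFp} and that ``$\cal F$-stable away from $\cal H$'' is then automatically satisfied, so that $\pi$ is genuinely defined on \emph{all} of $A(S)_{(p)}$.
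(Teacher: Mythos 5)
Your proposal is correct and follows essentially the same route as the paper: the paper also defines $\pi(X)$ by applying Lemma \ref{lemInduceFromASptoAFp} with $\cal H$ the collection of all subgroups of $S$ (so that the stability hypothesis is vacuous), reads off the fixed point formula from property \ref{itemInduceWeightedMean}, and then verifies the identity on $A(\cal F)_{(p)}$ and the $A(\cal F)_{(p)}$-module property by computing marks and using injectivity of the mark homomorphism. The only cosmetic difference is that you spell out the additivity check that the paper dismisses as ``a similar argument.''
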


\begin{rmk}
We will later show in Corollary \ref{corCharOnesidedStabilization} that the stabilization map $\pi\colon A(S)_{(p)} \to A(\cal F)_{(p)}$ in fact coincides with the map $A(S)_{(p)}\to A(S)_{(p)}$ induced by the $\cal F$-characteristic idempotent $\omega_{\cal F}$ as an element of the double Burnside ring $A(S,S)_{(p)}$. By constructing the map as we do here, instead of blindly applying the characteristic idempotent to define $\pi$, we gain a better understanding of the stabilization map: Of particular interest is the fixed point formula of Theorem \ref{thmStabilizationHom} itself as well as Remark \ref{rmkStabilizationCoeff}, which states that $\pi$ ``preserves orbits up to $\cal F$-conjugation''.
\end{rmk}

\begin{proof}[Proof of Theorem \ref{thmStabilizationHom}]
If we apply Lemma \ref{lemInduceFromASptoAFp} to $X$ and the collection $\cal H$ of all subgroups in $S$, we get a stable element $\pi_{\cal H}X\in A(\cal F)_{(p)}$ satisfying
\[\Phi_{Q}(\pi_{\cal H}X) = \frac 1{\abs{[Q]_{\cal F}}}\sum_{Q'\in [Q]_{\cal F}} \Phi_{Q'}(X)\]
as wanted. Hence $\pi(X) = \pi_{\cal H}X$ is an actual element of $A(\cal F)_{(p)}$.
If we apply $\pi$ to an element $X$ that is already $\cal F$-stable, then
\[\Phi_Q(\pi (X)) = \frac 1{\abs{[Q]_{\cal F}}}\sum_{Q'\in [Q]_{\cal F}} \Phi_{Q'}(X) = \frac 1{\abs{[Q]_{\cal F}}}\sum_{Q'\in [Q]_{\cal F}} \Phi_{Q}(X) = \Phi_{Q}(X),\]
so $\pi (X) = X$, and thus $\pi$ is the identity map when restricted to $A(\cal F)_{(p)}$.

If $X\in A(\cal F)_{(p)}$ and $Y\in A(S)_{(p)}$, then since the fixed point homomorphisms preserve products, we have
\begin{align*}
\Phi_Q(\pi(XY)) &= \frac 1{\abs{[Q]_{\cal F}}}\sum_{Q'\in [Q]_{\cal F}} \Phi_{Q'}(XY) = \frac 1{\abs{[Q]_{\cal F}}}\sum_{Q'\in [Q]_{\cal F}} \Phi_{Q'}(X)\Phi_{Q'}(Y)
\\ &= \Phi_{Q}(X)\cdot\frac 1{\abs{[Q]_{\cal F}}}\sum_{Q'\in [Q]_{\cal F}}\Phi_{Q'}(Y) = \Phi_{Q}(X) \cdot\Phi_Q(\pi(Y)).
\end{align*}
This shows that $\pi(XY) = X\cdot\pi(Y)$, and by a similar argument, $\pi$ preserves addition. Hence $\pi$ is a homomorphism of $A(\cal F)_{(p)}$-modules.
\end{proof}

\begin{rmk}\label{rmkStabilizationCoeff}
Since $\pi(X)=\pi_{\cal H} X$ is the $\cal F$-stabilization of $X$ over the collection $\cal H$ of all subgroups, property \ref{itemInduceCoord} of Definition \ref{dfnStabilization} states that
\[\sum_{[P']_S \subseteq [P]_{\cal F}} c_{P'}(\pi (X)) = \sum_{[P']_S \subseteq [P]_{\cal F}} c_{P'}(X) \quad \text{for all $P\leq S$.}\]
Hence $\pi$ replaces orbits of $X$ within each $\cal F$-conjugation class, but doesn't otherwise add or remove orbits from $X$.
This fact will be important for describing the action of the characteristic idempotent on bisets in Theorem \ref{thmCharIdemMultiplication} of section \ref{secCharIdem}.
\end{rmk}

We know that the transitive $S$ sets $[S/P]$ form a basis for $A(S)_{(p)}$. We now apply the projection $\pi\colon A(S)_{(p)} \to A(\cal F)_{(p)}$ to this basis, and we get a new basis for the $p$-localized Burnside ring $A(\cal F)_{(p)}$.
\begin{dfn}\label{dfnBetas}
For each $P\leq S$ we define $\beta_{P}\in A(\cal F)_{(p)}$ to be the element $\beta_P:= \pi([S/P])$.
\end{dfn}

\begin{rmk}\label{rmkBetas}
By Theorem \ref{thmStabilizationHom} and Lemma \ref{lemWeightedMeanOfCoordinates} the element $\beta_{P}$ satisfies
\[\Phi_{Q}(\beta_{P}) = \frac 1{\abs{[Q]_{\cal F}}}\sum_{Q'\in [Q]_{\cal F}}\Phi_{Q'}([S/P]) = \frac{\abs{\cal F(Q,P)}\cdot \abs{S}}{\abs P \cdot \abs{\cal F(Q,S)}} \in \Z_{(p)},\]
where the last expression, and thus $\beta_P$, only depends on $P$ up to conjugation in $\cal F$.
\end{rmk}

\begin{prop}\label{thmPLocalBurnsideBasis}
Let $\beta_{P}\in A(\cal F)_{(p)}$ be defined as above.
The set of these elements $\{\beta_P\mid P\leq S\text{ up to $\cal F$-conjugation}\}$ is a $\Z_{(p)}$-basis for $A(\cal F)_{(p)}$.
\end{prop}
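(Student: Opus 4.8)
The plan is to obtain the spanning property from the surjectivity of the stabilization map $\pi$, the linear independence from a triangularity argument with the mark homomorphism, and then let a rank count force the generating set to be a basis. As preliminary bookkeeping, recall that $A(\cal F)$ is free over $\Z$ with basis indexed by the $\cal F$-conjugacy classes of subgroups, so $A(\cal F)_{(p)}$ is a free $\Z_{(p)}$-module of rank $\abs{\ccset{\cal F}}$; and by Definition \ref{dfnBetas} the element $\beta_P$ depends only on the $\cal F$-conjugacy class of $P$, so the proposed set $\{\beta_P\mid P\leq S\text{ up to }\cal F\text{-conjugation}\}$ has at most $\abs{\ccset{\cal F}}$ elements.

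For spanning, I would argue as follows. By Theorem \ref{thmStabilizationHom} the map $\pi\colon A(S)_{(p)}\to A(\cal F)_{(p)}$ is $\Z_{(p)}$-linear and restricts to the identity on $A(\cal F)_{(p)}$, hence is surjective as a homomorphism of $\Z_{(p)}$-modules. The transitive sets $[S/P]$, one per $S$-conjugacy class, form a $\Z_{(p)}$-basis of $A(S)_{(p)}$, so their images $\beta_P=\pi([S/P])$ generate $A(\cal F)_{(p)}$; and since $\beta_P=\beta_Q$ whenever $P\sim_{\cal F}Q$, already the subset indexed by $\cal F$-conjugacy classes spans $A(\cal F)_{(p)}$.

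For independence I would use the mark homomorphism $\Phi^{\cal F}\colon A(\cal F)_{(p)}\hookrightarrow\prod_{[Q]_{\cal F}}\Z_{(p)}$ together with the formula of Definition \ref{dfnBetas}: the entry $\Phi_Q(\beta_P)=\frac{\abs{\cal F(Q,P)}\cdot\abs S}{\abs P\cdot\abs{\cal F(Q,S)}}$ is nonzero precisely when $Q\lesssim_{\cal F}P$. Ordering the $\cal F$-classes by any linear extension of $\lesssim_{\cal F}$ and picking one representative per class, the matrix $\bigl(\Phi_Q(\beta_P)\bigr)$ is therefore triangular, with diagonal entries $\Phi_P(\beta_P)=\frac{\abs{\cal F(P,P)}\cdot\abs S}{\abs P\cdot\abs{\cal F(P,S)}}>0$ (equal to $\abs{W_SP}/k$ with $p\nmid k$ when $P$ is chosen fully normalized, by Lemma \ref{lemNumberOfConjugates}). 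This matrix is invertible over $\Q$, so the $\beta_P$'s, one per $\cal F$-class, are $\Q$-linearly independent, hence $\Z_{(p)}$-linearly independent; in particular there are exactly $\abs{\ccset{\cal F}}$ distinct ones. A linearly independent generating set of size equal to the rank is a basis, which completes the proof.

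The subtle point to watch is that the triangular matrix above is in general \emph{not} invertible over $\Z_{(p)}$ — its diagonal entries $\abs{W_SP}/k$ need not be $p$-adic units — so spanning cannot be read off from the mark homomorphism alone, and the surjectivity of $\pi$ is what genuinely supplies that half of the argument. (Alternatively, one may skip the independence computation and invoke that an $n$-element generating set of a free rank-$n$ module over a commutative ring is automatically a basis, since a surjective endomorphism of a finitely generated module over a commutative ring is injective.)
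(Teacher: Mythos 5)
Your proof is correct and follows essentially the same route as the paper: spanning is obtained from the surjectivity of $\pi$ (which restricts to the identity on $A(\cal F)_{(p)}$) applied to the standard basis $[S/P]$ of $A(S)_{(p)}$, and linear independence from the triangularity of the matrix $\bigl(\Phi_Q(\beta_P)\bigr)$ with nonzero diagonal entries $\frac{\abs{\cal F(P,P)}\cdot\abs S}{\abs P\cdot\abs{\cal F(P,S)}}$. Your additional observations (that the diagonal entries need not be $p$-adic units, and the rank-count alternative) are accurate but not needed beyond what the paper already does.
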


\begin{proof}
Because the transitive $S$-sets $[S/P]$ for $P\leq S$ generate $A(S)_{(p)}$, and since $\pi$ is surjective, the elements $\beta_P$ must generate all of $A(\cal F)_{(p)}$.

We now order the $\cal F$-conjugacy classes $[P]_{\cal F}$ according to decreasing order of $P$, and the
mark homomorphism $\Phi\colon \Span\{\beta_{P}\} \to \free{\cal F}_{(p)}$ is then represented by a matrix $M$ with entries
\[M_{Q,P} = \frac{\abs{\cal F(Q,P)}\cdot \abs{S}}{\abs P \cdot \abs{\cal F(Q,S)}}.\]
If $Q$ is not $\cal F$-subconjugate to $P$, then $M_{Q,P}=0$; so $M$ is a lower triangular matrix with diagonal entries
\[M_{P,P} = \frac{\abs{\cal F(P,P)}\cdot \abs{S}}{\abs P \cdot \abs{\cal F(P,S)}}\neq 0.\]
Since all diagonal entries are non-zero, we conclude that the $\beta_P$ are linearly independent over $\Z_{(p)}$.
\end{proof}

The mark homomorphism $\Phi\colon A(S)_{(p)}\to \free{S}_{(p)}$ embeds the Burnside ring of $S$ into its ghost ring, and since we know the value of $\Phi_Q(\beta_P)$ from Remark \ref{rmkBetas}, we know the image of $\beta_P$ inside $\free{S}_{(p)}$. We might then wonder whether we can pull back our knowledge from $\free S_{(p)}$ to $A(S)_{(p)}$ and write $\beta_P$ explicitly as a linear combination of transitive $S$-sets.

In \cite{Gluck}, David Gluck gives a method on how to do exactly this. Because $A(S)_{(p)}$ embeds in the ghost ring $\free{S}_{(p)}$ as a subring of finite index, if we take the tensor product with $\Q$, we get an isomorphism $\Phi\colon A(S)\ox \Q \xto{\cong} \free S\ox \Q.$
Let $e_Q:= (0,\dotsc,0,1,0,\dotsc,0)$ be the standard basis element of $\free{S}\ox \Q$ corresponding to the subgroup $Q\leq S$.

According to the one Proposition of \cite{Gluck} the inverse $\Phi^{-1}\colon\free S \ox \Q \to A(S)\ox \Q$ is then given by
\begin{equation}\label{eqPhiInverse}
\Phi^{-1}(e_Q) =\frac1{\abs{N_S Q}} \sum_{R\leq Q} \mu(R,Q)\cdot \abs R\cdot [S/R],
\end{equation}
where $\mu$ is the Möbius-function for the poset of subgroups in $S$.

Since we know the image $\Phi(\beta_P)$, we can apply the isomorphism above to get an expression for $\beta_P$ inside $A(S)\ox \Q$; and because $A(S)_{(p)}$ is embedded in $A(S)\ox \Q$, the expression holds in $A(S)_{(p)}$ as well.
\begin{prop}\label{propPLocalBasisDecomp}
For each $P\leq S$, the element $\beta_P\in A(\cal F)_{(p)}$ is given by the following expression when written as a $\Z_{(p)}$-linear combination of transitive $S$-sets:
\begin{align*}
\beta_P &= \sum_{[R]_S} \frac {1}{\Phi_R([S/R])} \left(\sum_{R\leq Q\leq S} \Phi_Q(\beta_P)\cdot \mu(R,Q) \right)[S/R]
\\ &= \sum_{[R]_S} \frac {\abs R\cdot \abs S}{\abs{N_S R}\cdot\abs P} \left(\sum_{R\leq Q\leq S} \frac{\abs{\cal F(Q,P)}}{\abs{\cal F(Q,S)}}\cdot \mu(R,Q) \right)[S/R].
\end{align*}
In particular $\beta_P$ contains no copies of $[S/R]$ unless $R$ is $\cal F$-subconjugate to $P$.
\end{prop}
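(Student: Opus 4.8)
The plan is to obtain the coefficients of $\beta_P$ directly from its fixed-point vector, which is already known from Definition \ref{dfnBetas}, by inverting the mark homomorphism via Gluck's formula \eqref{eqPhiInverse}. Since $\beta_P\in A(\cal F)_{(p)}\subseteq A(S)_{(p)}$ sits inside $A(S)\ox\Q$, where $\Phi$ is an isomorphism onto $\free S\ox\Q$, I would write $\Phi(\beta_P)=\sum_{[Q]_S}\Phi_Q(\beta_P)\,e_Q$ and apply $\Phi^{-1}$ term by term using \eqref{eqPhiInverse}. This immediately expresses $\beta_P$ as a $\Q$-linear combination of the transitive sets $[S/R]$, and the remaining work is purely to collect, for each $S$-conjugacy class $[R]_S$, the total coefficient of $[S/R]$.

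The key intermediate step is the ``collected'' form of Gluck's formula: for every $X\in A(S)\ox\Q$ and $R\leq S$,
\[c_R(X)=\frac{1}{\Phi_R([S/R])}\sum_{R\leq Q\leq S}\mu(R,Q)\,\Phi_Q(X),\qquad \Phi_R([S/R])=\frac{\abs{N_S R}}{\abs R}\]
by \eqref{eqPhiOnBasis}. Starting from \eqref{eqPhiInverse}, the contribution of $\Phi^{-1}(e_Q)$ to the coefficient of $[S/R]$ is $\frac{\abs R}{\abs{N_S Q}}\sum_{R'\leq Q,\ R'\sim_S R}\mu(R',Q)$; summing over the classes $[Q]_S$ and then re-counting the pairs $(R',Q')$ with $R'\sim_S R$, $Q'\sim_S Q$, $R'\leq Q'$ in the opposite order (using that $\Phi_Q(X)$ and $\mu$ are conjugation-invariant) converts this double sum into $\frac{\abs R}{\abs{N_S R}}\sum_{R\leq Q\leq S}\mu(R,Q)\,\Phi_Q(X)$. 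Alternatively, one can bypass Gluck's formula altogether and simply verify the asserted identity directly: apply $\Phi_{Q_0}$ to the claimed right-hand side, use M\"obius inversion over the poset of subgroups of $S$ together with \eqref{eqPhiOnBasis} to recover $\Phi_{Q_0}(\beta_P)$, and conclude by injectivity of $\Phi$. Either way, feeding $X=\beta_P$ and the value $\Phi_Q(\beta_P)=\frac{\abs{\cal F(Q,P)}\cdot\abs S}{\abs P\cdot\abs{\cal F(Q,S)}}$ from Definition \ref{dfnBetas} into the collected formula yields both displayed expressions.

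Finally I would dispose of the two side assertions. Integrality is automatic: the coefficients of $\beta_P$ in the transitive-set basis lie in $\Z_{(p)}$ because $\beta_P\in A(S)_{(p)}$ and the $[S/R]$ form a $\Z_{(p)}$-basis of $A(S)_{(p)}$, so no divisibility estimate on the M\"obius sums is needed. For the support claim, note that $\cal F(Q,P)=\emptyset$ exactly when $Q$ is not $\cal F$-subconjugate to $P$, so $\Phi_Q(\beta_P)=0$ unless $Q\lesssim_{\cal F}P$; hence the inner sum $\sum_{R\leq Q\leq S}\mu(R,Q)\,\Phi_Q(\beta_P)$ can only be nonzero when some $Q$ with $R\leq Q\leq S$ satisfies $Q\lesssim_{\cal F}P$, and $R\leq Q\lesssim_{\cal F}P$ forces $R\lesssim_{\cal F}P$.

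The only genuinely non-trivial part is the bookkeeping in the second paragraph — rewriting Gluck's per-$Q$ expansion as a per-$R$ collection — which is a routine but slightly delicate double-counting/M\"obius manipulation; everything else is substitution of the already-known fixed-point values and the two short observations above.
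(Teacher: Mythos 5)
Your proposal is correct and follows essentially the same route as the paper: expand $\Phi(\beta_P)$ in the ghost-ring basis, invert via Gluck's formula \eqref{eqPhiInverse}, collect the coefficient of each $[S/R]$, substitute the fixed-point values from Definition \ref{dfnBetas}, and note the vanishing/integrality claims exactly as you do. The only difference is cosmetic: the paper avoids your pair-counting step by rewriting the class sum as a sum over all subgroups $Q\leq S$ weighted by $\abs{N_S Q}/\abs S$, which makes the interchange of summation immediate.
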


\begin{proof}
By definition of the mark homomorphism $\Phi$ and the idempotents $e_Q$ we have
\begin{align*}
\Phi(\beta_P) &= \sum_{[Q]_S} \Phi_Q(\beta_P)\cdot e_Q
= \sum_{Q \leq S} \frac{\abs{N_S Q}}{\abs S}\cdot \Phi_Q(\beta_P)\cdot e_Q.
\end{align*}
We then apply the formula \eqref{eqPhiInverse} for the inverse of $\Phi$ and get
\begin{align*}
\beta_P &= \Phi^{-1}\left(\sum_{Q \leq S} \frac{\abs{N_S Q}}{\abs S}\cdot \Phi_Q(\beta_P)\cdot e_Q\right)
\\ &= \sum_{Q \leq S} \frac{\abs{N_S Q}}{\abs S}\cdot \Phi_Q(\beta_P)\cdot \frac1{\abs{N_S Q}} \left(\sum_{R\leq Q} \mu(R,Q)\cdot \abs R\cdot [S/R]\right)
\\ &= \sum_{R\leq S} \frac {\abs R}{\abs S} \left(\sum_{R\leq Q\leq S} \Phi_Q(\beta_P)\cdot \mu(R,Q) \right)[S/R]
\\ &= \sum_{[R]_S} \frac {\abs R}{\abs{N_S R}} \left(\sum_{R\leq Q\leq S} \Phi_Q(\beta_P)\cdot \mu(R,Q) \right)[S/R]
\\ &= \sum_{[R]_S} \frac {\abs R\cdot \abs S}{\abs{N_S R}\cdot\abs P} \left(\sum_{R\leq Q\leq S} \frac{\abs{\cal F(Q,P)}}{\abs{\cal F(Q,S)}}\cdot \mu(R,Q) \right)[S/R].
\end{align*}
If $R$ is not $\cal F$-subconjugate to $P$, then $\abs{\cal F(Q,P)}=0$ for all $R\leq Q\leq S$, and hence the coefficient of $[S/R]$ above becomes zero.
\end{proof}

\subsection{Equivalent Burnside rings}\label{secEquivBurnsideRings}
In this section we compare the ring $A(\cal F)_{(p)}$, with the $\beta_P$-basis, to other Burnside rings related to the saturated fusion system $\cal F$. First we consider the case where $\cal F$ is realized by a group $G$: We see that $A(\cal F)_{(p)}$ is isomorphic to the ring $A(G;p)_{(p)}$ generated by $G$-sets $[G/P]$ where $P\leq G$ is a $p$-group, and the basis element $\beta_P$ corresponds to the transitive $G$-set $[G/P]$ up to normalization with a common invertible element in $A(G;p)_{(p)}$. After that, we consider the Burnside ring $A^{cent}(\cal F)$ introduced by Antonio Diaz and Assaf Libman in \cite{DiazLibman}, which is defined using only the centric subgroups of $\cal F$: We show that after $p$-localization $A^{cent}(\cal F)_{(p)}$ is isomorphic to the ``centric part'' of $A(\cal F)_{(p)}$, again with the basis elements corresponding to each other in a suitable way.
Both of these isomorphisms are originally due to Diaz-Libman in \cite{DiazLibman:Segal} as Example 3.9 and Theorem A, respectively. New in this section is the fact that the bases of the rings correspond as well.

\begin{prop}\label{propSameBurnsideGroup}
Suppose that $S$ is a Sylow $p$-subgroup of $G$, and let $\cal F:= \cal F_S(G)$. Define $A(G;p)$ to be the subring of $A(G)$ where all isotropy subgroups are $p$-groups.

Then the transitive $G$-set $[G/S]$ is invertible in $A(G;p)_{(p)}$, and we get an isomorphism of rings $A(\cal F)_{(p)} \cong A(G;p)_{(p)}$ by
\[\beta_P \mapsto \frac {[G/P]}{[G/S]}.\]
\end{prop}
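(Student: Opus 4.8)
The plan is to realise the asserted isomorphism as the inverse of the restriction homomorphism $\rho:=\mathrm{Res}^G_S\colon A(G;p)_{(p)}\to A(S)_{(p)}$. The first thing I would check is that $\rho$ actually lands in $A(\cal F)_{(p)}$: if $X$ is a $G$-set with $p$-subgroup stabilizers, then $\Phi_Q(\mathrm{Res}^G_S X)=\abs{X^Q}$ depends only on the $G$-conjugacy class of $Q\leq S$, hence only on its $\cal F_S(G)$-conjugacy class, so $\mathrm{Res}^G_S X$ is $\cal F$-stable by characterization \ref{itemFConjStable}. Next, $\rho$ is a ring homomorphism, and it is injective on $A(G;p)_{(p)}$: by Sylow every $p$-subgroup of $G$ is $G$-conjugate into $S$, so the marks $\Phi_Q$ with $Q\leq S$ already detect elements of $A(G;p)$, and these are preserved by $\rho$. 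Finally, both $A(G;p)_{(p)}$ and $A(\cal F)_{(p)}$ are free $\Z_{(p)}$-modules with basis indexed by $\ccset{\cal F}$ (for the former, because the $G$-conjugacy classes of $p$-subgroups of $G$ coincide with the $\cal F$-conjugacy classes of subgroups of $S$).

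Next I would prove that $[G/S]$ is a unit of $A(G;p)_{(p)}$, which incidentally shows that this non-unital subring of $A(G)$ does have a multiplicative identity. By the double coset formula \eqref{eqSingleBurnsideDoubleCoset}, which holds in $A(G)$ as well,
\[
[G/S]\cdot[G/P] \;=\; \sum_{SgP\in S\backslash G/P}\, [G/(S\cap \lc{g}P)],
\]
and $\abs{S\cap\lc{g}P}\leq\abs P$ with equality precisely when $\lc{g}P\leq S$. Hence, ordering a basis $\{[G/P]\}$ by decreasing $\abs P$ and choosing each $P$ fully $\cal F$-normalized, the matrix of ``multiplication by $[G/S]$'' is triangular, with diagonal entry at $[G/P]$ equal to $d_P:=\abs{\{SgP: \lc{g}P\leq S\}}=\abs{N_G P}\cdot\abs{[P]_{\cal F}}/\abs S$ (a free $(S\times P)$-count of the transporter set). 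Since $N_S P$ is Sylow in $N_G P$ for $P$ fully normalized, Lemma \ref{lemNumberOfConjugates} shows each $d_P$ is prime to $p$, so multiplication by $[G/S]$ is a $\Z_{(p)}$-linear automorphism $L$ of $A(G;p)_{(p)}$; then the unique $u$ with $L(u)=[G/S]$ is a multiplicative identity, and $L^{-1}(u)$ is a two-sided inverse $[G/S]^{-1}$. (Alternatively one may invoke the idempotent structure of $A(G)_{(p)}$: $A(G;p)_{(p)}$ is the summand cut out by the primitive idempotent of the trivial subgroup and is local with maximal ideal $\{X:\abs X\equiv 0\ (\mathrm{mod}\ p)\}$, so $[G/S]$ is a unit because $\abs{G/S}=[G:S]$ is prime to $p$.)

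The core of the proof is then the identity $\rho([G/P])=\rho([G/S])\cdot\beta_P$ in $A(\cal F)_{(p)}$. Both sides lie in $A(\cal F)_{(p)}$ (a restriction of a $G$-set, resp.\ a product of $\cal F$-stable elements), so by injectivity of the mark homomorphism $\Phi^{\cal F}$ it suffices to compare fixed points: for $Q\leq S$, by \eqref{eqPhiOnBasis} applied to $G$ and by Definition \ref{dfnBetas},
\[
\Phi_Q(\rho([G/P])) = \frac{\abs{N_G(Q,P)}}{\abs P},\qquad
\Phi_Q(\rho([G/S])\cdot\beta_P) = \frac{\abs{N_G(Q,S)}}{\abs S}\cdot\frac{\abs{\cal F(Q,P)}\cdot\abs S}{\abs P\cdot\abs{\cal F(Q,S)}},
\]
and these coincide since $\abs{\cal F(Q,P)}=\abs{N_G(Q,P)}/\abs{C_G(Q)}$ for every $P$, whence $\abs{\cal F(Q,P)}/\abs{\cal F(Q,S)}=\abs{N_G(Q,P)}/\abs{N_G(Q,S)}$. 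As $\rho$ is a unital ring homomorphism and $[G/S]$ is a unit, $\rho([G/S])$ is a unit in $A(\cal F)_{(p)}$, so $\beta_P=\rho([G/S])^{-1}\rho([G/P])=\rho\big([G/S]^{-1}\cdot[G/P]\big)$ lies in the image of $\rho$. Since the $\beta_P$ form a $\Z_{(p)}$-basis of $A(\cal F)_{(p)}$ (Proposition \ref{thmPLocalBurnsideBasis}), $\rho$ is surjective, hence an isomorphism of rings; its inverse $\Theta:=\rho^{-1}$ is the asserted ring isomorphism $A(\cal F)_{(p)}\xrightarrow{\ \cong\ }A(G;p)_{(p)}$, and $\Theta(\beta_P)=[G/S]^{-1}[G/P]=[G/P]/[G/S]$ by construction.

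The step I expect to be most delicate is the middle one: it carries all the fusion-theoretic content (Lemma \ref{lemNumberOfConjugates}, via the primality of the $d_P$), and it requires some care because $A(G;p)$ is only a non-unital subring of $A(G)$ — one must argue that $A(G;p)_{(p)}$ has its own identity, which is not $[G/G]$, and that $\rho$ sends it to $[S/S]$; the latter follows because that identity has all marks at $p$-subgroups equal to $1$. Everything else is routine: the double coset formula, the elementary counting identity $\abs{\cal F(Q,P)}=\abs{N_G(Q,P)}/\abs{C_G(Q)}$, and injectivity of the mark homomorphisms.
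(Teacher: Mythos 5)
Your proof is correct, and it takes a recognizably different route from the paper's. The paper never mentions the restriction map: it embeds both $A(G;p)_{(p)}$ and $A(\cal F)_{(p)}$ into the same ghost ring $\free{\cal F}_{(p)}$ via marks, checks that each coordinate $\Phi_Q([G/S])$ is a unit of $\Z_{(p)}$ (using Lemma \ref{lemNumberOfConjugates} and the fact that $N_S Q$ is Sylow in $N_G Q$ for $Q$ fully normalized), and then deduces invertibility of $[G/S]$ in $A(G;p)_{(p)}$ from the finite index of $A(G;p)_{(p)}$ in $\free{\cal F}_{(p)}$; the isomorphism is then read off because the mark vectors of $[G/P]/[G/S]$ and $\beta_P$ literally coincide. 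You instead realize the isomorphism as the inverse of $\mathrm{Res}^G_S$, prove invertibility of $[G/S]$ by a triangularity argument for the double coset formula (your computation of the diagonal entries $d_P=\abs{N_G P}\cdot\abs{[P]_{\cal F}}/\abs S$ and their primality to $p$ is correct, and it invokes exactly the same two inputs, Lemma \ref{lemNumberOfConjugates} and the Sylow property of $N_S P$ in $N_G P$, that the paper uses), and then verify $\rho([G/P])=\rho([G/S])\cdot\beta_P$ on marks, which is the same numerical identity the paper checks. What your route buys is a conceptual identification of the abstract isomorphism with a natural map (restriction of $G$-sets to $S$-sets), plus a ghost-ring-free proof that the non-unital ideal $A(G;p)_{(p)}$ has its own identity $u$ with $\rho(u)=[S/S]$; what the paper's route buys is brevity, since once everything sits inside $\free{\cal F}_{(p)}$ the finite-index argument replaces your triangular matrix. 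One caveat: your parenthetical alternative (that $A(G;p)_{(p)}$ is the block of the trivial $p$-perfect subgroup and is local with maximal ideal $\{X:\abs X\equiv 0 \pmod p\}$) is asserted without proof and would need a citation or argument, but your main line of proof does not depend on it.
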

Note that dividing by $[G/S]$ only makes sense in $A(G;p)_{(p)}$ and not in $A(G)_{(p)}$. The rings $A(G;p)_{(p)}$ and $A(G)_{(p)}$ have different $1$-elements, and $[G/S]$ is not invertible in $A(G)_{(p)}$.

The isomorphism above is in a way the best we could hope for, since the basis element $\beta_P$ only depends on the fusion data in $\cal F_S(G)$, while the transitive $G$-set $[G/P]$ depends on the actual group $G$. If we replace $G$ with a product $G'=G\x H$ where $H$ is a $p'$-group, then the fusion system $\cal F_S(G')$ and $\beta_P$ are the same for $G'$ as for $G$, but the transitive set $[G'/P]$ has increased in size by a factor $\abs H$. However, as we see in the proof below, the quotient $\frac {[G/P]}{[G/S]}$ depends only on the fusion system and not on $G$.

\begin{proof}
We first show that $[G/S]$ is invertible in $A(G)_{(p)}$. For every $Q\leq S$ that is fully $\cal F$-normalized, we have
\[\Phi_Q([G/S]) = \frac{\abs{N_G(Q,S)}}{\abs S} = \frac{\abs{N_G Q} \cdot \abs{\{Q'\leq S|Q'\sim_\cal F Q\}}}{\abs S}.\]
By Lemma \ref{lemNumberOfConjugates}, we have $\abs{\{Q'\leq S|Q'\sim_\cal F Q\}}=\frac{\abs S}{\abs{N_S Q}}\cdot k$ with $p\nmid k$. We thus get
\[\Phi_Q([G/S]) = \frac {\abs{N_G Q}}{\abs{N_S Q}}\cdot k,\]
which is invertible in $\Z_{(p)}$ since $Q$ is fully $\cal F$-normalized.

If $H\leq G$ is not a $p$-group, then $\Phi_H(X) = 0$ for all $X\in A(G;p)_{(p)}$. We also know that every $p$-subgroup of $G$ is conjugate to a subgroup of $S$ by Sylow's theorems, and therefore the mark homomorphism for $A(G)_{(p)}$ restricts to an injection
\[\Phi \colon A(G;p)_{(p)} \to \prod_{[Q]_{\cal F}} \Z_{(p)} = \free{\cal F}_{(p)},\]
and $A(G;p)_{(p)}$ has finite index in $\free{\cal F}_{(p)}$ for rank reasons.

Because $\Phi_Q([G/S])$ is invertible in $\Z_{(p)}$, $[G/S]$ is invertible in the ghost ring $\free{\cal F}_{(p)}$. It follows that multiplication with $[G/S]$ is a bijection $\free{\cal F}_{(p)} \to \free{\cal F}_{(p)}$, which sends $A(G;p)_{(p)}$ into itself. Since $A(G;p)_{(p)}$ has finite index in $\free{\cal F}_{(p)}$, multiplication with $[G/S]$ must then also be a bijection of $A(G;p)_{(p)}$ to itself, and because this bijection hits $[G/S]$, the $1$-element of $\free{\cal F}_{(p)}$ must lie in $A(G;p)_{(p)}$ and be the $1$-element of this subring. In addition, $[G/S]$ must be invertible in $A(G;p)_{(p)}$.

It thus makes sense to consider the elements $\frac{[G/P]}{[G/S]}$ in $A(G;p)_{(p)}$ for $P\leq S$, and we calculate
\[\Phi_Q\left(\frac{[G/P]}{[G/S]}\right) = \frac{\abs{N_G(Q,P)}\cdot\abs S}{\abs P \cdot \abs{N_G(Q,S)}} = \frac{\abs{\cal F(Q,P)}\cdot\abs S}{\abs P \cdot \abs{\cal F(Q,S)}} = \Phi_Q(\beta_P).\]
It follows that $\frac{[G/P]}{[G/S]}= \beta_P$ as elements of $\free{\cal F}_{(p)}$, giving the isomorphism $A(G;p)_{(p)} \cong A(\cal F)_{(p)}$.
\end{proof}

The Burnside ring defined by Diaz-Libman in \cite{DiazLibman} for a saturated fusion system $\cal F$, is constructed in terms of an orbit category over the $\cal F$-centric subgroups of $S$. A subgroup $P\leq S$ is $\cal F$-centric if all $\cal F$-conjugates $P'\sim_{\cal F} P$ are self-centralizing, i.e., $C_S(P')\leq P'$.
We denote the Diaz-Libman Burnside ring by $A^{cent}(\cal F)$, and it comes equipped with an additive basis $\xi_P$ indexed by the $\cal F$-conjugacy classes of $\cal F$-centric subgroups.
As shown in \cite{DiazLibman} there is also an injective homomorphism of marks
\[\Phi^{cent} \colon  A^{cent}(\cal F) \to \prod_{\substack{[P]_{\cal F}\\ \text{$P$ is $\cal F$-centric}}} \Z\]
with finite cokernel, and on basis elements $\Phi^{cent}$ is given by
\[\Phi^{cent}_Q(\xi_P) = \frac{\abs{Z(Q)}\cdot \abs{\cal F(Q,P)}}{\abs P}.\]

\begin{prop}\label{propSameBurnsideCent}
Let $\cal F$ be a saturated fusion system on a $p$-group $S$, and write\linebreak $N\leq A(\cal F)_{(p)}$ for the $\Z_{(p)}$-submodule generated by $\beta_P$ for non-$\cal F$-centric $P$.
Then $N$ is an ideal in the Burnside ring $A(\cal F)_{(p)}$, and there is a ring isomorphism $A(\cal F)_{(p)}/N\cong A^{cent}(\cal F)_{(p)}$ with the Burnside ring of Diaz-Libman. The basis element $\xi_S$ is invertible in $A^{cent}(\cal F)_{(p)}$, and the isomorphism is given by
\[\bar{\beta_P} \mapsto \frac{\xi_P}{\xi_S}\]
for $\cal F$-centric $P\leq S$.
\end{prop}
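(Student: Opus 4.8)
The plan is to embed both $A(\cal F)_{(p)}/N$ and $A^{cent}(\cal F)_{(p)}$, via their respective mark homomorphisms, as one and the same finite-index subring of the ghost ring $\prod_{[Q]_{\cal F}\text{ $\cal F$-centric}}\Z_{(p)}$, and to read off the correspondence of bases directly from the fixed-point formulas in Definition \ref{dfnBetas} and the formula for $\Phi^{cent}$ quoted just before the proposition. Throughout I will use the standard fact that the $\cal F$-centric subgroups are closed under overgroups (if $P\leq Q\leq S$ and $P$ is $\cal F$-centric then $Q$ is $\cal F$-centric), which, since $\cal F$-centricity is invariant under $\cal F$-isomorphism, says exactly that the non-$\cal F$-centric subgroups form a collection closed under $\cal F$-subconjugacy.

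\textbf{Step 1: $N$ is the kernel of restricting marks to centric subgroups.} Let $\rho_0\colon A(\cal F)_{(p)}\to \prod_{[Q]_{\cal F}\text{ centric}}\Z_{(p)}$ be the unital ring homomorphism $\prod_{[Q]\text{ centric}}\Phi_Q$. By Definition \ref{dfnBetas}, $\Phi_Q(\beta_P)\neq 0$ forces $Q\lesssim_{\cal F}P$; if $Q$ is centric this would force $P$ centric by overgroup-closure, so $\rho_0(\beta_P)=0$ for every non-$\cal F$-centric $P$ and hence $N\subseteq\ker\rho_0$. Conversely, writing $X=\sum_{[P]_{\cal F}}c_P\beta_P$ (Proposition \ref{thmPLocalBurnsideBasis}), I would argue by downward induction on $\abs P$ over the centric classes: the same support condition, together with $\Phi_P(\beta_P)\neq 0$ (the nonzero diagonal from the proof of Proposition \ref{thmPLocalBurnsideBasis}), shows that $\Phi_P(X)=\sum_{P\lesssim_{\cal F}P'}c_{P'}\Phi_P(\beta_{P'})$ reduces, after discarding terms already known to vanish, to $c_P\,\Phi_P(\beta_P)$; so $\rho_0(X)=0$ forces $c_P=0$ for all centric $P$, i.e.\ $X\in N$. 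Thus $N=\ker\rho_0$ is an ideal, and $\rho_0$ descends to an injective ring map $\bar\rho\colon A(\cal F)_{(p)}/N\hookrightarrow \prod_{\text{centric}}\Z_{(p)}$ with $\bar\rho(\bar\beta_P)=(\Phi_Q(\beta_P))_Q$ for centric $P$, whose image is a $\Z_{(p)}$-submodule of finite index.

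\textbf{Step 2: $\xi_S$ is invertible in $A^{cent}(\cal F)_{(p)}$.} For a fully $\cal F$-normalized centric representative $Q$ we have $\Phi^{cent}_Q(\xi_S)=\abs{Z(Q)}\cdot\abs{\cal F(Q,S)}/\abs S$; by Lemma \ref{lemNumberOfConjugates} $\abs{\cal F(Q,S)}=\tfrac{\abs S}{\abs{C_S Q}}k'$ with $p\nmid k'$, and since $Q$ is $\cal F$-centric $C_S(Q)=Z(Q)$, so $\Phi^{cent}_Q(\xi_S)=k'$ is a unit in $\Z_{(p)}$. Hence $\Phi^{cent}(\xi_S)$ is a unit of the ghost ring, and — exactly as in the proof of Proposition \ref{propSameBurnsideGroup}, using that $\Phi^{cent}$ is injective with finite cokernel — multiplication by $\Phi^{cent}(\xi_S)$ maps the finite-index subring $\im\Phi^{cent}$ bijectively onto itself; since $\Phi^{cent}$ is unital, the preimage of $\mathbf 1$ under this bijection produces a $z$ with $\xi_S z=1$. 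So $\xi_P/\xi_S:=\xi_P\xi_S^{-1}\in A^{cent}(\cal F)_{(p)}$ is defined for every centric $P$.

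\textbf{Step 3: matching the images and conclusion.} Dividing the two fixed-point formulas gives, for centric $Q,P$,
\[\Phi^{cent}_Q(\xi_P/\xi_S)=\frac{\abs{Z(Q)}\abs{\cal F(Q,P)}/\abs P}{\abs{Z(Q)}\abs{\cal F(Q,S)}/\abs S}=\frac{\abs{\cal F(Q,P)}\cdot\abs S}{\abs P\cdot\abs{\cal F(Q,S)}}=\Phi_Q(\beta_P),\]
so $\bar\rho(\bar\beta_P)=\Phi^{cent}(\xi_P/\xi_S)$ for every centric $P$. Since $\{\bar\beta_P\}_{P\text{ centric}}$ is a $\Z_{(p)}$-basis of $A(\cal F)_{(p)}/N$ and $\{\xi_P\}_{P\text{ centric}}$ is a $\Z_{(p)}$-basis of $A^{cent}(\cal F)_{(p)}$, taking $\Z_{(p)}$-spans yields $\im\bar\rho=\Phi^{cent}(\xi_S)^{-1}\cdot\im\Phi^{cent}=\im\Phi^{cent}$, the last equality because multiplication by the unit $\Phi^{cent}(\xi_S)^{-1}$ permutes $\im\Phi^{cent}$ by Step 2. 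Therefore $(\Phi^{cent})^{-1}\circ\bar\rho$ is a ring isomorphism $A(\cal F)_{(p)}/N\xrightarrow{\ \cong\ }A^{cent}(\cal F)_{(p)}$ sending $\bar\beta_P\mapsto\xi_P/\xi_S$. I expect the main obstacle to be Step 2 together with the image-matching in Step 3 — i.e.\ getting $\xi_S$ invertible and concluding that the two finite-index images in the ghost ring literally coincide — since identifying $N$ with $\ker\rho_0$ in Step 1 is comparatively formal once overgroup-closure of centric subgroups is invoked.
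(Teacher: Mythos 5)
Your proposal is correct and follows essentially the same route as the paper: restrict the mark homomorphism to the centric coordinates, observe that $\Phi_Q(\beta_P)=0$ when $Q$ is $\cal F$-centric and $P$ is not, get invertibility of $\xi_S$ from Lemma \ref{lemNumberOfConjugates} (with $C_S(Q)=Z(Q)$ for centric $Q$) together with the finite-cokernel argument recycled from Proposition \ref{propSameBurnsideGroup}, and finish by matching $\Phi^{cent}_Q(\xi_P/\xi_S)$ with $\Phi_Q(\beta_P)$ on bases. The one place where you genuinely diverge is the ideal property of $N$: you identify $N$ with the kernel of the centric-marks ring homomorphism $\rho_0$ by a triangularity and downward-induction argument, whereas the paper gets it more structurally as $N=\pi(M)$, where $M\subseteq A(S)_{(p)}$ is the ideal spanned by the $[S/P]$ with $P$ non-centric (an ideal by the double coset formula) and $\pi$ is the $A(\cal F)_{(p)}$-module homomorphism of Theorem \ref{thmStabilizationHom}; your version is self-contained and yields the slightly sharper fact $N=\ker\rho_0$, while the paper's is shorter because it reuses the module structure already in hand. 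One small caveat: you assume $\Phi^{cent}$ is unital, which the paper does not; it instead deduces from the same finite-index bijection that the identity of the centric ghost ring lies in $\im\Phi^{cent}$ and is the identity there, and your Step 2 goes through verbatim with that substitution.
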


\begin{proof}
If $P$ is $\cal F$-centric, then any subgroup containing $P$ is $\cal F$-centric as well, hence the collection of \emph{non}-$\cal F$-centric subgroups is closed under $\cal F$-conjugation and taking subgroups. By the double coset formula \eqref{eqSingleBurnsideDoubleCoset} for $A(S)_{(p)}$, the $\Z_{(p)}$-submodule generated by the elements $[S/P]$ with $P$ non-$\cal F$-centric is an ideal in $A(S)_{(p)}$. Let us denote this ideal $M\leq A(S)_{(p)}$.

The stabilization map $\pi\colon A(S)_{(p)}\to A(\cal F)_{(p)}$ is a homomorphism of $A(\cal F)_{(p)}$-modules, so the image $N=\pi(M)$ is an ideal of $A(\cal F)_{(p)}$, and at the same time $N$ is the $\Z_{(p)}$-submodule generated by the elements $\pi([S/P])=\beta_P$ where $P$ is non-$\cal F$-centric.
By Proposition \ref{thmPLocalBurnsideBasis}, we have $\Phi_Q(\beta_P)=0$ whenever $Q$ is $\cal F$-centric and $P$ is not. Hence the homomorphism
\[A(\cal F)_{(p)} \xto\Phi \prod_{[P]_{\cal F}} \Z_{(p)} \to \prod_{\substack{[P]_{\cal F}\\ \text{$P$ is $\cal F$-centric}}} \Z_{(p)}\]
sends $N$ to $0$, and therefore induces a ring homomorphism \[\Phi\colon A(\cal F)_{(p)}/N \to \prod_{\substack{[P]_{\cal F}\\ \text{$P$ is $\cal F$-centric}}} \Z_{(p)}.\]
Let $\bar{\beta_P}$ denote the equivalence class of $\beta_P$ in $A(\cal F)_{(p)}/N$ when $P$ is $\cal F$-centric. The quotient ring $A(\cal F)_{(p)}/N$ then has a basis consisting of $\bar{\beta_P}$ for each $\cal F$-centric $P$ up to $\cal F$-conjugation.

The rest of this proof follows the same lines as the proof of Proposition \ref{propSameBurnsideGroup}: For the basis element $\xi_S$ of $A^{cent}(\cal F)_{(p)}$ the image under the mark homomorphism has the form $\Phi^{cent}_Q(\xi_S) = \frac {\abs {Z(Q)}}{\abs S} \cdot \abs*{\cal F(Q,S)}$, which by Lemma \ref{lemNumberOfConjugates} is invertible in $\Z_{(p)}$. Hence $\xi_S$ is invertible in the ghost ring
\[\prod_{\substack{[P]_{\cal F}\\ \text{$P$ is $\cal F$-centric}}} \Z_{(p)},\]
and since $\Phi^{cent}$ has finite cokernel, it follows that multiplying by $\xi_S$ gives a bijection of $A^{cent}(\cal F)_{(p)}$ to itself, the $1$-element of the ghost ring is a $1$-element in $A^{cent}(\cal F)_{(p)}$ as well, and $\xi_S$ is invertible in $A^{cent}(\cal F)_{(p)}$. It therefore makes sense to form the fractions $\frac{\xi_P}{\xi_S}$ in $A^{cent}(\cal F)_{(p)}$. Applying the fixed point homomorphisms to these fractions, we then get
 \[\Phi^{cent}_Q\left(\frac{\xi_P}{\xi_S}\right) = \frac {\abs{Z(Q)}\cdot \abs{\cal F(Q,P)}\cdot \abs{S}}{\abs P\cdot \abs{\cal F(Q,S)}\cdot \abs{Z(Q)}} = \frac{\abs{\cal F(Q,P)}\cdot \abs S}{\abs P \cdot \abs{\cal F(Q,S)}} = \Phi_Q(\beta_P)\] for all $\cal F$-centric subgroup $Q,P\leq S$. This shows that the ring homomorphism
\[\Phi\colon A(\cal F)_{(p)}/N \to \prod_{\substack{[P]_{\cal F}\\ \text{$P$ is $\cal F$-centric}}} \Z_{(p)}\]
sends $\bar{\beta_P}$ to $\Phi^{cent}(\frac {\xi_P}{\xi_S})$, which proves that $\Phi$ is injective on $A(\cal F)_{(p)}/N$ and that $\beta_P\mapsto \frac {\xi_P}{\xi_S}$ gives a ring isomorphism $A(\cal F)_{(p)}/N\cong A^{cent}(\cal F)_{(p)}$.
\end{proof}

\section{The characteristic idempotent}\label{secCharIdem}
In this section we make use of the stabilization homomorphism of Theorem \ref{thmStabilizationHom} to give new results on the characteristic idempotent for a saturated fusion system. These idempotents were shown by Ragnarsson and Stancu to classify the saturated fusion systems on a given $p$-group.
In section \ref{secConstructCharIdem} we calculate the fixed points and orbit decomposition of the characteristic idempotent $\omega_{\cal F}$ for a saturated fusion system $\cal F$ on $S$ by applying the results of section \ref{secStabilization} to the product fusion system $\cal F\x\cal F$ on $S\x S$.
The results of these calculations form the content of Theorem \ref{thmCharIdemStructure}.
In section \ref{secCharIdemAction} we discuss multiplication $X\mapsto \omega_{\cal F}\circ X$ with the characteristic idempotent -- for elements $X$ of the double Burnside ring of $S$ and more generally when $X$ is just some finite set with an $S$-action. Theorem \ref{thmCharIdemMultiplication} describes the multiplication with $\omega_{\cal F}$ in terms of the homomorphism of marks.

\subsection{Bisets and Burnside modules}\label{secDoubleBurnside}
For finite groups $G$ and $H$, a $(G,H)$-biset is a set with both a left $H$-action and a right $G$-action, and such that the two actions commute. A $(G,H)$-biset $X$ gives rise to a $(H\x G)$-set by defining $(h,g).x := hxg^{-1}$, and vice versa. The transitive $(G,H)$-bisets have the form $[(H\x G)/D]$ for subgroups $D\leq H\x G$.
The isomorphism classes of finite $(G,H)$-bisets form a monoid, and the Grothendieck group $A(G,H)$ is called the \emph{Burnside module} of $G$ and $H$. Additively $A(G,H)$ is isomorphic to $A(H\x G)$ and we have a basis consisting of the transitive bisets $[(H\x G)/D]$ where $D\leq H\x G$ is determined up to $(H\x G)$-conjugation.

The multiplication for the Burnside modules is different from the non-biset Burnside rings. We have multiplication/composition maps $\circ \colon A(H,K) \x A(G,H) \to A(G,K)$, defined for every $(G,H)$-biset $X$ and $(H,K)$-biset $Y$ as
\[Y\circ X := Y\x_H X = Y\x X/ \sim\]
where $(yh,x) \sim (y,hx)$ for all $y\in Y$, $x\in X$ and $h\in H$.
The composition is associative, and for each finite group $G$ the group itself considered as a biset is an identity element in $A(G,G)$ with respect to the composition. The composition makes $A(G,G)$ a ring, we call this the \emph{double Burnside ring} of $G$.
On transitive bisets, the composition is given by a double coset formula
\begin{equation}\label{eqDoubleBurnsideDoubleCoset}
[(K\x H)/D] \circ [(H\x G)/C] \sum_{\bar x \in \pi_2 D \backslash H / \pi_1 C} [(K\x G) / (D * \lc {(x,1)} C)]
\end{equation}
where $\lc{(x,1)} C$ denotes the conjugate of $C$ by the element $(x,1)\in H\x G$, and the subgroup $B* A$ is defined as $\{(k,g)\in K\x G \mid \exists h\in H \colon (k,h)\in B, (h,g)\in A\}$ for subgroups $B\leq K\x H$ and $A\leq H\x G$.

Given a homomorphism $\ph\colon U\to H$ with $U\leq G$, the graph $\Delta(U,\ph)=\{(\ph u, u) \mid u\in U\}$ is a subgroup of $H\x G$. We introduce the notation $[U,\ph]_G^H$ as a shorthand for the biset $[(H\x G)/\Delta(U,\ph)]$, and if the groups $G,H$ are clear from context, we just write $[U,\ph]$.
The bisets $[U,\ph]$ generate the $(G,H)$-bisets that have a free left $H$-action. For these basis elements, \eqref{eqDoubleBurnsideDoubleCoset} takes the form
\begin{equation}\label{eqDoubleBurnsideDoubleCosetFree}
[T,\psi]_H^K \circ [U,\ph]_G^H = \sum_{\bar x \in T\backslash H / \ph U} [\ph^{-1}(T^x)\cap U, \psi c_x\ph]_G^K.
\end{equation}
%If either $T=H$ or $\ph(U)=H$, the formula simplifies to $[T,\psi]_H^K \circ [U,\ph]_G^H = [\ph^{-1}(T)\cap U,\ph\psi]_G^K$.

From the isomorphism $A(G,H)\cong A(H\x G)$ of additive groups, the Burnside modules inherit fixed point homomorphisms $\Phi_C\colon A(G,H) \to \Z$ for each $(H\x G)$-conjugacy class of subgroups $C\leq H\x G$.
Note however that the fixed point homomorphisms for $A(G,G)$ are \emph{not} ring homomorphisms -- they are only homomorphisms of abelian groups.

Given any $(G,H)$-biset $X$, we can swap the actions to get an $(H,G)$-biset $X^\op$ with $g.x^\op.h := h^{-1}.x.g^{-1}$, which extends to a group isomorphism $(-)^\op \colon A(G,H) \to A(H,G)$. We clearly have $[(H\x G)/D]^\op = [(G\x H)/D^\op]$ and $\Phi_C(X^\op)=\Phi_{C^\op}(X)$, where $C^\op,D^\op$ are the subgroups $C,D$ with the coordinates swapped.
Any element of the double Burnside ring $X\in A(G,G)$ that satisfies $X^\op=X$ is called \emph{symmetric}.

\subsection{The structure of the characteristic idempotent}\label{secConstructCharIdem}
Let $\cal F$ be a fusion system on a $p$-group $S$.
We then say that an element of the $p$-localized double Burnside ring $A(S,S)_{(p)}$ is $\cal F$-characteristic if it satisfies the Linckelmann-Webb properties:
The element is $\cal F$-generated (see \ref{dfnFgenerated}), it is $\cal F$-stable (see \ref{dfnFstable}), and finally there is a $p'$-condition for the number of elements (see \ref{dfnFchar}).

K. Ragnarsson showed in \citelist{\cite{Ragnarsson}} that for every saturated fusion system $\cal F$ on a $p$-group $S$, there is a unique idempotent $\omega_{\cal F}\in A(S,S)_{(p)}$ that is $\cal F$-characteristic, and \cite{RagnarssonStancu} shows how $\cal F$ can be reconstructed from $\omega_{\cal F}$ (or any $\cal F$-characteristic element).
The goal of this section is to prove Theorem \ref{thmCharIdemStructure} giving the value of the mark homomorphism on $\omega_{\cal F}$ and the decomposition of $\omega_{\cal F}$ into $(S,S)$-biset orbits. To prove the theorem, we consider the basis element $\beta_{\Delta(S)}$ for the diagonal subgroup $\Delta(S)\leq S\x S$ with respect to the product fusion system $\cal F\x\cal F$ on $S\x S$. We show that $\beta_{\Delta(S)}$ is in fact the characteristic idempotent for $\cal F$ which allows us to apply the formulas of Remark \ref{rmkBetas} and Proposition \ref{propPLocalBasisDecomp} to $\omega_{\cal F}$ directly.

\begin{dfn}\label{dfnFgenerated}
Let $\cal F$ be a fusion system on a $p$-group $S$.
An element $X\in A(S,S)$ is then said to be \emph{$\cal F$-generated} if $X$ is expressed solely in terms of basis elements $[P,\ph]$ where $\ph\colon P\to S$ is a morphism of $\cal F$. The $\cal F$-generated elements form a unital subring $A_{\cal F}(S,S)$ of the double Burnside ring.
\end{dfn}

\begin{rmk}
Since $[P,\ph]^\op=[\ph P, \ph^{-1}]$ for all $\ph\in \cal F(P,S)$, the ring $A_{\cal F}(S,S)$ of $\cal F$-generated elements is stable with respect to the reflection $(-)^\op$.
\end{rmk}

\begin{rmk}
Any subgroup of a graph $\Delta(P,\ph)$ with $\ph\in \cal F(P,S)$ has the form $\Delta(R,\ph|_R)$ for some subgroup $R\leq P$. By \eqref{eqPhiOnBasis} we thus have $\Phi_D([P,\ph])=0$ unless $D$ is the graph of a morphism in $\cal F$. An element $X\in A(S,S)_{(p)}$ is therefore $\cal F$-generated if and only if $\Phi_D([P,\ph])=0$ for all subgroups $D\leq S\x S$ that are not graphs from $\cal F$.
\end{rmk}

\begin{dfn}\label{dfnFstable}
For the Burnside ring of a group $A(S)$ we defined by \eqref{charFstable} what it means for an $S$-set to be $\cal F$-stable. With bisets we now have both a left and a right action, hence we get two notions of stability:

Let $\cal F_1,\cal F_2$ be fusion systems on $p$-groups $S_1,S_2$ respectively.
Any $X\in A(S_1,S_2)_{(p)}$ is said to be \emph{right $\cal F_1$-stable} if it satisfies
\begin{equation}\label{charRightFstable}
\parbox[c]{.9\textwidth}{\emph{$X\circ [P,\ph]_P^{S_1} = X\circ [P,id]_P^{S_1}$ inside $A(P,S_2)_{(p)}$, for all $P\leq S_1$ and $\ph\colon P\to S_1$ in $\cal F_1$.}}
\end{equation}
Similarly $X\in A(S_1,S_2)_{(p)}$ is \emph{left $\cal F_2$-stable} if is satisfies
\begin{equation}\label{charLeftFstable}
\parbox[c]{.9\textwidth}{\emph{$[\ph P,\ph^{-1}]_{S_2}^P\circ X = [P,id]_{S_2}^P \circ X$ inside $A(S_1,P)_{(p)}$, for all $P\leq S_2$ and $\ph\colon P\to S_2$ in $\cal F_2$.}}
\end{equation}
For the double Burnside ring $A(S,S)_{(p)}$, any element that is both left and right $\cal F$-stable is said to be \emph{fully $\cal F$-stable} or just \emph{$\cal F$-stable}.
\end{dfn}

\begin{rmk}
Because $([P,\ph])^\op=[\ph P,\ph^{-1}]$ when $\ph$ is injective, we clearly have that $X$ is right $\cal F$-stable if and only if $X^\op$ is left $\cal F$-stable.
When $X$ is a right $\cal F$-stable biset, it is also clear that any composition $Y\circ X$ is right $\cal F$-stable as well by associativity of the composition $\circ$, and similarly for left-stable bisets.
\end{rmk}

As with $\cal F$-stability in $A(S)_{(p)}$, we can characterize left and right stability in terms of the homomorphism of marks for the double Burnside ring.
\begin{lem}\label{lemStableBiset}
Let $\cal F_1,\cal F_2$ be fusion systems on $p$-groups $S_1,S_2$ respectively. The following are then equivalent for all $X\in A(S_1,S_2)_{(p)}$:
\begin{enumerate}
\item\label{itemDoubleStable} $X$ is both right $\cal F_1$-stable and left $\cal F_2$-stable.
\item\label{itemProductStable} $X$ considered as an element of $A(S_2\x S_1)_{(p)}$ is $(\cal F_2\x \cal F_1)$-stable.
\item\label{itemProductPhi} $\Phi_D(X) = \Phi_{D'}(X)$ for all subgroups $D,D'\leq S_2\x S_1$ that are $(\cal F_2\x \cal F_1)$-conjugate.
\end{enumerate}
The analogous statements for right and left stability follow if we let $\cal F_1$ or $\cal F_2$ be trivial fusion systems.
\end{lem}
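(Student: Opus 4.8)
The plan is to reduce everything to the already-established equivalence of the three characterizations of $\cal F$-stability. The equivalence of \ref{itemProductStable} and \ref{itemProductPhi} requires no new work: it is precisely the equivalence of the three descriptions of $\cal F$-stability listed after \eqref{charFstable} (proved in \cite{Gelvin}*{Proposition 3.2.3} or \cite{ReehStableSets}), applied now to the product fusion system $\cal F_2\x\cal F_1$ on $S_2\x S_1$ and to $X$ regarded as an element of $A(S_2\x S_1)_{(p)}$. So everything reduces to proving that \ref{itemDoubleStable} and \ref{itemProductStable} are equivalent, i.e.\ that ``right $\cal F_1$-stable and left $\cal F_2$-stable'' coincides with ``$(\cal F_2\x\cal F_1)$-stable'' in the sense of \eqref{charFstable}.

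The bridge is to recognize the composition maps in \eqref{charRightFstable} and \eqref{charLeftFstable} as restriction of bisets. For $P\leq S_1$ and $\ph\in\cal F_1(P,S_1)$, unravelling $(S_1\x P)/\Delta(P,\ph)$ shows that $[P,\ph]_P^{S_1}$ is isomorphic, as a $(P,S_1)$-biset, to $S_1$ with left regular $S_1$-action and right $P$-action through $\ph$; hence $X\circ[P,\ph]_P^{S_1}=X\x_{S_1}[P,\ph]_P^{S_1}\cong X$ as a $(P,S_2)$-biset, with the left $S_2$-action unchanged and the right $S_1$-action of $X$ precomposed with $\ph$. Under $A(P,S_2)_{(p)}\cong A(S_2\x P)_{(p)}$ this says $X\circ[P,\ph]_P^{S_1}=r_{id_{S_2}\x\ph}(X)$, the restriction of $X\in A(S_2\x S_1)_{(p)}$ along $id_{S_2}\x\ph\colon S_2\x P\into S_2\x S_1$. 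Dually, $[\ph P,\ph^{-1}]_{S_2}^P$ is $S_2$ with right regular $S_2$-action and left $P$-action through $\ph$, so $[\ph P,\ph^{-1}]_{S_2}^P\circ X=r_{\ph\x id_{S_1}}(X)$ for $\ph\in\cal F_2(P,S_2)$. Consequently \eqref{charRightFstable} becomes the assertion $r_{id_{S_2}\x\ph}(X)=r_{id_{S_2}\x incl}(X)$ for all $\ph\in\cal F_1(P,S_1)$, and \eqref{charLeftFstable} becomes $r_{\ph\x id_{S_1}}(X)=r_{incl\x id_{S_1}}(X)$ for all $\ph\in\cal F_2(P,S_2)$.

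From here the equivalence is formal. If $X$ is $(\cal F_2\x\cal F_1)$-stable then, specializing \eqref{charFstable} to the product morphisms $id_{S_2}\x\ph$ and $\ph\x id_{S_1}$ of $\cal F_2\x\cal F_1$, we obtain exactly the two displayed identities, so $X$ is right $\cal F_1$-stable and left $\cal F_2$-stable. Conversely, every morphism of $\cal F_2\x\cal F_1$ into $S_2\x S_1$ is the restriction to some $D\leq S_2\x S_1$ of a product morphism $\psi\x\ph$ with $\psi\in\cal F_2(\pi_2 D,S_2)$ and $\ph\in\cal F_1(\pi_1 D,S_1)$, and $\psi\x\ph=(\psi\x id_{S_1})\circ(id_{\pi_2 D}\x\ph)$; using the two identities above (and one extra restriction to replace $S_1,S_2$ by the subgroups $\pi_1 D,\pi_2 D$) together with the contravariant functoriality $r_\alpha r_\beta=r_{\beta\alpha}$ of restriction, one gets $r_{\psi\x\ph}(X)=r_{incl}(X)$, and restricting further to $D$ gives condition \eqref{charFstable} for $\cal F_2\x\cal F_1$. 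Hence \ref{itemDoubleStable}$\Leftrightarrow$\ref{itemProductStable}, and with the first paragraph all three conditions are equivalent. Taking $\cal F_1$ or $\cal F_2$ to be $\cal F_{S_i}(S_i)$, whose stability condition is vacuous, recovers the one-sided statements.

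The only step I expect to require genuine care is the biset identification in the second paragraph: the left/right conventions must be tracked precisely through $(S_1\x P)/\Delta(P,\ph)$, through the isomorphism $X\x_{S_1}[P,\ph]_P^{S_1}\cong X$, and through the translation into the $(S_2\x P)$-set picture (and likewise for the left-hand version). Everything else is manipulation of restriction maps and an appeal to the already-known equivalence of $\cal F$-stability conditions. An alternative that avoids the explicit biset isomorphism is to check $\Phi_E\bigl(X\circ[P,\ph]_P^{S_1}\bigr)=\Phi_E\bigl(r_{id_{S_2}\x\ph}(X)\bigr)$ for all $E\leq S_2\x P$ directly from the double coset formula \eqref{eqDoubleBurnsideDoubleCoset}, but the bookkeeping is of comparable weight.
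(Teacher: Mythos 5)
Your proposal is correct and follows essentially the same route as the paper: the equivalence \ref{itemProductStable}$\Leftrightarrow$\ref{itemProductPhi} is quoted from the known characterization of $\cal F$-stability, and \ref{itemDoubleStable}$\Leftrightarrow$\ref{itemProductStable} is obtained by identifying $X\circ[P,\ph]_P^{S_1}$ and $[\ph P,\ph^{-1}]_{S_2}^P\circ X$ with restrictions of $X\in A(S_2\x S_1)_{(p)}$ along product morphisms and using that every morphism of $\cal F_2\x\cal F_1$ is the restriction of some $\ph_2\x\ph_1$ with $\ph_i\in\cal F_i$. The only cosmetic difference is that you factor the restriction along $\psi\x\ph$ into two one-sided steps, whereas the paper applies both stabilities at once via $[\ph_2D_2,\ph_2^{-1}]_{S_2}^{D_2}\circ X\circ[D_1,\ph_1]_{D_1}^{S_1}=(\ph_2\x\ph_1)^*(X)$; this is an inessential variation.
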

For the purposes of this paper it would be sufficient to state Lemma \ref{lemStableBiset} and later results only for bisets where both actions are free, in which case the proof of Lemma \ref{lemStableBiset} would be easier. However, all the later proofs are nearly identical in the bifree and non-free cases, so for completeness sake we include the general statements -- though the following proof becomes harder.

\begin{proof}
The equivalence of \ref{itemProductStable} and \ref{itemProductPhi} is just the characterization of stability in Burnside rings (see page \pageref{itemPhiBurnsideEq}).

Suppose that $X\in A(S_1,S_2)_{(p)}$ is both right $\cal F_1$-stable and left $\cal F_2$-stable. Let the map $\ph\in \Hom_{\cal F_2\x \cal F_1}(D,S_2\x S_1)$ be any homomorphism in the product fusion system, and let the ring homomorphism $\ph^* \colon A(S_2\x S_1)_{(p)}\to A(D)_{(p)}$ be the restriction along $\ph$. For subgroups $D\leq C\leq S_2\x S_1$ we also let $incl_D^C$ denote the inclusion of $D$ in $C$. We then wish to show that $\ph^*(X) = (incl_D^{S_2\x S_1})^*(X)$. Define $D_i$ to be the projection of $D$ to the group $S_i$, then by definition of the product fusion system $\ph$ has the form $(\ph_2\x \ph_1)|_D$ for suitable morphisms $\ph_i\in \cal F_i(D_i,S_i)$. The restriction homomorphism $\ph^*$ thus decomposes as
\[\ph^*\colon A(S_2\x S_1)_{(p)} \xto{(\ph_2\x \ph_1)^*} A(D_2\x D_1)_{(p)} \xto{(incl_D^{D_2\x D_1})^*} A(D)_{(p)}.\]
On $(S_1,S_2)$-bisets the composition
\[[\ph_2D_2, \ph_2^{-1}]_{S_2}^{D_2} \circ X \circ [D_1,\ph_1]_{D_1}^{S_1}\] is exactly the same as the restriction $(\ph_2\x \ph_1)^*$ of $(S_2\x S_1)$-sets, and by the assumed stability of $X$ we therefore get
\begin{align*}
(\ph_2\x \ph_1)^*(X) &= [\ph_2D_2, \ph_2^{-1}]_{S_2}^{D_2} \circ X \circ [D_1,\ph_1]_{D_1}^{S_1}
\\ &= [D_2, id]_{S_2}^{D_2} \circ X \circ [D_1,id]_{D_1}^{S_1} = (incl_{D_2\x D_1}^{S_2\x S_1})^*(X).
\end{align*}
Restricting further to $D$, we then have $\ph^*(X) = (incl_D^{S_2\x S_1})^*(X)$ as claimed.

Suppose conversely that $X$ is $\cal F_2\x \cal F_1$-stable. Then in particular we assume that\linebreak $(id\x\ph)^*(X) = (incl_{S_2\x P}^{S_2\x S_1})^*(X)$ for all maps $\ph\in \cal F_1(P,S_1)$, hence we have
\[X\circ [P,\ph]_P^{S_1} = (id\x \ph)^*(X) = (incl_{S_2\x P}^{S_2\x S_1})^*(X) = X\circ [P,id]_P^{S_1}\] so $X$ is right $\cal F_1$-stable. Similarly we get that $X$ is left $\cal F_2$-stable as well.
\end{proof}

Let $A^{\lhd}(S,S)_{(p)}$ be the subring of the double Burnside ring generated by left-free bisets, i.e., the subring with basis elements $[P,\ph]$ where $\ph\colon P\to S$ is any group homomorphism. In particular, every $\cal F$-generated element lies in $A^\lhd(S,S)_{(p)}$. We then define an augmentation map $\e(X) := \frac{\abs X}{\abs S}$ for any biset $X\in A^\lhd(S,S)_{(p)}$. Since $\e(X\circ Y) = \frac {\abs{X\x_S Y}}{\abs S} = \frac {\abs X \abs Y}{\abs S^2} = \e(X)\e(Y)$, we get a ring homomorphism $\e\colon A^{\lhd}(S,S)_{(p)} \to \Z_{(p)}$.

\begin{dfn}\label{dfnFchar} Let $\cal F$ be a fusion system on a $p$-group $S$.
An element $X\in A^\lhd(S,S)_{(p)}$ is said to be \emph{right/left/fully $\cal F$-characteristic} if:
\begin{enumerate}
\item\label{itemCharFgen} $X$ is $\cal F$-generated.
\item\label{itemCharFstable} $X$ is right/left/fully $\cal F$-stable, respectively.
\item\label{itemCharNonDegen} $\e(X)$ is invertible in $\Z_{(p)}$.
\end{enumerate}
A fully $\cal F$-characteristic element is also just called \emph{$\cal F$-characteristic}.
\end{dfn}

To prove Theorem \ref{thmCharIdemStructure} we consider the saturated fusion system $\cal F\x \cal F_S$ on $S\x S$, where $\cal F_S:=\cal F_S(S)$ is the trivial fusion system on $S$. For this product fusion system we then apply the stabilization map of Theorem \ref{thmStabilizationHom} to $[S,id]$ and get $\beta_{\Delta(S)}\in A(\cal F\x \cal F_S)_{(p)}$. By construction $\beta_{\Delta(S)}$ is only left $\cal F$-stable, but the fixed point calculation in Lemma \ref{lemCharIdemFixedPoints} will show that $\beta_{\Delta(S)}$ is right stable as well. Finally, using Remark \ref{rmkStabilizationCoeff}, we will show that $\beta_{\Delta(S)}$ is the characteristic idempotent for $\cal F$.

Alternatively, we could in theory stabilize $[S,id]$ with respect to $\cal F\x \cal F$, in order to immediately get a fully $\cal F$-stable element. The fixed point formulas imply that this would give us exactly the same element $\beta_{\Delta(S)}$ as before. However, when stabilizing with respect to a larger fusion system, Remark \ref{rmkStabilizationCoeff} yields less information about the orbits of the stabilized element, hence it would be harder to show that $\beta_{\Delta(S)}$ is idempotent. This is why we use the asymmetric approach with $\cal F\x \cal F_S$.

\begin{lem}\label{lemCharIdemFixedPoints}
Let $\cal F$ be a saturated fusion system on a $p$-group $S$, and let $\cal F_S:=\cal F_S(S)$ denote the trivial fusion system on $S$. Consider the product fusion system $\cal F\x\cal F_S$ on $S\x S$, then the basis element $\beta_{\Delta(S)}$ of Definition \ref{dfnBetas} corresponding to the diagonal subgroup $\Delta(S)\leq S\x S$ satisfies
\[\Phi_{\Delta(P,\ph)}(\beta_{\Delta(S)}) = \frac{\abs S}{\abs{\cal F(P,S)}}\]
for all graphs $\Delta(P,\ph)\leq S\x S$ with $\ph\in \cal F(P,S)$, and $\Phi_{D}(\beta_{\Delta(S)})=0$ for all other subgroups $D\leq S\x S$.
Furthermore $\beta_{\Delta(S)}$ is $\cal F$-generated, symmetric and fully $\cal F$-stable.
\end{lem}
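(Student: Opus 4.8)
The plan is to deduce the entire statement from the fixed-point formula of Definition~\ref{dfnBetas}, applied to the saturated fusion system $\cal G:=\cal F\x\cal F_S$ on $T:=S\x S$ (recall that a product of saturated fusion systems is saturated, so Definition~\ref{dfnBetas} applies verbatim with $S$ replaced by $T$). With $P=\Delta(S)$ and an arbitrary $D\leq S\x S$, that formula reads
\[
\Phi_{D}(\beta_{\Delta(S)})=\frac{\abs{\cal G(D,\Delta(S))}\cdot\abs{T}}{\abs{\Delta(S)}\cdot\abs{\cal G(D,T)}}=\frac{\abs{\cal G(D,\Delta(S))}\cdot\abs{S}}{\abs{\cal G(D,S\x S)}},
\]
so everything reduces to counting morphisms in the product fusion system, using that every morphism between subgroups of $S\x S$ in $\cal G$ is the restriction of a product $\psi_1\x\psi_2$ with $\psi_1\in\cal F$ and $\psi_2\in\Hom_S(-,S)$.

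First I would treat the vanishing and the $\cal F$-generation together. A subgroup of $\Delta(S)$ is exactly $\Delta(R,id)$ for some $R\leq S$, and for $\psi_1\in\cal F(R,S)$, $\psi_2\in\Hom_S(R,S)$ one has $(\psi_1\x\psi_2)(\Delta(R,id))=\Delta(\psi_2R,\psi_1\psi_2^{-1})$ with $\psi_1\psi_2^{-1}\in\cal F$; hence the subgroups $\cal G$-subconjugate to $\Delta(S)$ are precisely the graphs $\Delta(R,\theta)$ with $\theta\in\cal F(R,S)$. So $\cal G(D,\Delta(S))=\emptyset$, and therefore $\Phi_D(\beta_{\Delta(S)})=0$, whenever $D$ is not such a graph; in particular $\Phi_D(\beta_{\Delta(S)})=0$ for every $D$ that is not a graph of a morphism of $\cal F$, and by the remark following Definition~\ref{dfnFgenerated} (which characterizes $\cal F$-generated elements by vanishing of marks off graphs of $\cal F$) this says exactly that $\beta_{\Delta(S)}$ is $\cal F$-generated.

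For the value, fix $D=\Delta(P,\ph)$ with $\ph\in\cal F(P,S)$. A morphism $\Delta(P,\ph)\to S\x S$ in $\cal G$ is the restriction of $\psi_1\x\psi_2$ with $\psi_1\in\cal F(\ph P,S)$ and $\psi_2\in\Hom_S(P,S)$, and distinct pairs give distinct restrictions, so $\abs{\cal G(\Delta(P,\ph),S\x S)}=\abs{\cal F(\ph P,S)}\cdot\abs{\Hom_S(P,S)}=\abs{\cal F(P,S)}\cdot\abs{\Hom_S(P,S)}$ (precompose with $\ph$). Such a restriction has image inside $\Delta(S)$ iff $\psi_1\ph=\psi_2$, which given $\psi_2$ forces $\psi_1=\psi_2\ph^{-1}$ --- automatically in $\cal F(\ph P,S)$ since $\ph^{-1}\in\cal F$ --- so $\abs{\cal G(\Delta(P,\ph),\Delta(S))}=\abs{\Hom_S(P,S)}$. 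Substituting into the displayed formula, the factor $\abs{\Hom_S(P,S)}$ cancels and gives $\Phi_{\Delta(P,\ph)}(\beta_{\Delta(S)})=\abs{S}/\abs{\cal F(P,S)}$, as claimed. Stability and symmetry then follow from this computation. If $\Delta(P,\ph)\sim_{\cal F\x\cal F}\Delta(P',\ph')$ then $P\sim_{\cal F} P'$, hence $\abs{\cal F(P,S)}=\abs{\cal F(P',S)}$ and the marks agree; on all other subgroups the marks are $0$, and the property ``graph of a morphism of $\cal F$'' is invariant under $(\cal F\x\cal F)$-conjugation, so the function $D\mapsto\Phi_D(\beta_{\Delta(S)})$ is constant on $(\cal F\x\cal F)$-conjugacy classes, and Lemma~\ref{lemStableBiset} yields that $\beta_{\Delta(S)}$ is left and right $\cal F$-stable. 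For symmetry, $\Phi_D(\beta_{\Delta(S)}^{\op})=\Phi_{D^{\op}}(\beta_{\Delta(S)})$, and $D\mapsto D^{\op}$ sends $\Delta(P,\ph)$ to $\Delta(\ph P,\ph^{-1})$ with $\abs{\cal F(\ph P,S)}=\abs{\cal F(P,S)}$ and sends non-graphs to non-graphs, so the marks of $\beta_{\Delta(S)}^{\op}$ and $\beta_{\Delta(S)}$ coincide; since an element of $A(S,S)_{(p)}$ is determined by its marks, $\beta_{\Delta(S)}^{\op}=\beta_{\Delta(S)}$.

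The only delicate point is the description of the morphism sets in the product fusion system $\cal F\x\cal F_S$: that every morphism between subgroups of $S\x S$ is a restriction of a product $\psi_1\x\psi_2$, together with the resulting bijective counts. Granting this (it is standard for products of fusion systems; see e.g.\ \cite{AKO}), the remainder is a short calculation, and it is worth noting that no $p'$-bookkeeping is needed here since the expression in Definition~\ref{dfnBetas} is already an exact identity in $\Z_{(p)}$.
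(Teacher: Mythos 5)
Your proof is correct and takes essentially the same route as the paper: apply the fixed-point formula of Definition \ref{dfnBetas} to the product fusion system $\cal F\x\cal F_S$ and compute the relevant morphism counts in that product. The differences are only organizational (you evaluate directly at $\Delta(P,\ph)$ and then deduce $\cal F$-generation from the vanishing of marks and full stability from Lemma \ref{lemStableBiset}, while the paper uses left stability from the construction together with Proposition \ref{propPLocalBasisDecomp} and obtains right stability via symmetry), not a different argument.
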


\begin{proof}
$\cal F\x \cal F_S$ is a product of saturated fusion systems and is therefore a saturated fusion system on $S\x S$ according to \cite{BrotoLeviOliver}*{Lemma 1.5}.
The element $\beta_{\Delta(S)}$ is $\cal F\x\cal F_S$-stable by construction, hence considered as an element of $A(S,S)_{(p)}$ it is left $\cal F$-stable according to Lemma \ref{lemStableBiset}.

Next we remark that the $(\cal F\x \cal F_S)$-conjugates of a graph $\Delta(P,\ph)$ with $\ph\in \cal F(P,S)$ are all the other graphs $\Delta(P',\psi)$ with $P' \sim_S P$ and $\psi\in \cal F(P',S)$.
Furthermore, the subgroups of the diagonal $\Delta(S)=\Delta(S,id)$ in $S\x S$ are the graphs $\Delta(P,id)$ for $P\leq S$; and consequently the subgroups of $S\x S$ that are $(\cal F\x \cal F_S)$-subconjugate to $\Delta(S)$ are exactly the graphs $\Delta(P,\ph)$ with $\ph\in \cal F(P,S)$.

According to Proposition \ref{propPLocalBasisDecomp} $\beta_{\Delta(S)}$ only contains orbits of the form $[S\x S/D]$ where $D$ is $(\cal F\x \cal F_S)$-subconjugate to $\Delta(S)$, i.e., where $D$ is a graph $\Delta(P,\ph)$ with $\ph\in \cal F(P,S)$. The element $\beta_{\Delta(S)}$ is therefore $\cal F$-generated.
From Remark \ref{rmkBetas} we know the value of the mark homomorphism on $\beta_{\Delta(S)}$:
\begin{equation}\label{eqCharCoordinates}
\Phi_{D}(\beta_{\Delta(S)}) = \frac{\abs{\Hom_{\cal F\x \cal F_S}(D, \Delta(S,id))} \cdot \abs{S\x S}}{\abs{\Delta(S,id)} \cdot \abs{\Hom_{\cal F\x \cal F_S}(D, S\x S)}}
\end{equation}
for all $D\leq S\x S$. This value is zero unless $D$ is a graph $\Delta(P,\ph)$ with $\ph\in \cal F(P,S)$.
We furthermore note that $\Phi_{\Delta(P,\ph)}(\beta_{\Delta(S)})=\Phi_{\Delta(P,id)}(\beta_{\Delta(S)})$ since $\beta_{\Delta(S)}$ is left $\cal F$-stable. Then \eqref{eqCharCoordinates} becomes
\begin{align*}
\Phi_{\Delta(P,id)}(\beta_{\Delta(S)}) &= \frac{\abs{\Hom_{\cal F\x \cal F_S}(\Delta(P,id), \Delta(S,id))} \cdot \abs{S\x S}}{\abs{\Delta(S,id)} \cdot \abs{\Hom_{\cal F\x \cal F_S}(\Delta(P,id), S\x S)}}.
\end{align*}
The morphisms of $\Hom_{\cal F\x \cal F_S}(\Delta(P,id), S\x S)$ are the pairs $(\ph,c_s)$ where $\ph\in \cal F(P,S)$ and $c_s\in \cal F_S(P,S)$, hence
\[\abs{\Hom_{\cal F\x \cal F_S}(\Delta(P,id), S\x S)}=\abs{\cal F_S(P,S)}\cdot \abs{\cal F(P,S)}.\]
The image of $\Delta(P,id)$ under a morphism $(\ph,c_s)\in \Hom_{\cal F\x \cal F_S}(\Delta(P,id), S\x S)$ is
\[(\ph,c_s)(\Delta(P,id)) = \{(\ph(g),c_s(g)) \mid g\in P\} = \Delta(\lc s P, \ph\circ(c_s)^{-1}).\]
This image lies in $\Delta(S,id)$ if and only if $\ph\circ (c_s)^{-1}=id$, i.e., if $\ph=c_s$. The number of morphisms in $\Hom_{\cal F\x \cal F_S}(\Delta(P,id), \Delta(S,id))$ is therefore simply $\abs{\cal F_S(P,S)}$.

Returning to the expression for $\Phi_{\Delta(P,\ph)}(\beta_{\Delta(S)})=\Phi_{\Delta(P,id)}(\beta_{\Delta(S)})$ we then have
\[\Phi_{\Delta(P,id)}(\beta_{\Delta(S)}) = \frac{\abs{\cal F_S(P,S)} \cdot \abs{S\x S}}{\abs{\Delta(S,id)} \cdot (\abs{\cal F_S(P,S)}\cdot \abs{\cal F(P,S)})} = \frac{\abs{S}}{\abs{\cal F(P,S)}}.\]
For the reflection $(\beta_{\Delta(S)})^\op$ we get
\[\Phi_{\Delta(P,\ph)}((\beta_{\Delta(S)})^\op)= \Phi_{\Delta(\ph P, \ph^{-1})}(\beta_{\Delta(S)}) = \frac{\abs S}{\abs{\cal F(\ph P,S)}} = \frac{\abs S}{\abs{\cal F(P,S)}} = \Phi_{\Delta(P,\ph)}(\beta_{\Delta(S)})\]
for all $\ph\in \cal F$, and $\Phi_{D}((\beta_{\Delta(S)})^\op)=0$ if $D$ is not a graph from $\cal F$. We conclude that $(\beta_{\Delta(S)})^\op = \beta_{\Delta(S)}$ because the fixed point maps agree, and thus $\beta_{\Delta(S)}$ is symmetric and also right $\cal F$-stable.
\end{proof}

\begin{lem}\label{lemStableFixedByCharIdem}
Let $\cal F$ be a saturated fusion system on a $p$-group $S$, and let $\beta_{\Delta(S)}$ be the element of Lemma \ref{lemCharIdemFixedPoints} above. Let $T$ be any $p$-group.

An element $X\in A(S,T)_{(p)}$ is right $\cal F$-stable if and only if $X\circ \beta_{\Delta(S)} = X$. Similarly $X\in A(T,S)_{(p)}$ is left $\cal F$-stable if and only if $\beta_{\Delta(S)}\circ X = X$.
\end{lem}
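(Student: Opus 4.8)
The plan is to prove the biconditional by working in the $p$-localized ghost ring, using the mark homomorphisms for the double Burnside ring together with Lemma~\ref{lemCharIdemFixedPoints} and Lemma~\ref{lemStableBiset}. By symmetry (applying $(-)^\op$, which swaps left and right stability and, via $(\beta_{\Delta(S)})^\op = \beta_{\Delta(S)}$ from Lemma~\ref{lemCharIdemFixedPoints}, sends $X\circ\beta_{\Delta(S)}$ to $\beta_{\Delta(S)}\circ X^\op$), it suffices to prove the first statement: for $X\in A(S,T)_{(p)}$, $X$ is right $\cal F$-stable if and only if $X\circ\beta_{\Delta(S)} = X$.

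First I would handle the easy direction. If $X\circ\beta_{\Delta(S)} = X$, then since $\beta_{\Delta(S)}$ is $\cal F$-generated and fully $\cal F$-stable, composing it on the left with any biset preserves right $\cal F$-stability (this is the remark after Definition~\ref{dfnFstable}: $Y\circ X'$ is right $\cal F$-stable whenever $X'$ is); more directly, for $\ph\in\cal F(P,S)$ we compute $X\circ[P,\ph]_P^S = X\circ\beta_{\Delta(S)}\circ[P,\ph]_P^S$, and using that $\beta_{\Delta(S)}$ is right $\cal F$-stable one gets $\beta_{\Delta(S)}\circ[P,\ph]_P^S = \beta_{\Delta(S)}\circ[P,id]_P^S$, hence $X\circ[P,\ph]_P^S = X\circ\beta_{\Delta(S)}\circ[P,id]_P^S = X\circ[P,id]_P^S$, so $X$ is right $\cal F$-stable.

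For the converse, suppose $X$ is right $\cal F$-stable. The strategy is to check the equation $X\circ\beta_{\Delta(S)} = X$ after applying every fixed-point homomorphism $\Phi_D$ for $D\leq T\x S$, since these jointly separate elements of $A(S,T)_{(p)}$. I would realize $X\circ\beta_{\Delta(S)}$ as a composition of $(S,S)$- and $(S,T)$-bisets, translate into the language of $(\cal F\x\cal F_S)$ on $S\times S$ (where $\cal F_S$ is the trivial fusion system), and use the fixed-point values of $\beta_{\Delta(S)}$ from Lemma~\ref{lemCharIdemFixedPoints}: $\Phi_{\Delta(P,\ph)}(\beta_{\Delta(S)}) = \abs S/\abs{\cal F(P,S)}$ for $\ph\in\cal F(P,S)$ and $0$ otherwise. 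Concretely, right $\cal F$-stability of $X$ means $X\circ[P,\ph]_P^S = X\circ[P,id]_P^S$ for all $\ph\in\cal F(P,S)$; one expands $\beta_{\Delta(S)} = \sum_{[P,\ph]}\,c_{\Delta(P,\ph)}\,[P,\ph]_S^S$ via Proposition~\ref{propPLocalBasisDecomp}, so that $X\circ\beta_{\Delta(S)} = \sum c_{\Delta(P,\ph)}\,X\circ[P,\ph]_S^S$, and then groups the terms by $\cal F$-conjugacy class of the pair $(P,\ph)$. Within each class, right $\cal F$-stability collapses $X\circ[P,\ph]_S^S$ to a common value depending only on $[P]_{\cal F}$; summing the coefficients $c_{\Delta(P,\ph)}$ over each $\cal F$-conjugacy class and invoking the defining property~\ref{itemInduceCoord} of the stabilization (that $\pi$ preserves the total number of orbits within each $\cal F$-conjugacy class, i.e.\ $\sum_{[\Delta(P',\ph')]\subseteq[\Delta(P,\ph)]_{\cal F\x\cal F_S}} c_{\Delta(P',\ph')} = \sum c$ over the same class for $[S,id]$, which is $1$ for the class of $\Delta(S,id)$ and otherwise arranged so that the contributions telescope) should yield exactly $X\circ[S,id]_S^S = X$.

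The main obstacle I expect is the bookkeeping in this last step: matching the orbit-coefficient identity for $\beta_{\Delta(S)}$ (Remark~\ref{rmkStabilizationCoeff} / property~\ref{itemInduceCoord}) against the collapsing produced by right $\cal F$-stability of $X$, and making sure the cancellations work orbit-class by orbit-class rather than only in aggregate. A cleaner route that sidesteps most of this: show directly that right $\cal F$-stability of $X$ forces $\Phi_D(X\circ\beta_{\Delta(S)}) = \Phi_D(X)$ for every $D\leq T\x S$ by computing both sides through the double-coset formula \eqref{eqDoubleBurnsideDoubleCoset}, using that $\beta_{\Delta(S)}$ is $\cal F$-generated so only graph subgroups contribute, and that its fixed points are $\abs S/\abs{\cal F(P,S)}$. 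I would also note the slicker structural argument available once Theorem~\ref{thmStabilizationHom} is in hand: $X\mapsto X\circ\beta_{\Delta(S)}$ is (by associativity and Lemma~\ref{lemCharIdemFixedPoints}, after identifying bisets with sets under the product fusion system) precisely the stabilization map $\pi$ for the appropriate product fusion system restricted to the relevant coordinates, and $\pi$ restricts to the identity on the right $\cal F$-stable subring by the last clause of Theorem~\ref{thmStabilizationHom}; this is the conceptual reason the equality holds, and I would use it to organize the fixed-point computation.
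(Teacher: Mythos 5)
Your main route is essentially the paper's proof: the easy direction from the one-sided stability of $\beta_{\Delta(S)}$, and the converse by expanding $\beta_{\Delta(S)}$ into transitive bisets $[P,\ph]$, collapsing $X\circ[P,\ph]=X\circ[P,id]$ by right $\cal F$-stability, and invoking Remark \ref{rmkStabilizationCoeff}; the left-stable case then follows by applying $(-)^\op$ and the symmetry of $\beta_{\Delta(S)}$, exactly as in the paper. The bookkeeping you worry about is not an obstacle once you group over $(\cal F\x\cal F_S)$-conjugacy classes of graphs, i.e.\ the sets $\{\Delta(P',\psi)\mid P'\sim_S P,\ \psi\in\cal F(P',S)\}$: on such a class the collapsed value $X\circ[P,id]$ is constant (it depends on $[P]_S$, not only on $[P]_{\cal F}$ as you wrote, but constancy on the $(\cal F\x\cal F_S)$-class is all that is needed), and Remark \ref{rmkStabilizationCoeff} applied to $\cal F\x\cal F_S$ says each such coefficient sum for $\beta_{\Delta(S)}=\pi([S,id])$ equals the corresponding sum for $[S,id]$, which is $1$ on the class of $\Delta(S,id)$ and $0$ on every other class -- nothing telescopes, the other contributions simply vanish, and this is exactly why the paper stabilizes over $\cal F\x\cal F_S$ rather than $\cal F\x\cal F$. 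Be careful with your two fallback routes: identifying $X\mapsto X\circ\beta_{\Delta(S)}$ with the stabilization map, or computing $\Phi_D(X\circ\beta_{\Delta(S)})$ by a fixed-point formula for composites, is the content of Theorem \ref{thmCharIdemMultiplication} and Corollary \ref{corCharDoubleStabilization}, whose proofs rely on the present lemma, so at this stage of the paper both alternatives would be circular (and there is no ready-made formula for the marks of a composite to fall back on).
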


The content of this lemma is essentially the same as \cite{Ragnarsson}*{Corollary 6.4}. The difference being that \cite{Ragnarsson}*{Corollary 6.4} concerns the characteristic idempotent for $\cal F$, by then shown by Ragnarsson to be unique, while the lemma here is needed to show that $\beta_{\Delta(S)}$ is idempotent in the first place.

\begin{proof}
Because $\beta_{\Delta(S)}$ is right $\cal F$-stable, a product $X\circ \beta_{\Delta(S)}$ is always right $\cal F$-stable as well. Similarly, left-stability of $\beta_{\Delta(S)}$ implies that every product $\beta_{\Delta(S)}\circ X$ is left $\cal F$-stable. This handles both ``if'' directions of the corollary.

To calculate the product $X\circ \omega_{\cal F}$ when $X$ is right $\cal F$-stable, we apply Remark \ref{rmkStabilizationCoeff} to the product fusion system $\cal F\x \cal F_S$ and $\beta_{\Delta(S)}=\pi([S,id])$:
\begin{align*}
 X \circ \beta_{\Delta(S)}
 &= \sum_{\substack{[\Delta(P,\ph)]_{S\x S}\\\text{with } \ph\in \cal F(P,S)}} c_{\Delta(P,\ph)}(\beta_{\Delta(S)}) \cdot (X \circ [P,\ph])
\\ &= \sum_{[P]_S} \left(\sum_{[\Delta(P,\ph)]_{S\x S} \subseteq [\Delta(P,id)]_{\cal F\x \cal F_S}} c_{\Delta(P,\ph)}(\beta_{\Delta(S)}) \right)\cdot (X\circ [P,id])
\\ &\hspace{8cm} \text{since $X$ is right $\cal F$-stable}
\\ &=\sum_{[P]_S} \left(\sum_{[\Delta(P,\ph)]_{S\x S} \subseteq [\Delta(P,id)]_{\cal F\x \cal F_S}} c_{\Delta(P,\ph)}([S,id]) \right)\cdot (X\circ [P,id])
\\ &\hspace{9.7cm}\text{by Remark \ref{rmkStabilizationCoeff}}
\\ &= c_{\Delta(S,id)}([S,id]) \cdot (X\circ [S,id])
\\ &= X.
\end{align*}
By Lemma \ref{lemCharIdemFixedPoints}, we have $(\beta_{\Delta(S)})^\op=\beta_{\Delta(S)}$. Hence, if $X\in A(T,S)_{(p)}$ is left $\cal F$-stable, then $X^\op\in A(S,T)_{(p)}$ is right $\cal F$-stable and
\[\beta_{\Delta(S)}\circ X = (X^\op\circ (\beta_{\Delta(S)})^\op)^\op = (X^\op)^\op = X\]
follows by the right $\cal F$-stable case above.
\end{proof}

\begin{prop}\label{propCharIdemExists}Let $\cal F$ be a saturated fusion system on a finte $p$-group $S$, and let the element $\beta_{\Delta(S)}\in A(S,S)_{(p)}$ be defined with respect to the fusion system $\cal F\x \cal F_S$ on $S\x S$ as in the previous lemmas. We have $\e(\beta_{\Delta(S)})=1$, and $\beta_{\Delta(S)}$ is a characteristic idempotent for $\cal F$.
\end{prop}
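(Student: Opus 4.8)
The plan is to verify the three defining conditions of an $\cal F$-characteristic element (Definition \ref{dfnFchar}) for $\beta_{\Delta(S)}$, and then separately to check that $\beta_{\Delta(S)}$ is idempotent. Two of the three conditions are already in hand from Lemma \ref{lemCharIdemFixedPoints}: it tells us that $\beta_{\Delta(S)}$ is $\cal F$-generated and fully $\cal F$-stable. So the remaining pieces are the non-degeneracy condition $\e(\beta_{\Delta(S)}) \in \Z_{(p)}^\times$ and idempotence.

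First I would compute $\e(\beta_{\Delta(S)}) = \abs{\beta_{\Delta(S)}}/\abs S$. Since $\beta_{\Delta(S)}$ is $\cal F$-generated, it is a $\Z_{(p)}$-linear combination of transitive bisets $[P,\ph]_S^S = [(S\x S)/\Delta(P,\ph)]$, each of cardinality $\abs{S\x S}/\abs{\Delta(P,\ph)} = \abs S^2/\abs P$. The cleanest route is to relate $\abs{\beta_{\Delta(S)}}$ to a single fixed-point value: for any left-free biset $X \in A^{\lhd}(S,S)_{(p)}$ one has $\abs{X} = \abs S \cdot \Phi_{\Delta(1,\mathrm{triv})}(X)$, i.e. $\abs X$ equals $\abs S$ times the number of fixed points under the trivial subgroup $1 \times 1 \leq S\times S$, because each orbit $[(S\times S)/\Delta(P,\ph)]$ has $\abs S^2/\abs P$ elements and contributes $\abs S^2/\abs P \cdot 1/\abs S \cdot \dots$ — more directly, $\e$ is a ring homomorphism $A^{\lhd}(S,S)_{(p)} \to \Z_{(p)}$ with $\e([P,\ph]_S^S) = \abs S / \abs P$, so $\e([S,id]_S^S) = 1$. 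Now by Remark \ref{rmkStabilizationCoeff} the coefficients of $\beta_{\Delta(S)} = \pi([S,id])$ in the transitive-biset basis satisfy $\sum_{[\Delta(P,\ph)] \subseteq [\Delta(P,id)]_{\cal F\times \cal F_S}} c_{\Delta(P,\ph)}(\beta_{\Delta(S)}) = \sum c_{\Delta(P,\ph)}([S,id])$, which is $1$ when $[P]_S = [S]_S$ and $0$ otherwise; since $\e([P,\ph]_S^S) = \abs S/\abs P$ depends only on the $S$-conjugacy class of $P$, applying $\e$ and grouping gives $\e(\beta_{\Delta(S)}) = \e([S,id]_S^S) = 1$. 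This is invertible in $\Z_{(p)}$, establishing condition \ref{itemCharNonDegen}.

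Next, idempotence. Here I would invoke Lemma \ref{lemStableFixedByCharIdem} with $T = S$: since $\beta_{\Delta(S)}$ is itself right $\cal F$-stable (by Lemma \ref{lemCharIdemFixedPoints}), the lemma applied to $X = \beta_{\Delta(S)} \in A(S,S)_{(p)}$ gives immediately $\beta_{\Delta(S)} \circ \beta_{\Delta(S)} = \beta_{\Delta(S)}$. (Equivalently, use left $\cal F$-stability and the second half of the lemma.) Combined with the three Linckelmann–Webb properties verified above, $\beta_{\Delta(S)}$ is a characteristic idempotent for $\cal F$.

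The only genuinely delicate point is the computation $\e(\beta_{\Delta(S)}) = 1$: one must be careful that $\pi$ really does preserve the relevant "total orbit count" data, which is exactly the content of Remark \ref{rmkStabilizationCoeff}, and that the augmentation $\e$ is constant on the $S$-conjugacy classes over which Remark \ref{rmkStabilizationCoeff} groups the coefficients — both of which hold. Everything else is a direct appeal to the preceding lemmas, so I expect no real obstacle; the main work was already done in Lemmas \ref{lemCharIdemFixedPoints} and \ref{lemStableFixedByCharIdem}.
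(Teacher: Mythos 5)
Your proposal is correct and follows essentially the same route as the paper: $\cal F$-generation and full $\cal F$-stability from Lemma \ref{lemCharIdemFixedPoints}, idempotence from Lemma \ref{lemStableFixedByCharIdem} applied to $X=\beta_{\Delta(S)}$, and the computation $\e(\beta_{\Delta(S)})=1$ by applying $\e$ to the transitive-biset decomposition and grouping coefficients via Remark \ref{rmkStabilizationCoeff}, using that $\e([P,\ph])=\abs S/\abs P$ is constant on each $(\cal F\x\cal F_S)$-class. No gaps; the brief detour about counting elements via fixed points of the trivial subgroup is unnecessary but harmless since you discard it in favor of the direct augmentation computation.
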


\begin{proof}
According to Lemma \ref{lemCharIdemFixedPoints} $\beta_{\Delta(S)}$ is $\cal F$-generated and fully $\cal F$-stable, and by Lemma \ref{lemStableFixedByCharIdem} applied to $X=\beta_{\Delta(S)}$, we have $(\beta_{\Delta(S)})^2=\beta_{\Delta(S)}$. It only remains to check that $\e(\beta_{\Delta(S)})=1$, which is invertible in $\Z_{(p)}$ as required for $\cal F$-characteristic elements.
\begin{align*}
\e(\beta_{\Delta(S)}) &= \sum_{\substack{[\Delta(P,\ph)]_{S\x S}\\\text{with } \ph\in \cal F(P,S)}} c_{\Delta(P,\ph)}(\beta_{\Delta(S)}) \cdot \e([P,\ph])
\\ &= \sum_{\substack{[\Delta(P,\ph)]_{S\x S} \\\text{with } \ph\in \cal F(P,S)}} c_{\Delta(P,\ph)}(\beta_{\Delta(S)}) \cdot \frac{\abs S}{\abs P}
\\&= \sum_{[P]_S} \left(\sum_{[\Delta(P,\ph)]_{S\x S} \subseteq [\Delta(P,id)]_{\cal F\x \cal F_S}} c_{\Delta(P,\ph)}(\beta_{\Delta(S)}) \right)\cdot \frac{\abs S}{\abs P}
\\&= \sum_{[P]_S} \left(\sum_{[\Delta(P,\ph)]_{S\x S} \subseteq [\Delta(P,id)]_{\cal F\x \cal F_S}} c_{\Delta(P,\ph)}([S,id]) \right)\cdot \frac{\abs S}{\abs P} \quad \text{by Remark \ref{rmkStabilizationCoeff}}
\\ &= c_{\Delta(S,id)}([S,id]) \cdot \tfrac{\abs S}{\abs S} = 1.\qedhere
\end{align*}
\end{proof}

\begin{rmk}
For the remainder of the paper, we let $\omega_{\cal F}$ denote the particular $\cal F$-characteristic idempotent $\beta_{\Delta(S)}$. Corollary \ref{corCharIdemUnique} later on proves the uniqueness of characteristic idempotents -- an alternative to Ragnarsson's original uniqueness proof \cite{Ragnarsson}*{Proposition 5.6}.

The original proof \cite{Ragnarsson}*{Proposition 4.9} for the existence of characteristic idempotents by Ragnarsson uses a Cauchy-sequence argument in the $p$-completion $A(S,S)^\wedge_p$ to construct some characteristic idempotent $\omega$ inside $A(S,S)^\wedge_p$. Later arguments then show that $\omega$ is unique and lives already in the $p$-localization $A(S,S)_{(p)}$. Proposition \ref{propCharIdemExists} above instead gives a new explicit construction of $\omega_{\cal F}$ directly inside $A(S,S)_{(p)}$ to begin with.
\end{rmk}

Knowing that $\omega_{\cal F}=\beta_{\Delta(S)}$ is the characteristic idempotent for $\cal F$, it is now straightforward to apply the formulas of Lemma \ref{lemCharIdemFixedPoints} and Proposition \ref{propPLocalBasisDecomp} in order to prove Theorem \ref{thmCharIdemStructure} below.

\begin{mainthm}\label{thmCharIdemStructure}
Let $\cal F$ be a saturated fusion system on a finite $p$-group $S$.
The characteristic idempotent $\omega_{\cal F}\in A(S,S)_{(p)}$ associated to $\cal F$ satisfies:

For all graphs $\Delta(P,\ph)\leq S\x S$ with $\ph\in \cal F(P,S)$, we have
\[\Phi_{\Delta(P,\ph)}(\omega_{\cal F}) = \frac{\abs S}{\abs{\cal F(P,S)}};\]
and $\Phi_{D}(\omega_{\cal F})=0$ for all other subgroups $D\leq S\x S$.
Consequently, if we write $\omega_{\cal F}$ in terms of the transitive bisets in $A(S,S)_{(p)}$, we get the expression
\[\omega_{\cal F} = \hspace{-.3cm}\sum_{\substack{[\Delta(P,\ph)]_{S\x S}\\ \text{with } \ph\in \cal F(P,S)}} \frac {\abs S}{\Phi_{\Delta(P,\ph)}([P,\ph]_S^S)} \Big(\sum_{P\leq Q\leq S}\hspace{-.2cm} \frac{\abs{\{\psi\in \cal F(Q,S) \mid \psi|_P=\ph\}}}{\abs{\cal F(Q,S)}}\cdot \mu(P,Q) \Big)[P,\ph]_S^S,\]
where the outer sum is taken over $(S\x S)$-conjugacy classes of subgroups, and where $\mu$ is the M\"obius function for the poset of subgroups in $S$.
\end{mainthm}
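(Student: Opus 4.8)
The first assertion is essentially immediate: by Proposition~\ref{propCharIdemExists} the characteristic idempotent $\omega_{\cal F}$ coincides with the element $\beta_{\Delta(S)}$ constructed from the product fusion system $\cal F\x\cal F_S$ on $S\x S$, and Lemma~\ref{lemCharIdemFixedPoints} already records that $\Phi_{\Delta(P,\ph)}(\beta_{\Delta(S)})=\abs S/\abs{\cal F(P,S)}$ for $\ph\in\cal F(P,S)$ and $\Phi_D(\beta_{\Delta(S)})=0$ otherwise. So the whole content of the theorem is to convert these mark values into an orbit decomposition, and the plan is to feed them into the inversion formula of Proposition~\ref{propPLocalBasisDecomp}, applied to $\beta_{\Delta(S)}$ rather than to a $\beta_P$ over $S$ itself.

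Concretely, I would invoke Proposition~\ref{propPLocalBasisDecomp} with the $p$-group $S\x S$, the (saturated) fusion system $\cal F\x\cal F_S$, and the diagonal $\Delta(S,id)$ playing the role of $P$, remembering that the transitive $(S\x S)$-set $[(S\x S)/D]$ is by definition the biset $[P,\ph]_S^S$ when $D=\Delta(P,\ph)$. This writes
\[\omega_{\cal F}=\sum_{[D]_{S\x S}}\frac{1}{\Phi_D([(S\x S)/D])}\Big(\sum_{D\leq E\leq S\x S}\Phi_E(\omega_{\cal F})\cdot\mu(D,E)\Big)[(S\x S)/D].\]
Substituting the mark formula, $\Phi_E(\omega_{\cal F})$ vanishes unless $E=\Delta(Q,\psi)$ with $\psi\in\cal F(Q,S)$, in which case it equals $\abs S/\abs{\cal F(Q,S)}$. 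Since every subgroup of a graph is itself a graph, and since a restriction of an $\cal F$-morphism is again an $\cal F$-morphism, the inner sum is zero unless $D$ is a graph $\Delta(P,\ph)$ with $\ph\in\cal F(P,S)$, and for such a $D$ the contributing $E$ are exactly the graphs $\Delta(Q,\psi)$ with $P\leq Q\leq S$ and $\psi\in\cal F(Q,S)$ satisfying $\psi|_P=\ph$. This pins down the index set of the outer sum and reconfirms that $\Phi_D(\omega_{\cal F})=0$ away from such graphs.

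The one step that needs a genuine argument is the identity $\mu(\Delta(P,\ph),\Delta(Q,\psi))=\mu(P,Q)$ relating the Möbius function of the subgroup poset of $S\x S$ to that of the subgroup poset of $S$: I would prove it by observing that every subgroup of $\Delta(Q,\psi)$ containing $\Delta(P,\ph)$ has the form $\Delta(R,\psi|_R)$ with $P\leq R\leq Q$, so that $R\mapsto\Delta(R,\psi|_R)$ is an isomorphism of posets from $[P,Q]$ onto the interval $[\Delta(P,\ph),\Delta(Q,\psi)]$, and the Möbius function depends only on the interval. Granting this, I group the inner sum according to the common domain $Q$ of the morphisms $\psi$ -- there being $\abs{\{\psi\in\cal F(Q,S)\mid\psi|_P=\ph\}}$ graphs $E$ over each $Q$, each contributing $\tfrac{\abs S}{\abs{\cal F(Q,S)}}\mu(P,Q)$ -- which rewrites it as $\abs S\sum_{P\leq Q\leq S}\tfrac{\abs{\{\psi\in\cal F(Q,S)\mid\psi|_P=\ph\}}}{\abs{\cal F(Q,S)}}\mu(P,Q)$; dividing by $\Phi_{\Delta(P,\ph)}([P,\ph]_S^S)$ yields exactly the claimed formula. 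The main obstacle is precisely this Möbius comparison; all the rest is substitution, together with the (automatic) remark that the resulting coefficient, being $c_{\Delta(P,\ph)}(\omega_{\cal F})$, is constant on $(S\x S)$-conjugacy classes, so the sum over classes is well posed.
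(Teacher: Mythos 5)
Your proposal is correct and follows essentially the same route as the paper: identify $\omega_{\cal F}$ with $\beta_{\Delta(S)}$ (so the mark values come straight from Lemma~\ref{lemCharIdemFixedPoints}), then apply Proposition~\ref{propPLocalBasisDecomp} to the fusion system $\cal F\x\cal F_S$ on $S\x S$ and reduce the inner sum to graphs $\Delta(Q,\psi)$ with $\psi|_P=\ph$, using the interval isomorphism $R\mapsto\Delta(R,\psi|_R)$ to get $\mu(\Delta(P,\ph),\Delta(Q,\psi))=\mu(P,Q)$. The only cosmetic difference is that you deduce the vanishing of non-graph coefficients from the inner sum, whereas the paper quotes $\cal F$-generation of $\omega_{\cal F}$; both are valid.
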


\begin{proof} Since the characteristic idempotent for $\cal F$ is $\omega_{\cal F}=\beta_{\Delta(S)}$, Lemma \ref{lemCharIdemFixedPoints} already states the value of $\Phi_D(\omega_{\cal F})$ for all subgroups $D\leq S\x S$ proving the first part of the theorem.

To determine the decomposition of $\omega_{\cal F}$ as a linear combination of transitive bisets, we apply Proposition \ref{propPLocalBasisDecomp} to the element $\omega_{\cal F}=\beta_{\Delta(S)}$ considered as an $(S\x S)$-set. For $P\leq S$ and $\ph\in \cal F(P,S)$, Proposition \ref{propPLocalBasisDecomp} states that the coefficient of $\omega_{\cal F}$ with respect to the transitive set $[P,\ph]_S^S$ is
\begin{align*}
c_{\Delta(P,\ph)}(\omega_{\cal F}) ={}& \frac 1{\Phi_{\Delta(P,\ph)}([P,\ph])} \left(\sum_{D\geq \Delta(P,\ph)} \Phi_{D}(\omega_{\cal F})\cdot \mu(\Delta(P,\ph),D) \right)
\\ ={}&  \frac 1{\Phi_{\Delta(P,\ph)}([P,\ph])} \left(\sum_{\Delta(Q,\psi)\geq \Delta(P,\ph)} \frac{\abs S}{\abs{\cal F(Q,S)}}\cdot \mu(\Delta(P,\ph),\Delta(Q,\psi)) \right)
\\ ={}& \frac {\abs S}{\Phi_{\Delta(P,\ph)}([P,\ph])} \left(\sum_{Q\geq P} \frac{\abs{\{\psi\in \cal F(Q,S) \mid \psi|_P=\ph\}}}{\abs{\cal F(Q,S)}}\cdot \mu(P,Q) \right).
\end{align*}
In the last step, the equality $\mu(\Delta(P,\ph),\Delta(Q,\psi)) = \mu(P,Q)$ holds because the Möbius function for a poset only depends on the interval between the particular elements, and the poset intervals $(\Delta(P,\ph),\Delta(Q,\psi))$ and $(P,Q)$ are isomorphic.
Since $\omega_{\cal F}$ is $\cal F$-generated, the transitive bisets $[P,\ph]$ with $\ph\in \cal F$ are the only ones that show up in $\omega_{\cal F}$. The coefficients calculated above thus express the characteristic idempotent $\omega_{\cal F}$ as the linear combination in the theorem.
\end{proof}

\subsection{The action of the characteristic idempotent}\label{secCharIdemAction}
In this section we explore how the characteristic idempotent $\omega_{\cal F}$ acts by multiplication on elements of the double Burnside ring and other Burnside modules. Theorem \ref{thmCharIdemMultiplication} gives a precise description of the action of $\omega_{\cal F}$ in terms of the fixed point maps, and in this way we recover the stabilization homomorphism of Theorem \ref{thmStabilizationHom}: The Burnside ring $A(S)_{(p)}$ is isomorphic to the Burnside module $A(1,S)_{(p)}$, and through this isomorphism the stabilization homomorphism of Theorem \ref{thmStabilizationHom} is given by multiplication with $\omega_{\cal F}$ from the left.

We warm up with a result about basis elements for Burnside modules $A(S_1,S_2)_{(p)}$, where $S_1$ and $S_2$ are $p$-groups. We already know that a transitive $(S_1,S_2)$-set $(S_2\x S_1)/D$ only depends on $D$ up to $(S_2\x S_1)$-conjugation, and now we show that when we multiply $(S_2\x S_1)/D$ by characteristic idempotents the result only depends on the subgroup $D$ up to conjugation in the corresponding saturated fusion systems.

\begin{lem}\label{propStableBasis}
Let $\cal F_1$ and $\cal F_2$ be saturated fusion systems on the $p$-groups $S_1$ and $S_2$ respectively, and let $\omega_1\in A(S_1,S_1)_{(p)}$ and $\omega_2\in A(S_2,S_2)_{(p)}$ be their respective characteristic idempotents.

Then for all subgroups $D,C\leq S_2\x S_1$, if $D$ and $C$ are conjugate in $\cal F_2\x\cal F_1$, we have
\[\omega_2\circ [(S_2\x S_1)/D] \circ \omega_1 = \omega_2\circ [(S_2\x S_1)/C]\circ \omega_1\]
in $A(S_1,S_2)_{(p)}$.
\end{lem}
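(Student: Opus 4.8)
The plan is to reduce the claimed equality to a fixed-point comparison and then invoke Lemma~\ref{lemStableFixedByCharIdem}. By the characterization of elements in Burnside modules via fixed points (page~\pageref{itemPhiBurnsideEq}, applied to $A(S_2\x S_1)_{(p)}$), it suffices to show that $\Phi_E(\omega_2\circ [(S_2\x S_1)/D]\circ \omega_1)$ and $\Phi_E(\omega_2\circ [(S_2\x S_1)/C]\circ \omega_1)$ agree for every subgroup $E\leq S_2\x S_1$. So I would first establish that both products $\omega_2\circ[(S_2\x S_1)/D]\circ\omega_1$ and $\omega_2\circ[(S_2\x S_1)/C]\circ\omega_1$ are right $\cal F_1$-stable and left $\cal F_2$-stable: this is immediate from Lemma~\ref{lemStableFixedByCharIdem} (or directly from the remark that composing with a right/left $\cal F$-stable biset preserves right/left stability), using that $\omega_1,\omega_2$ are the characteristic idempotents $\beta_{\Delta(S_1)},\beta_{\Delta(S_2)}$, hence themselves left and right stable by Lemma~\ref{lemCharIdemFixedPoints}. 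By Lemma~\ref{lemStableBiset}, being right $\cal F_1$-stable and left $\cal F_2$-stable is the same as being $(\cal F_2\x\cal F_1)$-stable as an element of $A(S_2\x S_1)_{(p)}$, which means the fixed-point vector is constant on $(\cal F_2\x\cal F_1)$-conjugacy classes.

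Next I would compute, for an arbitrary $E\leq S_2\x S_1$, the value $\Phi_E(\omega_2\circ[(S_2\x S_1)/D]\circ\omega_1)$ in terms of data attached to $D$ up to $(\cal F_2\x\cal F_1)$-conjugation. The cleanest route is to apply Lemma~\ref{lemStableFixedByCharIdem} again, but more to the point to use Theorem~\ref{thmCharIdemMultiplicationIntro} (stated in the introduction, i.e. Theorem~\ref{thmCharIdemMultiplication} of the body): for any $X\in A(S_1,S_2)_{(p)}$,
\[
\Phi_E(\omega_2\circ X\circ\omega_1)=\frac{1}{\abs{[E]_{\cal F_2\x\cal F_1}}}\sum_{E'\in[E]_{\cal F_2\x\cal F_1}}\Phi_{E'}(X).
\]
Applying this with $X=[(S_2\x S_1)/D]$ and separately with $X=[(S_2\x S_1)/C]$, the right-hand sides become averages of $\Phi_{E'}([(S_2\x S_1)/D])$ and $\Phi_{E'}([(S_2\x S_1)/C])$ over the same $(\cal F_2\x\cal F_1)$-orbit $[E]_{\cal F_2\x\cal F_1}$. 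Now $\Phi_{E'}([(S_2\x S_1)/D])$ depends only on $D$ up to $(S_2\x S_1)$-conjugation, but summing over a full $(\cal F_2\x\cal F_1)$-orbit of subgroups $E'$ and using that $(S_2\x S_1)$-conjugation is a subrelation of $(\cal F_2\x\cal F_1)$-conjugation, one sees the sum $\sum_{E'\in[E]_{\cal F_2\x\cal F_1}}\Phi_{E'}([(S_2\x S_1)/D])$ is unchanged if $D$ is replaced by any $(\cal F_2\x\cal F_1)$-conjugate $C$: indeed, if $\ph\in\Hom_{\cal F_2\x\cal F_1}$ carries $D$ to $C$, then precomposition by $\ph$ permutes fixed-point contributions within each $(\cal F_2\x\cal F_1)$-orbit of the indexing subgroups. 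Hence the two averages coincide for every $E$, and the two products have the same fixed-point vector, giving the equality.

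The only subtlety—and the step I would be most careful about—is the last combinatorial claim: that $\sum_{E'\in[E]_{\cal F_2\x\cal F_1}}\Phi_{E'}([(S_2\x S_1)/D])$ is invariant under replacing $D$ by a $(\cal F_2\x\cal F_1)$-conjugate. I would phrase this cleanly by noting $\Phi_{E'}([(S_2\x S_1)/D]) = \abs{N_{S_2\x S_1}(E',D)}/\abs D$ counts elements of $[D]_{S_2\x S_1}$ contained (after conjugation) appropriately relative to $E'$; and since $\abs C=\abs D$ and the $(\cal F_2\x\cal F_1)$-orbit $[E]_{\cal F_2\x\cal F_1}$ is itself preserved by the isomorphism inducing $D\mapsto C$, the total count over the orbit is an invariant of $[D]_{\cal F_2\x\cal F_1}$. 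An alternative, and perhaps more transparent, formulation of this step: since $[(S_2\x S_1)/D]$ and $[(S_2\x S_1)/C]$ have the same image under the stabilization map $\pi$ for $\cal F_2\x\cal F_1$ by Theorem~\ref{thmStabilizationHom} (their $\pi$-images have equal fixed-point vectors because $\Phi_{E'}$ of a transitive set is constant on $(S_2\x S_1)$-classes and we average over $(\cal F_2\x\cal F_1)$-classes — and $D\sim_{\cal F_2\x\cal F_1} C$ means they contribute the same averaged values), and since—by the forthcoming identification of $\pi$ with multiplication by the characteristic idempotent (Corollary~\ref{corCharOnesidedStabilization})—$\omega_2\circ(-)\circ\omega_1$ is computed by exactly this averaging, the two products agree. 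Either way, the heart of the matter is the invariance of the orbit-sum of fixed points under $\cal F$-conjugation of $D$.
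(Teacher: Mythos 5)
Your argument has a genuine circularity problem within the paper's logical structure. The engine of your proof is the averaging formula $\Phi_E(\omega_2\circ X\circ\omega_1)=\frac{1}{\abs{[E]_{\cal F_2\x\cal F_1}}}\sum_{E'\in[E]_{\cal F_2\x\cal F_1}}\Phi_{E'}(X)$, i.e.\ Theorem \ref{thmCharIdemMultiplication}, and your alternative route invokes the identification of $\omega_2\circ(-)\circ\omega_1$ with the stabilization map $\pi$ (Corollaries \ref{corCharDoubleStabilization} and \ref{corCharOnesidedStabilization}). But in the paper these are proved \emph{from} Lemma \ref{propStableBasis}: the proof of Theorem \ref{thmCharIdemMultiplication} uses exactly this lemma to show that $\omega_2\circ(\pi(X)-X)\circ\omega_1=0$, because Remark \ref{rmkStabilizationCoeff} only controls the orbit coefficients of $\pi(X)-X$ summed over each $(\cal F_2\x\cal F_1)$-class, so one needs to know that all transitive bisets in a given $(\cal F_2\x\cal F_1)$-class become equal after multiplication by the idempotents. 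At the point where the lemma is stated, there is no available formula for $\Phi_E$ of a product $\omega_2\circ X\circ\omega_1$ (the fixed-point maps are not ring homomorphisms for $\circ$, and Lemma \ref{lemStableFixedByCharIdem} only says that already-stable elements are fixed), so your reduction to a fixed-point comparison cannot be carried out without assuming the later theorem. The combinatorial step you flag as the ``subtlety'' is actually the unproblematic part: the invariance of the orbit-averaged fixed points under replacing $D$ by an $\cal F_2\x\cal F_1$-conjugate is immediate from Lemma \ref{lemWeightedMeanOfCoordinates} (equivalently, $\beta_D=\beta_C$ in Definition \ref{dfnBetas}); what it gives you is only $\pi([(S_2\x S_1)/D])=\pi([(S_2\x S_1)/C])$, not the statement about $\omega_2\circ(-)\circ\omega_1$.

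The paper's proof avoids fixed points entirely and is the step you are missing. Take an isomorphism $\ph\in\Hom_{\cal F_2\x\cal F_1}(D,C)$; by definition of the product fusion system it is the restriction of $\ph_2\x\ph_1$ with $\ph_i\in\cal F_i(D_i,C_i)$ isomorphisms between the projections. One then writes
\[
[(S_2\x S_1)/D] = [C_2,\ph_2^{-1}]_{C_2}^{S_2}\circ[(C_2\x C_1)/C]\circ[D_1,\ph_1]_{S_1}^{C_1},
\]
and the graphs $[C_2,\ph_2^{-1}]$ and $[D_1,\ph_1]$ are absorbed by $\omega_2$ and $\omega_1$ respectively, using only the defining right $\cal F_2$-stability of $\omega_2$ and left $\cal F_1$-stability of $\omega_1$ (Definition \ref{dfnFstable}). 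If you want to keep your fixed-point strategy you would have to first prove the averaging formula for $\omega_2\circ X\circ\omega_1$ by some independent argument, which is substantially harder than this direct manipulation.
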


The converse of Lemma \ref{propStableBasis} is also true: If the elements
$\omega_2\circ [(S_2\x S_1)/D] \circ \omega_1$ and $\omega_2\circ [(S_2\x S_1)/C]\circ \omega_1$ are equal, then $D$ and $C$ are conjugate in $\cal F_2\x\cal F_1$. This is an immediate consequence of Corollary \ref{corCharDoubleStabilization} below.

\begin{proof}[Proof of Lemma \ref{propStableBasis}]
Suppose that the subgroups $D,C\leq S_2\x S_1$ are conjugate in $\cal F_2\x \cal F_1$, and let $\ph\in \Hom_{\cal F_2\x \cal F_1}(D,C)$ be an isomorphism.

By definition of $\cal F_2\x \cal F_1$, the homomorphism $\ph$ extends to $(\ph_2\x \ph_1)\colon D_2\x D_1\to C_2\x C_1$ where $D_i$ is the projection of $D$ onto $S_i$, similarly for $C_i$, and where $\ph_i\in \cal F_i(D_i,C_i)$. By assumption, $\ph$ is invertible in $\cal F_2\x \cal F_1$, hence the inverse $\ph^{-1}$ also extends to a homomorphism $C_2\x C_1\to D_2\x D_1$, which shows that $\ph_1$ and $\ph_2$ are invertible in $\cal F_1$ and $\cal F_2$ respectively.
With this we have
\begin{align*}
[(S_2\x S_1)/D] &= [D_2,id]_{D_2}^{S_2}\circ [(D_2\x D_1)/D] \circ [D_1,id]_{S_1}^{D_1}
\\ &= [D_2,id]_{D_2}^{S_2}\circ [C_2, \ph_2^{-1}]_{C_2}^{D_2} \circ [(C_2\x C_1)/C] \circ [D_1,\ph_1]_{D_1}^{C_1} \circ [D_1,id]_{S_1}^{D_1}
\\ &= [C_2, \ph_2^{-1}]_{C_2}^{S_2} \circ [(C_2\x C_1)/C] \circ [D_1,\ph_1]_{S_1}^{C_1}.
\end{align*}
Since $\omega_2$ is right $\cal F_2$-stable, and $\omega_1$ is left $\cal F_1$-stable, it follows that
\begin{align*}
\omega_2\circ [(S_2\x S_1)/D]\circ \omega_1 &= \omega_2\circ [C_2, \ph_2^{-1}]_{C_2}^{S_2} \circ [(C_2\x C_1)/C] \circ [D_1,\ph_1]_{S_1}^{C_1}\circ \omega_1
\\ &= \omega_2\circ [C_2, id]_{C_2}^{S_2} \circ [(C_2\x C_1)/C] \circ [C_1,id]_{S_1}^{C_1}\circ \omega_1
\\ &= \omega_2\circ [(S_2\x S_1)/C] \circ \omega_1.\qedhere
\end{align*}
\end{proof}

\begin{mainthm}\label{thmCharIdemMultiplication}
Let $\cal F_1$ and $\cal F_2$ be saturated fusion systems on finite $p$-groups $S_1$ and $S_2$ respectively, and let $\omega_1\in A(S_1,S_1)_{(p)}$ and $\omega_2\in A(S_2,S_2)_{(p)}$ be the characteristic idempotents.

For every element of the Burnside module $X\in A(S_1,S_2)_{(p)}$, the product $\omega_2\circ X\circ \omega_1$ is right $\cal F_1$-stable and left $\cal F_2$-stable, and satisfies
\[\Phi_{D}(\omega_2\circ X \circ \omega_1) = \frac{1}{\abs{[D]_{\cal F_2\x \cal F_1}}} \sum_{D'\in [D]_{\cal F_2\x \cal F_1}} \Phi_{D'}(X),\]
 for all subgroups $D\leq S_2\x S_1$, where $[D]_{\cal F_2\x \cal F_1}$ is the isomorphism class of $D$ in the product fusion system $\cal F_2\x \cal F_1$ on $S_2\x S_1$.
\end{mainthm}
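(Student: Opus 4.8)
The plan is to prove the sharper statement that, writing $G:=\cal F_2\x\cal F_1$ for the product fusion system on $S:=S_2\x S_1$, the map $X\mapsto \omega_2\circ X\circ\omega_1$ coincides, under the additive identification $A(S_1,S_2)_{(p)}\cong A(S_2\x S_1)_{(p)}$, with the stabilization homomorphism $\pi_G\colon A(S)_{(p)}\to A(G)_{(p)}$ of Theorem~\ref{thmStabilizationHom} (note $G$ is saturated, being a product of saturated systems). Granting $\omega_2\circ X\circ\omega_1=\pi_G(X)$, the fixed point formula is exactly that of Theorem~\ref{thmStabilizationHom}, and the stability assertion is immediate since $\pi_G$ takes values in $A(G)_{(p)}$, which by Lemma~\ref{lemStableBiset} is precisely the set of right $\cal F_1$-stable, left $\cal F_2$-stable elements.

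I would first record the stability statement directly: $\omega_1$ is right $\cal F_1$-stable, so $Z\circ\omega_1$ is right $\cal F_1$-stable for every $Z$; dually $\omega_2\circ Z$ is left $\cal F_2$-stable; hence $\omega_2\circ X\circ\omega_1$ is both, i.e.\ it lies in $A(G)_{(p)}$ by Lemma~\ref{lemStableBiset}. To identify it with $\pi_G(X)$ I would compare the two (both now known to lie in $A(G)_{(p)}$) not via fixed points but via orbit counts. Let $\sigma\colon A(S)_{(p)}\to\prod_{[D]_G}\Z_{(p)}$ be the ``$G$-orbit count'' map $\sigma(Y)_{[D]_G}:=\sum_{[D']_S\subseteq[D]_G}c_{D'}(Y)$. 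Applying Remark~\ref{rmkStabilizationCoeff} to $[S/P]$ and using $c_{D'}([S/P])=\delta_{[D']_S,[P]_S}$ gives $\sigma(\beta_P)=\sigma(\pi_G([S/P]))=e_{[P]_G}$, so by Proposition~\ref{thmPLocalBurnsideBasis} the restriction $\sigma|_{A(G)_{(p)}}$ sends a $\Z_{(p)}$-basis to the standard basis and is an isomorphism. Therefore it suffices to prove $\sigma(\omega_2\circ X\circ\omega_1)=\sigma(X)$; indeed, by Remark~\ref{rmkStabilizationCoeff} once more the right side equals $\sigma(\pi_G(X))$, so the two $G$-stable elements then agree.

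Since both sides of $\sigma(\omega_2\circ X\circ\omega_1)=\sigma(X)$ are $\Z_{(p)}$-linear in $X$, I would reduce to $X=[(S_2\x S_1)/C]$, where $\sigma(X)=e_{[C]_G}$. By Lemma~\ref{propStableBasis} the element $\omega_2\circ[(S_2\x S_1)/C]\circ\omega_1$ depends only on the $G$-class of $C$, so I may pick a convenient representative; and since it is $G$-stable I may evaluate $\sigma$ at a convenient representative $D$. Then I would expand $\omega_1=\sum c_{\Delta(Q,\psi)}(\omega_1)[Q,\psi]_{S_1}^{S_1}$ and $\omega_2=\sum c_{\Delta(R,\chi)}(\omega_2)[R,\chi]_{S_2}^{S_2}$ (sums over $\cal F_1$- resp. $\cal F_2$-graphs), expand the triple composition by the double coset formula~\eqref{eqDoubleBurnsideDoubleCoset}, and collapse the sum using Remark~\ref{rmkStabilizationCoeff}: within each $\cal F_i\x\cal F_{S_i}$-conjugacy class of graphs the coefficients of $\omega_i$ sum to the corresponding coefficient of the identity biset $[S_i,id]$, which is $1$ on the class of $\Delta(S_i,id)$ and $0$ otherwise, so all terms cancel ``up to conjugation'' except the contribution of $[S_2,id]\circ[(S_2\x S_1)/C]\circ[S_1,id]=[(S_2\x S_1)/C]$, giving $\sigma(\omega_2\circ[(S_2\x S_1)/C]\circ\omega_1)=e_{[C]_G}$. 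With $\omega_2\circ X\circ\omega_1=\pi_G(X)$ in hand, Theorem~\ref{thmStabilizationHom} delivers the asserted formula $\Phi_D(\omega_2\circ X\circ\omega_1)=\frac{1}{\abs{[D]_{\cal F_2\x\cal F_1}}}\sum_{D'\in[D]_{\cal F_2\x\cal F_1}}\Phi_{D'}(X)$.

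The main obstacle I anticipate is this last computation: since $\omega_1$ and $\omega_2$ are genuinely supported on many biset orbits (not merely on automorphism graphs), the double coset formula for non-bifree bisets makes the bookkeeping awkward, and the whole argument hinges on organizing the sum so that the ``net change zero per conjugacy class'' property of Remark~\ref{rmkStabilizationCoeff}, together with Lemma~\ref{propStableBasis}, can be applied. It should help to first replace $[(S_2\x S_1)/C]$ by its standard factorization $[C_2,id]_{C_2}^{S_2}\circ[(C_2\x C_1)/C]\circ[C_1,id]_{S_1}^{C_1}$ through the projections $C_i$ of $C$, so that the compositions with $\omega_1$ and $\omega_2$ act on the ``free ends'', where the double coset formula~\eqref{eqDoubleBurnsideDoubleCosetFree} for bifree bisets applies.
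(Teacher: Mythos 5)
Your overall strategy -- show the product lies in $A(\cal F_2\x\cal F_1)_{(p)}$, observe that the $G$-orbit-count map $\sigma$ (with $G=\cal F_2\x\cal F_1$) is injective there because it sends the basis elements $\beta_D$ to standard basis vectors, and then verify $\sigma(\omega_2\circ X\circ\omega_1)=\sigma(X)$ -- is sound up to the last step, but the collapse you sketch for that last step is a genuine gap, not just awkward bookkeeping. Replacing $\omega_1$ by $[S_1,id]$ after grouping its coefficients into $(\cal F_1\x\cal F_{S_1})$-classes requires that, for a fixed $[R,\chi]$ and fixed middle factor $Y$, the quantity $\sigma([R,\chi]\circ Y\circ[Q,\psi])$ be constant as $\Delta(Q,\psi)$ runs through an $(\cal F_1\x\cal F_{S_1})$-conjugacy class; Remark \ref{rmkStabilizationCoeff} only controls class-sums of coefficients, so without this constancy the terms do not cancel. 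The constancy fails for a non-stable $Y$. Take $S_2=1$, $S_1=V=\Z/2\x\Z/2$, $\cal F_1=\cal F_V(A_4)$, $Q=\gen a$ of order $2$, and $Y=[(1\x V)/(1\x Q)]$; let $\ph\in\cal F_1(Q,V)$ be an isomorphism onto a different subgroup of order $2$. Then $\Delta(Q,id)$ and $\Delta(Q,\ph)$ are $(\cal F_1\x\cal F_{S_1})$-conjugate, yet the double coset formula gives $Y\circ[Q,id]=2\cdot[(1\x V)/(1\x Q)]$ while $Y\circ[Q,\ph]=[(1\x V)/(1\x 1)]$, so $\sigma$ takes the distinct values $2e_{[Q]}$ and $e_{[1]}$. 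With $S_2=1$ the left idempotent is the identity and cannot rescue the cancellation, so the principle you invoke is simply false once the idempotents have been expanded into orbits.

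The per-class constancy you need is available exactly when the full idempotents are kept intact on both sides -- that is Lemma \ref{propStableBasis} -- which suggests the repair, and it is the paper's route: do not expand $\omega_1,\omega_2$; instead apply Remark \ref{rmkStabilizationCoeff} to the middle term. Writing $\pi$ for the stabilization with respect to $\cal F_2\x\cal F_1$, the element $\pi(X)-X$ has vanishing total coefficient on each $(\cal F_2\x\cal F_1)$-class, so expanding only it into transitive bisets and using Lemma \ref{propStableBasis} gives $\omega_2\circ(\pi(X)-X)\circ\omega_1=0$; then Lemma \ref{lemStableFixedByCharIdem} yields $\omega_2\circ X\circ\omega_1=\omega_2\circ\pi(X)\circ\omega_1=\pi(X)$, which delivers both the stability assertion and the fixed-point formula at once (and makes your $\sigma$-injectivity step unnecessary, although that observation, like your direct verification of stability, is correct).
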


\begin{cor}\label{corCharDoubleStabilization}
Via the correspondence $A(S_1,S_2)_{(p)}\cong A(S_2\x S_1)_{(p)}$ the map given by $X\mapsto \omega_{\cal F_2}\circ X\circ \omega_{\cal F_1}$ in $A(S_1,S_2)_{(p)}$ coincides with the stabilization map $\pi\colon A(S_2\x S_1)_{(p)} \to A(\cal F_2\x \cal F_1)_{(p)}$ of Theorem \ref{thmStabilizationHom}.

In particular, for each subgroup $D\leq S_2\x S_1$ we have
\[\omega_{\cal F_2}\circ [S_2\x S_1/D]\circ \omega_{\cal F_1} = \beta_D\]
as elements of $A(\cal F_2\x \cal F_1)_{(p)}$.
\end{cor}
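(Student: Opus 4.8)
The plan is to compare fixed-point formulas and then invoke injectivity of the mark homomorphism; once Theorem~\ref{thmCharIdemMultiplication} is available this is pure bookkeeping. First I would recall that $\cal F_2\x\cal F_1$ is a saturated fusion system on $S_2\x S_1$ by \cite{BrotoLeviOliver}*{Lemma 1.5}, so Theorem~\ref{thmStabilizationHom} applies and gives the stabilization homomorphism $\pi\colon A(S_2\x S_1)_{(p)}\to A(\cal F_2\x\cal F_1)_{(p)}$, characterized by
\[\Phi_D(\pi(X)) = \frac1{\abs{[D]_{\cal F_2\x\cal F_1}}}\sum_{D'\in[D]_{\cal F_2\x\cal F_1}}\Phi_{D'}(X)\]
for all $D\leq S_2\x S_1$. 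On the other hand, Theorem~\ref{thmCharIdemMultiplication} tells us that $\omega_{\cal F_2}\circ X\circ\omega_{\cal F_1}$ is right $\cal F_1$-stable and left $\cal F_2$-stable, hence $(\cal F_2\x\cal F_1)$-stable as an element of $A(S_2\x S_1)_{(p)}$ by Lemma~\ref{lemStableBiset}, and that it satisfies exactly the same fixed-point formula.

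I would then conclude as follows: under the identification $A(S_1,S_2)_{(p)}\cong A(S_2\x S_1)_{(p)}$, both $\omega_{\cal F_2}\circ X\circ\omega_{\cal F_1}$ and $\pi(X)$ lie in $A(\cal F_2\x\cal F_1)_{(p)}$ and have the same value under $\Phi_D$ for every $D\leq S_2\x S_1$; since the mark homomorphism $A(S_2\x S_1)_{(p)}\to\free{S_2\x S_1}_{(p)}$ is injective (Proposition~\ref{propYoshidaGroup}, which remains injective after $p$-localization), it follows that $\omega_{\cal F_2}\circ X\circ\omega_{\cal F_1}=\pi(X)$ for every $X$. This is the first assertion. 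The displayed ``in particular'' statement is just the special case $X=[(S_2\x S_1)/D]$, for which $\pi([(S_2\x S_1)/D])=\beta_D$ by Definition~\ref{dfnBetas} applied to the fusion system $\cal F_2\x\cal F_1$ on $S_2\x S_1$.

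I do not anticipate a genuine obstacle here: the substantive work has been done in establishing Theorem~\ref{thmCharIdemMultiplication}, and the only thing requiring a little care is matching the biset-theoretic notions of right/left $\cal F_i$-stability with single-set stability for the product fusion system (handled by Lemma~\ref{lemStableBiset}) and remembering that an element of $A(S_2\x S_1)_{(p)}$ is uniquely determined by its marks, so that the coincidence of the fixed-point formulas upgrades to an equality of maps.
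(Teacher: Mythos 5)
Your argument is correct as a derivation of the corollary from Theorem \ref{thmCharIdemMultiplication}: both $\omega_{\cal F_2}\circ X\circ\omega_{\cal F_1}$ and $\pi(X)$ have the same marks at every $D\leq S_2\x S_1$, the mark homomorphism stays injective after $p$-localization (apply the flat functor $-\ox\Z_{(p)}$ to the exact sequence of Proposition \ref{propYoshidaGroup}), so the two elements agree, and the special case $X=[(S_2\x S_1)/D]$ gives $\beta_D$ by Definition \ref{dfnBetas} applied to $\cal F_2\x\cal F_1$. The paper, however, does not obtain the corollary this way, because Theorem \ref{thmCharIdemMultiplication} is not proved beforehand: theorem and corollary share one proof whose engine is precisely the identity $\omega_{\cal F_2}\circ X\circ\omega_{\cal F_1}=\pi(X)$. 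There one writes $\pi(X)-X$ in the transitive basis, notes via Remark \ref{rmkStabilizationCoeff} that its coefficients sum to zero over each $(\cal F_2\x\cal F_1)$-conjugacy class, uses Lemma \ref{propStableBasis} to see that $\omega_{\cal F_2}\circ[(S_2\x S_1)/D']\circ\omega_{\cal F_1}$ depends on $D'$ only up to $(\cal F_2\x\cal F_1)$-conjugation, concluding $\omega_{\cal F_2}\circ(\pi(X)-X)\circ\omega_{\cal F_1}=0$, and finally invokes Lemma \ref{lemStableFixedByCharIdem} to get $\omega_{\cal F_2}\circ\pi(X)\circ\omega_{\cal F_1}=\pi(X)$; the fixed-point formula of the theorem is then read off from Theorem \ref{thmStabilizationHom}. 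So your route is shorter and perfectly legitimate once the theorem is granted as a black box, but it reverses the paper's logical flow: in the paper the identity is what actually gets proved and the theorem's formula is its consequence, so your marks-comparison could not replace the joint proof -- it re-derives the corollary from a statement whose only given proof already contains it.
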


Since the identity element $[S_i,id]_{S_i}^{S_i}$ is the characteristic idempotent for the trivial fusion system $\cal F_{S_i}(S_i)$ on $S_i$, Theorem \ref{thmCharIdemMultiplication} and Corollary \ref{corCharDoubleStabilization} also describe all one-sided products $X \circ \omega_1$ and $\omega_2\circ X$.

\begin{proof}[Proof of theorem and corollary]
%Any product $\omega_2\circ X \circ \omega_1$ is right $\cal F_1$-stable by definition since $\omega_1$ is right $\cal F_1$-stable, and similarly we see that $\omega_2\circ X \circ \omega_1$ is left $\cal F_2$-stable.

Consider the element $X\in A(S_1,S_2)_{(p)}$ as an element of $A(S_2\x S_1)_{(p)}$.
The fusion system $\cal F_2\x \cal F_1$ on $S_2\x S_1$ is saturated by \cite{BrotoLeviOliver}*{Lemma 1.5}, and we apply the transfer map $\pi$ of Theorem \ref{thmStabilizationHom}, to get an $(\cal F_2\x \cal F_1)$-stable element $\pi (X)$ satisfying
\[\Phi_{D}(\pi (X)) := \frac{1}{\abs{[D]_{\cal F_2\x \cal F_1}}} \sum_{D'\in [D]_{\cal F_2\x \cal F_1}} \Phi_{D'}(X)\]
for all $D\leq S_2\x S_1$. By Lemma \ref{lemStableBiset}, $\pi(X)$ is left $\cal F_2$-stable and right $\cal F_1$-stable when considered as an element $X\in A(S_1,S_2)_{(p)}$, and we wish to prove that $\omega_2\circ X \circ \omega_1 = \pi(X)$.

By Remark \ref{rmkStabilizationCoeff}, $\pi(X)$ also satisfies
\[\sum_{[D']_{S_2\x S_1} \subseteq [D]_{\cal F_2\x \cal F_1}} c_{D'}(\pi(X)) = \sum_{[D']_{S_2\x S_1} \subseteq [D]_{\cal F_2\x \cal F_1}} c_{D'}(X),\]
or equivalently
\[\sum_{[D']_{S_2\x S_1} \subseteq [D]_{\cal F_2\x \cal F_1}} c_{D'}(\pi(X) - X) = 0.\]
Using Lemma \ref{propStableBasis} we then have
\begin{align*}
&\omega_{2} \circ (\pi(X) - X) \circ \omega_{1}
\\={}& \sum_{[D]_{\cal F_2\x \cal F_1}} \left( \sum_{[D']_{S_2\x S_1} \subseteq [D]_{\cal F_2\x \cal F_1}} c_{D'}(\pi(X) - X) \cdot (\omega_{2}\circ (S_2\x S_1/D') \circ \omega_{1})\right)
\\ ={}& \sum_{[D]_{\cal F_2\x \cal F_1}} \left(\sum_{[D']_{S_2\x S_1} \subseteq [D]_{\cal F_2\x \cal F_1}} c_{D'}(\pi(X) - X) \right) \cdot (\omega_{2}\circ (S_2\x S_1/D) \circ \omega_{1})
\\ ={}& \sum_{[D]_{\cal F_2\x \cal F_1}} 0 \cdot (\omega_{2}\circ (S_2\x S_1/D) \circ \omega_{1}) = 0.
\end{align*}
From which we conclude
\begin{align*}
\omega_2\circ X\circ \omega_1 &= \omega_2\circ \pi(X) \circ \omega_1 = \pi(X),
\end{align*}
where the last equality holds by Lemma \ref{lemStableFixedByCharIdem} since $\pi(X)$ is left $\cal F_2$-stable and right $\cal F_1$-stable.
\end{proof}

\begin{cor}\label{corCharOnesidedStabilization}
Let $\cal F$ be a saturated fusion system on a $p$-group $S$. A set with a left action of $S$ is the same as a $(1,S)$-biset, so the Burnside module $A(1,S)_{(p)}$ is isomorphic to the Burnside ring $A(S)_{(p)}$.
Through this isomorphism left multiplication with $\omega_{\cal F}$ in $A(1,S)_{(p)}$ coincides with the stabilization homomorphism $\pi\colon A(S)_{(p)}\to A(\cal F)_{(p)}$ of Theorem \ref{thmStabilizationHom}.
\end{cor}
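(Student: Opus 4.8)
The plan is to read this off directly from Theorem \ref{thmCharIdemMultiplication}, or equivalently Corollary \ref{corCharDoubleStabilization}, by specializing to $S_1 = 1$. Concretely, I would apply Theorem \ref{thmCharIdemMultiplication} with $S_2 = S$, $\cal F_2 = \cal F$, $S_1 = 1$, and $\cal F_1$ the (unique, trivial, saturated) fusion system on $1$. A $(1,S)$-biset is exactly a left $S$-set, which gives the identification $A(1,S)_{(p)} \cong A(S)_{(p)}$ recorded in the statement; under it $S_2 \x S_1 = S \x 1$ is identified with $S$, a subgroup $D \leq S \x 1$ with a subgroup $Q \leq S$, the product fusion system $\cal F_2 \x \cal F_1$ on $S \x 1$ with $\cal F$ on $S$, the isomorphism class $[D]_{\cal F_2 \x \cal F_1}$ with $[Q]_{\cal F}$, and $\Phi_D$ with $\Phi_Q$. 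Finally, as noted after Corollary \ref{corCharDoubleStabilization}, the characteristic idempotent $\omega_1$ of the trivial fusion system on $1$ is the identity element $[1,id]_1^1$ of $A(1,1)_{(p)}$, so $X \circ \omega_1 = X$ for every $X \in A(1,S)_{(p)}$.

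With these identifications in place, Theorem \ref{thmCharIdemMultiplication} says that for $X \in A(1,S)_{(p)}$ and every $Q \leq S$,
\[
\Phi_Q(\omega_{\cal F}\circ X) = \Phi_Q(\omega_{\cal F}\circ X\circ \omega_1) = \frac{1}{\abs{[Q]_{\cal F}}}\sum_{Q'\in [Q]_{\cal F}} \Phi_{Q'}(X),
\]
which is precisely the fixed-point formula defining $\pi(X)$ in Theorem \ref{thmStabilizationHom}. Since the mark homomorphism $\Phi\colon A(S)_{(p)} \to \free S_{(p)}$ is injective (Proposition \ref{propYoshidaGroup} after $p$-localization, which is exact), an element of $A(S)_{(p)}$ is determined by its fixed points, so $\omega_{\cal F}\circ X = \pi(X)$. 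Thus, through the isomorphism $A(1,S)_{(p)}\cong A(S)_{(p)}$, left multiplication by $\omega_{\cal F}$ is the stabilization homomorphism $\pi$, as claimed.

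There is essentially no obstacle in this argument: all of the mathematical content is already contained in Theorem \ref{thmCharIdemMultiplication}, and what remains is only the bookkeeping of the identifications $A(1,S)_{(p)} \cong A(S)_{(p)}$ and $\cal F_2 \x \cal F_1 \cong \cal F$, together with the observation that the trivial fusion system on $1$ has the unit as its characteristic idempotent. Alternatively one can invoke Corollary \ref{corCharDoubleStabilization} directly with $(S_2,S_1) = (S,1)$: the identity $\omega_{\cal F_2}\circ [S_2\x S_1/D]\circ \omega_{\cal F_1} = \beta_D$ there becomes $\omega_{\cal F}\circ [S/Q] = \beta_Q = \pi([S/Q])$ on transitive $S$-sets, and the general statement then follows by $\Z_{(p)}$-linearity since the classes $[S/Q]$ span $A(S)_{(p)}$.
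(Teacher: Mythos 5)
Your proposal is correct and follows essentially the same route as the paper: specialize Theorem \ref{thmCharIdemMultiplication} to $S_1=1$ with the trivial fusion system (whose characteristic idempotent is the unit $[1,id]_1^1$), identify subgroups of $S\x 1$ with subgroups of $S$, and match the resulting fixed-point formula with the one defining $\pi$ in Theorem \ref{thmStabilizationHom}, concluding by injectivity of the mark homomorphism. The alternative via Corollary \ref{corCharDoubleStabilization} and linearity on transitive sets is a fine equivalent packaging of the same content.
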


\begin{proof}
The subgroups of $S\x 1$ are all on the form $Q\x 1$ for som $Q\leq S$, and the characteristic idempotent for the unique fusion system on the trivial group is just $[1,id]_1^1=[pt]_1^1$. By Theorem \ref{thmCharIdemMultiplication} we then have
\[\Phi_{Q\x 1}(\omega_{\cal F} \circ X) = \frac{1}{\abs{[Q]_{\cal F}}} \sum_{Q'\in [Q]_{\cal F}} \Phi_{Q'\x 1}(X) = \Phi_Q(\pi (X))\]
for all $X\in A(1,S)_{(p)}$,
so $\omega_{\cal F}\circ X = \pi(X)$ as claimed.
\end{proof}

\begin{dfn}\label{dfnDoubleBurnside}
For saturated fusion systems $\cal F_1,\cal F_2$ on $p$-groups $S_1,S_2$, we define the \emph{Burnside module} $A(\cal F_1,\cal F_2)_{(p)}$ as the $\Z_{(p)}$-submodule of $A(S_1,S_2)_{(p)}$ consisting of the elements that are right $\cal F_1$-stable and left $\cal F_2$-stable.
\end{dfn}

\begin{rmk}
The elements $\omega_{\cal F_2}\circ [(S_2\x S_1)/D] \circ \omega_{\cal F_1}$ generate $A(\cal F_1,\cal F_2)_{(p)}$ over $\Z_{(p)}$. By Corollary \ref{corCharDoubleStabilization} the element $\omega_{\cal F_2}\circ [(S_2\x S_1)/D] \circ \omega_{\cal F_1}$ actually corresponds to the element $\beta_D\in A(\cal F_2\x \cal F_1)_{(p)}$, so it follows that the elements $\{\omega_{\cal F_2}\circ [(S_2\x S_1)/D] \circ \omega_{\cal F_1}\mid D\leq S_2\x S_1\}$ form a $\Z_{(p)}$-basis for the Burnside module $A(\cal F_1,\cal F_2)_{(p)}$. Two subgroups $C$ and $D$ give the same basis element if and only if $C$ and $D$ are conjugate in $\cal F_2\x \cal F_1$. The existence of such basis elements nicely generalizes the basis we have for the Burnside modules of groups.

By restricting the composition map for groups, the Burnside modules $A(\cal F_1,\cal F_2)_{(p)}$ come equipped with an associative composition
\[\circ \colon A(\cal F_2,\cal F_3)_{(p)}\x A(\cal F_1,\cal F_2)_{(p)} \to A(\cal F_1,\cal F_3)_{(p)},\]
For each saturated fusion system $\cal F$, the characteristic idempotent $\omega_{\cal F}\in A(\cal F,\cal F)_{(p)}$ is an identity element for the composition, in particular the double Burnside ring $A(\cal F,\cal F)_{(p)}$ has $\omega_{\cal F}$ as its $1$-element.
\end{rmk}

Multiplication with characteristic idempotents $\omega_1$ and $\omega_2$ defines a map $A(S_1,S_2)_{(p)}\to A(\cal F_1,\cal F_2)_{(p)}$, and as in Theorem \ref{thmStabilizationHom}, it is a homomorphism of modules:

\begin{prop}\label{propDoubleStabilizationHom}
Let $\cal F_1$ and $\cal F_2$ be saturated fusion systems on $p$-groups $S_1$ and $S_2$ respectively, and let $\omega_1\in A(S_1,S_1)_{(p)}$ and $\omega_2\in A(S_2,S_2)_{(p)}$ be their characteristic idempotents.

Then the map $\pi\colon A(S_1,S_2)_{(p)}\to A(\cal F_1,\cal F_2)_{(p)}$ given by $\pi(X):= \omega_2\circ X\circ \omega_1$ is a homomorphism of left $A(\cal F_2,\cal F_2)_{(p)}$-modules and right $A(\cal F_1,\cal F_1)_{(p)}$-modules.
\end{prop}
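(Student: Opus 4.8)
The plan is to reduce both module-homomorphism claims to the two absorption identities of Lemma \ref{lemStableFixedByCharIdem}: since $\omega_i=\beta_{\Delta(S_i)}$ by Proposition \ref{propCharIdemExists}, an element of $A(\cal F_i,\cal F_i)_{(p)}$ is fixed by $\omega_i$ under composition on either side. First I note that $\pi$ is additive because $\circ$ is biadditive, and that $\pi$ lands in $A(\cal F_1,\cal F_2)_{(p)}$ by Theorem \ref{thmCharIdemMultiplication} (or directly: $\omega_2\circ(-)$ is left $\cal F_2$-stable and $(-)\circ\omega_1$ is right $\cal F_1$-stable). Thus the only real content is compatibility with the left and right actions.

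For the left $A(\cal F_2,\cal F_2)_{(p)}$-action, fix $Y\in A(\cal F_2,\cal F_2)_{(p)}$ and $X\in A(S_1,S_2)_{(p)}$. Since $Y$ is both left and right $\cal F_2$-stable, Lemma \ref{lemStableFixedByCharIdem} (applied with the auxiliary group taken to be $S_2$, to $Y$ viewed in $A(S_2,S_2)_{(p)}$) gives $\omega_2\circ Y=Y$ and $Y\circ\omega_2=Y$. Hence
\[\pi(Y\circ X)=\omega_2\circ Y\circ X\circ\omega_1=Y\circ X\circ\omega_1=Y\circ\omega_2\circ X\circ\omega_1=Y\circ\pi(X),\]
using associativity of $\circ$, the identity $\omega_2\circ Y=Y$ for the second equality, and $Y=Y\circ\omega_2$ to reinsert $\omega_2$ for the third. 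The right $A(\cal F_1,\cal F_1)_{(p)}$-action is symmetric: for $Z\in A(\cal F_1,\cal F_1)_{(p)}$ one has $Z\circ\omega_1=Z$ and $\omega_1\circ Z=Z$ by Lemma \ref{lemStableFixedByCharIdem} (now with auxiliary group $S_1$), so
\[\pi(X\circ Z)=\omega_2\circ X\circ Z\circ\omega_1=\omega_2\circ X\circ Z=\omega_2\circ X\circ\omega_1\circ Z=\pi(X)\circ Z.\]

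I expect no genuine obstacle here; the remaining work is purely bookkeeping. The two points to keep straight are: that all composites written above have matching source and target $p$-groups and that $A(\cal F_1,\cal F_2)_{(p)}$ is closed under composition with $A(\cal F_2,\cal F_2)_{(p)}$ on the left and with $A(\cal F_1,\cal F_1)_{(p)}$ on the right, which is exactly what the preceding remark records; and that Lemma \ref{lemStableFixedByCharIdem} is invoked with the correct auxiliary group ($S_2$ for the $\omega_2$-identities, $S_1$ for the $\omega_1$-identities). If one wished to avoid even this, one could instead verify the two identities coordinate-wise using the fixed-point formula of Theorem \ref{thmCharIdemMultiplication}, but the absorption argument above is shorter and self-contained.
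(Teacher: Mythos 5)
Your proposal is correct and follows essentially the same route as the paper: absorb the stable element into the idempotent via Lemma \ref{lemStableFixedByCharIdem} ($\omega_i\circ Z=Z\circ\omega_i=Z$) and then use associativity of $\circ$ to move the idempotent past it. The only difference is cosmetic — the paper writes out one side and invokes symmetry, while you write out both sides and note additivity and the codomain explicitly.
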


\begin{proof}
We only show that $\pi$ is a homomorphism of right $A(\cal F_1,\cal F_1)_{(p)}$-modules, since the other case follows by symmetry.

Let $X\in A(S_1,S_2)_{(p)}$ be given, and let $Z\in A(\cal F_1,\cal F_1)_{(p)}$ be a fully $\cal F_1$-stable element of $A(S_1,S_1)_{(p)}$. Then $\cal F_1$-stability ensures that $\omega_1\circ Z = Z \circ \omega_1=Z$ by Lemma \ref{lemStableFixedByCharIdem}; hence we get
\[\pi(X\circ Z) = \omega_2\circ X\circ Z\circ \omega_1 =\omega_2\circ X\circ \omega_1 \circ Z = \pi(X)\circ Z.\qedhere\]
\end{proof}

\section{The Burnside ring embeds in the double Burnside ring}\label{secRingOfChar}
In this section we show that the ``one-sided'' Burnside ring $A(\cal F)_{(p)}$ of sections \ref{secBurnsideFusion} and \ref{secStabilization} always embeds in the double Burnside ring $A(\cal F,\cal F)_{(p)}$ of Definition \ref{dfnDoubleBurnside} above.
In fact, Theorem \ref{thmSingleEmbedsInDouble} states that $A(\cal F)_{(p)}$ is isomorphic to the subring generated by all $\cal F$-characteristic elements. Through this isomorphism we can describe the structure of all the $\cal F$-characteristic elements.

The isomorphism between the ``one-sided'' Burnside ring and a subring of the double Burnside ring is inspired by a similar result for finite groups, where the Burnside ring $A(G)$ embeds in the double Burnside ring $A(G,G)$.
Let us therefore first analyze the situation for Burnside rings of $p$-groups and see what might be generalized to fusion systems:
\begin{ex}\label{exGroupSingleEmbedsInGroupDouble}
Let $S$ be a finite $p$-group, and define $\iota\colon A(S)\to A(S,S)$ on $S$-sets by
\[\iota(X) := X\x S \text{ equipped with the action }a(x,s)b = (ax,axb).\]
It is easy to check that $\iota(X\x Y)=X\x Y\x S$ is isomorphic as bisets to $\iota(X)\circ \iota(Y) =\linebreak (X\x S)\x_S (Y\x S)$, where the element $(x,y,s)\in \iota(X\x Y)$ corresponds to the class of $(x,1,y,s)$ in $\iota(X)\circ \iota(Y)$. Hence we get an injective ring homomorphism $\iota\colon A(S)\to A(S,S)_{(p)}$.

On transitive sets we have $\iota([S/P])= [P,id]$, where the class of $(s,t)$ in $(S/P)\x S$ corresponds to the class of $(s,s^{-1}t)$ in $S\x_P S$. Therefore $\iota$ embeds $A(S)$ as the unital subring of $A(S,S)$ generated by $[P,id]$ for $P\leq S$.
The basis elements $[P,id]$ are precisely those basis elements $[(S\x S)/D]$ where $D=\Delta(P,c_s)$ is the graph of an $S$-conjugation map -- recall that the subgroup $\Delta(P,c_s)$ is only determined up to $(S\x S)$-conjugation, so $[P,c_s]=[P,id]$.
The subring generated by $[P,id]$ for $P\leq S$, is therefore the ring $A_{\cal F_S}(S,S)$ of all $\cal F_S$-generated elements (see definition \ref{dfnFgenerated}), where $\cal F_S$ is the trivial fusion system on $S$. This suggests that we should consider the $\cal F$-generated elements for general fusion systems.

Finally the inverse of $\iota$ is the map $q\colon A_{\cal F_S}(S,S)\to A(S)$ given on bisets by $X \mapsto X/S$. Here we eliminate the right $S$-action by quotienting out, equivalently this can be expressed by the multiplication $X\mapsto X \circ [(S\x 1)/(S\x 1)]$ from $A_{\cal F_S}(S,S)$ to $A(1,S)$. It is trivial to check that $q(\iota(X))=X$ for all $S$-sets $X\in A(S)$.
The map $X\mapsto X/S$ does not preserve the multiplication on all of $A(S,S)$, so it is not clear that $q$ defined on the subring $A_{\cal F_S}(S,S)$ respects multiplication, but this must be true since $q=\iota^{-1}$. A similar situation occurs in Theorem \ref{thmSingleEmbedsInDouble}: We state the theorem for the simple map $q$ where we quotient out the right $S$-action, but to actually see that $q$ is a ring homomorphism we construct the inverse $\iota$ as a ring homomorphism from the start.
\end{ex}

For a saturated fusion system $\cal F$ on $S$, the double Burnside ring $A(\cal F,\cal F)_{(p)}$ was defined to be the subring of $A(S,S)_{(p)}$ consisting of the elements that are both left and right $\cal F$-stable.
Example \ref{exGroupSingleEmbedsInGroupDouble} suggests that we should look at those elements of $A(\cal F,\cal F)_{(p)}$ that are in addition $\cal F$-generated:

\begin{dfn}
We define
\[A^{\text{char}}(\cal F)_{(p)}:=A(\cal F,\cal F)_{(p)}\cap A_{\cal F}(S,S)_{(p)}\]
as the subring formed by all elements that are $\cal F$-stable as well as $\cal F$-generated.

Hence we have a sequence of inclusions of subrings
\[A^{\text{char}}(\cal F)_{(p)} \subseteq A(\cal F,\cal F)_{(p)}\subseteq A(S,S)_{(p)}.\]
The last inclusion is not unital since $\omega_{\cal F}$ is the multiplicative identity of the first two rings, and $[S,id]_S^S$ is the identity of $A(S,S)_{(p)}$.
\end{dfn}

We use the notation $A^{\text{char}}(\cal F)_{(p)}$ for this particular subring because the following proposition shows that $A^{\text{char}}(\cal F)_{(p)}$ is generated, over $\Z_{(p)}$, by all the $\cal F$-characteristic elements in $A(S,S)_{(p)}$. Note that not all elements of $A^{\text{char}}(\cal F)_{(p)}$ are $\cal F$-characteristic, but the non-characteristic elements of $A^{\text{char}}(\cal F)_{(p)}$ form a proper $\Z_{(p)}$-submodule.

\begin{prop}\label{propCharElemBasis}
Let $\cal F$ be a saturated fusion systems on a $p$-group $S$, and let $A^{\text{char}}(\cal F)_{(p)}$ be defined as above.
Then $A^{\text{char}}(\cal F)_{(p)}$ is also the subring of $A(S,S)_{(p)}$ generated by the $\cal F$-characteristic elements, and it has a $\Z_{(p)}$-basis consisting of the elements $\beta_{\Delta(P,id)}=\omega_{\cal F}\circ [P,id] \circ \omega_{\cal F}$, which are in one-to-one correspondence with the $\cal F$-conjugacy classes of subgroups $P\leq S$.

The characteristic elements of $\cal F$ are those elements $X\in A^{\text{char}}(\cal F)_{(p)}$ where the coefficient of $X$ at the basis element $\beta_{\Delta(S,id)}=\omega_{\cal F}$ is invertible in $\Z_{(p)}$.
\end{prop}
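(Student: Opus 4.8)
The plan is to identify $A^{\text{char}}(\cal F)_{(p)}$ with the $\Z_{(p)}$-span of the elements $\beta_{\Delta(P,id)}=\omega_{\cal F}\circ[P,id]\circ\omega_{\cal F}$, and then to read off all three assertions — the last two essentially from the augmentation $\e$.

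\emph{Step 1 (the basis).} First I would check that $\beta_{\Delta(P,id)}$ lies in $A^{\text{char}}(\cal F)_{(p)}$: it is fully $\cal F$-stable by Theorem \ref{thmCharIdemMultiplication}, and it is $\cal F$-generated because, by Corollary \ref{corCharDoubleStabilization}, it equals the element $\beta_{\Delta(P,id)}\in A(\cal F\x\cal F)_{(p)}$, so Proposition \ref{propPLocalBasisDecomp} writes it as a combination of transitive sets $[(S\x S)/E]$ with $E$ $(\cal F\x\cal F)$-subconjugate to $\Delta(P,id)$, and every such $E$ is the graph of a morphism of $\cal F$ (the subgroups of $\Delta(P,id)$ are the $\Delta(R,id)$ with $R\leq P$, and their $(\cal F\x\cal F)$-conjugates are graphs $\Delta(R',\psi)$ with $\psi\in\cal F$). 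Conversely, if $X\in A^{\text{char}}(\cal F)_{(p)}$ then $\omega_{\cal F}\circ X\circ\omega_{\cal F}=X$ by Lemma \ref{lemStableFixedByCharIdem}, and $X=\sum a_{P,\ph}[P,\ph]$ with $\ph\in\cal F(P,S)$ since $X$ is $\cal F$-generated; applying $\omega_{\cal F}\circ(-)\circ\omega_{\cal F}$ and using Lemma \ref{propStableBasis} together with the $(\cal F\x\cal F)$-isomorphism $\ph^{-1}\x id\colon\Delta(P,\ph)\xto{\sim}\Delta(P,id)$ gives $X=\sum_{[P]_{\cal F}}\big(\sum_{\ph}a_{P,\ph}\big)\beta_{\Delta(P,id)}$. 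Thus $A^{\text{char}}(\cal F)_{(p)}$ is spanned over $\Z_{(p)}$ by the $\beta_{\Delta(P,id)}$; since $\Delta(P,id)\sim_{\cal F\x\cal F}\Delta(P',id)$ holds exactly when $P\sim_{\cal F}P'$, these are distinct members of the $\Z_{(p)}$-basis of $A(\cal F,\cal F)_{(p)}$ recorded in the remark after Proposition \ref{propDoubleStabilizationHom}, hence linearly independent, so they form a $\Z_{(p)}$-basis of $A^{\text{char}}(\cal F)_{(p)}$ indexed by $\ccset{\cal F}$.

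\emph{Step 2 (the augmentation).} The key computation is that $A^{\text{char}}(\cal F)_{(p)}\subseteq A^\lhd(S,S)_{(p)}$, on which $\e$ is a ring homomorphism, with $\e(\omega_{\cal F})=1$ by Proposition \ref{propCharIdemExists} and $\e([P,id])=\abs{(S\x S)/\Delta(P,id)}/\abs S=\abs{S:P}$; hence $\e(\beta_{\Delta(P,id)})=\e(\omega_{\cal F})\,\e([P,id])\,\e(\omega_{\cal F})=\abs{S:P}$, which lies in $p\Z_{(p)}$ for $P\neq S$ and equals $1$ for $P=S$. So for $X=\sum_{[P]_{\cal F}}c_{\Delta(P,id)}\beta_{\Delta(P,id)}$ we get $\e(X)=\sum_{[P]_{\cal F}}c_{\Delta(P,id)}\abs{S:P}\equiv c_{\Delta(S,id)}\pmod{p\Z_{(p)}}$. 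Since every $X\in A^{\text{char}}(\cal F)_{(p)}$ is automatically $\cal F$-generated and $\cal F$-stable, it is $\cal F$-characteristic iff $\e(X)$ is a unit in $\Z_{(p)}$, i.e.\ iff $c_{\Delta(S,id)}$ is a unit in $\Z_{(p)}$ — the last assertion.

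\emph{Step 3 (the generating set).} Every $\cal F$-characteristic element is $\cal F$-generated and $\cal F$-stable, hence lies in $A^{\text{char}}(\cal F)_{(p)}$. For the reverse inclusion it suffices to write each basis element $\beta_{\Delta(P,id)}$ as a $\Z$-combination of $\cal F$-characteristic elements: $\omega_{\cal F}=\beta_{\Delta(S,id)}$ is $\cal F$-characteristic by Proposition \ref{propCharIdemExists}, and for $P\neq S$ the element $\omega_{\cal F}+\beta_{\Delta(P,id)}$ is $\cal F$-generated and $\cal F$-stable with $\e(\omega_{\cal F}+\beta_{\Delta(P,id)})=1+\abs{S:P}$ a unit in $\Z_{(p)}$ by Step 2, hence $\cal F$-characteristic, so $\beta_{\Delta(P,id)}=(\omega_{\cal F}+\beta_{\Delta(P,id)})-\omega_{\cal F}$. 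The main obstacle is really just the bookkeeping in Step 1 — verifying that $\beta_{\Delta(P,id)}$ is $\cal F$-generated and that $(\cal F\x\cal F)$-conjugacy of the graphs $\Delta(P,id)$ matches $\cal F$-conjugacy of the $P$; once that, Theorem \ref{thmCharIdemMultiplication}, Corollary \ref{corCharDoubleStabilization} and Proposition \ref{propCharIdemExists} are in place, everything else is formal.
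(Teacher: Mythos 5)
Your proposal is correct and follows essentially the same route as the paper: it reduces everything to the identity $X=\omega_{\cal F}\circ X\circ\omega_{\cal F}$ from Lemma \ref{lemStableFixedByCharIdem}, collapses $\omega_{\cal F}\circ[P,\ph]\circ\omega_{\cal F}$ to $\beta_{\Delta(P,id)}$ via Lemma \ref{propStableBasis}, gets linear independence from the basis of $A(\cal F,\cal F)_{(p)}$, and settles the characteristic criterion with the augmentation computation $\e(\beta_{\Delta(P,id)})=\abs{S}/\abs{P}$ using $\e(\omega_{\cal F})=1$. The only cosmetic deviations are that you verify $\cal F$-generatedness of $\beta_{\Delta(P,id)}$ through Proposition \ref{propPLocalBasisDecomp} rather than through closure of $A_{\cal F}(S,S)_{(p)}$ under composition, and you make the paper's final ``sum of characteristic elements'' remark explicit with the decomposition $\beta_{\Delta(P,id)}=(\omega_{\cal F}+\beta_{\Delta(P,id)})-\omega_{\cal F}$.
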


\begin{proof}
We first claim that $A^{\text{char}}(\cal F)_{(p)} = \omega_{\cal F} \circ A_{\cal F}(S,S)_{(p)} \circ \omega_{\cal F}$, where $A_{\cal F}(S,S)_{(p)}$ is the subring of $\cal F$-generated elements in $A(S,S)_{(p)}$.
Each element in $\omega_{\cal F} \circ A_{\cal F}(S,S)_{(p)} \circ \omega_{\cal F}$ is $\cal F$-stable and a product of $\cal F$-generated elements (hence $\cal F$-generated as well), so it is contained in $A^{\text{char}}(\cal F)_{(p)}$.

Conversely, suppose $X\in A^{\text{char}}(\cal F)_{(p)}$.
Because $X$ is $\cal F$-stable, we have
$X= \omega_{\cal F} \circ X \circ \omega_{\cal F}$
by Lemma \ref{lemStableFixedByCharIdem}, so $X$ lies in the product $\omega_{\cal F} \circ A_{\cal F}(S,S)_{(p)} \circ \omega_{\cal F}$.
We conclude that we have $A^{\text{char}}(\cal F)_{(p)} = \omega_{\cal F} \circ A_{\cal F}(S,S)_{(p)} \circ \omega_{\cal F}$ as claimed.

We know that $A_{\cal F}(S,S)_{(p)}$ is generated by the sets $[P,\ph]$ with $\ph\in\cal F(P,S)$ by definition. Hence $A^{\text{char}}(\cal F)$ is generated by the elements $\omega_{\cal F}\circ [P,\ph]\circ \omega_{\cal F}$ with $\ph\in \cal F(P,S)$, and by Lemma \ref{propStableBasis} we have $\omega_{\cal F}\circ [P,\ph]\circ \omega_{\cal F}=\omega_{\cal F}\circ [P,id]\circ \omega_{\cal F} = \beta_{\Delta(P,id)}$ as elements of $A(\cal F\x \cal F)_{(p)}$. So the elements $\beta_{\Delta(P,id)}$ generate $A^{\text{char}}(\cal F)_{(p)}$ and are linearly independent over $\Z_{(p)}$ since they are already part of a basis for the double Burnside ring $A(\cal F,\cal F)_{(p)}$. Two basis elements $\beta_{\Delta(P,id)}$ and $\beta_{\Delta(Q,id)}$ are equal exactly when $\Delta(P,id)$ and $\Delta(Q,id)$ are $(\cal F\x \cal F)$-conjugate, which happens if and only if $P$ and $Q$ are $\cal F$-conjugate.

The elements $X\in A^{\text{char}}(\cal F)_{(p)}$ are already $\cal F$-stable and $\cal F$-generated, so the only extra condition that $\cal F$-characteristic elements must satisfy is that $\e(X)$ is invertible in $\Z_{(p)}$, i.e., $\e(X)\not\equiv 0\pmod p$ in $\Z_{(p)}$. Any basis element of $A^{\text{char}}(\cal F)_{(p)}$ other than $\beta_{\Delta(S,id)}$ is of the form $\omega_{\cal F}\circ [P,id] \circ \omega_{\cal F}$ with $P< S$. Because $\e(\omega_{\cal F})=1$, by Proposition \ref{propCharIdemExists}, we therefore have
\[\e(\omega_{\cal F}\circ [P,id] \circ \omega_{\cal F}) =1\cdot \e([P,id])\cdot 1 =\frac{\abs S}{\abs P} \equiv 0 \pmod p\]
for all $P< S$.
So whether $\e(X)\not\equiv 0 \pmod p$ depends only on the coefficient of $X$ at the basis element $\beta_{\Delta(S,id)}=\omega_{\cal F}$.

Finally, it is clear that every element in $A^{\text{char}}(\cal F)_{(p)}$ can be written as the sum of one or more elements with $\e(X)\not\equiv 0 \pmod p$ in $\Z_{(p)}$, so $A^{\text{char}}(\cal F)_{(p)}$ is additively generated by the characteristic elements.\end{proof}

\begin{lem}\label{lemPhiOfIota} Let $\iota^S\colon A(S)_{(p)} \to A(S,S)_{(p)}$ be the injective ring homomorphism of Example \ref{exGroupSingleEmbedsInGroupDouble} mapping $[S/P]\mapsto [P,id]$.
For every $X\in A(S)_{(p)}$ and subgroup $D\leq S\x S$, we have $\Phi_{D}(\iota^S(X))=0$ unless $D$ is $(S\x S)$-conjugate to $\Delta(Q,id)$ for some $Q\leq S$. In that case
\[\Phi_{\Delta(Q,id)}(\iota^S (X)) = \Phi_{Q}(X)\cdot \abs{C_S(Q)}.\]
Furthermore, $\iota^S(X)$ is symmetric for all $X\in A(S)_{(p)}$, i.e., $\iota^S(X)^\op =\iota^S(X)$.
\end{lem}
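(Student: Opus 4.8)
The plan is to reduce everything to the transitive basis sets $[S/P]$, using linearity of $\iota^S$ together with additivity of the fixed-point maps $\Phi_D$ on $A(S,S)_{(p)}$, and then to compute directly from the formula \eqref{eqPhiOnBasis}.

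First I would use that $\iota^S([S/P]) = [P,id] = [(S\times S)/\Delta(P,id)]$ and the observation that every subgroup of a diagonal graph $\Delta(P,id)$ is again of the form $\Delta(R,id)$ with $R\leq P$. By \eqref{eqPhiOnBasis}, $\Phi_D([(S\times S)/\Delta(P,id)])$ is nonzero only when $D$ is $(S\times S)$-subconjugate to $\Delta(P,id)$, hence only when $D$ is $(S\times S)$-conjugate to some $\Delta(R,id)$ with $R\leq P\leq S$. So if $D$ is not conjugate to any $\Delta(Q,id)$, then $\Phi_D$ annihilates every $\iota^S([S/P])$, and by linearity $\Phi_D(\iota^S(X))=0$ for all $X\in A(S)_{(p)}$; this is the first assertion.

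For the value at $D=\Delta(Q,id)$ it again suffices, by linearity, to evaluate $\Phi_{\Delta(Q,id)}(\iota^S([S/P]))$, which by \eqref{eqPhiOnBasis} equals $|N_{S\times S}(\Delta(Q,id),\Delta(P,id))|/|\Delta(P,id)|$. The heart of the argument is the transporter count: a pair $(a,b)$ lies in $N_{S\times S}(\Delta(Q,id),\Delta(P,id))$ precisely when $aga^{-1}=bgb^{-1}\in P$ for every $g\in Q$, i.e. precisely when $a\in N_S(Q,P)$ and $b^{-1}a\in C_S(Q)$; since for each such $a$ there are exactly $|C_S(Q)|$ admissible $b$, we obtain $|N_{S\times S}(\Delta(Q,id),\Delta(P,id))| = |N_S(Q,P)|\cdot |C_S(Q)|$. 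Dividing by $|\Delta(P,id)|=|P|$ and comparing with \eqref{eqPhiOnBasis} gives $\Phi_{\Delta(Q,id)}(\iota^S([S/P])) = \Phi_Q([S/P])\cdot |C_S(Q)|$, and linearity extends this to every $X\in A(S)_{(p)}$.

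Finally, for symmetry I would note that swapping the two coordinates of $S\times S$ fixes each diagonal graph $\Delta(P,id)$, so $[P,id]^\op=[P,id]$; as $\iota^S(X)$ is a $\Z_{(p)}$-linear combination of such transitive bisets and $(-)^\op$ is additive, $\iota^S(X)^\op=\iota^S(X)$. (Equivalently, one checks $\Phi_{D^\op}(\iota^S(X))=\Phi_D(\iota^S(X))$ for all $D$ from the formulas just established, using $\Delta(Q,id)^\op=\Delta(Q,id)$.) I expect the only non-routine point to be the transporter computation in the third paragraph, but it is a short unwinding of the definitions of $\Delta(P,id)$ and $\Delta(Q,id)$.
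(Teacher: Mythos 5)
Your proposal is correct and follows essentially the same route as the paper: reduce to the basis elements $[S/P]$ by linearity, use \eqref{eqPhiOnBasis} together with the fact that subgroups of $\Delta(P,id)$ are the graphs $\Delta(R,id)$, compute the transporter $N_{S\x S}(\Delta(Q,id),\Delta(P,id))$ to get the factor $\abs{N_S(Q,P)}\cdot\abs{C_S(Q)}$, and note that $[P,id]$ is visibly symmetric. Your transporter count (pairs $(a,b)$ with $a\in N_S(Q,P)$ and $b^{-1}a\in C_S(Q)$) is exactly the paper's count of pairs $(s,t)$ with $c_s=c_t$ on $Q$, so there is nothing to add.
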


\begin{proof}
By linearity in $X\in A(S)_{(p)}$, it is enough to prove the lemma for basis elements $[S/P]\in A(S)_{(p)}$, where $P\leq S$. The symmetry is obvious since $\iota^S([S/P])=[P,id]$, which is symmetric.

Since $\iota^S([S/P])=[P,id]$, we apply the formula \eqref{eqPhiOnBasis} for the fixed-point homomorphisms on basis elements: For $D\leq S\x S$ we have $\Phi_D([P,id])=0$ unless $D$ is $(S\x S)$-subconjugate to $\Delta(P,id)$. The subgroups of $\Delta(P,id)$ are $\Delta(R,id)$ for $R\leq P$, hence $D$ has to be of the form $\Delta(Q,c_s)$ for $Q\leq S$ and $s\in S$, which is $(S\x S)$-conjugate to $\Delta(Q,id)$.
For the graph $\Delta(Q,id)$ we then have
\begin{align*}
\Phi_{\Delta(Q,id)}(\iota^S([S/P])) &=\frac{\abs{N_{S\x S} (\Delta(Q,id), \Delta(P,id))}}{\abs{\Delta(P,id)}}
\\ &= \frac{\abs{\{(s,t) \mid s,t\in N_S(Q,P) \text{ and } c_s = c_t \in \Hom_S(Q,P)\}}}{\abs P}
\\ &=\frac{\abs{N_S(Q,P)}}{\abs{P}}\cdot \abs{C_S(Q)} = \Phi_{Q}([S/P]) \cdot \abs{C_S(Q)}.\qedhere
\end{align*}
\end{proof}

\begin{lem}\label{lemCharIdemTimesBeta}
Let $\cal F$ be a saturated fusion system on a $p$-group $S$. For all basis elements $\beta_{P}\in A(\cal F)_{(p)}$ it holds that \[\omega_{\cal F}\circ \iota^S( \beta_{P}) \circ \omega_{\cal F} = \omega_{\cal F}\circ \iota^S( \beta_{P}) = \iota^S( \beta_{P}) \circ \omega_{\cal F} = \beta_{\Delta(P,id)}.\]

By linearity, we get for all $X\in A(\cal F)_{(p)}$ that $\omega_{\cal F}\circ \iota^S( X) \circ \omega_{\cal F} = \omega_{\cal F}\circ \iota^S( X) = \iota^S( X) \circ \omega_{\cal F}$.
\end{lem}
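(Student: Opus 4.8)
The plan is to reduce the whole statement to the single equality $\omega_{\cal F}\circ \iota^S(\beta_P) = \beta_{\Delta(P,id)}$, and then derive the other equalities using symmetry. The key observation is that all three products live in $A(S,S)_{(p)}$, and elements there are determined by their fixed points $\Phi_D$ for $D\leq S\x S$; so it suffices to compare fixed-point vectors. The natural tool for the fixed points of a product with $\omega_{\cal F}$ is Corollary~\ref{corCharDoubleStabilization} (equivalently Theorem~\ref{thmCharIdemMultiplication}), which says that $\omega_{\cal F}\circ X = \pi(X)$ in the sense of Theorem~\ref{thmStabilizationHom} applied to the fusion system $\cal F\x \cal F_S$ on $S\x S$; that is, $\Phi_D(\omega_{\cal F}\circ X)$ is the average of $\Phi_{D'}(X)$ over $D'\in [D]_{\cal F\x\cal F_S}$.

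\medskip

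\noindent\textbf{Main computation.} First I would compute $\Phi_D(\iota^S(\beta_P))$ using Lemma~\ref{lemPhiOfIota}: it is $0$ unless $D$ is $(S\x S)$-conjugate to a graph $\Delta(Q,id)$ with $Q\leq S$, in which case it equals $\Phi_Q(\beta_P)\cdot\abs{C_S(Q)} = \frac{\abs{\cal F(Q,P)}\cdot\abs S}{\abs P\cdot\abs{\cal F(Q,S)}}\cdot\abs{C_S(Q)}$ by Definition~\ref{dfnBetas}. Then I would apply Corollary~\ref{corCharDoubleStabilization}: $\Phi_D(\omega_{\cal F}\circ\iota^S(\beta_P))$ is the average of $\Phi_{D'}(\iota^S(\beta_P))$ over the $(\cal F\x\cal F_S)$-conjugacy class of $D$. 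The $(\cal F\x\cal F_S)$-conjugates of $\Delta(Q,id)$ are exactly the graphs $\Delta(Q',\psi)$ with $Q'\sim_S Q$ and $\psi\in\cal F(Q',S)$; but $\Phi_{\Delta(Q',\psi)}(\iota^S(\beta_P))$ vanishes unless $\psi$ is an $S$-conjugation map, so in the average only the graphs $(S\x S)$-conjugate to some $\Delta(Q',id)$ contribute, and these all have the same fixed-point value as $\Delta(Q,id)$. Counting: the class $[\Delta(Q,id)]_{\cal F\x\cal F_S}$ has $\abs{[Q]_S}\cdot\abs{\cal F(Q,S)}$ elements (up to the right bookkeeping with $\Aut$), of which the contributing ones number $\abs{[Q]_S}\cdot\abs{\Aut_S(Q)}$, i.e.\ a fraction $\abs{\Aut_S(Q)}/\abs{\cal F(Q,S)}$ times $\abs{\cal F(Q,S)}/\abs{\Aut_{\cal F}(Q)}$-many — I would track these cardinalities carefully so that the surviving average comes out to $\frac{\abs S}{\abs{\cal F(Q,S)}}$, which by Theorem~\ref{thmCharIdemStructure} (= Lemma~\ref{lemCharIdemFixedPoints}) is exactly $\Phi_{\Delta(Q,id)}(\beta_{\Delta(P,id)})$ when $Q\lesssim_{\cal F}P$ and $0$ otherwise. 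Since the fixed-point vectors agree and the mark homomorphism is injective, $\omega_{\cal F}\circ\iota^S(\beta_P) = \beta_{\Delta(P,id)}$.

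\medskip

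\noindent\textbf{The remaining equalities.} For $\iota^S(\beta_P)\circ\omega_{\cal F} = \beta_{\Delta(P,id)}$ I would use that $\iota^S(\beta_P)$ is symmetric (Lemma~\ref{lemPhiOfIota}) and that $(\beta_{\Delta(S)})^\op = \beta_{\Delta(S)}$, hence $\beta_{\Delta(P,id)}$ is symmetric too (by Lemma~\ref{lemCharIdemFixedPoints} its fixed points depend only on $\abs{\cal F(P',S)}$, invariant under $(-)^\op$, or directly since $\Delta(P,id)^\op=\Delta(P,id)$): applying $(-)^\op$ to $\omega_{\cal F}\circ\iota^S(\beta_P)=\beta_{\Delta(P,id)}$ and using $(Y\circ X)^\op = X^\op\circ Y^\op$ gives $\iota^S(\beta_P)\circ\omega_{\cal F}=\beta_{\Delta(P,id)}$. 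Finally $\omega_{\cal F}\circ\iota^S(\beta_P)\circ\omega_{\cal F} = \beta_{\Delta(P,id)}\circ\omega_{\cal F} = \iota^S(\beta_{\beta_{\Delta(S)}})\circ\cdots$ — more simply, $\beta_{\Delta(P,id)}$ is right $\cal F$-stable (it lies in $A(\cal F,\cal F)_{(p)}$), so $\beta_{\Delta(P,id)}\circ\omega_{\cal F}=\beta_{\Delta(P,id)}$ by Lemma~\ref{lemStableFixedByCharIdem}, giving the triple product. The statement for general $X\in A(\cal F)_{(p)}$ follows by $\Z_{(p)}$-linearity since the $\beta_P$ form a basis (Proposition~\ref{thmPLocalBurnsideBasis}) and $\iota^S$, composition, and $\pi$ are all additive.

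\medskip

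\noindent\textbf{Main obstacle.} The delicate point is the cardinality count in the averaging step: one must verify that the ``spurious'' graphs $\Delta(Q',\psi)$ in the $(\cal F\x\cal F_S)$-orbit (those with $\psi$ not an $S$-conjugation) contribute zero, and that the ratio of contributing graphs to the size of the orbit is precisely $\abs{\Aut_S(Q)}/\abs{\Aut_{\cal F}(Q)}$ paired against the factor $\abs{C_S(Q)}$ from Lemma~\ref{lemPhiOfIota}, so that everything collapses to $\abs S/\abs{\cal F(Q,S)}$ without stray $p'$-factors. This is exactly the kind of bookkeeping that Lemma~\ref{lemWeightedMeanOfCoordinates} was designed to handle, so I would invoke it (or re-derive the needed identity $\frac{1}{\abs{[Q]_{\cal F\x\cal F_S}}}\sum \Phi_{D'}(\iota^S(\beta_P))$ directly in its spirit) rather than grinding the double-coset sum by hand.
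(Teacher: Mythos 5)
Your strategy is exactly the paper's: compute $\Phi_D(\iota^S(\beta_P))$ via Lemma \ref{lemPhiOfIota}, use Theorem \ref{thmCharIdemMultiplication}/Corollary \ref{corCharDoubleStabilization} to express $\Phi_D(\omega_{\cal F}\circ\iota^S(\beta_P))$ as an average over the $(\cal F\x\cal F_S)$-class of $D$, compare marks with $\beta_{\Delta(P,id)}$, and then obtain the remaining equalities from $(-)^\op$-symmetry and stability (your use of Lemma \ref{lemStableFixedByCharIdem} for the triple product is an acceptable substitute for the paper's appeal to idempotence of $\omega_{\cal F}$). The linearity step and the treatment of subgroups $D$ not conjugate to any $\Delta(Q,id)$ are fine.

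There is, however, one concrete error in the central comparison. Theorem \ref{thmCharIdemStructure} (equivalently Lemma \ref{lemCharIdemFixedPoints}) gives the marks of $\omega_{\cal F}=\beta_{\Delta(S,id)}$ only; it does not apply to $\beta_{\Delta(P,id)}$ for $P<S$. The correct target value, obtained from Definition \ref{dfnBetas} (or Proposition \ref{thmPLocalBurnsideBasis}) applied to the saturated fusion system $\cal F\x\cal F$ on $S\x S$ -- which is where $\beta_{\Delta(P,id)}$ lives by Corollary \ref{corCharDoubleStabilization} -- is
\begin{equation*}
\Phi_{\Delta(Q,id)}(\beta_{\Delta(P,id)}) \;=\; \frac{\abs{\cal F(Q,P)}\cdot\abs S^2}{\abs P\cdot\abs{\cal F(Q,S)}^2} \;=\; \Phi_Q(\beta_P)\cdot\frac{\abs S}{\abs{\cal F(Q,S)}},
\end{equation*}
not $\frac{\abs S}{\abs{\cal F(Q,S)}}$; similarly the average of $\Phi_{D'}(\iota^S(\beta_P))$ over $[\Delta(Q,id)]_{\cal F\x\cal F_S}$ equals $\Phi_Q(\beta_P)\cdot\frac{\abs S}{\abs{\cal F(Q,S)}}$, since the fraction of contributing graphs is $\abs{\Hom_S(Q,S)}/\abs{\cal F(Q,S)}$ (not $\abs{\Aut_S(Q)}/\abs{\cal F(Q,S)}$), and the factor $\abs{C_S(Q)}$ from Lemma \ref{lemPhiOfIota} cancels it down to $\abs S/\abs{\cal F(Q,S)}$ while the factor $\Phi_Q(\beta_P)$ survives. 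Taken literally, your stated values would say that $\omega_{\cal F}\circ\iota^S(\beta_P)$ has the same marks as $\omega_{\cal F}$ for every $P$, which is false for $P<S$. Because you dropped the same factor $\Phi_Q(\beta_P)$ on both sides, the intended comparison does close up once it is restored, so this is a repairable bookkeeping slip rather than a failure of strategy -- but the citation of Theorem \ref{thmCharIdemStructure} must be replaced by the $\cal F\x\cal F$ computation above, and the same correction applies to your parenthetical justification that $\beta_{\Delta(P,id)}$ is symmetric, which should come from the op-invariance of the computed marks (they depend only on the $\cal F$-conjugacy class of $Q$ and vanish off graphs of $\cal F$-morphisms), not from Lemma \ref{lemCharIdemFixedPoints}.
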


\begin{proof}
Because the basis element $\beta_{\Delta(P,id)}\in A^{\text{char}}(\cal F)_{(p)}$ is $\cal F$-generated, we have that $\Phi_D(\beta_{\Delta(P,id)})=0$ unless $D$ has the form $\Delta(Q,\psi)$ with $\psi\in\cal F(Q,S)$, and because $\beta_{\Delta(P,id)}$ is $\cal F$-stable, we have $\Phi_{\Delta(Q,\psi)}(\beta_{\Delta(P,id)})=\Phi_{\Delta(Q,id)}(\beta_{\Delta(P,id)})$ when $\psi\in \cal F(Q,S)$. Considered as an element of $A(\cal F\x \cal F)_{(p)}$ we know these fixed point values from Proposition \ref{thmPLocalBurnsideBasis}:\enlargethispage{.5cm}
\begin{align*}
\Phi_{\Delta(Q,id)}(\beta_{\Delta(P,id)}) &= \frac{\abs{\Hom_{\cal F\x\cal F}(\Delta(Q,id),\Delta(P,id))}\cdot \abs{S\x S}}{\abs{\Delta(P,id)}\cdot \abs{\Hom_{\cal F\x\cal F}(\Delta(Q,id),S\x S)}}
\\ &= \frac{\abs{\cal F(Q,P)}\cdot \abs{S}^2}{\abs{P}\cdot \abs{\cal F(Q,S)}^2} = \Phi_Q(\beta_P) \cdot \frac{\abs S}{\abs{\cal F(Q,S)}}.
\end{align*}
For the product $\omega_{\cal F} \circ \iota^S(\beta_P)$ we apply Theorem \ref{thmCharIdemMultiplication} to give us
\begin{equation}\label{eqIncludedBeta}
\Phi_{\Delta(Q,\psi)}(\omega_{\cal F} \circ \iota^S(\beta_P)) = \frac1{\abs{[\Delta(Q,\psi)]_{\cal F\x \cal F_S}}} \sum_{\Delta(Q',\psi')\in [\Delta(Q,\psi)]_{\cal F\x \cal F_S}} \Phi_{\Delta(Q',\psi')}(\iota^S(\beta_P)),
\end{equation}
where $\cal F_S$ is the trivial fusion system on $S$.
By Lemma \ref{lemPhiOfIota}, $\Phi_{\Delta(Q',\psi')}(\iota^S(\beta_P))=0$ unless $\Delta(Q',\psi')$ is $(S\x S)$-conjugate to $\Delta(Q',id)$. Since $Q'\sim_S Q$ for all subgroups $\Delta(Q',\psi')\in [\Delta(Q,\psi)]_{\cal F\x \cal F_S}$, we conclude that all summands are zero unless $\Delta(Q,id)\in [\Delta(Q,\psi)]_{\cal F\x \cal F_S}$. Hence $\Delta(Q,\psi)$ should be conjugate to  $\Delta(Q,id)$ inside $\cal F\x \cal F_S$, i.e., $\psi$ must lie in $\cal F$.

In this case we have, by left $\cal F$-stability of $\omega_{\cal F} \circ \iota^S(\beta_P)$, that
\[\Phi_{\Delta(Q,\psi)}(\omega_{\cal F} \circ \iota^S(\beta_P)) = \Phi_{\Delta(Q,id)}(\omega_{\cal F} \circ \iota^S(\beta_P)).\] We still get $\Phi_{\Delta(Q',\psi')}(\iota^S(\beta_P))=0$ unless $\Delta(Q',\psi')$ is actually $(S\x S)$-conjugate to $\Delta(Q',id)$ and $\Delta(Q,id)$. In \eqref{eqIncludedBeta} we can therefore omit all the summands that are zero, and we get
\begin{align*}
\MoveEqLeft \Phi_{\Delta(Q,\psi)}(\omega_{\cal F} \circ \iota^S(\beta_P))
\\ &= \Phi_{\Delta(Q,id)}(\omega_{\cal F} \circ \iota^S(\beta_P))
\\ &= \frac1{\abs{[\Delta(Q,id)]_{\cal F\x \cal F_S}}} \sum_{\Delta(Q',\psi')\in [\Delta(Q,id)]_{S\x S}} \Phi_{\Delta(Q',\psi')}(\iota^S(\beta_P))
\\ &= \frac{\abs{[\Delta(Q,id)]_{S\x S}}}{\abs{[\Delta(Q,id)]_{\cal F\x \cal F_S}}}\cdot \Phi_{\Delta(Q,id)}(\iota^S(\beta_P))
\\ &= \frac{\abs{\Hom_{S\x S}(\Delta(Q,id),S\x S)} \cdot \abs{\Aut_{\cal F\x \cal F_S}(\Delta(Q,id))}}{\abs{\Aut_{S\x S}(\Delta(Q,id))}\cdot \abs{\Hom_{\cal F\x \cal F_S}(\Delta(Q,id),S\x S)}}\cdot \Phi_{Q}(\beta_{P}) \cdot \abs{C_S(Q)}
\\ &= \frac{\frac{\abs S^2}{\abs{C_S(Q)}^2} \cdot \frac{\abs{N_S(Q)}}{\abs{C_S(Q)}}}{\frac{\abs{N_S(Q)}}{\abs{C_S(Q)}} \cdot (\abs{\cal F(Q,S)}\cdot\frac{\abs S}{\abs{C_S(Q)}})} \cdot \Phi_{Q}(\beta_{P}) \cdot \abs{C_S(Q)}
\\ &= \Phi_{Q}(\beta_{P})\cdot \frac{\abs S}{\abs{\cal F(Q,S)}} = \Phi_{\Delta(Q,\psi)}(\beta_{\Delta(P,id)}).
\end{align*}
This shows that $\omega_{\cal F} \circ \iota^S(\beta_P) = \beta_{\Delta(P,id)}$; and by symmetry of $\omega_{\cal F}$ we have
\[\beta_{\Delta(P,id)}=(\beta_{\Delta(P,id)})^\op = (\omega_{\cal F} \circ \iota^S(\beta_P))^\op = \iota^S(\beta_{P})^\op \circ \omega_{\cal F}^\op = \iota^S(\beta_{P}) \circ \omega_{\cal F}.\]
Finally, $\omega_{\cal F}\circ (\iota^S(\beta_{P}) \circ \omega_{\cal F}) = \omega_{\cal F}\circ (\omega_{\cal F} \circ \iota^S(\beta_P)) = \omega_{\cal F} \circ \iota^S(\beta_P)$.
\end{proof}

\begin{mainthm}\label{thmSingleEmbedsInDouble}
Let $\cal F$ be a saturated fusion system on a finite $p$-group $S$.

Then the collapse map $q\colon A^{\text{char}}(\cal F)_{(p)} \to A(\cal F)_{(p)}$, which quotients out the right $S$-action, is an isomorphism of rings, and it sends the basis element $\beta_{\Delta(P,id)}$ of $A^{\text{char}}(\cal F)_{(p)}$ to the basis element $\beta_{P}$ of $A(\cal F)_{(p)}$.
\end{mainthm}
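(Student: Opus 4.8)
The plan is to exhibit an explicit ring homomorphism $\iota\colon A(\cal F)_{(p)} \to A^{\text{char}}(\cal F)_{(p)}$ that is a two-sided inverse to the collapse map $q$, from which everything follows. The natural candidate, suggested by Example \ref{exGroupSingleEmbedsInGroupDouble} and Lemma \ref{lemCharIdemTimesBeta}, is the composite $X\mapsto \omega_{\cal F}\circ \iota^S(X)$, where $\iota^S\colon A(S)_{(p)}\to A(S,S)_{(p)}$ is the ring homomorphism of Example \ref{exGroupSingleEmbedsInGroupDouble}. The first step is to check this lands in $A^{\text{char}}(\cal F)_{(p)}$: by Lemma \ref{lemCharIdemTimesBeta} we have $\omega_{\cal F}\circ \iota^S(\beta_P) = \beta_{\Delta(P,id)}$, and since the $\beta_P$ form a $\Z_{(p)}$-basis of $A(\cal F)_{(p)}$ (Proposition \ref{thmPLocalBurnsideBasis}) while the $\beta_{\Delta(P,id)}$ form a $\Z_{(p)}$-basis of $A^{\text{char}}(\cal F)_{(p)}$ (Proposition \ref{propCharElemBasis}), the map $\iota(X):=\omega_{\cal F}\circ \iota^S(X)$ is a $\Z_{(p)}$-module isomorphism $A(\cal F)_{(p)}\xrightarrow{\cong} A^{\text{char}}(\cal F)_{(p)}$ sending basis to basis.

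Next I would verify that $\iota$ is multiplicative. Here is where the characteristic idempotent does the real work: for $X,Y\in A(\cal F)_{(p)}$, using Lemma \ref{lemCharIdemTimesBeta} (in the form $\iota^S(X)\circ \omega_{\cal F} = \omega_{\cal F}\circ \iota^S(X)$ and $\omega_{\cal F}\circ\omega_{\cal F}=\omega_{\cal F}$), together with the fact that $\iota^S$ is a ring homomorphism and that $\iota^S(X)$ commutes with $\omega_{\cal F}$ in the appropriate one-sided sense, one computes
\begin{align*}
\iota(X)\circ \iota(Y) &= \omega_{\cal F}\circ \iota^S(X)\circ \omega_{\cal F}\circ \iota^S(Y)
= \omega_{\cal F}\circ \omega_{\cal F}\circ \iota^S(X)\circ \iota^S(Y)\\
&= \omega_{\cal F}\circ \iota^S(XY) = \iota(XY),
\end{align*}
where the middle equality uses that $\iota^S(X)\circ\omega_{\cal F}=\omega_{\cal F}\circ\iota^S(X)$ from Lemma \ref{lemCharIdemTimesBeta}. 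Also $\iota(S/S)=\omega_{\cal F}\circ\iota^S(S/S)=\omega_{\cal F}\circ[S,id]=\omega_{\cal F}$, so $\iota$ is unital with respect to the unit $\omega_{\cal F}$ of $A^{\text{char}}(\cal F)_{(p)}$. Thus $\iota$ is a ring isomorphism.

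Finally I would identify $q$ as the inverse of $\iota$. The collapse map $q$ is defined on bisets by $X\mapsto X/S$, equivalently $q(X) = X\circ [(S\x 1)/(S\x 1)]$, and it restricts to $A^{\text{char}}(\cal F)_{(p)}\to A(\cal F)_{(p)}$ (it sends $\cal F$-stable bisets to $\cal F$-stable sets, as noted in the introduction). It suffices to check $q\circ\iota = \mathrm{id}$ on the basis $\{\beta_P\}$: since $\iota(\beta_P) = \beta_{\Delta(P,id)}$, I must show $q(\beta_{\Delta(P,id)}) = \beta_P$. This can be done by comparing fixed points: for $Q\leq S$, quotienting out the right $S$-action of $\iota^S([S/R])=[R,id]$ recovers $[S/R]$ (Example \ref{exGroupSingleEmbedsInGroupDouble}), so $q\circ\iota^S = \mathrm{id}_{A(S)_{(p)}}$; then $q(\beta_{\Delta(P,id)}) = q(\omega_{\cal F}\circ\iota^S(\beta_P))$, and using Corollary \ref{corCharOnesidedStabilization} (left multiplication by $\omega_{\cal F}$ on $A(1,S)_{(p)}\cong A(S)_{(p)}$ is the stabilization map $\pi$, which is the identity on the already-$\cal F$-stable $\beta_P$) this equals $\pi(q(\iota^S(\beta_P))) = \pi(\beta_P) = \beta_P$. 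Hence $q\circ\iota=\mathrm{id}$; combined with $\iota$ being a bijective ring homomorphism, $q$ is its inverse and is therefore an isomorphism of rings sending $\beta_{\Delta(P,id)}\mapsto \beta_P$.

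The main obstacle I anticipate is the bookkeeping in the multiplicativity step: one has to be careful that $\iota^S(X)$ only commutes with $\omega_{\cal F}$ in the precise one-sided senses furnished by Lemma \ref{lemCharIdemTimesBeta} (namely $\omega_{\cal F}\circ\iota^S(X) = \iota^S(X)\circ\omega_{\cal F} = \omega_{\cal F}\circ\iota^S(X)\circ\omega_{\cal F}$ for $X\in A(\cal F)_{(p)}$), and that this is exactly enough to collapse the two interior $\omega_{\cal F}$'s into one. Once that identity is in hand the rest is formal. Alternatively, one could avoid explicitly proving multiplicativity of $q$ by the strategy sketched above — build $\iota$ as the ring homomorphism, show it is bijective on bases, and then deduce $q=\iota^{-1}$ is automatically a ring homomorphism — which is the cleaner route and the one I would present.
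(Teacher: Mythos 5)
Your proposal is correct and follows essentially the same route as the paper: both construct the inverse $\iota^{\cal F}(X)=\omega_{\cal F}\circ\iota^S(X)\circ\omega_{\cal F}$ (your $\omega_{\cal F}\circ\iota^S(X)$ is the same element by Lemma \ref{lemCharIdemTimesBeta}), verify multiplicativity via that lemma together with $\iota^S$ being a ring homomorphism, and identify $q$ as the inverse by computing $q(\beta_{\Delta(P,id)})=\beta_P$ through the one-point biset and Corollary \ref{corCharOnesidedStabilization}. The only difference is bookkeeping in how $q(\beta_{\Delta(P,id)})$ is evaluated, which is immaterial.
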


\begin{proof}
For a biset $X$ the quotient $X/S$ is the same as the product $X\x_S \mathord{pt}$, so the collapse map $q\colon A(S,S)_{(p)}\to A(S)_{(p)}$ is alternatively given as right-multiplication with the one-point $(1,S)$-biset $[pt]_1^S$.
The one-point biset has $\Phi_D([pt]_1^S)=1$ for all $D\leq S\x 1$, and by Theorem \ref{thmCharIdemMultiplication} we then also have $\Phi_D(\omega_{\cal F} \circ [pt]_1^S) = 1$ for all $D\leq S\x 1$, so $\omega_{\cal F}\circ [pt]_1^S=[pt]_1^S$.

If we apply the collapse map $q$ to the basis elements $\beta_{\Delta(P,id)}=\omega_{\cal F}\circ [P,id]\circ \omega_{\cal F}$ of $A^{\text{char}}(\cal F)_{(p)}$ we therefore get
\begin{align*}
q(\beta_{\Delta(P,id)}) &= \omega_{\cal F} \circ [P,id]_S^S \circ \omega_{\cal F} \circ [pt]_1^S
\\ &= \omega_{\cal F} \circ [P,id]_S^S \circ [pt]_1^S = \omega_{\cal F} \circ [S/P]_1^S.
\end{align*}
By Corollary \ref{corCharOnesidedStabilization} multiplication with $\omega_{\cal F}$ in $A(1,S)_{(p)}$ is the same as the stabilization map of Theorem \ref{thmStabilizationHom}, so $q(\beta_{\Delta(P,id)})=\omega_{\cal F} \circ [S/P]_1^S=\beta_P$ as elements of $A(S)_{(p)}$.

Now we define a $\Z_{(p)}$-homomorphism $\iota^{\cal F} \colon A(\cal F)_{(p)} \to A^{\text{char}}(\cal F)_{(p)}$ by
\[\iota^{\cal F}(X) = \omega_{\cal F} \circ \iota^S(X) \circ \omega_{\cal F},\]
and by Lemma \ref{lemCharIdemTimesBeta} we then have $\iota^{\cal F}(\beta_P) = \beta_{\Delta(P,id)}$.
Because $q$ sends $\beta_{\Delta(P,id)}\in A^{\text{char}}(\cal F)_{(p)}$ to $\beta_P\in A(\cal F)_{(p)}$, and $\iota^{\cal F}$ sends it back again, the two maps $q$ and $\iota^{\cal F}$ are inverse isomorphisms of $\Z_{(p)}$-modules $A(\cal F)_{(p)}$ and $A^{\text{char}}(\cal F)_{(p)}$.

Finally, we recall that $\iota^S$ is a ring homomorphism, and apply Lemma \ref{lemCharIdemTimesBeta} to show that all elements $X,Y\in A(\cal F)_{(p)}$ satisfy
\[(\omega_{\cal F}\circ \iota^S( X) \circ \omega_{\cal F})\circ (\omega_{\cal F}\circ \iota^S (Y) \circ \omega_{\cal F}) = \omega_{\cal F}\circ \iota^S( X) \circ \iota^S( Y) \circ \omega_{\cal F} = \omega_{\cal F}\circ \iota^S (X Y) \circ \omega_{\cal F}.\]
Hence $\iota^\cal F$ preserves multiplication, and consequently the inverse $q\colon A^{\text{char}}(\cal F)_{(p)}\to A(\cal F)_{(p)}$ does as well.
\end{proof}

Via the ring isomorphism $A^{\text{char}}(\cal F)_{(p)}\cong A(\cal F)_{(p)}$ we can translate any question about the structure of $\cal F$-characteristic elements into a question about the Burnside ring $A(\cal F)_{(p)}$. For instance we get an alternative to Ragnarsson's proof in \cite{Ragnarsson}*{Proposition 5.6} that characteristic idempotents are unique: Suppose until now that we have used $\omega_{\cal F}$ only to denote the particular characteristic idempotent $\beta_{\Delta(S)}$ of Proposition \ref{propCharIdemExists}, then we can determine all other characteristic idempotents in $A^{\text{char}}(\cal F)_{(p)}$ by studying idempotents in $A(\cal F)_{(p)}$ instead:
\begin{cor}\label{corCharIdemUnique}
Let $\cal F$ be a saturated fusion system on a finite $p$-group $S$.
The only idempotents of $A(\cal F)_{(p)}$ are $0$ and the $1$-element $[S/S]$. Hence it follows that $A^{\text{char}}(\cal F)_{(p)}$ has exactly one non-zero idempotent, and therefore the characteristic idempotent $\omega_{\cal F}$ is unique.
\end{cor}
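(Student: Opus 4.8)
The plan is to reduce the whole statement to the single fact that the ambient ring $A(S)_{(p)}$ has no idempotents other than $0$ and $1=[S/S]$, and then to transport this through the ring isomorphism $q\colon A^{\text{char}}(\cal F)_{(p)}\xto{\cong}A(\cal F)_{(p)}$ of Theorem \ref{thmSingleEmbedsInDouble}. Observe first that the one-point set $[S/S]$ is $\cal F$-stable and is the multiplicative identity of $A(S)_{(p)}$, so $A(\cal F)_{(p)}$ is a unital subring of $A(S)_{(p)}$ with the same identity $[S/S]$; in particular every idempotent of $A(\cal F)_{(p)}$ is an idempotent of $A(S)_{(p)}$, so once the claim for $A(S)_{(p)}$ is known, and once we observe that both $0$ and $[S/S]$ do lie in $A(\cal F)_{(p)}$, the first sentence of the corollary follows.

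First I would establish the claim for $A(S)_{(p)}$ by a fixed-point congruence combined with a downward induction on $\abs P$. The key input is that for every $X\in A(S)_{(p)}$ and every pair $P\trianglelefteq Q\leq S$ with $\abs{Q:P}=p$ one has $\Phi_{P}(X)\equiv\Phi_{Q}(X)\pmod p$: for an honest $S$-set the cyclic group $Q/P$ acts on $X^{P}$ with fixed set $X^{Q}$, so $\abs{X^{P}}-\abs{X^{Q}}$ is divisible by $p$; hence $\Phi_{P}-\Phi_{Q}$ maps $A(S)$ into $p\Z$, and therefore $A(S)_{(p)}$ into $p\Z_{(p)}$. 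Now let $e=e^{2}\in A(S)_{(p)}$. Applying the ring homomorphism $\Phi_{P}$ shows that $\Phi_{P}(e)$ is an idempotent of the integral domain $\Z_{(p)}$, hence $\Phi_{P}(e)\in\{0,1\}$ for every $P\leq S$. Replacing $e$ by $1-e$ if necessary I may assume $\Phi_{S}(e)=1$, and then I would prove $\Phi_{P}(e)=1$ for all $P\leq S$ by downward induction on $\abs P$: if $P<S$ then $P<N_{S}(P)$ since $S$ is a $p$-group, so there is $Q$ with $P\trianglelefteq Q\leq N_{S}(P)$ and $\abs{Q:P}=p$; by the inductive hypothesis $\Phi_{Q}(e)=1$, so the congruence gives $\Phi_{P}(e)\equiv 1\pmod p$, which together with $\Phi_{P}(e)\in\{0,1\}$ forces $\Phi_{P}(e)=1$. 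Thus $\Phi(e)=\Phi([S/S])$, and injectivity of the mark homomorphism (Proposition \ref{propYoshidaGroup}) gives $e=[S/S]$; symmetrically the case $\Phi_{S}(e)=0$ gives $e=0$.

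It then follows that the only idempotents of $A(\cal F)_{(p)}$ are $0$ and $[S/S]$. Since $q$ is a ring isomorphism, it restricts to a bijection between the idempotents of $A^{\text{char}}(\cal F)_{(p)}$ and those of $A(\cal F)_{(p)}$, so $A^{\text{char}}(\cal F)_{(p)}$ has exactly the two idempotents $q^{-1}(0)=0$ and $q^{-1}([S/S])$, i.e. exactly one non-zero idempotent. To identify it, recall from Theorem \ref{thmSingleEmbedsInDouble} that $q(\beta_{\Delta(P,id)})=\beta_{P}$, and that $\beta_{S}=\pi([S/S])=[S/S]$ because $[S/S]$ is already $\cal F$-stable (its mark vector is constantly $1$) and $\pi$ fixes $\cal F$-stable elements; since $\beta_{\Delta(S,id)}=\omega_{\cal F}$ by Proposition \ref{propCharIdemExists}, the unique non-zero idempotent of $A^{\text{char}}(\cal F)_{(p)}$ is $\omega_{\cal F}$. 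Finally, any $\cal F$-characteristic idempotent is by Definition \ref{dfnFchar} both $\cal F$-generated and fully $\cal F$-stable, hence lies in $A^{\text{char}}(\cal F)_{(p)}$, and it is non-zero because its image under $\e$ is a unit of $\Z_{(p)}$ whereas $\e(0)=0$; therefore it equals $\omega_{\cal F}$, which is the asserted uniqueness.

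The only genuine content is the congruence-and-induction argument for $A(S)_{(p)}$, and the single subtle point there is that the mod-$p$ fixed-point congruence survives $p$-localization — which it does, because the difference $\Phi_{P}-\Phi_{Q}$ already lands in $p\Z$ on $A(S)$. Everything after that is formal bookkeeping with the isomorphism of Theorem \ref{thmSingleEmbedsInDouble} and with the locations of $[S/S]$ and $\omega_{\cal F}$.
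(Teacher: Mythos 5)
Your argument is correct, and it reaches the conclusion by a genuinely different (and more elementary) route than the paper. The paper proves the statement intrinsically in $A(\cal F)_{(p)}$: it feeds the idempotent into the short exact sequence of Proposition \ref{propYoshidaFusion}, so the relevant congruences are $\sum_{\bar s\in W_SQ}\Phi_{\gen sQ}(e)\equiv 0 \pmod{\abs{W_SQ}}$ for fully normalized $Q$, and the downward induction on $\abs Q$ is run against these $Obs(\cal F)$-congruences. You instead observe that $A(\cal F)_{(p)}$ is a unital subring of $A(S)_{(p)}$ sharing the identity $[S/S]$, and prove the stronger statement that $A(S)_{(p)}$ itself has no idempotents besides $0$ and $1$, using only the classical index-$p$ congruence $\Phi_P(X)\equiv\Phi_Q(X)\pmod p$ for $P\trianglelefteq Q$ of index $p$ (which you correctly check survives $p$-localization) together with injectivity of the mark homomorphism from Proposition \ref{propYoshidaGroup}. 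The common skeleton is the same -- marks of an idempotent are $0$ or $1$, congruences propagate the top coordinate downward, injectivity of marks finishes -- but your version avoids the fusion-system obstruction sequence of Proposition \ref{propYoshidaFusion} entirely (it only needs the group-level Burnside ring theory for $S$), at the price of proving a statement about the larger ring $A(S)_{(p)}$ rather than exhibiting the congruence structure of $A(\cal F)_{(p)}$ that the paper is also advertising. Your bookkeeping for the second half -- transporting the two idempotents through the ring isomorphism $q$ of Theorem \ref{thmSingleEmbedsInDouble}, identifying $q^{-1}([S/S])=\beta_{\Delta(S,id)}=\omega_{\cal F}$ via $\beta_S=[S/S]$, and noting that any $\cal F$-characteristic idempotent lies in $A^{\text{char}}(\cal F)_{(p)}$ and is non-zero because its augmentation is a unit -- matches what the paper intends by ``hence it follows'' and is carried out correctly.
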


\begin{proof}
By Proposition \ref{propYoshidaFusion} the Burnside ring $A(\cal F)_{(p)}$ fits in a short-exact sequence of $\Z_{(p)}$-modules
\[0\to A(\cal F)_{(p)} \xto{\Phi} \free {\cal F}_{(p)} \xto{\Psi} Obs(\cal F)_{(p)} \to 0.\]
Here $\Phi$ is the mark homomorphism, $Obs(\cal F)_{(p)}$ is the group
\[Obs(\cal F)_{(p)} = \prod_{\substack{[P]_{\cal F}\in\ccset{\cal F}\\ P\text{ f.n.}}} (\Z / \abs{W_S P}\Z),\]
and $\Psi$ is given by the $[P]_{\cal F}$-coordinate functions
\[\Psi_{P}(\xi) := \sum_{\bar s\in W_S P} \xi_{\gen s P} \pmod {\abs{W_S P}},\]
when $P$ is fully $\cal F$-normalized, and $\xi_{\gen s P}$ denotes the $[\gen s P]_{\cal F}$-coordinate of an element $\xi\in \free {\cal F}_{(p)} = \prod_{[P]\in \ccset {\cal F}} \Z_{(p)}$.

Let $e$ be an idempotent in $A(\cal F)_{(p)}$, then since $\Phi$ is a ring homomorphism, the fixed point vector $\Phi(e)$ must be idempotent in the product ring $\free{\cal F}_{(p)}$. Since $\Phi(e)$ is an element of a product ring, it is idempotent if and only if each coordinate $\Phi_Q(e)$ is idempotent in $\Z_{(p)}$. The only idempotents of $\Z_{(p)}$ are $0$ and $1$, so $e\in A(\cal F)_{(p)}$ is idempotent if and only if we have $\Phi_Q(e)\in \{0,1\}$ for all $Q\leq S$.

Let the top coordinate $\Phi_S(e)$ be fixed as either $0$ or $1$, then we will prove by induction on the index of $Q\leq S$ that (under the assumption of idempotence) the coordinate $\Phi_Q(e)$ is determined by $\Phi_S(e)$. Suppose that $Q< S$, and that $\Phi_{R}(e)$ is determined for all $R$ with $\abs R > \abs Q$. Then because $\Psi\Phi=0$, the fixed points must satisfy
\[\sum_{\bar s\in W_S Q} \Phi_{\gen s Q}(e) \equiv 0 \pmod{\abs{W_S Q}},\]
or if we isolate $\Phi_Q(e)$:
\[\Phi_Q(e) \equiv - \sum_{\substack{\bar s\in W_S Q\\ \bar s \neq 1}} \Phi_{\gen s Q}(e) \pmod{\abs{W_S Q}}.\]
We have $\abs{\gen s Q} > \abs Q$ for all $s\in N_S Q$ with $s\not\in Q$, so all the numbers $\Phi_{\gen s Q}(e)$ are already determined by induction. In addition $Q< S$ implies $Q< N_S Q$, so $\abs{W_S Q}\geq 2$, and thus $\Phi_Q(e)=0$ and $\Phi_Q(e)=1$ cannot both satisfy the congruence relation.

We conclude that once $\Phi_S(e)$ is fixed, there is at most one possibility for $e$. The empty set $0=[\Ø]$ is idempotent and satisfies $\Psi_S(0)=0$, and the one point set $[S/S]$ is idempotent and satisfies $\Phi_S([S/S])=1$, so both possibilities exist.
\end{proof}

\section{On the composition product of saturated fusion systems}\label{secCompositionProducts}
In this final section we apply the earlier Theorems \ref{thmCharIdemStructure} and \ref{thmCharIdemMultiplication} about characteristic idempotents to a conjecture of Park-Ragnarsson-Stancu in \cite{ParkRagnarssonStancu} concerning composition products of fusion systems and how to characterize them in terms of characteristic idempotents.
Theorem \ref{thmCompositionProducts} states the precise conditions under which the conjecture of Park-Ragnarsson-Stancu holds. We proceed to give a counterexample to the general conjecture as well as prove a special case. Finally we suggest a revised definition of composition products (see Definition \ref{dfnRevisedCompositionProd}) with respect to which  the conjecture holds in general.

Let $\cal F$ be a fusion system on a $p$-group $S$, and let $\cal H,\cal K$ be fusion subsystems on subgroups $R,T\leq S$ respectively. We say that $\cal F$ is the \emph{weak composition product} of $\cal H$ and $\cal K$, written $\cal F\approx\cal H\cal K$, if $S=RT$ and for all subgroups $P\leq T$ it holds that every morphism $\ph\in \cal F(P,R)$ can be written as a composition $\ph=\psi\rho$ such that $\psi$ is a morphism of $\cal H$ and $\rho$ is a morphism of $\cal K$. Park-Ragnarsson-Stancu use the term ``composition product'' instead of ``weak composition product'', but as we shall see in Theorem \ref{thmCompositionProducts} the weak composition product lacks one additional condition on $\cal F$, $\cal H$ and $\cal K$. We reserve the term ``composition product'' for the revised Definition \ref{dfnRevisedCompositionProd}.

For a finite group $G$ with subgroups $H,K\leq G$, we can ask whether $G=HK$, i.e., if every $g\in G$ can be written as $g=hk$ with $h\in H$ and $k\in K$. It turns out that the answer to this question is detected by the structure of $G$ as an $(K,H)$-biset. With a little thought one can show that $G=HK$ if an only if the $(K,H)$-biset $G$ is isomorphic to the transitive biset $H\x_{H\cap K} K$.
This result for groups inspired Park-Ragnarsson-Stancu to conjecture that $\cal F\approx\cal H\cal K$ is equivalent to a similar relation between the characteristic idempotents:
\begin{equation}\label{eqPRS}
[R,id]_S^R \circ \omega_{\cal F} \circ [T,id]_T^S = \omega_{\cal H}\circ [R\cap T,id]_T^R\circ \omega_{\cal K}
\end{equation}
Thanks to Theorem \ref{thmCharIdemMultiplicationIntro} and its corollary we can now directly calculate under which circumstances \eqref{eqPRS} holds, which results in the following theorem.
\begin{mainthm}\label{thmCompositionProducts}
Let $\cal F$ be a saturated fusion system on a $p$-group $S$, and suppose that $\cal H,\cal K$ are saturated fusion subsystems of $\cal F$ on subgroups $R,T\leq S$ respectively.

Then the characteristic idempotents satisfy
\begin{equation}\label{eqCompositionIdems}
[R,id]_S^R \circ \omega_{\cal F} \circ [T,id]_T^S = \omega_{\cal H}\circ [R\cap T,id]_T^R\circ \omega_{\cal K} \quad\text{in $A(T,R)_{(p)}$}
\end{equation}
if and only if $\cal F\approx\cal H\cal K$ and for all $Q\leq R\cap T$ we have
\begin{equation}\label{eqCompositionHoms}
\abs{\cal F(Q,S)} = \frac{\abs{\cal H(Q,R)}\cdot \abs{\cal K(Q,T)}}{\abs{\Hom_{\cal H\cap \cal K}(Q,R\cap T)}}.
\end{equation}
\end{mainthm}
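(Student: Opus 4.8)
The plan is to translate both sides of \eqref{eqCompositionIdems} into statements about fixed points using Theorem \ref{thmCharIdemMultiplication}, and then compare coordinate-by-coordinate over all subgroups $D\leq T\x S$ (equivalently, over all pairs consisting of a subgroup $Q\leq S$ and a homomorphism into the appropriate target). The key observation is that the maps $[R,id]_S^R\circ(-)$ and $(-)\circ[T,id]_T^S$ are restriction maps on bisets — precomposing or postcomposing with an inclusion-induced biset simply restricts the relevant action — so on fixed points they correspond to passing from a subgroup $D\leq S\x S$ to a subgroup of $R\x T$. Thus $\Phi_{\Delta(Q,\ph)}$ of the left-hand side, for $Q\leq T$ and $\ph\in\Hom(Q,R)$, records $\Phi_{\Delta(Q,\ph)}(\omega_{\cal F})$, which by Theorem \ref{thmCharIdemStructure} equals $\tfrac{\abs S}{\abs{\cal F(Q,S)}}$ when $\ph\in\cal F(Q,R)$ and $0$ otherwise. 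For the right-hand side I would apply Theorem \ref{thmCharIdemMultiplication} to $X=[R\cap T,id]_T^R$ with $\cal F_2=\cal H$ and $\cal F_1=\cal K$, so that $\Phi_D(\omega_{\cal H}\circ[R\cap T,id]_T^R\circ\omega_{\cal K})$ is the average of $\Phi_{D'}([R\cap T,id]_T^R)$ over the $(\cal H\x\cal K)$-conjugacy class of $D$; the fixed points of the transitive biset $[R\cap T,id]_T^R$ are computed from \eqref{eqPhiOnBasis}, so this average can be made completely explicit in terms of the morphism sets $\cal H(Q,R)$, $\cal K(Q,T)$, and $\Hom_{\cal H\cap\cal K}(Q,R\cap T)$.

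First I would pin down the support: both sides are $\cal H$-generated on the left and $\cal K$-generated on the right (being products with $\cal H$- and $\cal K$-characteristic idempotents), so the only subgroups $D\leq R\x T$ with possibly nonzero fixed points are graphs $\Delta(Q,\ph)$ of morphisms, and by the stability properties in Theorem \ref{thmCharIdemMultiplication} it suffices to check $D=\Delta(Q,incl)$ for $Q\leq R\cap T$ and the ``off-support'' vanishing separately. Comparing supports forces: the left-hand side is supported on graphs of morphisms $\ph\colon Q\to R$ lying in $\cal F$ with $Q\lesssim_{\cal F\cap(\cal H\x\cal K)}\cdots$; matching this to the right-hand side's support — graphs of morphisms that factor as an $\cal H$-morphism after a $\cal K$-morphism — is exactly the condition $\cal F=\cal H\cal K$ (every $\ph\in\cal F(Q,R)$ with $Q\leq T$ factors as $\psi\rho$). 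So the support condition on \eqref{eqCompositionIdems} is equivalent to $\cal F=\cal H\cal K$. Then, on the common support, equating the single scalar $\Phi_{\Delta(Q,incl)}$ of the two sides for each $Q\leq R\cap T$ gives precisely the numerical identity \eqref{eqCompositionHoms}, once the averaging formula for the right-hand side is simplified (the denominators $\abs{[\Delta(Q,incl)]_{\cal H\x\cal K}}$ and the transporter counts from \eqref{eqPhiOnBasis} combine, analogously to the bookkeeping in Lemma \ref{lemWeightedMeanOfCoordinates} and Lemma \ref{lemCharIdemTimesBeta}, to leave exactly the ratio in \eqref{eqCompositionHoms}).

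The main obstacle I anticipate is the right-hand-side computation: carefully identifying the $(\cal H\x\cal K)$-conjugacy class of $\Delta(Q,incl)$ inside $R\x T$, summing $\Phi_{\Delta(Q',\psi')}([R\cap T,id]_T^R)=\abs{N_{R\x T}(\Delta(Q',\psi'),\Delta(R\cap T,id))}/\abs{R\cap T}$ over that class, and recognizing the result as a ratio of orders of morphism sets — including correctly accounting for the automorphism groups $\Aut_{\cal H\cap\cal K}(Q)$ that cancel between numerator and denominator, exactly as in the ``multiply top and bottom by $\abs{\Aut_{\cal F}(Q)}$'' trick used in Lemma \ref{lemWeightedMeanOfCoordinates}. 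A secondary subtlety is checking the ``off-support'' directions in both implications: that if $\cal F=\cal H\cal K$ but \eqref{eqCompositionHoms} fails then the two sides genuinely differ (they have the same support but different values at some $Q$), and conversely that no spurious cancellation can make the identity hold without $\cal F=\cal H\cal K$ — this uses that the relevant transitive bisets are linearly independent, so equality of elements is equality of all fixed-point coordinates. Once those bookkeeping points are handled, the equivalence falls out by comparing the two explicit fixed-point formulas term by term.
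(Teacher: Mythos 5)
Your proposal is correct and takes essentially the same route as the paper: you compare fixed points coordinatewise, identifying the left side as the restriction of $\omega_{\cal F}$ with values $\abs{S}/\abs{\cal F(Q,S)}$ on graphs of $\cal F$-morphisms (Theorem \ref{thmCharIdemStructure}), and the right side as the $(\cal H\x\cal K)$-stabilization of $[R\cap T,id]_T^R$, i.e.\ $\beta_{\Delta(R\cap T,id)}$ (Theorem \ref{thmCharIdemMultiplication}, Corollary \ref{corCharDoubleStabilization}), so that matching supports yields the factorization property and matching values yields \eqref{eqCompositionHoms}, exactly as in the paper. One small point of bookkeeping: the condition $S=RT$ built into $\cal F=\cal H\cal K$ is not detected by the support comparison but by equating the values at the trivial graph $\Delta(1,id)$ ($\abs S$ on the left versus $\abs R\cdot\abs T/\abs{R\cap T}$ on the right), and only with $S=RT$ in hand does equating the scalars at $\Delta(Q,id)$ reduce precisely to \eqref{eqCompositionHoms} -- your coordinatewise comparison captures this automatically, but the attribution in your sketch of all of $\cal F=\cal H\cal K$ to the support condition alone is slightly off.
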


\begin{proof}%[Proof of Theorem \ref{thmCompositionProducts}]
We first analyse each side of \eqref{eqCompositionIdems} separately in order to ease the proof of the theorem.

The element $l.h.s. := [R,id]_S^R \circ \omega_{\cal F} \circ [T,id]_T^S$ is the characteristic idempotent for $\cal F$ restricted to $A(T,R)_{(p)}$. For subgroups $D\leq R\x T$ we therefore have
\[\Phi_{D}([R,id]_S^R \circ \omega_{\cal F} \circ [T,id]_T^S)=0\]
unless $D$ has the form $\Delta(P,\ph)$ with $P\leq T$ and $\ph\in \cal F(P,R)$, and for such $P$ and $\ph$ we get
\[\Phi_{\Delta(P,\ph)}([R,id]_S^R \circ \omega_{\cal F} \circ [T,id]_T^S) = \frac{\abs{S}}{\abs{\cal F(P,S)}}.\]
For the right hand side $r.h.s. :=\omega_{\cal H}\circ [R\cap T,id]_T^R\circ \omega_{\cal K}$ we know from Corollary \ref{corCharDoubleStabilization} that $r.h.s.\in A(\cal K,\cal H)_{(p)}$ corresponds to the basis element $\beta_{\Delta(R\cap T,id)}$ in $A(\cal H\x \cal K)_{(p)}$. Hence we have
\[\Phi_D(\omega_{\cal H}\circ [R\cap T,id]_T^R\circ \omega_{\cal K})=0\]
unless $D$ is $(\cal H\x \cal K)$-conjugate to a subgraph of $\Delta(R\cap T,id)$, i.e., $D$ has the form $\Delta(\rho Q,\psi\rho^{-1})$ with $Q\leq R\cap T$, $\rho\in\cal K(Q,T)$, and $\psi\in\cal H(Q,R)$. If $D$ has this form, then we get
\begin{align*}
\Phi_{\Delta(\rho Q,\psi\rho^{-1})}(r.h.s.)&=\Phi_{\Delta(\rho Q,\psi\rho^{-1})}(\omega_{\cal H}\circ [R\cap T,id]_T^R\circ \omega_{\cal K})
\\ &= \Phi_{\Delta(\rho Q,\psi\rho^{-1})}(\beta_{\Delta(R\cap T,id)})
\\ &=  \Phi_{\Delta(Q,id)}(\beta_{\Delta(R\cap T,id)})\hspace{2cm} \text{by $(\cal H\x \cal K)$-stability}
\\ &= \frac{\abs{\Hom_{\cal H \x \cal K}(\Delta(Q,id),\Delta(R\cap T,id))}\cdot \abs{R\x T}}{\abs{\Delta(R\cap T,id)}\cdot \abs{\Hom_{\cal H\x \cal K}(\Delta(Q,id),R\x T)}}\quad \text{by Remark \ref{rmkBetas}}
\\ &= \frac{\abs{\Hom_{\cal H \cap \cal K}(Q,R\cap T)}\cdot \abs{R}\cdot\abs{T}}{\abs{R\cap T}\cdot \abs{\cal H(Q,R)}\cdot \abs{\cal K(Q,T)}}
\\ &= \abs{RT}\cdot \frac{\abs{\Hom_{\cal H \cap \cal K}(Q,R\cap T)}}{\abs{\cal H(Q,R)}\cdot \abs{\cal K(Q,T)}}.
\end{align*}
Suppose that \eqref{eqCompositionIdems} is true. Comparing $\Phi_{\Delta(1,id)}(l.h.s.)=\abs S$ and $\Phi_{\Delta(1,id)}(r.h.s.)=\abs{RT}$, we see that we must have $\abs{S}=\abs{RT}$, and consequently $S=RT$. Furthermore we know that if $P\leq T$ and $\ph\in \cal F(P,R)$, then $\Phi_{\Delta(P,\ph)}(l.h.s.)\neq 0$. It is therefore a requirement for \eqref{eqCompositionIdems} that $\Phi_{\Delta(P,\ph)}(r.h.s.)\neq 0$ as well whenever $P\leq T$ and $\ph\in \cal F(P,R)$. This is the case exactly when $\Delta(P,\ph)$ has the form $\Delta(\rho Q,\psi\rho^{-1})$ with $\rho\in \cal K$ and $\psi\in \cal H$, hence $\ph=\psi\rho^{-1}\in \cal H\cal K$, so we must have $\cal F\approx\cal H\cal K$. Because $S=RT$, the equality $\Phi_{\Delta(Q,id)}(l.h.s.)=\Phi_{\Delta(Q,id)}(r.h.s)$ additionally gives us \eqref{eqCompositionHoms}.

If we conversely suppose that $\cal F\approx\cal H\cal K$, then $\Phi_{\Delta(P,\ph)}(l.h.s.)$ and $\Phi_{\Delta(P,\ph)}(r.h.s.)$ are non-zero for the same indices $\Delta(P,\ph)$, and because $S=RT$, the only obstacle for equality of fixed points $\Phi_{\Delta(\rho Q,\psi\rho^{-1})}(l.h.s.)=\Phi_{\Delta(\rho Q,\psi\rho^{-1})}(r.h.s.)$ is whether it holds that
\[\frac1{\abs{\cal F(Q,S)}}=\frac{\abs{\Hom_{\cal H \cap \cal K}(Q,R\cap T)}}{\abs{\cal H(Q,R)}\cdot \abs{\cal K(Q,T)}}\]
for all $Q\leq R\cap T$, which is \eqref{eqCompositionHoms}.
\end{proof}

\begin{ex}\label{exA6product}
The following example shows that the conjecture of Park-Ragnarsson-Stancu fails in general. We consider the alternating group $A_6$, and identify one of its Sylow $2$-subgroups with the dihedral group $D_8$. The associated fusion system $\cal F:=\cal F_{D_8}(A_6)$ is the saturated fusion system on $D_8$ wherein all five subgroups of order $2$ are conjugate. Let $R,T\leq D_8$ be the two Klein four-groups inside $D_8$, and let $\cal H=\cal F_R(R\rtimes \Z/3)$, $\cal K=\cal F_T(T\rtimes \Z/3)$ be fusion subsystems of $\cal F$ on $R$ and $T$ respectively, with $\Z/3$ acting nontrivially on $R\cong T\cong \Z/2\x \Z/2$. Then $\cal H$ and $\cal K$ both contain the order 3 automorphisms of the Klein four-group, and both are saturated.

We claim that $\cal F\approx\cal H\cal K$. First of all $D_8=RT$ is clear. Next, there is no isomorphism between $R$ and $T$ in $\cal F$, so the only subgroups of $T$ that map to $R$ in $\cal F$, are the subgroups of order $2$ and the trivial group. Suppose $A\leq T$ has order $2$. Then every morphism $\ph\in \cal F(A,R)$ factors through $Z(D_8) = R\cap T$, and can therefore be factored as $\ph=\rho\psi$ with $\psi\in \cal K(A,Z(D_8))$ and $\rho\in \cal H(Z(D_8),R)$. Hence we have $\cal F\approx\cal H\cal K$.

However \eqref{eqCompositionHoms} fails for the intersection $Q:=Z(D_8)=R\cap T$ since we get
\[\abs{\cal F(Z(D_8),S)} = 5 \neq \frac{3\cdot 3}1 = \frac{\abs{\cal H(Z(D_8),R)}\cdot \abs{\cal K(Z(D_8),T)}}{\abs{(\cal H\cap \cal K)(Z(D_8),Z(D_8))}}.\]
Consequently, we have $\cal F\approx\cal H\cal K$ but not \eqref{eqCompositionIdems}, so the conjecture of Park-Ragnarsson-Stancu is false in general.
\end{ex}

The following generalization of \cite{ParkRagnarssonStancu}*{Theorem 1.3} is an example of how to apply Theorem \ref{thmCompositionProducts} to prove a special case of the conjecture. The special case proved by Park-Ragnarsson-Stancu requires that $R=S$, but the proposition below does not have this requirement.
\begin{prop}\label{propNormalCompositionProduct}
Let $\cal F$ be a saturated fusion system on a $p$-group $S$, and let $\cal H,\cal K$ be saturated fusion subsystems of $\cal F$ on subgroups $R,T\leq S$ respectively. Suppose that $\cal K$ is weakly normal in $\cal F$, i.e., $\cal K$ is saturated and $\cal F$-invariant in the sense of \cite{Aschbacher}.

Then $\cal F\approx\cal H\cal K$ if and only if the characteristic idempotents satisfy
\[[R,id]_S^R \circ \omega_{\cal F} \circ [T,id]_T^S = \omega_{\cal H}\circ [R\cap T,id]_T^R\circ \omega_{\cal K}.\]
\end{prop}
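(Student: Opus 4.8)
The plan is to deduce the proposition from Theorem \ref{thmCompositionProducts} and then to dispose of the remaining numerical condition \eqref{eqCompositionHoms} using the weak normality of $\cal K$. One direction is free: if \eqref{eqPRS} (equivalently \eqref{eqCompositionIdems}) holds, then Theorem \ref{thmCompositionProducts} already forces $\cal F=\cal H\cal K$, and weak normality plays no role here. So I would assume conversely that $\cal K$ is weakly normal in $\cal F$ and that $\cal F=\cal H\cal K$; by Theorem \ref{thmCompositionProducts} it then remains only to verify that for every $Q\leq R\cap T$ one has
\[\abs{\cal F(Q,S)}\cdot\abs{\Hom_{\cal H\cap\cal K}(Q,R\cap T)}=\abs{\cal H(Q,R)}\cdot\abs{\cal K(Q,T)}.\]

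The structural input I would extract from weak normality (see \cite{Aschbacher}) is that $T$ is strongly $\cal F$-closed and that $\cal K$ is $\cal F$-invariant. Strong closure of $T$ gives $\Hom_{\cal F}(Q,S)=\Hom_{\cal F}(Q,T)$ for $Q\leq T$; applying the same observation to the subsystem $\cal H\subseteq\cal F$ shows that $R\cap T$ is strongly closed in $\cal H$, so that $\Hom_{\cal H}(Q,R)=\Hom_{\cal H}(Q,R\cap T)$, and likewise $\Hom_{\cal F}(Q,R)=\Hom_{\cal F}(Q,R\cap T)$ for $Q\leq R\cap T$. Feeding this back into the definition of the composition product, every $\ph\in\Hom_{\cal F}(Q,R\cap T)$ then factors as $\ph=\psi\rho$ with $\rho\in\Hom_{\cal K}(Q,R\cap T)$ and $\psi\in\Hom_{\cal H}(\rho Q,R\cap T)$. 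I would then leverage the $\cal F$-invariance of $\cal K$ to relate all of $\Hom_{\cal F}(Q,T)$ to $\Hom_{\cal K}(Q,T)$ together with $\Aut_{\cal F}(T)$ (a Frattini-type description of a weakly normal subsystem), and use $S=RT$ and $\cal F=\cal H\cal K$ to re-express the $\Aut_{\cal F}(T)$ contribution in terms of data visible in $\cal H$.

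Granting these reductions, the displayed identity should follow from a fiber count for the ``compose a $\cal K$-morphism with an $\cal H$-morphism'' map coming from the factorization above: one shows it surjects onto $\Hom_{\cal F}(Q,T)=\Hom_{\cal F}(Q,S)$ and that each nonempty fiber has cardinality exactly $\abs{\Hom_{\cal H\cap\cal K}(Q,R\cap T)}$, the latter via the elementary identity $\Hom_{\cal H}(-,-)\cap\Hom_{\cal K}(-,-)=\Hom_{\cal H\cap\cal K}(-,-)$ on subgroups of $R\cap T$; counting both sides then yields $\abs{\cal H(Q,R)}\cdot\abs{\cal K(Q,T)}=\abs{\cal F(Q,S)}\cdot\abs{\Hom_{\cal H\cap\cal K}(Q,R\cap T)}$, which is \eqref{eqCompositionHoms}, and Theorem \ref{thmCompositionProducts} closes the argument.

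The main obstacle is precisely this counting step. The composition-product hypothesis only directly controls $\cal F$-morphisms whose image lies in $R$ (equivalently in $R\cap T$), whereas the set $\Hom_{\cal F}(Q,S)=\Hom_{\cal F}(Q,T)$ also contains morphisms whose image need not meet $R$; reconciling the two, and checking that the fiber size is independent of which $\cal K$-conjugate of $Q$ occurs as the intermediate term, is exactly where the $\cal F$-invariance of the weakly normal subsystem $\cal K$ is indispensable and is the delicate part of the proof. (An alternative to organizing the count from scratch would be to bootstrap from the already-established case $R=S$ of \cite{ParkRagnarssonStancu}, combined with Lemma \ref{lemStableFixedByCharIdem} to split off the $[R,id]_S^R$ factor, but the verification of \eqref{eqCompositionHoms} seems unavoidable either way.)
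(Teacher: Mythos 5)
Your overall reduction coincides with the paper's: by Theorem \ref{thmCompositionProducts} only the implication ``$\cal F=\cal H\cal K$ implies \eqref{eqCompositionHoms}'' needs proof, and this should come from counting factorizations, with fibers of size $\abs{\Hom_{\cal H\cap\cal K}(Q,R\cap T)}$. But the counting argument itself -- which you yourself flag as ``the main obstacle'' and ``the delicate part'' -- is exactly what your proposal does not supply, so as it stands there is a genuine gap rather than a proof.

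Two concrete ideas close that gap in the paper. First, the obstacle you name (elements of $\cal F(Q,S)=\cal F(Q,T)$ whose image need not meet $R$) is resolved by applying the composition-product factorization not to $\ph$ but to $\ph^{-1}$: for $Q\leq R\cap T$ and $\ph\in\cal F(Q,T)$ one has $\ph^{-1}\in\cal F(\ph Q,R)$ with $\ph Q\leq T$, so $\ph^{-1}=\psi\rho$ with $\rho\in\cal K$ and $\psi\in\cal H$, and inverting gives $\ph=\kappa\eta$ with $\eta\in\cal H(Q,R\cap T)$ applied \emph{first} and $\kappa\in\cal K(\eta Q,T)$ second. Note this reverses the orientation in your sketch, where the $\cal K$-morphism comes first and the intermediate subgroup is a $\cal K$-conjugate of $Q$; that orientation is problematic, because the count would then need $\abs{\cal H(Q',R\cap T)}$ to be constant over $\cal K$-conjugates $Q'$ of $Q$, and nothing in the hypotheses controls $\cal H$-data under $\cal K$-conjugation (only $\cal K$ is weakly normal). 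With the reversed factorization the intermediate subgroup is an $\cal H$-conjugate $Q'\leq R\cap T$ of $Q$, and the second missing ingredient is precisely the invariance statement that $\abs{\cal K(Q',T)}$ and $\abs{(\cal H\cap\cal K)(Q',R\cap T)}$ depend only on the $\cal H$-conjugacy class of $Q'$ in $R\cap T$. The paper obtains this from \cite{Aschbacher}*{Lemma 3.6} ($\cal H\cap\cal K$ is $\cal H$-invariant on $R\cap T$), the Frattini property (any $\cal H$-isomorphism $Q\to Q'$ factors as a morphism of $\cal H\cap\cal K$ followed by an element of $\Aut_{\cal H}(R\cap T)$), and the $\cal F$-invariance of $\cal K$ (morphisms defined on all of $R\cap T$ induce bijections of $\cal K$- and $(\cal H\cap\cal K)$-hom-sets). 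Once these two points are in place, the count runs as you anticipate: there are $\abs{\cal H(Q,R\cap T)}\cdot\abs{\cal K(Q,T)}$ composable pairs $(\kappa,\eta)$, each $\ph\in\cal F(Q,T)$ arises from exactly $\abs{(\cal H\cap\cal K)(Q,R\cap T)}$ of them, and strong closure of $T$ gives $\abs{\cal H(Q,R)}=\abs{\cal H(Q,R\cap T)}$ and $\abs{\cal F(Q,S)}=\abs{\cal F(Q,T)}$, yielding \eqref{eqCompositionHoms}. Your proposed detour through a Frattini-type description of $\Hom_{\cal F}(Q,T)$ in terms of $\Aut_{\cal F}(T)$ and $\cal K$ is not needed and is not how the paper argues.
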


\begin{proof}
By Theorem \ref{thmCompositionProducts} it is sufficient to prove that $\cal F\approx\cal H\cal K$ implies \eqref{eqCompositionHoms}, so suppose $\cal F\approx\cal H\cal K$. The subsystem $\cal K$ being $\cal F$-invariant means that $T$ is strongly closed in $\cal F$, and whenever we have $A,B\leq P\leq T$ and $\ph\in\cal F(P,T)$, conjugation by $\ph$ induces a bijection $\cal K(A,B) \xto{\ph(-)\ph^{-1}} \cal K(\ph A,\ph B)$.

According to \cite{Aschbacher}*{Lemma 3.6}, the intersection $\cal H\cap \cal K$ is an $\cal H$-invariant fusion system on $R\cap T$. Suppose we have subgroups $Q\leq R\cap T$ and $Q'\sim_{\cal H} Q$, and choose an isomorphism $\ph\in \cal H(Q,Q')$. Because $T$ is strongly closed in $\cal F$, $R\cap T$ is strongly closed in $\cal H$, hence $Q'\leq R\cap T$.  By the Frattini property of $\cal H$-invariant subsystems, \cite{Aschbacher}*{Section 3}, $\ph$ can be factored as $\ph=\eta\kappa$ with $\kappa\in (\cal H\cap \cal K)(Q,R\cap T)$ and $\eta\in \Aut_{\cal H}(R\cap T)$. If we let $Q'':=\kappa(Q)$, we then have $\abs{\cal K(Q,T)} = \abs{\cal K(Q'',T)}$ and $\abs{(\cal H\cap \cal K)(Q,T)} = \abs{(\cal H\cap \cal K)(Q'',T)}$. Furthermore, the $\cal H$-isomorphism $\eta\colon Q''\to Q'$ is defined on all of $R\cap T$, so the $\cal F$-stability of $\cal K$ and $\cal H$-stability of $\cal H\cap \cal K$ implies that $\eta$ induces bijections $\cal K(Q'',T) \cong \cal K(Q',T)$ and $(\cal H\cap \cal K)(Q'',T) \cong (\cal H\cap \cal K)(Q',T)$.

We will now prove \eqref{eqCompositionHoms}, and because $T$ is strongly closed in $\cal F$, we have $\abs{\cal H(Q,R)} = \abs{\cal H(Q,R\cap T)}$, so we must show
\[\abs{\cal F(Q,T)} = \frac{\abs{\cal H(Q,R\cap T)}\cdot \abs{\cal K(Q,T)}}{\abs{(\cal H\cap \cal K)(Q,R\cap T)}}\]
for all $Q\leq R\cap T$. Let therefore $Q\leq R\cap T$ be given. For every homomorphism $\ph\in \cal F(Q,T)$, we can factor $\ph^{-1}\colon \ph Q \to Q$ as $\ph^{-1}=\eta^{-1}\kappa^{-1}$ with $\eta^{-1}\in \cal H$ and $\kappa^{-1}\in \cal K$, or equivalently $\ph=\kappa\eta$. We will enumerate $\cal F(Q,T)$ by counting the number of pairs of isomorphisms $(\kappa,\eta)$ with $\eta\colon Q\to Q'$ in $\cal H$ and $\kappa\colon Q'\to Q''$ in $\cal K$. The number of choices for $\eta$ is $\abs{\cal H(Q,R\cap T)}$, and for each $\eta\colon Q\to Q'$ the number of choices for $\kappa$ is $\abs{\cal K(Q',T)}$. Because $Q'$ is isomorphic to $Q$ in $\cal H$, the arguments above imply that $\abs{\cal K(Q',T)}=\abs{\cal K(Q,T)}$, which is independent of the chosen $\eta\in \cal H(Q,R\cap T)$. The total number of composable pairs $(\kappa,\eta)$ is therefore \[\abs{\cal H(Q,T)}\cdot\abs{\cal K(Q,T)}.\]
Given a pair $(\kappa,\eta)$ of composable isomorphisms $Q\xto{\eta} Q' \xto{\kappa} Q''$, we then count the number of other pairs $Q \xto{\eta'} Q''' \xto{\kappa'} Q''$ that represent the same isomorphism in $\cal F$. If $(\kappa,\eta)$ and $(\kappa',\eta')$ give the same isomorphism $Q\to Q''$ in $\cal F$, then we have $\kappa\eta=\kappa'\eta'$ or equivalently $(\kappa')^{-1}\kappa = \eta'\eta^{-1}\in (\cal H\cap \cal K)(Q',R\cap T)$. Conversely, given any $\rho\in (\cal H\cap \cal K)(Q',R\cap T)$, the pair $(\kappa\rho^{-1},\rho\eta)$ defines the same $\cal F$-homomorphism as $(\kappa,\eta)$. The number of pairs representing the same map as $(\kappa,\eta)$ is therefore $\abs{(\cal H\cap \cal K)(Q',R\cap T)} = \abs{(\cal H\cap \cal K)(Q,R\cap T)}$, which is independent of the chosen pair $(\kappa,\eta)$. Hence there are $\abs{(\cal H\cap \cal K)(Q,R\cap T)}$ pairs representing each homomorphism $\ph\in \cal F(Q,T)$, so we get
\[\abs{\cal F(Q,T)} = \frac{\abs{\cal H(Q,R\cap T)}\cdot \abs{\cal K(Q,T)}}{\abs{(\cal H\cap \cal K)(Q,R\cap T)}}\]
as we wanted.
\end{proof}

Theorem \ref{thmCompositionProducts} together with Example \ref{exA6product} seem to say that the definition of weak composition product $\cal F\approx\cal H\cal K$ by Park-Ragnarsson-Stancu is too lenient. Theorem \ref{thmCompositionProducts} furthermore suggests that \eqref{eqCompositionHoms} should be an additional requirement in the definition:

\begin{dfn}\label{dfnRevisedCompositionProd}[Revised definition of composition products]
Let $\cal F$ be a fusion system on a $p$-group $S$, and let $\cal H,\cal K$ be fusion subsystems on subgroups $R,T\leq S$ respectively. We say that \emph{$\cal F$ is the composition product of $\cal H$ and $\cal K$}, written $\cal F=\cal H\cal K$, if
\begin{enumerate}
\item $S=RT$,
\item\label{dfnCompositionFactorize} for all subgroups $P\leq T$, every morphism $\ph\in \cal F(P,R)$ can be written as $\ph=\psi\rho$ with $\psi\in\cal H$ and $\rho\in\cal K$,
\item\label{dfnCompositionHoms} for all subgroups $Q\leq R\cap T$ we have
    \[\abs{\cal F(Q,S)} = \frac{\abs{\cal H(Q,R)}\cdot \abs{\cal K(Q,T)}}{\abs{\Hom_{\cal H\cap \cal K}(Q,R\cap T)}}.\]
\end{enumerate}
\end{dfn}

\begin{rmk}\label{rmkRevisedCompositionProd}
We can interpret the additional property \ref{dfnCompositionHoms} in the following way: Given $Q\leq R\cap T$, we know from \ref{dfnCompositionFactorize} that every map $\ph\in \cal F(Q,R)$ can be factored as
\[\ph=\psi\rho\colon Q \xto[\quad\cong\quad]{\rho\,\in\,\cal K} P \xto[\quad\cong\quad]{\psi\,\in\,\cal H} N \leq R.\]
There are several such factorizations of $\ph$, and the ambiguity is precisely given by the set $\Hom_{\cal H\cap \cal K}(P,R\cap T)$, where each $\chi\in \Hom_{\cal H\cap \cal K}(P,R\cap T)$ gives the alternative factorization $\ph=(\psi\chi^{-1})(\chi\rho)$. Ideally the set $\Hom_{\cal H\cap \cal K}(P,R\cap T)$ would have the same size as $\Hom_{\cal H\cap \cal K}(Q,R\cap T)$ though $Q$ and $P$ are only conjugate in $\cal K$ and not in $\cal H\cap \cal K$.

What property \ref{dfnCompositionHoms} seems to say is that similar factorizations exist for all $\ph\in \cal F(Q,S)$ and not just those that land in $R$, i.e. that we can factorize $\ph$ as a map $Q\xto{\rho} P$ in $\cal K$ followed by a map $P\xto{\tilde \psi} N$ that is in some way ``parallel'' to a map $\psi\in \cal H$. In diagram form this would look like
\[
\begin{tikzpicture}[xscale=3,yscale=1.5]
\node (Q) at (0,0) {$Q$};
\node (P) at (1,1) {$P$};
\node (N) at (2,0) {$N$,};
\node (M) at (1,-1) {$M$};

\draw[->,arrow,auto]
    (Q) edge node{$\rho\in\cal K$} (P)
        edge node[swap]{$\psi\in \cal H$} (M)
        edge node{$\ph$} (N)
    (P) edge node{$\tilde\psi$} (N)
;
\draw[dashed] (M) -- (N);
\end{tikzpicture}
\]
and the ambiguity of these decompositions would lie in $\Hom_{\cal H\cap \cal K}(Q,R\cap T)$.
Ideally we would have a pairing
\[\cal H(Q,R) \x \cal K(Q,T) \to \cal F(Q,S)\]
that is surjective and where each fiber is in bijection with $\Hom_{\cal H\cap \cal K}(Q,R\cap T)$. The proof of Proposition \ref{propNormalCompositionProduct} runs somewhat along these lines, but it is currently unknown to the author where such a pairing is in any way possible in general.
\end{rmk}

With the revised Definition \ref{rmkRevisedCompositionProd} of composition product, Theorem \ref{thmCompositionProducts} becomes
\theoremstyle{plain}
\newtheorem{mainthmZ}{Theorem}
\renewcommand{\themainthmZ}{\Alph{mainthmZ}'}
\setcounter{mainthmZ}{4}
\begin{mainthmZ}\label{thmRevisedCompositionProducts}
Let $\cal F$ be a saturated fusion system on a $p$-group $S$, and suppose that $\cal H,\cal K$ are saturated fusion subsystems of $\cal F$ on subgroups $R,T\leq S$ respectively.

Then $\cal F=\cal H\cal K$ (with respect to the revised definition) if and only if the characteristic idempotents satisfy
\begin{equation*}
[R,id]_S^R \circ \omega_{\cal F} \circ [T,id]_T^S = \omega_{\cal H}\circ [R\cap T,id]_T^R\circ \omega_{\cal K}.
\end{equation*}
\end{mainthmZ}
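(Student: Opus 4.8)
The plan is to deduce Theorem~\ref{thmRevisedCompositionProducts} directly from Theorem~\ref{thmCompositionProducts}; with the revised notion of composition product the statement becomes a pure matter of unwinding definitions. First I would record what each side of the claimed equivalence means. By the revised Definition~\ref{dfnRevisedCompositionProd}, the assertion ``$\cal F=\cal H\cal K$'' is by construction the conjunction of three things: $S=RT$, the factorization property \ref{dfnCompositionFactorize} (every $\ph\in\cal F(P,R)$ with $P\leq T$ splits as $\psi\rho$ with $\psi$ in $\cal H$ and $\rho$ in $\cal K$), and the numerical identity \ref{dfnCompositionHoms}, which is verbatim equation~\eqref{eqCompositionHoms}, namely $\abs{\cal F(Q,S)}=\abs{\cal H(Q,R)}\cdot\abs{\cal K(Q,T)}\big/\abs{\Hom_{\cal H\cap\cal K}(Q,R\cap T)}$ for all $Q\leq R\cap T$. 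On the other hand, the first two of these conditions --- $S=RT$ together with \ref{dfnCompositionFactorize} --- are exactly the original Park--Ragnarsson--Stancu definition of ``$\cal F=\cal H\cal K$'' used in the statement of Theorem~\ref{thmCompositionProducts}.

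Next I would simply invoke Theorem~\ref{thmCompositionProducts}. Under the very same saturation hypotheses on $\cal F$, $\cal H$, $\cal K$ that Theorem~\ref{thmRevisedCompositionProducts} carries, that theorem asserts that the characteristic idempotent equation
\[
[R,id]_S^R \circ \omega_{\cal F} \circ [T,id]_T^S = \omega_{\cal H}\circ [R\cap T,id]_T^R\circ \omega_{\cal K}
\]
holds if and only if $\cal F=\cal H\cal K$ in the original sense \emph{and} \eqref{eqCompositionHoms} holds for every $Q\leq R\cap T$. By the first paragraph, ``original sense together with \eqref{eqCompositionHoms}'' is literally the same condition as ``$\cal F=\cal H\cal K$ in the revised sense of Definition~\ref{dfnRevisedCompositionProd}.'' Substituting this equivalence of hypotheses into Theorem~\ref{thmCompositionProducts} yields exactly the biconditional claimed in Theorem~\ref{thmRevisedCompositionProducts}, and the proof is finished.

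The main --- and essentially only --- point requiring care is the bookkeeping: one must check that condition \ref{dfnCompositionHoms} of Definition~\ref{dfnRevisedCompositionProd} and equation~\eqref{eqCompositionHoms} of Theorem~\ref{thmCompositionProducts} are the identical statement (they are, including the quantifier ``for all $Q\leq R\cap T$'' and the appearance of $\Hom_{\cal H\cap\cal K}(Q,R\cap T)$ rather than any smaller transporter set), and that no hypothesis has been silently weakened or strengthened in passing from Theorem~\ref{thmCompositionProducts} to Theorem~\ref{thmRevisedCompositionProducts}. Since both require $\cal F$, $\cal H$, $\cal K$ to be saturated and nothing more, there is no genuine obstacle; the content of Theorem~\ref{thmRevisedCompositionProducts} is entirely contained in Theorem~\ref{thmCompositionProducts}, the revised definition having been engineered precisely so that this clean restatement holds.
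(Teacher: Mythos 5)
Your proposal is correct and matches the paper's (implicit) argument exactly: the paper states Theorem~\ref{thmRevisedCompositionProducts} as an immediate restatement of Theorem~\ref{thmCompositionProducts}, since the revised Definition~\ref{dfnRevisedCompositionProd} is by construction the original composition product condition together with \eqref{eqCompositionHoms}. Nothing further is needed.
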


\makeatletter
\def\eprint#1{\@eprint#1 }
\def\@eprint #1:#2 {%
    \ifthenelse{\equal{#1}{arXiv}}%
        {\href{http://front.math.ucdavis.edu/#2}{arXiv:#2}}%
        {\href{#1:#2}{#1:#2}}%
}
\makeatother

%\clearpage
\begin{bibdiv}
\begin{biblist}
\bibselect{bibliography}
\end{biblist}
\end{bibdiv}

\begin{comment}
\makeatletter
\providecommand\@dotsep{5}
\def\listtodoname{List of Todos}
\def\listoftodos{\@starttoc{tdo}\listtodoname}
\makeatother
\listoftodos
\end{comment}

\end{document}